\newcommand{\varep}{\varepsilon}
\newtheorem{thm}{Theorem}[section]
\newtheorem{lemma}[thm]{Lemma}
\newtheorem{definition}[thm]{Definition}
\newtheorem{remark}[thm]{Remark}
\numberwithin{equation}{section}
\begin{document}

\bibliographystyle{amsplain}

\title{Homogenization of Elliptic Boundary Value Problems \\
in Lipschitz Domains}

\author{Carlos E. Kenig\thanks{Supported in part by NSF grant DMS-0456583}
 \and Zhongwei Shen\thanks{Supported in part by NSF grant DMS-0855294}}

\date{Dedicated to the Memory of Bj\"orn Dahlberg}

\maketitle
\begin{abstract}
In this paper we study the $L^p$ boundary value problems for
$\mathcal{L}(u)=0$ in $\mathbb{R}^{d+1}_+$, where $\mathcal{L}=-\text{div}
(A\nabla )$ is a second order elliptic operator with real and symmetric
coefficients.
Assume that $A$ is {\it periodic} in $x_{d+1}$
and satisfies some minimal smoothness
condition in the $x_{d+1}$ variable,
we show that the $L^p$ Neumann and regularity problems are uniquely solvable
for $1<p<2+\delta$. 
We also present a new proof of Dahlberg's theorem on the 
$L^p$ Dirichlet problem for $2-\delta<p< \infty$
(Dahlberg's original unpublished proof is given in the Appendix).
As the periodic and smoothness conditions
 are imposed only on the $x_{d+1}$ variable,
these results extend directly from $\mathbb{R}^{d+1}_+$
to regions above Lipschitz graphs.
Consequently, by localization techniques,
we obtain uniform $L^p$ estimates for the Dirichlet, Neumann and regularity
problems on bounded Lipschitz domains
for a family of second order elliptic
operators arising in the theory of homogenization.
The results on the Neumann and regularity problems
are new even for smooth domains.

\bigskip

\noindent{\it MSC 2000:} 35J25. 

\bigskip

\noindent {\it Keywords:} Boundary value problem; Periodic coefficient;Lipschitz domain.

\end{abstract}

\section{Introduction}

Let $\mathcal{L}=-\text{div} (A\nabla )$
be a second order
elliptic operator
defined in $\mathbb{R}^{d+1}=\big\{ X=(x,t)\in \mathbb{R}^{d}\times \mathbb{R}\big\}$, $d\ge 2$.
We will always assume that the $(d+1)\times (d+1)$ coefficient matrix
\begin{equation}\label{real-symmetric}
A=A(X)=(a_{i,j}(X)) \text{ is real and symmetric,}
\end{equation}
and satisfies the ellipticity condition,
\begin{equation}\label{ellipticity}
\mu |\xi|^2 \le a_{ij}(X)\xi_i\xi_j \le \frac{1}{\mu} |\xi|^2
\quad \text{for all }X,\xi\in \mathbb{R}^{d+1},
\end{equation}
where $\mu>0$.
In this paper we shall be interested 
in boundary value problems for $\mathcal{L}(u)=0$ in the upper half-space
$\mathbb{R}^{d+1}_+ =\mathbb{R}^{d} \times (0,\infty)$
with $L^p$ boundary data, under the assumption that the coefficients
are periodic in the $t$ variable,
\begin{equation}\label{periodic-in-t}
A(x,t+1) =A(x,t) \quad \text{ for } (x,t)\in \mathbb{R}^{d+1}.
\end{equation}
More precisely, we will study the solvabilities of the
$L^p$ Dirichlet problem $(D)_p$
\begin{equation}\label{Lp-Dirichlet problem}
\left\{
\aligned 
& \mathcal{L}(u)  =0 \quad \text{in } \Omega=\mathbb{R}^{d+1}_+,\\
&  u =f\in L^p(\partial\Omega) \  \text{ n.t. on } \partial\Omega
\text{ and } (u)^*\in L^p(\partial\Omega),
\endaligned
\right.
\end{equation}
the $L^p$ Neumann problem $(N)_p$
\begin{equation}\label{Neumann-problem}
 \left\{
\aligned 
& \mathcal{L}(u)=0 \quad \text{ in } \Omega=\mathbb{R}^{d+1}_+,\\
& \frac{\partial u}{\partial\nu}=f\in L^p(\partial\Omega) \ \text{ on } \partial\Omega
 \text{ and }
N(\nabla u)\in L^p(\partial\Omega),
\endaligned
\right.
\end{equation}
where $\frac{\partial u}{\partial \nu}$ denote the
conormal derivative associated with operator $\mathcal{L}$,
and the $L^p$ regularity problem $(R)_p$
\begin{equation}\label{regularity-problem}
\left\{
\aligned 
& \mathcal{L}(u)=0 \quad \text{in } \Omega=\mathbb{R}^{d+1}_+,\\
& u=f\in \dot{W}^{1,p}(\partial\Omega)\ \text{n.t. on } \partial\Omega
\text{ and } N(\nabla u)\in L^p(\partial\Omega).
\endaligned
\right.
\end{equation}
Here $(u)^*$ denotes the usual
nontangential maximal function of $u$ and $N(\nabla u)$ a generalized 
nontangantial maximal function of $\nabla u$.
By $u=f$ n.t. on $\partial\Omega$ we mean that
$u(X)$ converges to $f(P)$ as $X\to P$ nontangentially for a.e. $P\in \partial\Omega$. 
Under the periodic condition (\ref{periodic-in-t}) as well as some (necessary) local
solvability conditions on $\mathcal{L}$, we will show that
the $L^p$ Dirichlet problem is
uniquely solvable for $2-\delta<p<\infty$, and the $L^p$ Neumann and regularity
problems are uniquely solvable for $1<p<2+\delta$.
Furthermore, the solution to the Dirichlet problem satisfies the
estimate $\|(u)^*\|_p \le C\| u\|_p$, while the solutions to the
$L^p$ Neumann and regularity problems satisfy 
$\| N(\nabla u)\|_p \le C\|\frac{\partial u}{\partial \nu}\|_p$
and $\|N(\nabla u)\|_p\le C\|\nabla_x u\|_p$, respectively.
These results extend the work
on the $L^p$ boundary value problems in $\mathbb{R}^{d+1}_+$
for second order elliptic operators with 
$t$-independent coefficients (or in bounded star-like Lipschitz domains
for operators with radially independent 
coefficients).
As we shall discuss later,
since the local solvability conditions may be deduced from certain minimal smoothness
conditions in the $t$-variable and our periodic condition is only imposed
on the $t$-variable, the $L^p$ global estimates on $(u)^*$ and $N(\nabla u)$
 extend directly
from $\mathbb{R}^{d+1}_+$ to the regions above Lipschitz graphs.
As a consequence, by well known localization techniques,
we obtain uniform estimates for the $L^p$ boundary
value problems in bounded Lipschitz domains
for a family of second order elliptic operators arising in
the theory of homogenization.

We should point out that the result mentioned above for the Dirichlet problem
is in fact due to the late B. Dahlberg \cite{dahlberg-personal} (unpublished, personal communication).
His proof, which is given in the Appendix for the sake of reference,
depends on the ingenious use of Green's functions and harmonic measures.
As in the case of his celebrated theorem on the Dirichlet problem
for $\Delta u=0$ in Lipschitz domains \cite{Dahlberg-1977, Dahlberg-1979},
 Dahlberg's method does not extend
to the Neumann and regularity problems.
Motivated by the work of Jerison and Kenig \cite{Jerison-Kenig-1980,
Jerison-Kenig-1981-Ann, Jerison-Kenig-1981-Bull},
we then seek to establish the global $L^2$ Rellich type estimate,
\begin{equation}\label{Rellich-type-estimate}
\| \frac{\partial u}{\partial \nu}\|_2
\approx \|\nabla_x u\|_2
\end{equation}
for suitable solutions of elliptic operators with $t$-periodic coefficients.
In Section 3 we develop some new integral identities which play a crucial role
in the proof of (\ref{Rellich-type-estimate}) and which may be regarded as the Rellich
identities for operators with $t$-periodic coefficients.
Indeed, in the case of constant or $t$-independent coefficients, Rellich identities
are usually derived by using integration by parts on a form involving $\frac{\partial u}{\partial t}$.
The basic insight here is to replace the $t$ derivative of $u$ by
the difference $Q(u)(x,t)=u(x,t+1)-u(x,t)$.
The $t$-periodicity of $A(x,t)$ is used in the fact that $Q$ commutes with $\mathcal{L}$.
In particular, $Q(u)$  is a solution
whenever $u$ is a solution.
We further remark that these integral identities allow us to control the near-boundary
integral
\begin{equation}\label{boundary-estimate}
\int_0^1 \int_{\mathbb{R}^{d}} |\nabla u(x,t)|^2\, dxdt,
\end{equation}
by the integral of $|\frac{\partial u}{\partial\nu}|^2$ or $|\nabla_x u|^2$
on $\mathbb{R}^d$. The desired estimate (\ref{Rellich-type-estimate}) then follows
by local solvability conditions.

With (\ref{Rellich-type-estimate}) at our disposal, we are able to give another proof
of Dahlberg's theorem on the Dirichlet problem in Section 4 and
more importantly,
solve the $L^2$ regularity problem in Section 5 and the $L^2$ Neumann problem
in Section 8.
Furthermore, using the $L^2$ estimates and
following the approach developed in \cite{Dahlberg-Kenig-1987}
and \cite{KP-1993}, we establish the
solvability of the $L^p$ regularity and Neumann problems for
$1<p<2$ in Sections 6 and 9.
Although the range $2<p<2+\delta$ may be also treated by the real variable arguments
used in \cite{Dahlberg-Kenig-1987, KP-1993}, we choose to use a relatively new real variable approach,
based on the weak reverse H\"older inequalities \cite{shen-2006-ne, shen-2007-boundary}.
In particular it allows us to show that for elliptic operators with coefficients
satisfying (\ref{real-symmetric})-(\ref{ellipticity}),
if the $L^2$ regularity problem $(R)_2$ is solvable 
and $\frac{1}{p}+\frac{1}{q}=1$,
then the Dirichlet problem $(D)_q$ and the regularity problem $(R)_p$ are equivalent.

There exists an extensive literature on boundary value problems
with minimal smoothness assumptions on the coefficients or on the boundary
of the domain in question.
In particular the $L^p$ Dirichlet, regularity and Neumann problems for
$\Delta u=0$ in Lipschitz domains, which are closely related to
the problems we investigate here,
 were well understood thanks to \cite{Dahlberg-1977, Dahlberg-1979, Jerison-Kenig-1980,
Jerison-Kenig-1981-Ann, Jerison-Kenig-1981-Bull, Verchota-1984, Dahlberg-Kenig-1987}.
Deep results on the $L^p$ Dirichlet problem   
for a general second order elliptic equation $\mathcal{L}(u)=0$
may be found in \cite{CFMS-1981, Dahlberg-1986, Fe-1989, FKP-1991}
(see \cite{Kenig-1994} for further references).
The $L^p$ regularity and Neumann problems for $\mathcal{L}(u)=0$
were formulated and studied in \cite{KP-1993, KP-1995}.
If the coefficient matrix $A$ is $t$-independent, the $L^p$ solvability
of the Neumann and regularity problems in 
the upper half-space was essentially established in \cite{KP-1993}
for the sharp range $1<p<2+\delta$, although the paper only treats the case of
radially independent coefficients in the unit ball.
We also mention that the $L^2$ boundary value problems
for some operators with $t$-independent complex coefficients was recently studied
by the method of layer potentials in \cite{Hofmann-preprint}.
Related work on $L^p$ boundary value problems for
operators with $t$-independent, but non-symmetric coefficients
may be found in \cite{KKPT-2000, KR-2009}.
Regarding the conditions on the $t$ variable, it worths 
mentioning that the global estimate $\| (u)^*\|_p \le C\| f\|_p$
may fail for any $p$, even if the matrix $A(x,t)$ satisfies
(\ref{real-symmetric})-(\ref{ellipticity}) and is $C^\infty$ in $\mathbb{R}^{d+1}$.

Since the local solvability condition on $\mathcal{L}$ may be deduced from
certain minimal smoothness assumption in the $t$ variable on the coefficient matrix $A$, as 
we alluded to earlier,
Dahlberg's theorem on the Dirichlet problem and our results on the Neumann and regularity problems
extend readily to the case where 
\begin{equation}\label{Lipschitz-region}
D
=\big\{ (x,t)\in \mathbb{R}^{d+1}: \, t>\psi(x)\big\}
\end{equation}
is the region above a Lipschitz graph, using the bi-Lipschitzian map
$(x,t)\to (x, t-\psi(x))$ from $D$ to $\mathbb{R}^{d+1}_+$.
Indeed, let
\begin{equation}\label{eta-function}
\eta(\rho)=\sup\big\{ |A(x,s_1)-A(x,s_2)|: \
x\in \mathbb{R}^d \text{ and } |s_1-s_2|\le \rho\big\}
\end{equation}
and assume that
\begin{equation}
\label{smoothness-condition-in-t}
\int_0^1 \frac{ \big( \eta (\rho)\big)^2 }{\rho}\, d\rho<\infty.
\end{equation}
The following three theorems are proved in Section 10.
The constant $\delta$ in Theorems \ref{Dirichlet-theorem-Lipschitz},
\ref{Neumann-theorem-Lipschitz} and \ref{regularity-theorem-Lipschitz}
depends only on $d$, $\mu$, $\eta(\rho)$
and $\|\nabla_x\psi\|_\infty$.

\begin{thm}\label{Dirichlet-theorem-Lipschitz}
Let $\mathcal{L}=-\text{div}(A\nabla )$
with coefficient matrix satisfying (\ref{real-symmetric}),
(\ref{ellipticity}), (\ref{periodic-in-t})
 and (\ref{smoothness-condition-in-t}).
Let $D$ be given by (\ref{Lipschitz-region}).
Then there exists $\delta\in (0,1)$ such that 
for any $f\in L^p(\partial D)$ with $2-\delta<p<\infty$,
there exists a unique solution to $\mathcal{L}(u)=0$ in
$D$ with the property that $(u)^*\in L^p(\partial D)$
and $u=f$ n.t. on $\partial D$.
Moreover, the solution $u$ satisfies
the estimate $\|(u)^*\|_p \le C_p \| f\|_p$,
where $C_p$ depends only on $d$, $p$, $\mu$, $\eta(\rho)$ and $\| \nabla_x \psi\|_\infty$.
\end{thm}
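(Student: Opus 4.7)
The plan is to reduce the Dirichlet problem on the Lipschitz graph domain $D$ to the half-space problem, which has been settled in Section 4 for $t$-periodic operators satisfying the Dini-type condition (\ref{smoothness-condition-in-t}). The reduction proceeds via the bi-Lipschitz map $\Phi:D\to\mathbb{R}^{d+1}_+$ given by $\Phi(x,t)=(x,t-\psi(x))$, as indicated immediately after (\ref{Lipschitz-region}). Given a solution $u$ of $\mathcal{L}u=0$ in $D$, set $v(x,s)=u(x,s+\psi(x))$ on $\mathbb{R}^{d+1}_+$. The chain rule yields $\nabla u=L(x)\nabla v$ with
\[
L(x)=\begin{pmatrix} I_d & -\nabla_x\psi(x) \\ 0 & 1\end{pmatrix},
\]
so $v$ satisfies $\tilde{\mathcal{L}}v:=-\text{div}(\tilde A\nabla v)=0$ in $\mathbb{R}^{d+1}_+$ with pulled-back coefficient matrix $\tilde A(x,s)=L(x)^T A(x,s+\psi(x))L(x)$. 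Since $\det L(x)=1$, no Jacobian factor arises.

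The heart of the argument is to verify that $\tilde A$ satisfies each of (\ref{real-symmetric}), (\ref{ellipticity}), (\ref{periodic-in-t}) and (\ref{smoothness-condition-in-t}). Symmetry is preserved by the congruence $L^T(\cdot)L$. Ellipticity holds with a new constant $\tilde\mu=\tilde\mu(\mu,\|\nabla_x\psi\|_\infty)$ because $L$ is invertible with norm controlled by $\|\nabla_x\psi\|_\infty$. Crucially, because $\psi=\psi(x)$ is independent of $t$ (so $L$ is independent of $s$), the vertical shift $s\mapsto s+\psi(x)$ preserves both periodicity, $\tilde A(x,s+1)=\tilde A(x,s)$, and the modulus in the $s$-variable, with $\tilde\eta(\rho)\le C(\|\nabla_x\psi\|_\infty)\eta(\rho)$; hence (\ref{smoothness-condition-in-t}) carries over. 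No $x$-regularity of $\tilde A$ is required by the half-space theorem, which is essential here since $L(x)$ is only bounded measurable in $x$.

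It then remains to translate norms and nontangential behaviour. Parametrizing $\partial D$ by $x\mapsto(x,\psi(x))$, the surface measure has density $\sqrt{1+|\nabla_x\psi|^2}\in[1,\sqrt{1+\|\nabla_x\psi\|_\infty^2}]$ with respect to $dx$, and $v(x,0)=f(x)$, so $\|f\|_{L^p(\partial D)}$ is comparable to $\|f\|_{L^p(\mathbb{R}^d)}$. The map $\Phi$ sends nontangential cones in $D$ of aperture $\alpha$ into nontangential cones in $\mathbb{R}^{d+1}_+$ of some aperture $\alpha'=\alpha'(\alpha,\|\nabla_x\psi\|_\infty)$, and conversely; since the $L^p$ norm of the nontangential maximal function is aperture-independent (up to constants) and nontangential convergence is as well, one has $\|(u)^*\|_{L^p(\partial D)}\approx\|(v)^*\|_{L^p(\mathbb{R}^d)}$, and $v\to f$ nontangentially on $\partial\mathbb{R}^{d+1}_+$ iff $u\to f$ nontangentially on $\partial D$. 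Applying the half-space Dirichlet theorem to $\tilde{\mathcal L}v=0$ with data $f$ then delivers existence, uniqueness and the estimate $\|(u)^*\|_p\le C_p\|f\|_p$ on $D$ for $2-\delta<p<\infty$, with constants of the advertised form.

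The only substantive obstacle is the structural verification in the second paragraph: one must check that the pulled-back operator really lies inside the class handled in Section 4. The payoff for having restricted the periodicity and smoothness hypotheses on $A$ to the $t$-direction is precisely that the vertical shear $(x,t)\mapsto(x,t-\psi(x))$ dictated by a Lipschitz graph respects exactly those hypotheses, while the unavoidable $x$-dependence it introduces via $\nabla_x\psi$ is absorbed harmlessly into the ellipticity constant.
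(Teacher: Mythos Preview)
Your reduction from $D$ to the half-space via $\Phi(x,t)=(x,t-\psi(x))$ is correct and matches the paper exactly: the pulled-back matrix $\tilde A=L^T A L$ (equivalently $\Phi' A(\Phi')^t$ as in Section~10) is real symmetric, uniformly elliptic with constant depending on $\mu$ and $\|\nabla_x\psi\|_\infty$, $t$-periodic, and satisfies $\tilde\eta(\rho)\le C\eta(\rho)$, so (\ref{smoothness-condition-in-t}) transfers. The equivalence of nontangential maximal functions and boundary norms under $\Phi$ is also fine.

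The gap is your claim that ``the half-space problem \dots\ has been settled in Section~4 for $t$-periodic operators satisfying the Dini-type condition (\ref{smoothness-condition-in-t}).'' Theorem~\ref{Dahlberg-theorem} is proved under the \emph{local solvability condition}~(\ref{local-Dirichlet-condition}), not under (\ref{smoothness-condition-in-t}); the implication from the latter to the former is a separate, nontrivial step that you have not addressed. The paper fills this in Section~10 in two stages. First, one mollifies $A$ to obtain smooth approximants $A_k$ preserving all four structural conditions with uniform constants, and passes to the limit via Lemma~\ref{regularity-approximation-lemma} and Theorem~\ref{regularity-approximation-theorem}. Second, for smooth coefficients (Theorem~\ref{special-case-theorem}) one verifies the local conditions by comparing $\mathcal{L}$ near the boundary with the $t$-independent operator $L_2$ obtained by freezing $A(x,t)$ at $t=0$; the square-Dini condition (\ref{smoothness-condition-in-t}) is precisely what forces the Carleson-type discrepancy (\ref{definition-of-h}) to vanish as $r\to 0$, so the perturbation theorem of \cite{KP-1995} transfers local solvability from $L_2$ (which has it by \cite{KP-1993}) to $\mathcal{L}$. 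This perturbation argument is where (\ref{smoothness-condition-in-t}) is actually consumed, and without it your appeal to Section~4 is circular.
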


\begin{thm}\label{Neumann-theorem-Lipschitz}
Under the same assumptions on $\mathcal{L}$ and $D$ as in Theorem
\ref{Dirichlet-theorem-Lipschitz}, there exists $\delta>0$ such that given any
$g\in L^p(\partial D)$ with $1<p<2+\delta$, there exists a
solution, unique up to constants, to $\mathcal{L}(u)=0$ in $D$
such that $N(\nabla u)\in L^p(\partial D)$ and $\frac{\partial
u}{\partial\nu}=g$ on $\partial D$.  Moreover, the solution
satisfies the estimate $\|N(\nabla u)\|_p \le C_p \| g\|_p$, where $C_p$ depends
only on $d$, $p$, $\mu$, $\eta(\rho)$ and $\| \nabla_x \psi\|_\infty$.
\end{thm}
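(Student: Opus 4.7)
The plan is to reduce the problem on the Lipschitz region $D$ to the corresponding $L^p$ Neumann problem on the upper half-space $\mathbb{R}^{d+1}_+$, which has already been established in Sections 8 and 9 for the full range $1<p<2+\delta$. The crucial feature is that the standard change of variables flattening $\partial D$ preserves the $t$-periodic structure and the Dini-type condition, because $\psi$ depends only on $x$.

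First, I would introduce the bi-Lipschitz diffeomorphism $\Phi:D\to\mathbb{R}^{d+1}_+$ given by $\Phi(x,t)=(x,t-\psi(x))$, with inverse $\Phi^{-1}(y,s)=(y,s+\psi(y))$, and set $v=u\circ\Phi^{-1}$. A direct chain-rule computation in the weak formulation of $\mathcal{L}(u)=0$ shows that $v$ satisfies $\tilde{\mathcal{L}}(v)=0$ in $\mathbb{R}^{d+1}_+$, where $\tilde{\mathcal{L}}=-\text{div}(\tilde A\nabla)$ with
\[
\tilde A(y,s)=(D\Phi)\,A(y,s+\psi(y))\,(D\Phi)^T,
\]
and $D\Phi$ is the Jacobian of $\Phi$, which is $s$-independent and has entries depending only on $\nabla_y\psi$.

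Next I would verify that $\tilde A$ satisfies all of (\ref{real-symmetric})--(\ref{ellipticity}), (\ref{periodic-in-t}) and (\ref{smoothness-condition-in-t}). Symmetry is immediate from that of $A$, and ellipticity holds with a constant depending on $\mu$ and $\|\nabla_x\psi\|_\infty$. The $s$-periodicity of $\tilde A$ with period $1$ follows from the $s$-periodicity of $A(y,s+\psi(y))$ together with the $s$-independence of $D\Phi$. The smoothness modulus of $\tilde A$ in $s$ is bounded by $C(\|\nabla_x\psi\|_\infty)\,\eta(\rho)$, so the Dini condition (\ref{smoothness-condition-in-t}) is inherited with essentially the same $\eta$.

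Then I would track the boundary data through $\Phi$. The surface parametrization $y\mapsto(y,\psi(y))$ gives $\|h\|_{L^p(\partial D)}\approx \|h(\cdot,\psi(\cdot))\|_{L^p(\mathbb{R}^d)}$; the nontangential approach regions for $\partial D$ map to nontangential regions in $\mathbb{R}^{d+1}_+$ with apertures depending only on $\|\nabla_x\psi\|_\infty$, so $\|N(\nabla u)\|_{L^p(\partial D)}\approx \|N(\nabla v)\|_{L^p(\mathbb{R}^d)}$; and an integration-by-parts check identifies the transformed conormal derivative, giving $\tilde g(y)=\sqrt{1+|\nabla\psi(y)|^2}\,g(y,\psi(y))$ with $\|\tilde g\|_{L^p(\mathbb{R}^d)}\approx \|g\|_{L^p(\partial D)}$. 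I would then invoke the half-space Neumann theorem from Sections 8 and 9 for $\tilde{\mathcal{L}}$ to obtain, for $1<p<2+\delta$, the unique-up-to-constants solution $v$ on $\mathbb{R}^{d+1}_+$ with $\|N(\nabla v)\|_p\le C\|\tilde g\|_p$, and set $u=v\circ\Phi$; the equivalences above transfer both the estimate and the uniqueness statement back to $D$.

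The main obstacle is not genuinely analytic but rather careful bookkeeping: one has to verify that \emph{all four} structural conditions on the coefficient matrix survive conjugation by $D\Phi$, and that the conormal derivative, the generalized nontangential maximal function, and the $L^p$ boundary norms all transform with constants controlled only by $d$, $\mu$, $\eta$ and $\|\nabla_x\psi\|_\infty$. The key point that makes this bookkeeping clean is precisely that $\psi$ has no $t$-dependence, so the one-periodic structure in the last variable is preserved exactly and the smoothness modulus $\eta$ is essentially unchanged; no new machinery beyond the half-space theorem is needed.
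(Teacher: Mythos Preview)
Your bi-Lipschitz reduction to $\mathbb{R}^{d+1}_+$ is correct and is exactly the paper's first step in Section 10: the conjugated matrix $\widetilde{A}=\Phi' A(\Phi')^t$ is indeed real, symmetric, uniformly elliptic, $t$-periodic, and satisfies the same Dini condition~(\ref{smoothness-condition-in-t}) with $\widetilde{\eta}(\rho)\approx\eta(\rho)$, and the boundary data, conormal derivative, and nontangential maximal functions transform with constants depending only on $\|\nabla_x\psi\|_\infty$.

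The genuine gap is the sentence ``invoke the half-space Neumann theorem from Sections 8 and 9 for $\widetilde{\mathcal{L}}$.'' No such theorem is available under the hypotheses you have in hand. Theorem~\ref{Neumann-theorem-2} requires \emph{smooth} coefficients together with the local solvability conditions (\ref{local-Dirichlet-condition}) and (\ref{local-Neumann-condition}); Theorem~\ref{Neumann-solvability-p} requires that $(N)_2$ and $(R)_2$ already be solvable. The Dini-type condition (\ref{smoothness-condition-in-t}) on $\widetilde{A}$ does not by itself give you either of these, so as written your argument is circular: the half-space result under (\ref{smoothness-condition-in-t}) \emph{is} the content of Theorem~\ref{Neumann-theorem-Lipschitz} for $\psi\equiv 0$.

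The paper bridges this gap in Section 10 with two substantial additional steps that you have omitted. First, an approximation argument (Lemmas~\ref{regularity-approximation-lemma}, \ref{Neumann-approximation-lemma} and Theorems~\ref{regularity-approximation-theorem}, \ref{Neumann-approximation-theorem}) shows that if mollified operators $\mathcal{L}_k$ with smooth coefficients solve $(N)_2$ and $(R)_2$ uniformly, then so does $\mathcal{L}$, and hence $(N)_p$, $(R)_p$ for $1<p<2+\delta$. Second, Theorem~\ref{special-case-theorem} verifies that for smooth coefficients the Dini condition (\ref{smoothness-condition-in-t}) implies the local solvability conditions (\ref{local-regularity-condition}) and (\ref{local-Neumann-condition}); this is done by a perturbation argument comparing $\mathcal{L}$ locally to an operator with $t$-independent coefficients (formulas (\ref{definition-of-A-1})--(\ref{definition-of-h})) and invoking the Kenig--Pipher perturbation theorem \cite[Theorem~2.2]{KP-1995}. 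Your proposal needs both of these ingredients before Sections 8 and 9 can be applied.
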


Let $\nabla_{tan} u$ denote the tangential derivatives of $u$ on $\partial\Omega$.
We say $f\in \dot{W}^{1,p} (\partial\Omega)$ if $f\in L^p_{loc} (\partial\Omega)$
and $\nabla_{tan} f \in L^p(\partial\Omega)$.

\begin{thm}\label{regularity-theorem-Lipschitz}
Under the same assumptions on $\mathcal{L}$ and $D$ as in Theorem \ref{Dirichlet-theorem-Lipschitz},
there exists $\delta>0$ such that given any $f\in \dot{W}^{1,p}(\partial D)$
with $1<p<2+\delta$, there exists a unique solution
to $\mathcal{L}(u)=0$ in $D$ such that
$N(\nabla u)\in L^p(\partial D)$ and
$u=f$ n.t. on $\partial D$.
Moreover, the solution satisfies the estimate
$\|N(\nabla u)\|_p \le C_p \| \nabla_{tan} f\|_p$, where
$C_p$ depends only on $d$, $p$, $\mu$, $\eta(\rho)$ and $\| \nabla_x \psi\|_\infty$.
\end{thm}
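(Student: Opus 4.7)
The plan is to reduce Theorem \ref{regularity-theorem-Lipschitz} to the corresponding regularity problem on the upper half-space, which has already been solved in Sections 5--6 for $1<p<2+\delta$ under hypotheses (\ref{real-symmetric})--(\ref{periodic-in-t}) and (\ref{smoothness-condition-in-t}). The bridge is the bi-Lipschitzian flattening map $\Phi(x,t)=(x,t-\psi(x))$, sending $D$ onto $\mathbb{R}^{d+1}_+$ and $\partial D$ onto $\mathbb{R}^d\times\{0\}$. Setting $v(y,s):=u(y,s+\psi(y))$, a weak-formulation change of variables shows that $\mathcal{L}u=0$ in $D$ is equivalent to $\tilde{\mathcal{L}}v:=-\text{div}(\tilde A\nabla v)=0$ in $\mathbb{R}^{d+1}_+$, where
\[
\tilde A(y,s)=M(y)\,A(y,s+\psi(y))\,M(y)^T,\qquad M(y):=\begin{pmatrix} I & 0\\ -\nabla\psi(y)^T & 1\end{pmatrix}.
\]

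The first step is to check that $\tilde A$ satisfies the hypotheses of the half-space theory. Real symmetry passes through the congruence $A\mapsto MAM^T$; ellipticity is preserved since $\det M=1$ and $M$ is bounded in terms of $\|\nabla_x\psi\|_\infty$; the $s$-independence of $M$ combined with the $t$-periodicity of $A$ gives $\tilde A(y,s+1)=\tilde A(y,s)$; and the transverse modulus of continuity satisfies $\tilde\eta(\rho)\le C(1+\|\nabla_x\psi\|_\infty)^2\eta(\rho)$, so (\ref{smoothness-condition-in-t}) continues to hold for $\tilde A$. Consequently, the $L^p$ regularity results established earlier for the half-space apply to $\tilde{\mathcal{L}}$.

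Next I would transfer the data and the nontangential maximal function under $\Phi$. The trace $u=f$ on $\partial D$ corresponds to $v(y,0)=\tilde f(y):=f(y,\psi(y))$ on $\mathbb{R}^d$, with the equivalence $\|\nabla_{\tan}f\|_{L^p(\partial D)}\asymp\|\nabla\tilde f\|_{L^p(\mathbb{R}^d)}$ following from the Lipschitz chain rule, with constants depending only on $\|\nabla_x\psi\|_\infty$. Nontangential approach regions at points of $\partial D$ map under $\Phi$ into nontangential approach regions at points of $\partial\mathbb{R}^{d+1}_+$ of comparable aperture, so $\|N(\nabla u)\|_{L^p(\partial D)}\asymp\|N(\nabla v)\|_{L^p(\mathbb{R}^d)}$. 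Applying the half-space regularity theorem to $\tilde{\mathcal{L}}$ with data $\tilde f$ produces a unique $v$ with $\|N(\nabla v)\|_p\le C\|\nabla\tilde f\|_p$, and $u:=v\circ\Phi$ is the required solution on $D$. Uniqueness follows from the same reduction: a solution with zero trace and $N(\nabla u)\in L^p(\partial D)$ transforms to a half-space solution with zero trace and finite nontangential bound, which must vanish by half-space uniqueness.

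\textbf{Main obstacle.} The substantive analytic work has already been performed in the half-space, so the remaining effort is essentially bookkeeping. The delicate points are (i) verifying that the congruence $A\mapsto MAM^T$ preserves (\ref{smoothness-condition-in-t}) with a quantitative bound depending only on $\|\nabla_x\psi\|_\infty$, and (ii) establishing the equivalences of tangential Sobolev and nontangential maximal function norms under $\Phi$ with the same quantitative control. The key enabling fact is that $\Phi$ acts trivially in the tangential variable $x$ and is merely Lipschitz in the transverse direction: this ensures that the periodicity and Dini smoothness hypotheses in $s$ for $\tilde A$ are inherited from those in $t$ for $A$, and that the constants $\delta$ and $C_p$ depend only on $d$, $p$, $\mu$, $\eta$, and $\|\nabla_x\psi\|_\infty$.
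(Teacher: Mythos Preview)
Your reduction via the flattening map $\Phi(x,t)=(x,t-\psi(x))$ is exactly the paper's first step, and your verification that $\tilde A=MAM^T$ inherits conditions (\ref{real-symmetric})--(\ref{periodic-in-t}) and (\ref{smoothness-condition-in-t}) is correct and matches the paper's argument at the start of Section 10.

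The gap is in your claim that the half-space regularity problem ``has already been solved in Sections 5--6 for $1<p<2+\delta$ under hypotheses (\ref{real-symmetric})--(\ref{periodic-in-t}) and (\ref{smoothness-condition-in-t}).'' It has not. Theorem \ref{regularity-theorem} is proved under two extra hypotheses: the coefficients are $C^\infty$, and the abstract local solvability condition (\ref{local-regularity-condition}) holds. Neither of these is the Dini condition (\ref{smoothness-condition-in-t}). Section 6 then extends from $p=2$ to $1<p<2+\delta$, but only assuming $(R)_2$ is already solvable. So after your reduction to $\mathbb{R}^{d+1}_+$, two substantive steps remain, and these are precisely what the paper carries out in Section 10: (i) an approximation argument (Lemma \ref{regularity-approximation-lemma}, Theorem \ref{regularity-approximation-theorem}) reducing to smooth $A$ by mollification, using that the mollified matrices $A_k$ satisfy (\ref{smoothness-condition-in-t}) uniformly; and (ii) Theorem \ref{special-case-theorem}, which shows that for smooth $A$ the Dini condition (\ref{smoothness-condition-in-t}) implies the local conditions (\ref{local-regularity-condition}) and (\ref{local-Neumann-condition}). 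Step (ii) is not bookkeeping: it invokes the Kenig--Pipher perturbation theory \cite{KP-1995} to compare $\mathcal{L}$ near the boundary with a $t$-independent operator, for which the local estimates are known.

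In short, your outline is the correct skeleton and agrees with the paper, but your ``already solved'' black box is exactly where the remaining work lies.
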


As we indicated earlier, Theorem \ref{Dirichlet-theorem-Lipschitz}
as well as Theorem \ref{e-Dirichlet-theorem} below are due to B. Dahlberg \cite{dahlberg-personal}.
However, under the $t$-periodic condition,
Theorems \ref{Neumann-theorem-Lipschitz} 
and \ref{regularity-theorem-Lipschitz} are new even for operators
with smooth coefficients on smooth domains.
We further point out that by well known localization techniques,
Theorems \ref{Dirichlet-theorem-Lipschitz},
\ref{Neumann-theorem-Lipschitz} and
\ref{regularity-theorem-Lipschitz}  allow one to establish uniform estimates
for the $L^p$ boundary value problems in Lipschitz domains
for a family of second order elliptic operators arising in the theory of 
homogenization (see e.g. \cite{bensoussan-1978}).

More precisely, we consider
\begin{equation}\label{L-operator}
\mathcal{L}_\varep =-\text{div} \left( A\left(\frac{X}{\varep}\right)\nabla \right), \quad \varep>0,
\end{equation}
where the matrix $A(X)$ is periodic with respect to the standard lattice,
\begin{equation}\label{periodicity}
A(X+Z)=A(X) \quad \text{ for any } X\in \mathbb{R}^{d+1},\ Z\in \mathbb{Z}^{d+1}.
\end{equation}
In view of (\ref{smoothness-condition-in-t}), we will assume that $A(X)$ is 
continuous and its modulus of continuity satisfies
\begin{equation}\label{smoothness-condition}
\int_0^1
\frac{(\omega(\rho))^2}{\rho}\, d\rho
<\infty,
\end{equation}
where $\omega(\rho)=\sup \big\{ |A(X)-A(Y)|:\ X,Y\in \mathbb{R}^{d+1}
\text{ and }
|X-Y|\le  \rho\big\}$.

The proofs of the following three theorems are given in Section 11.

\begin{thm}\label{e-Dirichlet-theorem}
Assume that the coefficient matrix $A(X)$ is real, symmetric, and satisfies the ellipticity
condition (\ref{ellipticity}), the periodicity condition (\ref{periodicity})
and the smoothness condition (\ref{smoothness-condition}).
Let $\Omega$ be a bounded Lipschitz domain in $\mathbb{R}^{d+1}$. 
Then there exist constants $\delta\in (0,1)$ and $C>0$ independent of $\varepsilon$, such that if
$f\in L^p(\partial\Omega)$
for some $2-\delta<p<\infty$, then
the unique solution to the $L^p$ Dirichlet problem:
$\mathcal{L}_\varepsilon (u_\varepsilon)  =0  \text{ in } \Omega,\
 u_\varepsilon
 =f \text{ n.t. on } \partial\Omega,\ 
(u_\varepsilon)^*  \in L^p(\partial\Omega),
$
satisfies the estimate $\|(u_\varepsilon)^*\|_p \le C\| f\|_p$.
\end{thm}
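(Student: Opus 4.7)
The plan is to reduce Theorem \ref{e-Dirichlet-theorem} to Theorem \ref{Dirichlet-theorem-Lipschitz} via a dilation, together with the standard localization machinery on bounded Lipschitz domains.

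First I would perform the change of variables $Y=X/\varepsilon$. Setting $v(Y)=u_\varepsilon(\varepsilon Y)$ and $g(Y)=f(\varepsilon Y)$, the Dirichlet problem for $\mathcal{L}_\varepsilon$ on $\Omega$ transforms into the Dirichlet problem for $\mathcal{L}_1=-\text{div}(A(Y)\nabla_Y)$ on the rescaled domain $\Omega_\varepsilon:=\varepsilon^{-1}\Omega$ with boundary data $g$. The coefficient matrix $A(Y)$ is $\mathbb{Z}^{d+1}$-periodic by (\ref{periodicity}), hence in particular satisfies the $t$-periodicity (\ref{periodic-in-t}); and (\ref{smoothness-condition}) trivially implies (\ref{smoothness-condition-in-t}). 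Crucially, $\Omega_\varepsilon$ is a bounded Lipschitz domain with exactly the same Lipschitz character as $\Omega$, since Lipschitz constants and aspect ratios of a boundary atlas are scale-invariant under dilation.

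Second I would apply Theorem \ref{Dirichlet-theorem-Lipschitz} together with a standard localization to obtain the bound on $\Omega_\varepsilon$. At every boundary point the unit outer normal satisfies $|\nu\cdot e_i|\ge(d+1)^{-1/2}$ for some coordinate index $i$, so $\partial\Omega_\varepsilon$ admits a coordinate-oriented boundary atlas: a finite covering by coordinate cylinders $Z_j$ in which $\partial\Omega_\varepsilon$ is a Lipschitz graph over some coordinate hyperplane $\{X_{i_j}=\text{const}\}$, with Lipschitz constants bounded by the Lipschitz character of $\Omega$ up to dimensional factors. Since $A$ is periodic in every coordinate direction, the coordinate-indexed version of Theorem \ref{Dirichlet-theorem-Lipschitz} (with ``up'' direction $e_{i_j}$) applies verbatim in each $Z_j$. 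Combining the local graph estimates with interior De Giorgi--Moser bounds, Green's function controls, and a subordinate partition of unity yields $\|(v)^*\|_{L^p(\partial\Omega_\varepsilon)}\le C\|g\|_{L^p(\partial\Omega_\varepsilon)}$, with $C$ depending only on $d$, $p$, $\mu$, $\omega$ and the Lipschitz character of $\Omega$; undoing the dilation then gives the desired $\varepsilon$-independent bound on $\partial\Omega$. Existence and uniqueness of $u_\varepsilon$ for each fixed $\varepsilon$ are classical, since $A(\cdot/\varepsilon)$ is Dini-continuous on $\overline{\Omega}$.

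The main structural point requiring care is to avoid introducing rotations during localization, since a rotation would replace $\mathbb{Z}^{d+1}$ by a rotated lattice and could destroy the $t$-periodicity hypothesis of Theorem \ref{Dirichlet-theorem-Lipschitz}. The coordinate-oriented atlas sidesteps this difficulty by exploiting the full $\mathbb{Z}^{d+1}$-periodicity of $A$: periodicity in every coordinate direction allows one to choose any of the $d+1$ coordinate directions as the local ``up'' direction, with no rotation of coefficients needed. The patching machinery itself is by now classical, going back to Verchota and Jerison--Kenig; the essential verification is that every constant appearing in it is scale-invariant, so that no hidden $\varepsilon$-dependence enters the final estimate.
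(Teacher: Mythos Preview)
Your reduction by dilation to $\mathcal{L}_1$ on $\Omega_\varepsilon$ matches the paper, and your coordinate-aligned atlas is a clean device to guarantee the $t$-periodicity hypothesis of Theorem~\ref{Dirichlet-theorem-Lipschitz} in each chart by exploiting the full $\mathbb{Z}^{d+1}$-periodicity. (The paper instead rotates coordinates and asserts that the rotated coefficients remain periodic in the new $t$-direction with some period $t_0\ge 1$; your concern about rotations destroying periodicity is reasonable, and your alternative avoids it.)

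The gap is the patching. Theorem~\ref{Dirichlet-theorem-Lipschitz} bounds $(v)^*$ for a solution of the Dirichlet problem on the \emph{entire} graph region $D$, but your $v$ solves the Dirichlet problem on $\Omega_\varepsilon$, not on any $D_j$; the two domains coincide only inside the chart $Z_j$. A partition of unity on the boundary data does not help, since each piece still produces a solution on $\Omega_\varepsilon$, not on $D_j$, so Theorem~\ref{Dirichlet-theorem-Lipschitz} cannot be applied to it. The paper's localization is specific to the Dirichlet problem and works at the level of harmonic measure: one shows that the kernel $K_\varepsilon(X,\cdot)=d\omega_\varepsilon^X/d\sigma$ on $\partial\Omega$ satisfies a reverse H\"older inequality on every surface ball $I(P_0,r)$, $0<r<r_0$, with constant independent of $\varepsilon$. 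The mechanism is the boundary comparison principle~(\ref{comparison-principle}): one compares the Green's function $G_\Omega$ with the Green's function $G_D$ on the local graph domain, and this transfers the reverse H\"older for $K_{\varepsilon,D}$ (available from Theorem~\ref{Dirichlet-theorem-Lipschitz} after rescaling) to $K_\varepsilon$. The uniform $L^p$ estimate for $(u_\varepsilon)^*$ then follows from the reverse H\"older in the standard way. Your phrase ``Green's function controls'' may be gesturing at this, but the comparison-principle step is the substance of the argument and cannot be left implicit.
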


Let $\frac{\partial u}{\partial \nu_\varepsilon}$ denote the conomal derivative
associated with the operator $\mathcal{L}_\varepsilon$.

\begin{thm}\label{e-Neumann-theorem}
Let $\Omega$ be a bounded Lipschitz domain in $\mathbb{R}^{d+1}$. 
Assume that $A(X)$ satisfies the same assumption as in Theorem \ref{e-Dirichlet-theorem}.
Then there exist constants $\delta\in (0,1)$ and $C>0$ independent of $\varepsilon$, such that
given any $g\in L^p(\partial\Omega)$ with $1<p<2+\delta$ and mean value zero,
the unique solution to the $L^p$ Neumann problem:
$
\mathcal{L}_\varepsilon (u_\varepsilon) =0  \text{ in } \Omega,\
 \frac{\partial u_\varepsilon}{\partial \nu_\varepsilon}
 =g \text{ on } \partial\Omega,\
N(\nabla u_\varepsilon) \in L^p(\partial\Omega),
$
satisfies the estimate $\|N(\nabla u_\varepsilon)\|_p \le C \| g\|_p$.
\end{thm}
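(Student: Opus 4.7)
The plan is to reduce Theorem \ref{e-Neumann-theorem} to Theorem \ref{Neumann-theorem-Lipschitz} by a rescaling followed by a localization argument. First I would rescale: set $v(Y) := u_\varepsilon(\varepsilon Y)$, $\tilde\Omega := \varepsilon^{-1}\Omega$, and $\tilde g(Y) := \varepsilon g(\varepsilon Y)$. Then $v$ solves $\mathcal{L}_1 v = 0$ in $\tilde\Omega$ with conormal data (for $\mathcal{L}_1$) equal to $\tilde g$ on $\partial\tilde\Omega$, and a routine change of variables on $\partial\Omega$ shows that the desired inequality is equivalent to $\|N(\nabla v)\|_{L^p(\partial\tilde\Omega)} \le C\|\tilde g\|_{L^p(\partial\tilde\Omega)}$ with the same constant. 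The advantage is that $\mathcal{L}_1$ is now $\varepsilon$-independent, its coefficients $A(Y)$ are periodic with respect to the full lattice $\mathbb{Z}^{d+1}$, and the modulus of continuity $\omega$ satisfies the Dini condition \eqref{smoothness-condition}. The domain $\tilde\Omega$ has the same Lipschitz character as $\Omega$, though its diameter is now $O(\varepsilon^{-1})$. It thus suffices to prove a Neumann estimate for $\mathcal{L}_1$ on an arbitrary bounded Lipschitz domain, with constants depending only on the Lipschitz character of $\Omega$, the dimension, $\mu$, and $\omega$.

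Next I would localize using coordinate-aligned Lipschitz graphs. The crucial point is that because $A$ is periodic with respect to $\mathbb{Z}^{d+1}$, it is in particular periodic in every standard coordinate direction $e_i$. For each $P \in \partial\tilde\Omega$, by choosing the coordinate axis $e_{i(P)}$ most aligned with the outer normal at $P$, one obtains a neighborhood of $P$ in which $\partial\tilde\Omega$ is a Lipschitz graph $Y_{i(P)} = \psi(Y')$, with Lipschitz constant controlled only by the Lipschitz character of $\Omega$ and $d$. Cover $\partial\tilde\Omega$ by a family of cylinders $\{Z_j\}$ of fixed radius, each adapted to a direction $e_{i_j}$; extend the local graph in $Z_j$ to a global Lipschitz graph $\psi_j$, and let $D_j := \{Y_{i_j} > \psi_j(Y')\}$, which coincides with $\tilde\Omega$ inside $Z_j$. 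After permuting coordinates so that $e_{i_j}$ becomes the $t$-axis, Theorem \ref{Neumann-theorem-Lipschitz} applies on $D_j$: the $t$-periodicity hypothesis is exactly the periodicity of $A$ in $e_{i_j}$, and condition \eqref{smoothness-condition-in-t} follows from \eqref{smoothness-condition}. This gives a local $L^p$ Neumann estimate on each $D_j$ for $1 < p < 2+\delta$, with constants uniform in $j$ and $\varepsilon$.

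Finally I would pass from local to global via a partition of unity. Choose $\{\phi_j\}$ subordinate to $\{Z_j\}$ with bounded overlap, decompose $\tilde g = \sum_j \phi_j \tilde g$, and for each $j$ solve on $D_j$ the Neumann problem $\mathcal{L}_1 w_j = 0$ with data $\phi_j \tilde g$ (extended by zero outside $Z_j$). Applying the local estimate to each $w_j$ and controlling the discrepancy $v - \sum_j w_j$ near the boundary using interior Caccioppoli and boundary H\"older estimates for $\mathcal{L}_1$, one assembles the global a priori bound on $\tilde\Omega$ and reverses the rescaling. Existence in $L^p$ follows by approximation from the a priori estimate, and uniqueness modulo constants follows from coercivity at $p = 2$ together with duality against the solvable Dirichlet problem $(D)_q$ (Theorem \ref{e-Dirichlet-theorem}). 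The hardest step will be making the passage to the global estimate uniform in $\varepsilon$: the number of cylinders $\{Z_j\}$ in $\tilde\Omega$ grows like $\varepsilon^{-d}$, so naive summation fails, and one must carefully exploit the $L^p$-additivity afforded by the bounded-overlap partition together with the local support of $\phi_j \tilde g$ so that the final constant depends only on the Lipschitz character of $\Omega$, $d$, $\mu$, and $\omega$.
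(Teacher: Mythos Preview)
Your rescaling to $\tilde\Omega=\varepsilon^{-1}\Omega$ before localizing is the source of the difficulty you flag at the end, and it is a genuine gap rather than a technicality. Even if you replace your partition-of-unity scheme by the correct localization estimate (Theorem~\ref{Neumann-localization}) on $\tilde\Omega$ at unit scale, summing over the $O(\varepsilon^{-d})$ cylinders yields
\[
\int_{\partial\tilde\Omega}|N(\nabla v)|^2\,d\sigma
\le C\int_{\partial\tilde\Omega}|\tilde g|^2\,d\sigma
+ C\int_{\tilde\Omega}|\nabla v|^2\,dX,
\]
and the solid term is controlled only by $\|\tilde g\|_2^2$ times the Poincar\'e constant of $\tilde\Omega$, which scales like $\varepsilon^{-1}$. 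So the estimate is not uniform. Your proposed decomposition $v\approx\sum_j w_j$ via auxiliary Neumann problems on the graph domains $D_j$ makes this worse, not better: the $w_j$ live on unbounded domains, $N(\nabla w_j)$ is not compactly supported, and the discrepancy $v-\sum_j w_j$ solves no equation, so Caccioppoli and boundary H\"older estimates do not apply to it.

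The paper avoids all of this by localizing on the \emph{original} domain $\Omega$, where the number of coordinate patches is fixed and independent of $\varepsilon$. Concretely: first reduce to $p=2$ by invoking \cite{KP-1993}; at each $P_0\in\partial\Omega$ rotate so that $\partial\Omega$ is locally a Lipschitz graph in the $t$-direction and the rotated $A$ is still $t$-periodic (with some period $t_0\ge1$, since the class of periodic matrices is rotation-invariant); then Theorem~\ref{Neumann-theorem-Lipschitz} plus the dilation $w(X)=u_\varepsilon(\varepsilon X)$ show that $(N)_2$ for $\mathcal{L}_\varepsilon$ on the graph domain $D$ is solvable with constants uniform in $\varepsilon$. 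This is exactly the hypothesis of Theorem~\ref{Neumann-localization}, which gives
\[
\int_{I(P_0,r_0)}|N(\nabla u_\varepsilon)|^2\,d\sigma
\le C\int_{\partial\Omega}|g|^2\,d\sigma
+ C\int_\Omega|\nabla u_\varepsilon|^2\,dX.
\]
Summing over finitely many $P_0$ and absorbing the solid energy via the variational identity and Poincar\'e on the fixed domain $\Omega$ (so the constant is $\varepsilon$-free) yields $\|N(\nabla u_\varepsilon)\|_2\le C\|g\|_2$. The missing idea in your outline is precisely this: use the rescaling only to verify the hypothesis of the localization theorem on each graph domain, but carry out the localization itself on $\Omega$.
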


\begin{thm}\label{e-regularity-theorem}
Let $\Omega$ be a bounded Lipschitz domain in $\mathbb{R}^{d+1}$. 
Assume that $A(X)$ satisfies the same assumption as in Theorem \ref{e-Dirichlet-theorem}.
Then there exist constants $\delta\in (0,1)$ and $C>0$ independent of 
$\varepsilon>0$, such that given any $f\in W^{1,p}(\partial\Omega)$
with $1<p<2+\delta$,
the unique solution to the $L^p$ regularity problem:
$
\mathcal{L}_\varepsilon (u_\varepsilon)  =0  \text{ in } \Omega,\ 
 u_\varepsilon
 =f \text{ n.t. on } \partial\Omega,\
N(\nabla u_\varepsilon) \in L^p(\partial\Omega)
$
satisfies the estimate 
$\|N(\nabla u_\varepsilon)\|_p \le C\, \big\{
\|\nabla_{tan} f\|_p +|\partial\Omega|^{\frac{1}{1-d}}\| f\|_p\big\}.
$
\end{thm}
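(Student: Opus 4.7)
The plan is to deduce Theorem \ref{e-regularity-theorem} from the graph-domain regularity estimate (Theorem \ref{regularity-theorem-Lipschitz}) by a rescaling-plus-localization argument, with the lower-order term supplied by the $L^q$ Dirichlet solvability of Theorem \ref{e-Dirichlet-theorem}.

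First, I would transfer the estimate of Theorem \ref{regularity-theorem-Lipschitz} to the operator $\mathcal{L}_\varepsilon$ with constants independent of $\varepsilon$. If $u_\varepsilon$ satisfies $\mathcal{L}_\varepsilon u_\varepsilon=0$ in a Lipschitz graph region $D$ whose graphing direction is one of the standard axes $e_j$, set $v(Y)=u_\varepsilon(\varepsilon Y)$. Then $v$ solves $\mathcal{L}v=0$ in $\varepsilon^{-1}D$, where $\mathcal{L}=-\mathrm{div}(A\nabla)$ has $\mathbb{Z}^{d+1}$-periodic coefficients; in particular $A$ is $1$-periodic in the $e_j$ direction, and (\ref{smoothness-condition}) implies (\ref{smoothness-condition-in-t}). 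The dilated domain $\varepsilon^{-1}D$ is a Lipschitz graph region with the same Lipschitz constant as $D$, so Theorem \ref{regularity-theorem-Lipschitz} applies; after scaling back one obtains $\|N(\nabla u_\varepsilon)\|_{L^p(\partial D)}\le C\|\nabla_{tan}(u_\varepsilon|_{\partial D})\|_p$ with $C$ independent of $\varepsilon$.

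Next, I would localize on $\Omega$. Cover $\partial\Omega$ by finitely many balls $B_k=B(P_k,r_k)$ such that in each ball $\Omega$ lies on one side of the graph of a Lipschitz function in some standard coordinate direction $e_{j_k}$, with Lipschitz constant controlled by the Lipschitz character of $\Omega$; this is possible by a pigeonhole argument on the outward normal, and the restriction to standard axes is essential because rotating the coordinates would destroy the lattice periodicity needed in the previous step. Extend each patch to an unbounded Lipschitz graph region $D_k$ agreeing with $\Omega$ on $2B_k$, let $\{\eta_k\}$ be a smooth partition of unity on $\partial\Omega$ subordinate to $\{B_k\}$, and apply the first step to solve the $L^p$ regularity problem for $\mathcal{L}_\varepsilon$ on $D_k$ with data $\eta_k f$, obtaining $v_k$ with $\|N(\nabla v_k)\|_p \le C\bigl(\|\nabla_{tan}f\|_p+r_k^{-1}\|f\|_{L^p(\partial\Omega\cap B_k)}\bigr)$.

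Finally, writing $u_\varepsilon=\sum_k\zeta_k v_k+w_\varepsilon$ for smooth cutoffs $\zeta_k$ supported in $2B_k$, the remainder $w_\varepsilon$ solves $\mathcal{L}_\varepsilon w_\varepsilon=\sum_k[\mathcal{L}_\varepsilon,\zeta_k]v_k$ in $\Omega$ with boundary trace controlled in $L^p(\partial\Omega)$ by $\|f\|_p$ alone, since the tangential part of $f$ has been absorbed by $\sum_k\eta_k v_k$. Using the Green function of $\mathcal{L}_\varepsilon$ on $\Omega$, whose H\"older regularity is available by De Giorgi--Nash--Moser under (\ref{smoothness-condition}), one converts the interior source and the residual boundary data into an estimate $\|N(\nabla w_\varepsilon)\|_p\le C|\partial\Omega|^{1/(1-d)}\|f\|_p$, the scale-invariant factor arising from a boundary Poincar\'e inequality. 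The hard part will be this last step: showing that the commutator error $[\mathcal{L}_\varepsilon,\zeta_k]v_k$, which formally contains the full gradient $\nabla v_k$, is controlled by $\|f\|_p$ rather than $\|\nabla_{tan} f\|_p$ uniformly in $\varepsilon$; this relies on interior Lipschitz (or uniform Meyers-type) bounds for $v_k$ away from $\partial D_k$, granted by (\ref{smoothness-condition}) together with the periodic homogenization theory. Uniqueness is easier: if $u_\varepsilon$ vanishes nontangentially on $\partial\Omega$ with $N(\nabla u_\varepsilon)\in L^p$, then $(u_\varepsilon)^*\in L^p$ and the uniqueness part of $(D)_p$ in Theorem \ref{e-Dirichlet-theorem} forces $u_\varepsilon\equiv 0$.
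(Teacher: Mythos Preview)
Your rescaling step and the observation that one may take the graphing direction to be a standard axis $e_j$ are correct and match the paper's two key observations at the start of Section 11. After that, however, your route diverges from the paper's and introduces an unnecessary difficulty.

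The paper first reduces to $p=2$ by quoting \cite{KP-1993}, and then localizes \emph{directly on the solution $u_\varepsilon$} rather than building auxiliary solutions $v_k$. Since $(R)_2$ for $\mathcal{L}_\varepsilon$ on each local graph region $D$ is uniformly solvable (by your rescaling argument and Theorem \ref{regularity-theorem-Lipschitz}), the localization estimate of Theorem \ref{regularity-localization} applies to $u_\varepsilon$ on each chart and yields, after summing,
\[
\int_{\partial\Omega}|N(\nabla u_\varepsilon)|^2\,d\sigma \le C\int_{\partial\Omega}|\nabla_{tan}f|^2\,d\sigma + C\int_\Omega|\nabla u_\varepsilon|^2\,dX.
\]
The solid integral is then absorbed by the elementary identity $\mu\int_\Omega|\nabla u_\varepsilon|^2\le \int_{\partial\Omega}\frac{\partial u_\varepsilon}{\partial\nu_\varepsilon}\,u_\varepsilon\,d\sigma$ together with $\|\partial u_\varepsilon/\partial\nu_\varepsilon\|_2\le C\|N(\nabla u_\varepsilon)\|_2$, giving the claimed bound with no commutators at all.

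Your decomposition $u_\varepsilon=\sum_k\zeta_k v_k+w_\varepsilon$ forces you to control $[\mathcal{L}_\varepsilon,\zeta_k]v_k$, which involves $\nabla v_k$ on the support of $\nabla\zeta_k$; this support meets $\partial\Omega$, so you cannot appeal to purely interior bounds. Your proposed fix via ``interior Lipschitz bounds granted by (\ref{smoothness-condition})'' does not work: the Avellaneda--Lin uniform Lipschitz estimates require H\"older coefficients, not merely the square Dini condition (\ref{smoothness-condition}), and a Meyers-type improvement gives only $L^{2+\delta}$ integrability, which does not suffice to control the commutator by $\|f\|_p$ alone as you claim. There is no obvious way to close this loop without, in effect, reproving the localization Theorem \ref{regularity-localization}. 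Also, your uniqueness argument via $(D)_p$ is not available for $1<p<2-\delta$; the paper simply takes existence and uniqueness from \cite{KP-1993} and proves only that the constant is uniform in $\varepsilon$.
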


We remark that Dahlberg's work on operators with periodic coefficients
was inspired by a series of remarkable papers \cite{AL-1987, AL-1987-ho, AL-1989-II, AL-1989-ho, 
AL-1991} of M. Avellaneda and F. Lin
on elliptic homogenization problems.
In particular, 
 the uniform estimate
$\|(u_\varepsilon)^*\|_p \le C\| f\|_p$ in Theorem \ref{e-Dirichlet-theorem}
for $1<p<\infty$ was established 
on $C^{1, \gamma}$ domains
by Avellaneda and Lin in \cite{AL-1987, AL-1987-ho, AL-1991}
for second order elliptic equations and systems  in divergence form with $C^\alpha$
periodic coefficients.
Related work on the boundedness of Riesz transforms
$\nabla (\mathcal{L}_\varepsilon)^{-1/2}$ on Lipschitz and $C^1$ domains
may be found in \cite{shen-2008-estimates}.
However, to the authors' best knowledge, 
the uniform $L^p$ estimates for periodic operators have not been studied
before for the regularity and Neumann problems; Theorems \ref{e-Neumann-theorem} and
\ref{e-regularity-theorem}
are new even for smooth domains.

Finally we mention that as in the case of operators with constant coefficients,
the Rellich estimates we develop in this paper, can be used to solve the $L^p$ boundary
value problems by the method of layer potentials.
This would enable us to represent the solutions
in Theorems \ref{e-Dirichlet-theorem}, \ref{e-Neumann-theorem} and \ref{e-regularity-theorem}
in terms of layer potentials with density functions uniformly bounded in appropriate spaces. 
Even more importantly,
 the method of layer potentials can be applied to elliptic systems
and would allow us to establish uniform $L^p$ estimates
for elliptic systems with periodic coefficients in nonsmooth domains.
This line of ideas will be fully developed in a forthcoming paper \cite{kenig-shen}.

\medskip

\noindent{\it Acknowledgment.}
The first named author is indebted to the late Bj\"orn Dahlberg for sharing with him the proof
of Theorem \ref{Dahlberg-theorem} on the Dirichlet problem.
Our work was motivated in part by Dahlberg's theorem.
Dahlberg died suddenly on January 30, 1998 and did not publish his proof.
As we mentioned earlier, we include a version of his original proof in the Appendix
for future reference.
We dedicate this paper to the memory of Bj\"orn Dahlberg.

\section{Notations and Preliminaries}

We will use $X,Y,Z$ to denote points in $\mathbb{R}^{d+1}$ and
 $x, y, z$ points in $\mathbb{R}^{d}$. 
Balls in $\mathbb{R}^d$ are denoted by
$B(x,r)=\{ y\in \mathbb{R}^d: |y-x|<r\}$.
By $T(x,r)$ we mean the cylinder
$B(x,r)\times (0,r)$.
We will use $\nabla$ for the gradient in $\mathbb{R}^{d+1}$
and $\nabla_x$ for the gradient
in $\mathbb{R}^d$.

\medskip

\noindent{\bf The classical Dirichlet problem in $\Omega=\mathbb{R}^{d+1}_+$.}

Let $f\in C(\mathbb{R}^d)$ such that $f(x)\to 0$ as $|x|\to \infty$. Then there exists
a unique weak solution to $\mathcal{L}(u)=0$ in $\mathbb{R}^{d+1}_+$ such that
$u\in C(\mathbb{R}^{d+1}\times [0,\infty))$, $u(x,0)=f(x)$ for $x\in \mathbb{R}^d$
and $u$ is bounded in $\mathbb{R}^{d+1}_+$. Moreover, the solution 
satisfies $\sup\{ |u(x,t)|: x\in \mathbb{R}^d, t>0\} \le \| f\|_\infty$.
The uniqueness follows from Louiville's Theorem by a reflection argument.
To establish the existence, one may assume that $f\ge 0$.
Let $\Omega_k=T(0,k)=B(0,k)\times (0,k)$ and $\varphi$ a 
continuous decreasing function on $[0,\infty)$ such that
$0\le \varphi\le 1$, $\varphi(s)=1$ for $0\le s\le (1/2)$ and $\varphi(s)=0$
for $s>(3/4)$.
Let $u_k$ be the solution to the classical Dirichlet problem
for $\mathcal{L}(u)=0$ in $T(0,k)$ with boundary data $f(x)\varphi(|x|/k)$
on $\{ (x,0): |x|<k\}$ and zero otherwise.
Since $u_k\le u_{k+1}$ on $\Omega_k$ and $\| u_k\|_{L^\infty(\Omega_k)}\le \| f\|_\infty$,
it follows that
$u(x)=\lim_{k\to\infty} u_k(x)$ exists and is finite for any $x\in \mathbb{R}^{d+1}_+$
and $\| u\|_{L^\infty(\mathbb{R}^{d+1})} \le \| f\|_\infty$.
Also, we may deduce from the estimate
$$
\sup_{\Omega_\ell } |u_k-u_m| \le C (u_k-u_m)(0, \ell/2), \quad \text{for }k>m>\ell
$$
that as $k\to \infty$, 
$u_k$ converges to $u$ uniformly on $\overline{\Omega_\ell}$ for any $\ell>1$.
This implies that $\mathcal{L}(u)=0$ in $\Omega=\mathbb{R}_+^{d+1}$,
$u$ is continuous in $\overline{\Omega}$ and
$u(x,0)=f(x)$.

By the Riesz representation theorem, there exists a family of regular
Borel measures $\{ \omega^X: X\in \mathbb{R}^{d+1}_+\}$ on $\mathbb{R}^d$, called 
$\mathcal{L}$-harmonic measures,
such that
\begin{equation}\label{Riesz-representation}
u(X)=\int_{\mathbb{R}^d} f(y)\, d\omega^X (y).
\end{equation}
Let $v_k$ be the solution to the classical Dirichlet problem in $\mathbb{R}^{d+1}_+$
with data
$\varphi(|x|/k)$.
It follows from the uniqueness that $v_k(X)\to 1$ as $k\to \infty$ 
for any $X\in \mathbb{R}^{d+1}_+$.
This shows that $\omega^X$ is a probability measure, as in the case of
bounded domains.
Furthermore, let $\omega_k^X$ denote the $\mathcal{L}$-harmonic measure
for the cylinder $B(0,k)\times (0,k)$. It is not hard to see that
$\omega_k^X(B)\to \omega^X (B)$ as $k\to\infty$, for any ball $B\subset \mathbb{R}^d$
and
$X\in \mathbb{R}^{d+1}_+$.

\medskip

\noindent{\bf Green's function on $\Omega=\mathbb{R}^{d+1}_+$.}

Let $G_k(X,Y)$ denote the Green's function for $\mathcal{L}$ on the domain
$\Omega_k=T(0,k)$.
Since $0\le G_k(X,Y)\le G_{k+1}(X,Y)$ for $X,Y\in \Omega_k$, 
the Green's function 
for $\mathcal{L}$ on $\Omega=\mathbb{R}^{d+1}_+$ may be defined by
$$
G(X,Y)=\lim_{k\to\infty} G_k (X,Y)
\quad \text{for } X,Y \in \mathbb{R}_+^{d+1} \text{ and } X\neq Y.
$$
By well known properties of $G_k$, we have
\begin{equation}\label{Green's-function-estimate}
0\le G(X,Y)  \le \frac{C}{|X-Y|^{d-1}}\quad \text{ and }\quad
G(X,Y)  \le \frac{C t^{\alpha}}{|X-Y|^{d-1+\alpha}}
\end{equation}
for $X=(x,t)$, $Y\in \mathbb{R}_+^{d+1}$, where
$C$ and $\alpha$ are positive constants depending only on $d$ and $\mu$.
Since $\omega_k^X(B(y,r))\approx r^{d-1} G_k (X,(y,r))$ for any $X\in \Omega_k
\setminus T(y,2r)$ \cite{CFMS-1981}, by letting $k\to \infty$, 
we obtain
\begin{equation}\label{harmonic-measure-Green's-function}
\omega^X (B(y,r)) \approx r^{d-1} G(X, (y,r))
\quad \text{ for any } X\notin T(y,2r).
\end{equation}

\medskip

\noindent{\bf Two properties of nonnegative weak solutions.}

Let $u$ be a nonnegative weak solution of $\mathcal{L}(u)=0$ in $T(x_0,2r)$
for some $x_0\in \mathbb{R}^d$ and $r>0$.
Suppose that $u\in C(\overline{T(x_0,2r)})$ and
$u(x,0)=0$ on $B(x_0,2r)$. Then we have the boundary Harnack inequality,
\begin{equation}\label{local-bound}
u(x,t)\le C u(x_0, r) \quad \text{ for any } (x,t)\in T(x_0,r).
\end{equation}
We will also need the following comparison principle,
\begin{equation}\label{comparison-principle}
\frac{u(x,t)}{v(x,t)}
\le C\frac{u(x_0,r)}{v(x_0,r)}
\quad \text{ for any } (x,t)\in T(x_0,r),
\end{equation}
where $u$, $v$ are two nonnegative weak solutions in $T(x_0,2r)$
such that $u, v\in C(\overline{T(x_0,2r)})$ and
$u(x,0)=v(x,0)=0$ on $B(x_0,2r)$ (see e.g. \cite{CFMS-1981}).
The constants in (\ref{local-bound}) and (\ref{comparison-principle})
depend only on $d$ and $\mu$.

\medskip

\noindent {\bf Difference operator $Q$.}

Define
\begin{equation}\label{difference operator}
Q(u) (x,t)=u(x,t+1)-u(x,t) \quad \text{for }(x,t)\in \mathbb{R}^{d+1}.
\end{equation}
Clearly, $Q(\frac{\partial u}{\partial x_i})=\frac{\partial}{\partial x_i} Q(u)$
for $i=1, \dots,d+1$.
It is also easy to verify that
\begin{equation}\label{product-rule}
Q(uv)-Q(u) Q(v)=uQ(v)+v Q(u)
\end{equation}
and
\begin{equation}\label{difference-property}
\iint_{T(x,t)} Q(u)(y,s)\, dyds
=\int_t^{t+1}\int_{B(x,t)} u(y,s)\, dyds
-\int_0^1 \int_{B(x,t)} u(y,s)\, dyds.
\end{equation}

\medskip

\noindent{\bf Nontangential maximal functions.}

For a function $u$ defined in $\mathbb{R}^{d+1}_+$,
we let
\begin{equation}\label{nontangential-maximal}
N(u) (x)=\sup\left\{ \left(\frac{1}{t |B(x,t)|}
\int_{\frac{t}{2}}^{\frac{3t}{2}}
\int_{B(x,t)}
|u(y,s)|^2\, dyds\right)^{1/2}: \ 0<t<\infty\right\}.
\end{equation}
This is a variant of the usual nontangential maximal function $(u)^*$ which is defined by
\begin{equation}\label{nontangential-maximal-1}
(u)^*(x)=\sup\big\{ |u(y,t)|:\ 0<t<\infty\text{ and } |y-x|<t\big\}.
\end{equation}
Clearly, $N(u)(x)\le (u)^*(x)$.
If $u$ is a weak solution to $\mathcal{L}(u)=0$ in $\mathbb{R}^{d+1}_+$ or any
function that has the property 
$$
|u(x,t)|\le C\left(\frac{1}{r^{d+1}}\iint_{B((x,t),r)}
|u(y,s)|^2\, dyds\right)^{1/2},
$$
whenever $B((x,t),r)\subset \mathbb{R}^{d+1}_+$, then $\| N(u)\|_p \approx \| (u)^*\|_p$
for any $0<p<\infty$.
We also need to introduce $N_r(u)$ and $(u)_r^*$ which are defined by
restricting the variable $t$ in (\ref{nontangential-maximal})
and (\ref{nontangential-maximal-1}) respectively to $0<t<r/2$.
The definitions of $N(u)$ and $(u)^*$ extend naturally to regions above Lipschitz graphs
and to bounded Lipschitz domains.

We conclude this section with a lemma which allows us to estimate
$\| N(\nabla u)\|_p $ by $\| N_4(\nabla u)\|_p$ and $\|(Q(u))^*\|_p$.

\begin{lemma}\label{maximal-function-lemma}
Let $u\in C^1(\overline{T(0,3R)})$ be a solution to $\mathcal{L}(u)=0$ 
in $T(0,3R)$ for some $R>10$.
Let $f(x)=u(x,0)$.
Then for any $x\in B(0,R)$,
\begin{equation}\label{maximal-function-estimate}
N_R(\nabla u)(x)
\le 
C\,  M\bigg(\big[N_4(\nabla u)+|\nabla_x f|+(Q(u))_{R}^*\big]\chi_{B(0,3R)}\bigg)(x),
\end{equation}
 where $M$ denotes the Hardy-Littlewood
maximal operator on $\mathbb{R}^d$ and $C$ depends only on $d$ and $\mu$.
\end{lemma}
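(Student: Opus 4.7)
The plan is to split the supremum defining $N_R(\nabla u)(x)$ into two regimes according to the scale $t\in(0,R/2)$: a near-boundary regime $t\le 4$ handled directly by $N_4(\nabla u)$, and an interior regime $t>4$ in which the difference operator $Q$ is the essential tool.

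For $t\le 4$ the cylinder $T(x,t) = B(x,t)\times(t/2,3t/2)$ sits in the strip $B(x,4)\times(0,6)$ of bounded height. Covering this strip by a bounded number of cylinders $T(y,r)$ with $r<2$ and $y$ ranging over $B(x,C)$ and using the definition of $N_4$ yields
\[
\left(\frac{1}{t|B(x,t)|}\int_{T(x,t)}|\nabla u|^2\right)^{1/2} \;\le\; \frac{C}{|B(x,C)|}\int_{B(x,C)} N_4(\nabla u)(y)\,dy \;\le\; C\, M(G)(x).
\]

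For $t>4$ every point of $T(x,t)$ has $s\ge 2$, so we are in the interior of $\mathbb{R}^{d+1}_+$. Iterating $u(y,\sigma+1)-u(y,\sigma)=Q(u)(y,\sigma)$, I obtain for $s=n+\alpha$ with $n=\lfloor s\rfloor$ and $\alpha\in[0,1)$ the telescoping identity
\[
\nabla u(y,s) \;=\; \nabla u(y,\alpha+1) \;+\; \sum_{k=1}^{n-1}\nabla Q(u)(y,\alpha+k),
\]
using that $Q$ commutes with each $\partial_{x_i}$, $i=1,\dots,d+1$. The $t$-periodicity of $A$ yields $Q\mathcal L=\mathcal L Q$, so $Q(u)$ is itself an $\mathcal L$-solution in $\mathbb{R}^{d+1}_+$. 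The boundary-layer term $\nabla u(y,\alpha+1)$ sits at height in $[1,2)$ and, after writing $\nabla_x u(y,\alpha+1)=\nabla_x f(y)+\nabla_x[u(y,\alpha+1)-f(y)]$, is absorbed by $N_4(\nabla u)(y)+|\nabla_x f(y)|$. For each term in the sum, interior Caccioppoli applied to the solution $Q(u)$ on the ball of radius $(\alpha+k)/4$ around $(y,\alpha+k)$, together with the pointwise bound $|Q(u)(z,\tau)|\le (Q(u))^*(y)$ on this ball, gives an $L^2$-averaged estimate for $|\nabla Q(u)|^2$ of size $(\alpha+k)^{-2}[(Q(u))^*(y)]^2$. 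Squaring the telescoping identity, integrating over $T(x,t)$, and organizing the $k$-sum dyadically into scales $\alpha+k\sim 2^j$ turns the $n$-term sum into a convergent geometric series. Dividing by $t|B(x,t)|$ and averaging $y$ over $B(x,Ct)\subset B(0,3R)$ dominates the whole quantity by $[M(G)(x)]^2$.

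\textbf{Main obstacle.} The sum $\sum_{k=1}^{n-1}\nabla Q(u)(y,\alpha+k)$ has $n\sim t$ terms, so a naive Cauchy--Schwarz costs a prohibitive factor of $t$; this must be offset by the scale factor $(\alpha+k)^{-2}$ from the interior Caccioppoli estimate on the solution $Q(u)$, which is precisely what the dyadic regrouping above is designed to exploit. A second delicate point is recovering the Hardy--Littlewood maximal $M$ rather than the larger $L^2$ version $M_2$ in the final estimate; this hinges on replacing $L^2$ averages of $Q(u)$ by the pointwise domination $|Q(u)(z,\tau)|\le (Q(u))^*(y)$ before the final average in $y$.
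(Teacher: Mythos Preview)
Your telescoping idea is natural, but the claim that the $k$-sum becomes a ``convergent geometric series'' is where the argument breaks. The Caccioppoli bound you quote gives (at best, in $L^2$-average)
\[
|\nabla Q(u)(y,\alpha+k)|\ \lesssim\ (\alpha+k)^{-1}\,(Q(u))^*(y),
\]
so the sum $\sum_{k=1}^{n-1}|\nabla Q(u)(y,\alpha+k)|$ is controlled by $(Q(u))^*(y)\sum_{k=1}^{n-1}(\alpha+k)^{-1}\sim (\log t)\,(Q(u))^*(y)$, a \emph{harmonic} sum, not a geometric one. No dyadic regrouping removes this logarithm; grouping into blocks $\alpha+k\sim 2^j$ leaves $O(\log t)$ blocks each contributing a bounded amount. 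A second, related problem is that Caccioppoli only yields an $L^2$-\emph{average} of $\nabla Q(u)$, not a pointwise bound, so you cannot sum the pointwise identity term by term as written. Your handling of the boundary-layer term $\nabla u(y,\alpha+1)$ has the same defect: it leads to an $L^2$ average of $N_4(\nabla u)$ over $B(x,t)$, i.e.\ to $M_2$ rather than $M$.

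The paper's proof sidesteps both issues by reversing the order of operations: it applies Caccioppoli \emph{and} interior De Giorgi--Nash regularity \emph{first}, converting the $L^2$ average of $\nabla u$ over $T(x_0,t_0)$ into $t_0^{-1}$ times an $L^1$ average of $|u-E|$ over a comparable region. Only then does it telescope, and on $u$ rather than $\nabla u$:
\[
|u(y,s)-u(y,0)|\ \le\ C t_0\,(Q(u))^*_R(y)\ +\ \int_0^1|\nabla u(y,\tau)|\,d\tau.
\]
Here the telescope on $u$ produces $O(t_0)$ terms each bounded pointwise by $(Q(u))^*_R(y)$, and the prefactor $t_0^{-1}$ from Caccioppoli cancels the $t_0$ exactly---no harmonic sum appears. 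The passage to an $L^1$ average (via De Giorgi--Nash) is also what delivers the genuine Hardy--Littlewood $M$ rather than $M_2$. The remaining $|u(y,0)-E|$ piece is handled by Poincar\'e, yielding the $|\nabla_x f|$ term, and the $\int_0^1|\nabla u|$ piece is absorbed by an average of $N_4(\nabla u)$. You should reorganize along these lines: Caccioppoli before telescoping, and telescope $u$, not $\nabla u$.
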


\begin{proof} We begin by fixing  $(x_0,t_0)\in \mathbb{R}^{d+1}$ with $x_0\in B(0,R)$
and $t_0\in (2, R/2)$.
Note that by Cacciopoli's inequality and interior regularity of weak solutions,
\begin{equation}\label{2-13}\aligned
\bigg(\frac{1}{t_0^{d+1}} &\int_{|s-t_0|<\frac{t_0}{2}}
\int_{B(x_0,t_0)} |\nabla u (y,s)|^2\, dyds\bigg)^{1/2}\\
&\le \frac{C}{t_0}
\left(\frac{1}{t_0^{d+1}}\int_{|s-t_0|<\alpha_1 t_0}
\int_{B(x_0,\beta_1 t_0)} |u(y,s)-E|^2\, dyds\right)^{1/2}\\
&\le \frac{C}{t_0^{d+2}}
\int_{|s-t_0|<\alpha_2 t_0}
\int_{B(x_0,\beta_2 t_0)} |u(y,s)-E|\, dyds,
\endaligned
\end{equation}
where $(1/2)<\alpha_1<\alpha_2<(3/4)$, $1<\beta_1<\beta_2<(3/2)$ and $E\in \mathbb{R}$.
To estimate the right-hand side of (\ref{2-13}), we use
$$
|u(y,s)-E|\le |u(y,s)-u(y,0)| +|u(y,0)-E|
$$
and choose $E$ to be the average of $u(y,0)$ over
$B(x_0,\beta_1 t_0)$. By Poincar\'e's inequality, we have
\begin{equation}\label{2-14}
\aligned
 \frac{1}{t_0^{d+2}}
\int_{|s-t_0|<\alpha_2 t_0}
& \int_{B(x_0,\beta_2 t_0)}
|u(y,0)-E|\, dyds\\
& \le \frac{C}{t_0^d}
\int_{B(x_0, \beta_2 t_0)}
|\nabla_x f|\, dx
 \le C M\big( |\nabla_x f| \chi_{B(0,3R)}\big) (x_0).
\endaligned
\end{equation}

Next we observe that for any $s\in (0,2t_0)$,
\begin{equation}\label{2-15}
|u(y,s)-u(y,0)|
\le Ct_0 (Q(u))^*_{R} (y) +\int_0^1 |\nabla u(y,t)|\, dt.
\end{equation}
In view of (\ref{2-13}),
the first term in the right-hand side of (\ref{2-15}) can be handled 
easily by $M\big( (Q(u))^*_{R}\chi_{B(0,3R)}\big)(x_0)$.
 Thus it remains to estimate
$$
\frac{1}{t_0^{d+1}}\int_{B(x_0,\beta_2 t_0)}\int_0^1 |\nabla u(y,t)|\, dtdy.
$$
This may be done by observing that
$$
C\int_{B(x_0,4 t_0)} N_4 (\nabla u)\, dx
\ge 
\int_0^{1}
\int_{B(x_0,2t_0)} |\nabla u(y,s)|\, dyds.
$$
\end{proof}

\begin{remark}\label{remark2.1}\label{maximal-function-estimate-remark}
{\rm It follows from (\ref{maximal-function-estimate}) that
for $1<p<\infty$,
\begin{equation}\label{maximal-function-estimate-1}
\aligned
\int_{B(0,R)} |N_R(\nabla u)|^p\, dx
& \le C\int_{B(0,3R)} |\nabla_x u(x,0)|^p\, dx
+C\int_{B(0,3R)} |N_4 (\nabla u)|^p\, dx\\
&\qquad\qquad
+C\int_{B(0,3R)}
|\big(Q(u)\big)^*_R|^p\, dx
\endaligned
\end{equation}
for any $R>10$. 
Thus, if $u\in C^1(\mathbb{R}^d\times [0,\infty))$ and $\mathcal{L}(u)=0$
in $\mathbb{R}^{d+1}_+$, we may let $R\to \infty$ in (\ref{maximal-function-estimate-1})
to obtain
\begin{equation}\label{maximal-function-estimate-2}
\aligned
\int_{\mathbb{R}^d} |N(\nabla u)|^p\, dx
&\le C\int_{\mathbb{R}^d} |\nabla_x u(x,0)|^p\, dx
+C\int_{\mathbb{R}^d} |N_4(\nabla u)|^p\, dx\\
&
\qquad\qquad 
+C\int_{\mathbb{R}^d} |\big(Q(u)\big)^*|^p\, dx,
\endaligned
\end{equation}
where $C$ depends only on $d$, $p$ and $\mu$.
}
\end{remark}

\section{Integral identities}

In this section we develop some new integral identities
that will play a key role in our approach to
 the $L^2$ boundary value problems in $\mathbb{R}^{d+1}_+$
for elliptic equations
with $t$-periodic coefficients.

\begin{lemma}\label{integral-identity-lemma}
Let $\mathcal{L}=-\text{div}(A\nabla) $
with coefficients satisfying conditions (\ref{real-symmetric})-(\ref{ellipticity}).
Let $\Omega\subset \mathbb{R}^{d+1}$ be a bounded Lipschitz domain
and $u\in W^{1,2}(\Omega)$ a variational solution to the Neumann problem:
$\mathcal{L}(u)=0$ in $\Omega$,
$\frac{\partial u}{\partial\nu}=g\in L^2(\partial\Omega)$.
Assume that $u\in W^{1,2}(\Omega_1)$
where $\Omega_1$ contains $\Omega$ and $\{ (x,t+1): (x,t)\in \Omega\}$.
Then
\begin{equation}\label{integral-identity}
\aligned
&-\int_\Omega a_{ij} 
Q\big(\frac{\partial u}{\partial x_i}\cdot \frac{\partial u}{\partial x_j}\big) \, dX
+\int_\Omega a_{ij} Q\big(\frac{\partial u}{\partial x_i}\big)
\cdot Q\big(\frac{\partial u}{\partial x_j}\big) \, dX\\
&\qquad\qquad =-2 \int_{\partial\Omega} g\,
Q(u) \, d\sigma,
\endaligned
\end{equation}
where indices $i,j$ are summed from $1$ to $d+1$ and we identify $x_{d+1}$ with $t$.
\end{lemma}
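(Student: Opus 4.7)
The plan is to reduce the stated identity to a single integration by parts, by using the product rule (\ref{product-rule}) for $Q$ to collapse the two volume integrals on the left into the bilinear form $\int_\Omega a_{ij}\,\partial_i u\,\partial_j Q(u)\,dX$, and then to invoke the variational formulation of the Neumann problem with $Q(u)$ as test function.

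First, I would write $u_i=\partial u/\partial x_i$ (with $i$ running from $1$ to $d+1$ and $x_{d+1}=t$). Applying (\ref{product-rule}) pointwise to $uv = u_i u_j$ gives
\[
Q(u_i u_j) \;=\; Q(u_i)\,Q(u_j) \;+\; u_i\,Q(u_j) \;+\; u_j\,Q(u_i).
\]
Inserting this into the first integral on the left of (\ref{integral-identity}), the $Q(u_i)Q(u_j)$ piece cancels the second integral exactly, leaving
\[
-\int_\Omega a_{ij}\,u_i\,Q(u_j)\,dX \;-\; \int_\Omega a_{ij}\,u_j\,Q(u_i)\,dX.
\]
By the symmetry $a_{ij}=a_{ji}$ and relabeling of dummy indices, the two integrals are equal, so the left-hand side of (\ref{integral-identity}) simplifies to $-2\int_\Omega a_{ij}\,u_i\,Q(u_j)\,dX$. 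Using the commutation $Q(\partial_j u)=\partial_j Q(u)$ (recorded just after (\ref{difference operator})), this is the same as
\[
-2\int_\Omega a_{ij}\,\frac{\partial u}{\partial x_i}\,\frac{\partial Q(u)}{\partial x_j}\,dX.
\]

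Second, I would apply the weak form of the Neumann problem. The hypothesis that $u\in W^{1,2}(\Omega_1)$ with $\Omega_1\supset \Omega\cup\{(x,t+1):(x,t)\in\Omega\}$ is precisely what is needed to ensure $Q(u)\in W^{1,2}(\Omega)$, so that $Q(u)$ is an admissible test function. Since $\mathcal L(u)=0$ in $\Omega$ with $\partial u/\partial\nu=g$ on $\partial\Omega$,
\[
\int_\Omega a_{ij}\,\partial_i u\,\partial_j Q(u)\,dX \;=\; \int_{\partial\Omega} g\,Q(u)\,d\sigma.
\]
Multiplying by $-2$ matches the right-hand side of (\ref{integral-identity}) and closes the proof.

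I do not expect a real obstacle here; the identity is engineered so that the product-rule expansion produces exactly the cancellation needed to reduce everything to one integration by parts, and the only genuine hypothesis used beyond that is admissibility of $Q(u)$ as a test function, which is the role of the auxiliary domain $\Omega_1$. It is worth flagging that periodicity of $A$ in $t$ is \emph{not} used for this identity itself; it will enter the downstream applications through the companion observation that $Q$ commutes with $\mathcal{L}$, so that $Q(u)$ is again a solution and interior/Caccioppoli-type estimates become available for the right-hand side of (\ref{integral-identity}).
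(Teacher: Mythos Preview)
Your proof is correct and follows essentially the same approach as the paper: both arguments use the product rule (\ref{product-rule}) for $Q$ together with the symmetry $a_{ij}=a_{ji}$ to reduce the left-hand side to $-2\int_\Omega a_{ij}\,\partial_j u\,\partial_i Q(u)\,dX$, and then invoke the variational formulation of the Neumann problem with $Q(u)\in W^{1,2}(\Omega)$ as test function. The only cosmetic difference is order: the paper starts from the variational identity and expands via the product rule, whereas you start from the left-hand side and collapse it before applying the variational identity.
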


\begin{proof}
Since $Q(u)\in W^{1,2}(\Omega)$, it follows from the definition of variational solutions that
$$
\aligned
\int_{\partial\Omega} g\,  Q(u)\, d\sigma
&=\int_\Omega a_{ij} \frac{\partial u}{\partial x_j} \cdot Q\big(\frac{\partial u}{\partial x_i}\big)
\, dX\\
&=\frac12
\int_\Omega
a_{ij} \left\{
\frac{\partial u}{\partial x_j} \cdot Q\big(\frac{\partial u}{\partial x_i}\big)+
\frac{\partial u}{\partial x_i} \cdot Q\big(\frac{\partial u}{\partial x_j}\big)\right\}\, dX,
\endaligned
$$
where we have used the symmetry condition $a_{ij}=a_{ji}$ in the second equation.
In view of (\ref{product-rule}), this gives
$$
\int_{\partial\Omega}
g\,  Q(u) \, d\sigma
=\frac12
\int_\Omega
a_{ij}
\left\{
Q\big(\frac{\partial u}{\partial x_i}\cdot
\frac{\partial u}{\partial x_j}\big)
-Q\big(\frac{\partial u}{\partial x_i}\big)\cdot
Q\big(\frac{\partial u}{\partial x_j}\big)\right\}\, dX,
$$
from which the desired identity follows.
\end{proof}

In the rest of this section we will assume that
$a_{ij}$ satisfy the periodic condition (\ref{periodic-in-t}).
To justify the use of integration by parts, we will also assume that
both $a_{ij}$ and $u$ are sufficiently smooth.
However all constants $C$ in this section depend only on $d$ and $\mu$.

The following lemma is crucial in our argument for the $L^2$
Neumann problem.

\begin{lemma}\label{key-Neumann-lemma}
Let $\mathcal{L}=-\text{div}(A\nabla)$ with $C^\infty$
coefficients
satisfying (\ref{real-symmetric}), (\ref{ellipticity}) and
(\ref{periodic-in-t}).
Let $R>10$ and $u\in C^1(\overline{\Omega})$
 be a solution to $\mathcal{L}(u)=0$ in $\Omega=T(x_0,3R)$
and  $\frac{\partial u}{\partial \nu}=g$
on $B(x_0,3R)\times \{ 0\}$.
Then
\begin{equation}\label{key-Neumann-estimate}
\int_0^1 \int_{B(x_0,R)}
|\nabla u|^2\, dxdt
\le C\, \int_{B(x_0,2R)}
|g|^2\, dx
+\frac{C}{R}
\iint_{T(x_0,3R)\setminus T(x_0,R)}
|\nabla u|^2\, dxdt,
\end{equation}
where $C$ depends only on $d$ and $\mu$.
\end{lemma}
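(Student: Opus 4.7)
The plan is to integrate by parts in $T(x_0, 3R)$ against the test function $w = Q(u)\,\eta^2(x)\,\psi(t)$, where $\eta \in C_c^\infty(\mathbb{R}^d)$ satisfies $\eta \equiv 1$ on $B(x_0, R)$, $\eta \equiv 0$ outside $B(x_0, 2R)$, $|\nabla_x \eta| \le C/R$, and $\psi \in C^1([0,\infty))$ satisfies $\psi \equiv 1$ on $[0, R]$, $\psi \equiv 0$ on $[2R, \infty)$, $|\psi'| \le C/R$. Since $w$ vanishes on the lateral and upper boundary of $T(x_0, 3R)$ while $\partial u/\partial\nu = g$ on the bottom, $\mathcal{L}(u)=0$ gives
$$\int_{T(x_0, 3R)} a_{ij}\, \partial_j u \cdot \partial_i w \, dX = \int_{B(x_0, 3R)} g \cdot Q(u)(x,0)\, \eta^2(x)\, dx.$$

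Expanding $\partial_i w$ by the product rule, using the symmetry $a_{ij} = a_{ji}$ together with formula (\ref{product-rule}) yields the identity $2 a_{ij}\, \partial_j u \cdot Q(\partial_i u) = a_{ij}[Q(\partial_i u \, \partial_j u) - Q(\partial_i u)\, Q(\partial_j u)]$. By $t$-periodicity, $a_{ij}\, Q(\partial_i u \, \partial_j u) = Q(F)$ with $F = A\nabla u \cdot \nabla u$. The key computation is a Fubini/shift argument showing
$$\int Q(F)\, \eta^2 \psi\, dX = -\int_{\mathbb{R}^d} \eta^2(x) \int_0^1 F\, dt\, dx + E,$$
where $|E| \le \frac{C}{R} \iint_{T(x_0, 3R)\setminus T(x_0, R)} |\nabla u|^2\, dX$. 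Indeed, after substituting $s = t+1$ in the forward-shift half, the integrand reduces to $F(x,s)[\psi(s-1)-\psi(s)]$, and since $\psi \equiv 1$ on $[0, R]$ only the bottom slice $t \in (0,1)$ and a transition zone $s \in [R, 2R+1]$ contribute; on the latter $|\psi(s-1)-\psi(s)| \le C/R$, and that slab is contained in $T(x_0, 3R) \setminus T(x_0, R)$.

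Combining the two identities expresses $\int \eta^2 \int_0^1 F$ as $-2$ times the boundary integral plus (i) the non-negative quadratic $\int a_{ij}\, Q(\partial_i u)\, Q(\partial_j u)\, \eta^2 \psi$, (ii) a term with $\nabla_x(\eta^2)$ supported in $B(x_0, 2R) \setminus B(x_0, R)$, (iii) a term with $\psi'$ supported in $t \in [R, 2R]$, and (iv) the error $E$. Item (i) has the favorable sign and is dropped when upper-bounding. For (ii) and (iii), Cauchy-Schwarz together with the pointwise inequality $|Q(u)(x,t)|^2 \le \int_t^{t+1} |\nabla u(x,s)|^2\, ds$ yields control by $\frac{C}{R} \iint_{T(x_0, 3R)\setminus T(x_0, R)} |\nabla u|^2$, since the $C/R$ bound on the cutoff derivatives combines with the fact that both cutoff-gradient supports lie in the annular region.

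Finally, the Neumann boundary term is handled via Cauchy-Schwarz and the elementary bound $|Q(u)(x,0)|^2 \le \int_0^1 |\partial_t u|^2\, ds$, giving $\int \eta^2 |Q(u)(x,0)|^2\, dx \le \mu^{-1} \int \eta^2 \int_0^1 F\, dx\, dt$; Young's inequality then controls $|\int g\, Q(u)(x,0)\, \eta^2\, dx|$ by $C\|g\|_{L^2(B(x_0,2R))}^2 + \varepsilon \int \eta^2 \int_0^1 F$. Taking $\varepsilon$ small enough to absorb into the left-hand side, and invoking ellipticity $F \ge \mu |\nabla u|^2$ together with $\eta \equiv 1$ on $B(x_0, R)$, yields (\ref{key-Neumann-estimate}). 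The main technical hurdle is the shift computation for $\int Q(F)\, \eta^2 \psi$: choosing $\psi$ to ramp down over the full length-$R$ interval $[R, 2R]$ (rather than a bounded interval) is essential for producing the $1/R$ weight that localizes the error to $T(x_0, 3R) \setminus T(x_0, R)$.
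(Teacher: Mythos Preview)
Your proof is correct and rests on the same Rellich-type identity as the paper: using symmetry and the product rule~(\ref{product-rule}) together with $t$-periodicity to write $2a_{ij}\,\partial_j u\,Q(\partial_i u)=Q(F)-a_{ij}\,Q(\partial_i u)\,Q(\partial_j u)$, then exploiting the telescoping structure of $Q(F)$ to isolate the slab $t\in(0,1)$. The only difference is in the localization device. The paper applies the integral identity of Lemma~\ref{integral-identity-lemma} on the sharp domain $\Omega_r=B(0,r)\times(0,r)$, which produces genuine boundary integrals on $\{t=r\}$ and on $\partial B(0,r)\times(0,r)$; it then \emph{averages in $r$ over $(R,2R)$} to convert those surface terms into volume integrals carrying the factor $1/R$. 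You instead insert smooth cutoffs $\eta^2(x)\psi(t)$ from the outset, so that no lateral or top boundary terms ever appear; the $1/R$ factor comes directly from $|\nabla\eta|,|\psi'|\le C/R$ and from the mean-value bound $|\psi(s-1)-\psi(s)|\le C/R$ in your shift computation. The two localizations are standard equivalents, and the resulting estimates coincide. Your treatment has the minor advantage of not invoking Lemma~\ref{integral-identity-lemma} as a separate step, at the cost of carrying the cutoffs through the algebra.
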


\begin{proof}
We may assume that $x_0=0$.
Let $\Omega_r=T(0,r)=B(0,r)\times (0,r)$ for $r\in (R,2R)$.
By the periodicity assumption (\ref{periodic-in-t}) and (\ref{difference-property}),
we have
$$
\aligned
&-\iint_{\Omega_r} a_{ij} Q\big(\frac{\partial u}{\partial x_i}
\cdot \frac{\partial u}{\partial x_j}\big)\, dxdt
=-\iint_{\Omega_r}
Q\big( a_{ij} \frac{\partial u}{\partial x_i}\cdot \frac{\partial u}{\partial x_j}\big)\, dxdt\\
&=
\int_0^1\int_{B(0,r)}
a_{ij} \frac{\partial u}{\partial x_i}\cdot \frac{\partial u}{\partial x_j}\, dxdt
-\int_{r}^{r+1}\int_{B(0,r)}
a_{ij} \frac{\partial u}{\partial x_i}\cdot \frac{\partial u}{\partial x_j}\, dxdt.
\endaligned
$$
It then follows from (\ref{integral-identity}) that
\begin{equation}\label{4.0}
\aligned
& \int_0^1\int_{B(0,r)}
a_{ij} \frac{\partial u}{\partial x_i}\cdot \frac{\partial u}{\partial x_j}\, dxdt
+\iint_{\Omega_r}
a_{ij} Q\big(\frac{\partial u}{\partial x_i}\big) Q\big(\frac{\partial u}{\partial x_j}\big) dxdt\\
& =
-2\int_{\partial\Omega_r}
\frac{\partial u}{\partial \nu}\cdot Q(u)\, d\sigma
+\int_{r}^{r+1}\int_{B(0,r)}
a_{ij} \frac{\partial u}{\partial x_i}\cdot \frac{\partial u}{\partial x_j}\, dxdt,
\endaligned
\end{equation}
for $r\in (R,2R)$.
By the Cauchy inequality, this leads to
\begin{equation}\label{4.1}
\aligned
\int_0^1 \int_{B(0,r)} |\nabla u|^2\, dxdt
&\le \delta \int_{B(0,r)} |Q(u)(x,0)|^2\, dx
+C_\delta \int_{B(0,r)} |g(x)|^2\, dx
\\
&\quad\quad
+C\int_{B(0,r)} 
|\nabla u(x,r)|^2\, dx
+C\int_{B(0,r)} |Q(u)(x,r)|^2\, dx\\
& \quad\quad
+C\int_r^{r+1} \int_{B(0,r)} |\nabla u|^2\, dx dt
+C\int_0^r \int_{\partial B(0,r)} |\nabla u|^2\, d\sigma dt\\
&\quad\quad
+C\int_0^r \int_{\partial B(0,r)} |Q(u)|^2\, d\sigma dt,
\endaligned
\end{equation}
for any $0<\delta<1$. Using
$$
\aligned
\int_{B(0,r)} |Q(u)(x,t)|^2\,dx
 & \le \int_t^{t+1} \int_{B(0,r)} |\nabla u|^2\, dxds,\\
\int_0^r \int_{\partial B(0,r)}
|Q(u)|^2\, d\sigma dt
&\le \int_0^{r+1} \int_{\partial B(0,r)} |\nabla u|^2\, d\sigma dt,
\endaligned
$$
we obtain from (\ref{4.1}) that
\begin{equation}\label{4.2}
\aligned
\int_0^1 \int_{B(0,R)} |\nabla u|^2\, dxdt
&\le C \int_{B(0,2R)}
|g(x)|^2\, dx
+C\int_{B(0,2R)} |\nabla u(x,r)|^2\, dx\\
&\quad\quad\quad +C\int_{r}^{r+1}\int_{B(0,2R)} |\nabla u|^2\, dxdt\\
&\quad\quad \quad
+C\int_0^{3R}\int_{\partial B(0,r)} |\nabla u|^2\, d\sigma dt,
\endaligned
\end{equation}
for $r\in (R, 2R)$.
The desired estimate now follows by integrating both sides of (\ref{4.2})
with respect to $r$ over the interval $(R,2R)$.
\end{proof}

\begin{remark}\label{key-Neumann-remark}
{\rm Under the same assumption as in Lemma \ref{key-Neumann-lemma},
we also have
\begin{equation}\label{key-Neumann-estimate-1}
\int_0^6 \int_{B(x_0,R)}
|\nabla u|^2\, dxdt
\le C\, \int_{B(x_0,2R)}
|g|^2\, dx
+\frac{C}{R}
\iint_{T(x_0,3R)\setminus T(x_0,R)}
|\nabla u|^2\, dxdt.
\end{equation}
To see (\ref{key-Neumann-estimate-1}), one simply replaces $Q(u)$
with $\widetilde{Q}(u)=u(x,t+6)-u(x,t)$ in the proof.}
\end{remark}

The following lemma is the key in our approach to the $L^2$ regularity problem.

\begin{lemma}\label{key-regularity-lemma}
Under the same assumptions on $\mathcal{L}$
as in Lemma \ref{key-Neumann-lemma}, there exists $C=C(d,\mu )>0$ such that
\begin{equation}\label{key-regularity-estimate}
\int_0^1 \int_{B(x_0,R)}
|\nabla u|^2\, dxdt
\le C\, \int_{B(x_0,2R)}
|\nabla_x f|^2\, dx
+\frac{C}{R}
\iint_{T(x_0,3R)\setminus T(x_0,R)}
|\nabla u|^2\, dxdt,
\end{equation}
where $u\in C^1(\overline{T(x_0,3R)})\cap C^2(T(x_0,3R))$ is a solution
to $\mathcal{L}(u)=0$ in $T(x_0,3R)$ and $u=f$ on $B(x_0,3R)
\times \{ 0\}, \, R>10$.
\end{lemma}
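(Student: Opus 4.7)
The plan is to parallel the proof of Lemma~\ref{key-Neumann-lemma}, applying Lemma~\ref{integral-identity-lemma} on $\Omega_r = T(x_0,r)$ for each $r\in(R,2R)$. Combined with the $t$-periodicity of $A$ and (\ref{difference-property}), the ``bulk $Q$'' term rewrites as $\int_r^{r+1}\int_B|\nabla u|^2_A - \int_0^1\int_B|\nabla u|^2_A$, yielding the analog of (\ref{4.0}):
$$\int_0^1\!\int_{B(x_0,r)}|\nabla u|^2_A\,dxdt + \iint_{\Omega_r} a_{ij}Q(\partial_i u)Q(\partial_j u)\,dxdt = -2\int_{\partial\Omega_r}\partial_\nu u\cdot Q(u)\,d\sigma + \int_r^{r+1}\!\int_B |\nabla u|^2_A\,dxdt.$$
The contributions from the top face $B\times\{r\}$, the side face $\partial B(x_0,r)\times(0,r)$, and the strip $B\times(r,r+1)$ are handled exactly as in Lemma~\ref{key-Neumann-lemma}: Cauchy's inequality followed by the bound $\|Q(u)(\cdot,t)\|_{L^2(B)}^2 \le \int_t^{t+1}\int_B|\nabla u|^2$, and finally averaging in $r\in(R,2R)$ to produce a bulk tail on $T(x_0,3R)\setminus T(x_0,R)$.

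The new work is the bottom face $B\times\{0\}$, where only $u|_{t=0}=f$ is known. Split the conormal derivative as $\partial_\nu u = -\sum_{k\le d}a_{d+1,k}\partial_k f - a_{d+1,d+1}\partial_t u|_{t=0}$. Pairing the tangential piece with $Q(u)(x,0)=u(x,1)-f(x)$ and applying Cauchy's inequality with a small $\varep$ gives $\varep \int_0^1\int|\nabla u|^2 + C\int|\nabla_x f|^2$, whose first summand will be absorbed on the left. For the unknown normal piece, integrate $\mathcal{L}u=0$ in $t$ over $(0,T)$: the identity $-\partial_t(a_{d+1,j}\partial_j u) - \sum_{k\le d}\partial_k(a_{kj}\partial_j u) = 0$ combined with the fundamental theorem of calculus gives
$$(a_{d+1,j}\partial_j u)(x,0) = (a_{d+1,j}\partial_j u)(x,T) + \sum_{k\le d}\partial_k\!\int_0^T (a_{kj}\partial_j u)(x,t)\,dt.$$

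Next, average $T$ over a unit interval $(m,m+1)$ with $m$ of order $r$; the change of variable $T = m+s$ combined with the $t$-periodicity $a(x,m+s)=a(x,s)$ rewrites the $T$-endpoint term as $\int_0^1 a_{d+1,j}(x,s)\partial_j u(x,m+s)\,ds$, whose squared $L^2(B)$-norm is controlled by the tail integral $\iint_{B\times(m,m+1)}|\nabla u|^2 \subset \iint_{T(x_0,3R)\setminus T(x_0,R)}|\nabla u|^2$. Substituting the resulting decomposition $-\partial_\nu u|_{t=0} = W(x) + \sum_{k\le d}\partial_k V_k(x)$ into the bottom integral $2\int (a_{d+1,j}\partial_j u)(x,0)\,Q(u)(x,0)\,dx$, and integrating by parts in $x$ against $Q(u)(x,0)$ whose tangential gradient is $\partial_k u(x,1) - \partial_k f$, produces (i) a term with $W$ paired against $Q(u)(x,0)$, bounded by $\varep\int_0^1\int|\nabla u|^2 + C\iint_{\text{tail}}|\nabla u|^2$; (ii) the desired $\int V_k\partial_k f\,dx$, bounded by $\varep\int_0^1\int|\nabla u|^2 + C\int|\nabla_x f|^2$; (iii) a cross term $\int V_k\partial_k u(x,1)\,dx$ involving the interior slice $\nabla u(\cdot,1)$, thickened to a bulk integral via a Caccioppoli-type estimate; and (iv) a tangential boundary integral over $\partial B(x_0,r)\times\{0\}$. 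After absorbing the $\varep$-bulk terms into the left side and averaging in $r$, the estimate follows.

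The main obstacle I foresee is piece (iv): the tangential integration by parts produces an integral over the $(d-1)$-dimensional sphere $\partial B(x_0,r)\times\{0\}$, one dimension smaller than the usual side face $\partial B(x_0,r)\times(0,r)$ from Lemma~\ref{key-Neumann-lemma}, so the standard Fubini trick across $r\in(R,2R)$ does not immediately convert it into a bulk tail. Handling it requires either an auxiliary trace estimate on the sphere bounding the boundary values of $V_k$ and $Q(u)(x,0)$ by nearby bulk integrals (using the interior regularity assumption $u\in C^2(T(x_0,3R))$ together with Caccioppoli, so that constants remain depending only on $d,\mu$), or a rearrangement that pairs this codimension-two piece with the $\partial B(x_0,r)\times(0,r)$ contributions from the top-face step before the $r$-average is taken. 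Either way, the key point is that the $t$-periodicity of $A$ is used precisely to push the endpoint $t=T$ into the tail region, mirroring how periodicity is used in Lemma~\ref{key-Neumann-lemma}.
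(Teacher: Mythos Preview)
Your setup through the analog of (\ref{4.0}) is exactly what the paper does. The divergence occurs in how you treat the boundary integral $\int_{\partial\Omega_r}\partial_\nu u\cdot Q(u)\,d\sigma$ on the bottom face, and there is a genuine gap.

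The problem is the size of $V_k(x)=\int_0^T a_{kj}\partial_j u\,dt$ when $T\in(m,m+1)$ with $m\approx r\approx R$. By Cauchy--Schwarz in $t$,
\[
\int_{B(x_0,r)}|V_k|^2\,dx \;\le\; C\,T\int_0^{T}\!\!\int_{B(x_0,r)}|\nabla u|^2\,dxdt,
\]
which is of order $R$ times the full bulk energy. So the bounds you claim in (ii) and (iii) --- that $\int V_k\,\partial_k f$ and $\int V_k\,\partial_k u(\cdot,1)$ are controlled by $\varep\int_0^1\!\int|\nabla u|^2$ plus acceptable terms --- are off by a factor of $R$ and cannot be absorbed. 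Taking $T$ of order $1$ instead repairs $V_k$ but then the endpoint term $W=(a_{d+1,j}\partial_j u)(\cdot,T)$ lives on a near-boundary slice; pairing it with $Q(u)(\cdot,0)$ produces a term comparable to $\int_0^1\!\int|\nabla u|^2$ with a constant that is \emph{not} small, so it cannot be absorbed either. In short, splitting off the bottom face and integrating the equation in $t$ gives you either a large $V_k$ or an unabsorbable $W$; the approach does not close. The codimension-two term (iv) you flagged is a secondary issue compared to this.

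The paper avoids all of this with one idea: since $Q(u)$ is itself a solution (by $t$-periodicity), Green's identity gives
\[
\int_{\partial\Omega_r}\frac{\partial u}{\partial\nu}\,Q(u)\,d\sigma
=\int_{\partial\Omega_r} u\,\frac{\partial}{\partial\nu}Q(u)\,d\sigma.
\]
Writing $\frac{\partial}{\partial\nu}Q(u)=n_i\,\partial_t\!\int_0^1 a_{ij}(x,t+s)\partial_j u(x,t+s)\,ds$ and using $\mathcal{L}(u)=0$ to replace $n_i\partial_t$ by the tangential operator $(n_i\partial_t-n_{d+1}\partial_i)$, one integrates by parts on the \emph{closed} surface $\partial\Omega_r$ --- so no codimension-two boundary appears --- and the tangential derivative lands on $u$. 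This produces $|\nabla_{tan}u|$ on $\partial\Omega_r$, which on the bottom face is exactly $|\nabla_x f|$, and on the other faces is handled by the $r$-average as in Lemma~\ref{key-Neumann-lemma}. This swap is the missing ingredient.
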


\begin{proof} We may assume that $x_0=0$.
As in the proof of Lemma \ref{key-Neumann-lemma}, we let
$\Omega_r =B(0,r)\times (0,r)$ for $r\in (R,2R)$.
By the periodicity assumption, $Q(u)$ is a solution in $\Omega_r$.
It follows that for $r\in (R,2R)$,
\begin{equation}\label{4.4}
\aligned
&\int_{\partial\Omega_r} \frac{\partial u}{\partial \nu}\cdot Q(u)\, d\sigma\\
&=\int_{\partial\Omega_r}
u\cdot \frac{\partial}{\partial \nu} Q(u)\, d\sigma\\
&=\int_{\partial\Omega_r}
un_i \cdot
\int_0^1 \frac{\partial}{\partial s}
\left\{
a_{ij}(x, t+s) \frac{\partial u}{\partial x_j} (x, t+s)\right\} \, ds\, d\sigma(x,t)\\
&=\int_{\partial\Omega_r}
un_i \cdot
\left\{ \frac{\partial}{\partial t}
\int_0^1 a_{ij}(x, t+s) \frac{\partial u}{\partial x_j} (x, t+s)\, ds
\right\}\, d\sigma(x,t)\\
& =\int_{\partial\Omega_r}
u\big(n_i\frac{\partial}{\partial t}
-n_{d+1} \frac{\partial }{\partial x_i}\big)
\left\{ \int_0^1 a_{ij}(x,t+s) \frac{\partial u}{\partial x_j}(x, t+s)ds
\right\} d\sigma(x,t)\\
& =
-\int_{\partial\Omega_r}
\big(n_i\frac{\partial}{\partial t}
-n_{d+1} \frac{\partial }{\partial x_i}\big)u \cdot
\left\{ \int_0^1 a_{ij}(x,t+s) \frac{\partial u}{\partial x_j}(x, t+s)ds
\right\} d\sigma(x,t).
\endaligned
\end{equation}
Consequently, by the Cauchy inequality, we obtain
\begin{equation}
\aligned
&\big|\int_{\partial\Omega_r} \frac{\partial u}{\partial \nu}\cdot Q(u)\, d\sigma\big|\\
&\le C\int_0^1 \int_{\partial\Omega_r}
|\nabla_{tan} u(x,t)|\, |\nabla u(x,t+s)|\,
d\sigma(x,t) ds\\
&\le \delta \int_0^1 \int_{B(0,r)} |\nabla u (x,t)|^2\, dx dt
+C_\delta \int_{B(0,r)}|\nabla_x u(x,0)|^2\, dx\\
&\qquad
+C\int_{B(0,r)} |\nabla u(x,r)|^2\, dx
+C\int_{r}^{r+1} \int_{B(0,r)} |\nabla u(x,t)|^2\, dxdt\\
&\qquad
+C\int_0^{r+1}\int_{\partial B(0,r)}
|\nabla u (x,t)|^2\, d\sigma(x) dt,
\endaligned
\end{equation}
for any $0<\delta<1$, where $\nabla_{tan}$ denotes the tangential gradient on $\partial\Omega_r$.
This, together with (\ref{4.0}), implies that
\begin{equation}\label{4.5}
\aligned
&\int_0^1\int_{B(0,R)}
|\nabla u|^2\, dxdt\\
&\le C\int_{B(0,2R)} |\nabla_x f|^2\, dx
+C\int_{B(0,2R)} |\nabla u(x,r)|^2\, dx\\
&\qquad
+C\int_0^{3R} \int_{\partial B(0,r)} |\nabla u|^2\, d\sigma dt
+C\int_r^{r+1} \int_{B(0,2R)}
|\nabla u|^2\, dxdt,
\endaligned
\end{equation}
for $r\in (R,2R)$. The estimate (\ref{key-regularity-estimate})
 follows by integrating both sides of
(\ref{4.5}) with respect to $r$ over the interval $(R,2R)$.
\end{proof}

\begin{remark}\label{key-regularity-remark}
{\rm By replacing $Q(u)$ with $\widetilde{Q}(u)(x,t)=u(x,t+6)-u(x,t)$ in the proof of 
Lemma \ref{key-regularity-lemma}, we may obtain
\begin{equation}\label{key-regularity-estimate-1}
\int_0^6 \int_{B(x_0,R)}
|\nabla u|^2\, dxdt
\le C\, \int_{B(x_0,2R)}
|\nabla_x f|^2\, dx
+\frac{C}{R}
\iint_{T(x_0,3R)\setminus T(x_0,R)}
|\nabla u|^2\, dxdt.
\end{equation}}
\end{remark}

\section{The $L^p$ Dirichlet problem: a theorem of Dahlberg}

To solve the $L^p$ Dirichlet problem,
we will impose a local solvability condition on $\mathcal{L}$:
for any $x_0\in \mathbb{R}^{d}$ and $0<r\le 1$,
\begin{equation}\label{local-Dirichlet-condition}
\int_{B(x_0, r)} \limsup_{t\to 0}
\left( \frac{u(x,t)}{t}\right)^2\, dx
\le \frac{C_0}{r^3} \iint_{T(x_0, 2r)} |u(x,t)|^2\, dxdt,
\end{equation}
whenever $u\in W^{1,2}(T(x_0,4r))$
is a nonnegative weak solution of
$\mathcal{L}(u)=0$ in $T(x_0, 4r)$ such that $u\in C(\overline{T(x_0, 4r)})$
and $u(x,0)=0$ on $B(x_0, 4r)$.
Recall that for $Z\in \mathbb{R}^{d+1}_+\setminus T(x,2t)$,
\begin{equation}\label{harmonic-measure-Green's function-1}
\frac{\omega^Z(B(x,t))}{|B(x,t)|}
\approx
\frac{G((x,t), Z)}{t},
\end{equation}
where $\omega^Z$ denotes the $\mathcal{L}$-harmonic measure on $\mathbb{R}^{d+1}_+$ and
$G(X,Z)$ the Green's function on $\mathbb{R}^{d+1}_+$.
By applying the condition (\ref{local-Dirichlet-condition})
to $u(X)=G(X,Z)$ for $Z\in \mathbb{R}^{d+1}_+\setminus T(x_0,4r)$,
we may deduce that $\omega^Z$ is absolutely continuous with respect to the 
Lebesgue measure on $\mathbb{R}^d$.
Furthermore, the kernel function
$$
K (x,Z)
:=\lim_{t\to 0} \frac{\omega^Z(B(x,t))}{|B(x,t)|}
$$
 satisfies the reverse H\"older inequality
\begin{equation}\label{reverse-Holder}
\left(\frac{1}{|B|}\int_B |K(y,Z)|^2\, dy\right)^{1/2}
\le 
\frac{C}{|B|}
\int_B K(y,Z)\, dy,
\end{equation}
for $B=B(x,r)$ with $0<r\le 1$, if $Z\notin T(x, 2r)$.
In fact, by the comparison principle (\ref{comparison-principle}), 
conditions (\ref{local-Dirichlet-condition}) and 
(\ref{reverse-Holder}) are equivalent.

The goal of this section is to show that
with the additional periodic condition (\ref{periodic-in-t}), 
the reverse H\"older inequality (\ref{reverse-Holder})
holds without the restriction $r\le 1$.
As a consequence, the $L^p$ Dirichlet problem for $\mathcal{L}(u)=0$ in 
$\mathbb{R}^{d+1}_+$ is solvable for $2-\delta<p<\infty$, under the assumptions 
(\ref{real-symmetric}), (\ref{ellipticity}), (\ref{periodic-in-t}) and (\ref{local-Dirichlet-condition}).
The following theorem is due to B. Dahlberg \cite{dahlberg-personal}.

\begin{thm}\label{Dahlberg-theorem}
Let $\mathcal{L}=-\text{div}(A\nabla)$ be an elliptic
operator satisfying conditions 
(\ref{real-symmetric}), (\ref{ellipticity}), 
(\ref{periodic-in-t}) and (\ref{local-Dirichlet-condition}).
Then there exists $\delta=\delta(d,\mu, C_0)\in (0,1)$
such that given any $f\in L^p(\mathbb{R}^{d})$
with $2-\delta<p<\infty$,
there exists a unique weak solution to $\mathcal{L}(u)=0$ in $\mathbb{R}^d$
with the property that $u=f$ n.t. on $\mathbb{R}^{d}$ and $(u)^*\in L^p(\mathbb{R}^{d})$.
Moreover the solution satisfies the estimate
$
\| (u)^*\|_p\le C_p\, \| f\|_p,
$
where $C_p$ depends
only on $d$, $p$, $\mu$ and the constant $C_0$ in (\ref{local-Dirichlet-condition}).
\end{thm}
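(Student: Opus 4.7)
The plan is to reduce the theorem to the extension of the reverse H\"older inequality (\ref{reverse-Holder}) from scales $r\le 1$ to all scales $r>0$; once this is accomplished, $L^p$ solvability is a standard consequence of $A_\infty$ theory. By (\ref{harmonic-measure-Green's-function}), the local condition (\ref{local-Dirichlet-condition}) is already equivalent to the statement that $K(\cdot,Z)\in B_2$ on every ball $B(x_0,r)$ with $0<r\le 1$ and $Z\notin T(x_0,2r)$, so the entire point of this section is to remove the restriction $r\le 1$ by invoking the periodicity (\ref{periodic-in-t}).

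To extend reverse H\"older to large scales, I would cover a ball $B=B(x_0,r)$, $r>1$, by unit balls $\{B(x_i,1)\}_i$ with $x_i$ in a lattice of spacing $\sim 1$ and sum the known small-scale inequalities:
\[
\int_B K(y,Z)^2\,dy\ \le\ C\sum_i \bigl(\omega^Z(B(x_i,1))\bigr)^2.
\]
The crux is to bound the right-hand side by $C(\omega^Z(B))^2/|B|$, which amounts to approximate equidistribution of $\omega^Z$ across unit sub-cells of $B$. By (\ref{harmonic-measure-Green's-function}), $\omega^Z(B(x_i,1))\approx G((x_i,1),Z)$, and $G(\cdot,Z)$ is a nonnegative $\mathcal{L}$-solution vanishing on large portions of $\partial\mathbb{R}^{d+1}_+$. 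Here the periodicity (\ref{periodic-in-t}) enters essentially: the difference operator $Q$ commutes with $\mathcal{L}$, so $Q(G(\cdot,Z))$ is itself a solution away from the pair of singular points, and the Rellich-type estimates of Section~3 — in particular Lemma~\ref{key-regularity-lemma} applied to $G(\cdot,Z)$ (which has zero Dirichlet trace) — give the near-boundary control of $|\nabla G|^2$ by its interior values with constants uniform in the scale $r$. Combined with the boundary Harnack inequality (\ref{local-bound}) and the comparison principle (\ref{comparison-principle}) at the unit scale, this yields the desired comparability of $\omega^Z(B(x_i,1))$ as $x_i$ ranges over the lattice, and hence (\ref{reverse-Holder}) at all scales.

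Once $K(\cdot,Z)\in B_2$ uniformly in $r$, Gehring's self-improvement produces some $q>2$ depending only on $d$, $\mu$, $C_0$, such that $K(\cdot,Z)\in B_q$ at every scale; equivalently, $\omega^X\in A_\infty(dx)$ with quantitative constants. Setting $\delta=(q-2)/(q-1)\in (0,1)$, the solution
\[
u(X)=\int_{\mathbb{R}^d} f(y)\,d\omega^X(y)
\]
for $f\in L^p(\mathbb{R}^d)$ with $2-\delta<p<\infty$ satisfies $\|(u)^*\|_p\le C_p\|f\|_p$ by the standard interplay of the $B_q$ estimate, H\"older's inequality and the Hardy-Littlewood maximal theorem; approximating $f$ by continuous compactly supported data as in Section~2 gives $u=f$ n.t.\ on $\partial\Omega$. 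Uniqueness follows because any solution with $(u)^*\in L^p$ and vanishing nontangential trace must be identically zero: one writes the Poisson representation on the cylinders $T(0,k)$ and passes to the limit using $\omega_k^X\to\omega^X$ together with the $L^p$ bound on $(u)^*$.

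The main obstacle will be the extension of reverse H\"older to scales $r>1$. The hypothesis (\ref{local-Dirichlet-condition}) and the known doubling of $\omega^Z$ supply $A_\infty$ only at scales $\le 1$, and since the half-space $\mathbb{R}^{d+1}_+$ is \emph{not} invariant under $t$-translations, one cannot propagate the estimate simply by translating the pole $Z$. Periodicity must enter through the operator rather than through the geometry, and the insight is that the commutation $[Q,\mathcal{L}]=0$ makes the identities of Section~3 available as a substitute for the $\partial_t$-based Rellich arguments that work in the $t$-independent setting; this is the heart of the extension step.
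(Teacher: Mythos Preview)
Your proposal is essentially correct and follows the same route as the paper's Section~4 argument: reduce to extending (\ref{reverse-Holder}) to scales $r>1$ (Lemma~\ref{key-Dirichlet-lemma}), cover $B(x_0,r)$ by unit balls, pass from $\sum_i\omega^Z(B(x_i,1))^2$ to a near-boundary gradient integral via Cacciopoli at the unit scale, then invoke Lemma~\ref{key-regularity-lemma} (with zero Dirichlet data) together with Cacciopoli and boundary Harnack to close; the $L^p$ existence then follows from Gehring self-improvement as in Lemma~\ref{Dirichlet-existence}.

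Two points of precision. First, your phrases ``approximate equidistribution'' and ``comparability of $\omega^Z(B(x_i,1))$ as $x_i$ ranges over the lattice'' overstate what is actually obtained (and needed): the values $G((x_i,1),Z)$ are \emph{not} mutually comparable across the lattice, and the comparison principle at the unit scale does not produce that. What Lemma~\ref{key-regularity-lemma} plus Cacciopoli and (\ref{local-bound}) yield is the $\ell^2$ bound $\sum_i G((x_i,1),Z)^2 \le C r^{d-2}\,G((x_0,r),Z)^2$, which is exactly $\sum_i\omega^Z(B(x_i,1))^2\le C|B|^{-1}\omega^Z(B)^2$; no pointwise comparability is involved. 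Second, your uniqueness sketch (``Poisson representation on $T(0,k)$ and pass to the limit'') is too loose: one must control the contribution of the lateral and top boundaries of $T(0,k)$, and the paper does this via the Green's function cutoff argument of Theorem~\ref{Dirichlet-uniqueness}, splitting into near-boundary, far, and lateral pieces and using $K(\cdot,Z)\in L^{p'}$ together with the decay (\ref{Green's-function-estimate}). Finally, note that Lemma~\ref{key-regularity-lemma} is stated for smooth coefficients; the paper absorbs this by approximation in Section~10 (and records Dahlberg's original, non-Rellich proof of Lemma~\ref{key-Dirichlet-lemma} in the Appendix).
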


The key step in the proof of Theorem \ref{Dahlberg-theorem}
is the following lemma. 

\begin{lemma}\label{key-Dirichlet-lemma}
Under the same assumption on $\mathcal{L}$
 as in Theorem \ref{Dahlberg-theorem}, 
estimate (\ref{local-Dirichlet-condition}) and hence (\ref{reverse-Holder})
hold for $r>1$.
\end{lemma}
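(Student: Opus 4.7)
The plan is to extend the reverse H\"older inequality (\ref{reverse-Holder}) for the kernel $K(\cdot, Z)$ from scale $r\le 1$ (the hypothesis) to scales $r>1$, using essentially the $t$-periodicity of $A$. The naive rescaling approach fails because the rescaled operator $A(r\cdot,r\cdot)$ has period $1/r$ in $t$ and hence does not satisfy (\ref{local-Dirichlet-condition}) at unit scale with a constant uniform in $r$. One needs to exploit periodicity in a more structural way.

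First, I would reduce the problem to a single choice of pole via the comparison principle (\ref{comparison-principle}). For $y\in B(x_0,r)$ and $Z,Z_0\notin T(x_0,2r)$, both $G(\cdot,Z)$ and $G(\cdot,Z_0)$ are non-negative solutions in $T(x_0,2r)$ vanishing on the bottom boundary. Applying (\ref{comparison-principle}) in both directions and passing to the boundary limit $t\to 0$ in (\ref{harmonic-measure-Green's-function-1}) yields
\begin{equation*}
c^{-1}\,\frac{G((x_0,r),Z)}{G((x_0,r),Z_0)}\le
\frac{K(y,Z)}{K(y,Z_0)}\le
c\,\frac{G((x_0,r),Z)}{G((x_0,r),Z_0)},
\end{equation*}
with $c=c(d,\mu)$. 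The bracketed ratio is independent of $y$, so proving (\ref{reverse-Holder}) for $K(\cdot,Z)$ on $B(x_0,r)$ reduces to proving it for $K(\cdot,Z_0)$ on the same ball, for a single convenient pole $Z_0$.

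Second, I would choose $Z_0$ at ``infinity'' in the vertical direction via a Martin-type limiting procedure enabled by periodicity. Define $\Phi(X)$ as the limit of $G(\cdot,(x_0,s))/G((x_0,1),(x_0,s))$ as $s\to\infty$. The $t$-periodicity of $\mathcal{L}$, together with interior Harnack and the comparison principle, should yield both the existence and the uniqueness (up to a scalar) of this limit, producing a positive $\mathcal{L}$-harmonic function in $\mathbb{R}^{d+1}_+$ vanishing on the boundary, with a quasi-translation symmetry under the integer shift $t\mapsto t+1$. Periodicity is what makes this construction scale-invariant: without it there is no natural, uniform reference function available at all scales simultaneously.

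Third, I would prove (\ref{reverse-Holder}) for the induced boundary kernel $K_\Phi(y)$ associated with $\Phi$ via (\ref{harmonic-measure-Green's-function-1}). The argument would cover $B(x_0,r)$ by unit balls $\{B(y_j,1)\}$ of bounded overlap, apply the small-scale hypothesis (\ref{reverse-Holder}) on each $B(y_j,1)$, and then use the boundary Harnack inequality (\ref{local-bound}) applied to $\Phi$ — with uniform constants obtained from the quasi-translation symmetry — to show that the local averages $|B(y_j,1)|^{-1}\int_{B(y_j,1)}K_\Phi\,dy$ are mutually comparable across $j$. This mutual comparability is exactly what is needed to glue the unit-scale reverse H\"older inequalities into a global reverse H\"older inequality on $B(x_0,r)$. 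Transferring back via the first step then yields (\ref{reverse-Holder}) for $K(\cdot,Z)$ for every $Z\notin T(x_0,2r)$, which in turn (by the already noted equivalence) gives (\ref{local-Dirichlet-condition}) for $r>1$.

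The main obstacle I foresee is the combination of the second and third steps: precisely constructing the ``pole-at-infinity'' function $\Phi$ from the periodic structure, and then quantifying the oscillation of $K_\Phi$ on $B(x_0,r)$ with constants depending only on $d$, $\mu$ and $C_0$ but \emph{not} on $r$. It is precisely at this point that the $t$-periodicity is essential — it replaces the missing scale invariance with a discrete translation symmetry strong enough to propagate the unit-scale information across arbitrarily large balls.
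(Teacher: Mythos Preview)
Your strategy is genuinely different from both proofs the paper gives. The paper's Section~4 argument (under extra smoothness) bounds $I\le C\int_0^1\int_{B(x_0,r+1)}|\nabla u|^2$ by the unit-scale hypothesis and then invokes the Rellich-type integral identity (Lemma~\ref{key-regularity-lemma}) to control this by $\frac{C}{r}\iint_{T(x_0,3.5r)}|\nabla u|^2$, finishing with Caccioppoli and boundary Harnack. Dahlberg's original proof in the Appendix avoids the integral identities altogether: it compares $u(x_k,1)$ to two auxiliary Green's functions $v_1,v_2$ on large cylinders and then controls $\int v_2(y,1)\,d\omega$ by writing it as boundary data for the \emph{difference} $F=Q(v_2)=v_2(\cdot,\cdot+1)-v_2(\cdot,\cdot)$, which is itself a solution by periodicity and hence obeys $|F|\le Cr^{1-d}$ via interior and boundary $L^\infty$ estimates applied at scale $r$. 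Both arguments use periodicity through the operator $Q$, gaining a factor $r^{-1}$ from a single application of Caccioppoli/interior estimates to $Q(\text{solution})$.

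Your Martin-kernel route is plausible but has a real gap in the third step. To pass from height $1$ to height $\approx r$ and back, you need the \emph{exact} multiplicative relation $\Phi(\cdot,\cdot+1)=\lambda\,\Phi$ for a constant $\lambda$; an approximate relation $c^{-1}\le\Phi(\cdot,\cdot+1)/\Phi\le c$ only yields $c^{-r}\le\Phi(\cdot,\cdot+r)/\Phi\le c^{r}$ after iteration, which is useless. The exact relation would follow from uniqueness (up to scalar) of the positive solution vanishing on $\{t=0\}$, but the comparison principle (\ref{comparison-principle}) only gives that any two such solutions are \emph{comparable}, not proportional: applying it to $\Phi_1,\Phi_2$ in $T(x_0,R)$ and evaluating at the normalization point gives $\Phi_1\approx\Phi_2$, not $\Phi_1=c\,\Phi_2$. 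Upgrading comparability to genuine uniqueness of the Martin kernel at the vertical point at infinity is a nontrivial statement (essentially a principal-eigenvalue/Floquet argument for the periodic operator) that you would have to supply. The paper's use of the finite difference $Q$ sidesteps this entirely: it never needs a limiting object or any iteration, only a single gain of $r^{-1}$ from the fact that $Q(\text{solution})$ is again a solution at scale $r$.
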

 
\begin{proof}
We will prove this lemma under the additional assumption that the coefficient matrix
$A(X)$ is sufficiently smooth. The assumption allows us to use
estimate (\ref{key-regularity-estimate}).
For the sake of completeness and future reference we shall provide 
a version of Dahlberg's original proof
of Lemma \ref{key-Dirichlet-lemma} in the Appendix.
We point out that our proofs of the theorems stated in the Introduction, given in Sections 10 and 11,
involve an approximation argument and 
do not rely on Dahlberg's proof.

The case $1< r\le 10$ follows easily
by covering $B(x_0,r)$ with balls of radius $1$.
If $r>10$, we also
cover $B(x_0,r)$ with a sequence of balls 
$\{ B(x_k,1)\}$ of radius $1$ such that
$$
B(x_0,r) \subset \bigcup_k B(x_k, 1)\subset B(x_0,r+1)
\quad \text{ and }\quad
\sum_k \chi_{B(x_k,1)}\le C.
$$
Let $I$ denote the left-hand side of (\ref{local-Dirichlet-condition}).
It follows from the assumption (\ref{local-Dirichlet-condition}) (with $r=1$)
and boundary Harnack inequality (\ref{local-bound}) that
\begin{equation}\label{Dirichlet-estimate-2}
\aligned
I &\le C\, \sum_k \int_{T(x_k,2)} |u(X)|^2\, dX \le C\, 
\sum_k \int_{T(x_k,1)} |u(X)|^2 \, dX\\
&\le \sum_k \int_{T(x_k,1)} |\nabla u(X)|^2\, dX
\le C\, \int_0^1 \int_{B(x_0, r+1)} | \nabla u(x,t)|^2\, dxdt.
\endaligned
\end{equation}
In view of Lemma \ref{key-regularity-lemma}, we obtain
$$
\aligned
I&\le \frac{C}{r}\iint_{T(x_0,3.5r)} |\nabla u|^2\, dxdt
\le \frac{C}{r^3}\iint_{T(x_0, 3.6r)} |u|^2\, dxdt\\
&\le \frac{C}{r^3}\iint_{T(x,2r)}
|u|^2\, dxdt,
\endaligned
$$
where we also used Cacciopoli's inequality as well as the Harnack
inequality (\ref{local-bound}).
\end{proof}

For $1<p<\infty$, we say the $L^p$ Dirichlet problem $(D)_p$ for
$\mathcal{L}(u)=0$ in $\mathbb{R}^{d+1}_+$ is solvable
if for any $f\in C_c(\mathbb{R}^d)$, 
the solution to the classical Dirichlet problem  for
$\mathcal{L}(u)=0$ in $\mathbb{R}^{d+1}_+$ with boundary
data $f$ satisfies the estimate
$\|(u)^*\|_p \le C\| f\|_p$.

\begin{lemma}\label{Dirichlet-existence}
 Under the same assumption on $\mathcal{L}$ as in 
Theorem \ref{Dahlberg-theorem}, 
the $L^p$ Dirichlet problem for $\mathcal{L}(u)=0$ in $\mathbb{R}^{d+1}_+$
is solvable for $2-\delta<p<\infty$.
Moreover, we have $\|(u)^*\|_p 
\le C \| f\|_p$ for $2-\delta<p< \infty$, where $\delta=\delta(d,\mu, C_0)\in (0,1)$
and $C=C(d,\mu, C_0, p)>0$.
\end{lemma}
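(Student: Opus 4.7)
The plan is to promote the uniform reverse H\"older inequality (\ref{reverse-Holder})---now valid at all scales thanks to Lemma \ref{key-Dirichlet-lemma}---to a pointwise bound on the nontangential maximal function of the Poisson integral, and then to conclude by the $L^p$ mapping properties of the Hardy--Littlewood maximal function.

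First I would observe that, combining the small-scale assumption (\ref{reverse-Holder}) with the large-scale extension from Lemma \ref{key-Dirichlet-lemma}, the kernel function $K(\cdot,Z)$ of the $\mathcal{L}$-harmonic measure lies in the reverse H\"older class $B_2$ on every ball $B=B(x,r)\subset\mathbb{R}^d$ with $r>0$ and $Z\notin T(x,2r)$, uniformly with constants depending only on $d$, $\mu$, $C_0$. Gehring's self-improvement lemma then produces $\varepsilon=\varepsilon(d,\mu,C_0)>0$ for which $K(\cdot,Z)\in B_{2+\varepsilon}$ uniformly:
$$
\left(\frac{1}{|B|}\int_B K(y,Z)^{2+\varepsilon}\,dy\right)^{\!1/(2+\varepsilon)} \le \frac{C}{|B|}\,\omega^Z(B).
$$
Set $p_0=(2+\varepsilon)'=(2+\varepsilon)/(1+\varepsilon)\in (1,2)$.

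Next, for $f\in C_c(\mathbb{R}^d)$ the classical solution constructed in Section 2 is $u(X)=\int f(y)\,K(y,X)\,dy$. I would establish the pointwise bound
$$
(u)^*(x_0)\le C\bigl[M(|f|^{p_0})(x_0)\bigr]^{1/p_0}
$$
by fixing $X=(y_0,t_0)$ with $|y_0-x_0|<t_0$ and splitting $\mathbb{R}^d$ into the central disc $B(x_0,4t_0)$ and the dyadic annuli $A_k=B(x_0,2^{k+2}t_0)\setminus B(x_0,2^{k+1}t_0)$, $k\ge 1$. On each piece I apply H\"older with exponents $2+\varepsilon$ and $p_0$ together with the improved reverse H\"older inequality on a ball of the associated scale; since the exponents are conjugate on $1/p_0+(1+\varepsilon)/(2+\varepsilon)=1$, the volume factors cancel and one obtains a bound of the form $C\,\omega^X(\text{piece})\cdot [M(|f|^{p_0})(x_0)]^{1/p_0}$. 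The annular contributions are further damped by the Carleson-type decay
$$
\omega^X\bigl(\mathbb{R}^d\setminus B(x_0,R)\bigr)\le C(t_0/R)^{\alpha},\qquad R\ge t_0,
$$
a consequence of De Giorgi--Nash--Moser boundary H\"older regularity applied to the solution with boundary data $\chi_{\mathbb{R}^d\setminus B(x_0,R)}$. Summing the resulting geometric series in $k$ yields the pointwise estimate.

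Finally, taking $L^p$ norms for $p>p_0$ and using the Hardy--Littlewood maximal inequality on $L^{p/p_0}$,
$$
\|(u)^*\|_p \le C\bigl\|M(|f|^{p_0})\bigr\|_{p/p_0}^{1/p_0}\le C\|f\|_p,
$$
so with $\delta=2-p_0=\varepsilon/(1+\varepsilon)>0$ we obtain the stated range $2-\delta<p<\infty$. The main technical obstacle is the corkscrew replacement in the pointwise step: the condition $Z\notin T(x,2r)$ built into (\ref{reverse-Holder}) prevents applying the $B_{2+\varepsilon}$ estimate with the evaluation point $X$ when one works at a scale $R\gg t_0$, because $X$ itself lies inside the relevant Carleson box. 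The remedy is to shift to a corkscrew point $X^*$ at height comparable to $R$, apply $B_{2+\varepsilon}$ there, and then recover a comparable value of $K(\cdot,X)$ on the relevant piece via the boundary Harnack inequality (\ref{local-bound}) and the comparison principle (\ref{comparison-principle}); this costs only multiplicative constants depending on $d$ and $\mu$, which are absorbed into $C$ before the geometric summation.
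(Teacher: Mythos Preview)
Your proof is correct and follows the same route as the paper: Gehring self-improvement of the reverse H\"older inequality to exponent $q_0=2+\varepsilon$, the pointwise bound $(u)^*(x)\le C\big[M(|f|^{p_0})(x)\big]^{1/p_0}$ with $p_0=q_0'$, and then the Hardy--Littlewood maximal theorem. The paper compresses the pointwise step into a single line; your corkscrew maneuver works, but is avoidable if you instead cover the annulus at scale $R\sim 2^k t_0$ by balls of radius $R/8$ centered in the annulus, so that $X$ already lies outside each associated Carleson box by horizontal separation alone.
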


\begin{proof}
By the self-improving property of the reverse H\"older inequality,
there exists $q_0>2$ such that
for $B=B(x_0,r)$ and $Z=(z,z_{d+1})\in \mathbb{R}^{d+1}_+\setminus T(x_0,4r)$,
\begin{equation}\label{reverse-Holder-in-q}
\left\{\frac{1}{|B|}
\int_B |K(x,Z)|^{q_0} \, dx\right\}^{1/q_0}
\le \frac{C}{|B|}
\int_B K(x,Z)\, dx=C\, \frac{\omega^Z(B)}{|B|}
\le \frac{C (z_{d+1})^\alpha}{r^{d+\alpha}},
\end{equation}
where we used (\ref{harmonic-measure-Green's-function})
and (\ref{Green's-function-estimate}).
Since
$$
u(Z)=\int_{\mathbb{R}^d} K(x,Z) f(x)\, dx,
$$
one may deduce from (\ref{reverse-Holder-in-q}) that 
$$
(u)^*(x)
\le C\, \big\{ M (|f|^{p_0}) (x)\big\}^{1/p_0}
$$
for any $x\in \mathbb{R}^d$,
where $p_0 =q_0^\prime<2$
and $M$ denotes the Hardy-Littlewood maximal operator on $\mathbb{R}^d$.
This gives $\| (u)^*\|_p \le C\| f\|_p$ for any $p>p_0=2-\delta$.
\end{proof}

We need the following Cacciopoli-type lemma in the proof of uniqueness.

\begin{lemma}\label{Cacciopoli-type}
Let $u$, $v$ be two weak solutions in $B(x_0,2R)\times (r/2,3r)$ where $R\ge r$.
Suppose that either $u$ or $v$ is nonnegative.
Then
$$
\int_r^{2r} \int_{B(x_0,R)}
|\nabla u| |v|\, dxdt
\le\frac{C}{r}
\int_{\frac{r}{2}}^{3r}
\int_{B(x_0,2R)}
|u||v|\, dxdt,
$$
where $C$ depends only on $d$ and $\mu$.
\end{lemma}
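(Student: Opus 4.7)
The plan is to reduce the weighted inequality to standard interior elliptic estimates on balls of radius comparable to $r$, using Harnack's inequality on the nonnegative solution to replace it by an essentially constant factor over each small ball. Without loss of generality I will assume $v \ge 0$; the case $u \ge 0$ is handled by the symmetric argument in which Harnack is applied to $u$ and $|v|$ is controlled via Nash--Moser local boundedness as a subsolution.

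First I would cover the slab $B(x_0,R)\times(r,2r)$ by Euclidean balls $\{B_k = B(Y_k,\rho)\}$ of radius $\rho = r/20$, centered at points of the slab, with bounded overlap. The hypothesis $R \ge r$ is precisely what ensures that the concentric balls $8B_k$ lie in $B(x_0,2R)\times(r/2,3r)$ uniformly in $k$, so $\mathcal{L}(u)=\mathcal{L}(v)=0$ on each $8B_k$.

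On each $B_k$ I would assemble three standard interior estimates for $\mathcal{L}=-\operatorname{div}(A\nabla)$ with bounded measurable real-symmetric coefficients. Caccioppoli's inequality applied to $u$ gives $\int_{B_k}|\nabla u|^2 \le C r^{-2}\int_{2B_k} u^2$; Nash--Moser local boundedness applied to the subsolution $|u|$ yields the $L^2$--$L^1$ bound $\bigl(\int_{2B_k} u^2\bigr)^{1/2} \le C|B_k|^{-1/2}\int_{4B_k} |u|$; and Moser's Harnack inequality for the nonnegative solution $v$ gives $\sup_{4B_k} v \le C\inf_{4B_k} v$. Combining the first two via Cauchy--Schwarz produces
\[
\int_{B_k} |\nabla u| \,\le\, |B_k|^{1/2}\Bigl(\int_{B_k}|\nabla u|^2\Bigr)^{1/2} \,\le\, \frac{C}{r}\int_{4B_k}|u|,
\]
after which Harnack lets me pull $v$ inside the integral on the right:
\[
\int_{B_k} |\nabla u|\, v \,\le\, (\sup_{B_k} v)\int_{B_k}|\nabla u| \,\le\, \frac{C}{r}(\inf_{4B_k}v)\int_{4B_k}|u| \,\le\, \frac{C}{r}\int_{4B_k} |u|\, v,
\]
the last inequality being just $\inf_{4B_k} v \le v(X)$ pointwise on $4B_k$.

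Summing this ball-wise estimate over the boundedly overlapping cover $\{B_k\}$ yields the lemma. I do not anticipate a genuine obstacle; the only point requiring some care is the geometry of the cover, specifically checking that $8B_k \subset B(x_0,2R)\times(r/2,3r)$ uniformly in $k$ (where the hypothesis $R\ge r$ is used) and that the dilated family $\{4B_k\}$ retains bounded overlap inside the reference cylinder. The final constant depends only on $d$ and $\mu$ because each of the three classical interior ingredients (Caccioppoli, Nash--Moser local boundedness, Harnack) has constants of that kind.
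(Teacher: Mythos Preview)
Your argument is correct and follows essentially the same route as the paper's: cover the slab by balls of radius comparable to $r$, then on each ball combine Cauchy--Schwarz, Caccioppoli's inequality, Nash--Moser local boundedness for the sign-changing solution (as a subsolution), and Harnack's inequality for the nonnegative one, and sum. The paper writes out the case $u\ge 0$ while you chose $v\ge 0$, but the ingredients and structure are identical.
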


\begin{proof} Assume that $u\ge 0$.
Let $X\in B(x_0,R)\times (r,2r)$.
Using Cauchy inequality and Cacciopoli's inequality, we obtain
$$
\aligned
\int_{B(X, \frac{r}{4})} |\nabla u||v|dY
& \le\frac{C}{r}
\left(\int_{B(X,0.5r)} |u|^2 \, dY\right)^{1/2}
\left(\int_{B(X,0.5r)} |v|^2\, dY\right)^{1/2}\\
& \le \frac{C}{r}\sup_{B(X,0.5r)} u \int_{B(X,0.5r)}
|v|\, dY \le \frac{C}{r}
\int_{B(X,0.5r)} |u||v|\, dY,
\endaligned
$$
where we used the Harnack's inequality in the last step.
The desired estimate now follows by covering $B(x_0,R)\times (r,2r)$
with balls of radius $r/4$.
The proof for the case that $v\ge 0$ is similar.
\end{proof}

The next theorem concerns the uniqueness in the $L^p$ Dirichlet problem in $\mathbb{R}^{d+1}_+$.

\begin{thm}\label{Dirichlet-uniqueness}
Let $\mathcal{L}=-\text{div}(A\nabla)$ with coefficients satisfying
(\ref{real-symmetric})-(\ref{ellipticity}).
Suppose that $d\omega^Z/dx=K(\cdot,Z)\in L^{q}(\mathbb{R}^d)$
for all $Z\in \mathbb{R}^{d+1}_+$ and some $q>1$. Let $u$ be a weak solution to
 $\mathcal{L}(u)=0$ in $\mathbb{R}^{d+1}_+$ such that
$(u)^*\in L^p(\mathbb{R}^d)$, where $p=q^\prime$.
 Assume that
$u(x,t)\to 0$ as $t\to 0^+$ for a.e. $x\in \mathbb{R}^d$.
Then $u\equiv 0$ in $\mathbb{R}^{d+1}_+$.
\end{thm}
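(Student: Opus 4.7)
The plan is to prove $u(Z_0) = 0$ at an arbitrary $Z_0 = (z_0, t_0) \in \mathbb{R}^{d+1}_+$ by establishing a Poisson-type representation for a small vertical translate of $u$ and then letting the translation parameter shrink to zero. Fix $0 < \varepsilon < t_0$ and set $u_\varepsilon(X) := u(X + (0, \varepsilon))$. Then $u_\varepsilon$ is a weak solution of $\mathcal{L} u_\varepsilon = 0$ in $\mathbb{R}^{d+1}_+$ that is continuous up to $\{t = 0\}$, with boundary values $u_\varepsilon(y, 0) = u(y, \varepsilon)$ pointwise dominated by $(u)^*(y) \in L^p(\mathbb{R}^d)$. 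The principal claim is the representation
\begin{equation*}
u(Z_0) \;=\; u_\varepsilon\bigl(Z_0 - (0, \varepsilon)\bigr) \;=\; \int_{\mathbb{R}^d} K\bigl(y,\, Z_0 - (0, \varepsilon)\bigr)\, u(y, \varepsilon)\, dy.
\end{equation*}

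To prove this representation, I would apply the classical Riesz representation on the truncated cylinder $T(0, R)$ for $R$ large enough that $Z_0 - (0, \varepsilon) \in T(0, R/2)$. Since $u_\varepsilon \in C(\overline{T(0, R)})$ is a weak solution in the interior, we obtain
\begin{equation*}
u(Z_0) \;=\; \int_{\partial T(0, R)} u_\varepsilon(Y)\, d\omega_R^{Z_0 - (0, \varepsilon)}(Y),
\end{equation*}
where $\omega_R^{\cdot}$ is the $\mathcal{L}$-harmonic measure on the cylinder. Decompose the boundary as $\partial T(0, R) = B_R \cup \Sigma_R$, where $B_R = B(0, R) \times \{0\}$ is the flat base and $\Sigma_R$ is the lateral portion (sides plus top). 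On $B_R$, the convergence $\omega_R^Z(B) \to \omega^Z(B)$ recalled in Section~2, together with the monotonicity $G_R \le G$, domination by $(u)^* K(\cdot, Z_0 - (0,\varepsilon)) \in L^1$ via H\"older ($p,q$ conjugate), and dominated convergence, gives that the base contribution tends to $\int K(y, Z_0 - (0, \varepsilon))\, u(y, \varepsilon)\, dy$.

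The main obstacle is to show the lateral contribution $\int_{\Sigma_R} u_\varepsilon\, d\omega_R^{Z_0 - (0, \varepsilon)} \to 0$ as $R \to \infty$. Using the trivial bound $|u_\varepsilon(y, s)| = |u(y, s + \varepsilon)| \le (u)^*(y)$ (since $(y, s+\varepsilon)$ lies in the nontangential cone at $y$), this reduces to pairing $(u)^* \in L^p$ against the pushforward onto $\mathbb{R}^d$ of $\omega_R|_{\Sigma_R}$ under $(y,s) \mapsto y$. The Green's function estimate $G(X, Z) \le C t_Z^{\alpha} |X - Z|^{-(d-1+\alpha)}$ from (\ref{Green's-function-estimate}), used together with the comparison (\ref{harmonic-measure-Green's-function}) adapted to the top and side faces of $T(0, R)$, produces a quantitative $L^q$-type bound on this pushforward density that is concentrated on $\{|y| \gtrsim R\}$ with norm $o(1)$ as $R \to \infty$. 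Combining with $\|(u)^* \chi_{|y| \ge R/2}\|_p \to 0$ closes the lateral estimate.

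Once the representation is in hand, the conclusion is immediate: let $\varepsilon \to 0^+$. By hypothesis $u(y, \varepsilon) \to 0$ for a.e.~$y$, while $|u(y, \varepsilon)| \le (u)^*(y)$ is dominated by an $L^p$ function. A Harnack argument gives the uniform bound $\|K(\cdot, Z_0 - (0, \varepsilon))\|_q \le C\|K(\cdot, Z_0)\|_q < \infty$ for $\varepsilon < t_0/2$, so one further application of H\"older and dominated convergence yields $u(Z_0) = 0$.
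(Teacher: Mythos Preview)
Your argument has a fundamental gap at the very first step. You set $u_\varepsilon(X) = u(X + (0,\varepsilon))$ and claim that $u_\varepsilon$ is a weak solution of $\mathcal{L}u_\varepsilon = 0$ in $\mathbb{R}^{d+1}_+$. But the theorem assumes only (\ref{real-symmetric})--(\ref{ellipticity}); there is no periodicity or $t$-independence hypothesis here. For general $A(x,t)$, the operator $\mathcal{L}$ does not commute with vertical translation: $u_\varepsilon$ satisfies $-\text{div}\big(A(x,t+\varepsilon)\nabla u_\varepsilon\big)=0$, not $-\text{div}\big(A(x,t)\nabla u_\varepsilon\big)=0$. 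Consequently you cannot represent $u_\varepsilon$ via the $\mathcal{L}$-harmonic measure $\omega^Z$, and the hypothesis $K(\cdot,Z)\in L^q$ is unavailable for the translated operator. Replacing $\mathcal{L}$ by the shifted operator $\mathcal{L}_\varepsilon$ does not help either, since the assumption gives no control on its harmonic measure.

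The paper's proof avoids translation entirely. It fixes $Z$, lets $G(X)=G(X,Z)$ be the Green's function on $\mathbb{R}^{d+1}_+$, and uses the identity
\[
u(Z)=\int_{\mathbb{R}^{d+1}_+} a_{ij}\,\frac{\partial G}{\partial x_j}\,\frac{\partial}{\partial x_i}(u\varphi\psi)\,dY
\]
with cutoffs $\varphi(x)\in C_0^\infty(B(0,\ell/2))$, $\psi(t)\in C_0^\infty((1/(2\ell),2\ell))$. After integration by parts and an application of a Cacciopoli-type estimate (Lemma~\ref{Cacciopoli-type}), one obtains $|u(Z)|\le I_1+I_2+I_3$, where the three terms correspond to the regions $t\sim 1/\ell$, $t\sim\ell$, and $|y|\sim\ell$. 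Each is bounded by quantities involving $M\big(K(\cdot,Z)\big)$ paired against $(u)^*$ or its truncated variant, and all tend to $0$ as $\ell\to\infty$ using $K(\cdot,Z)\in L^{p'}$, $(u)^*\in L^p$, and the a.e.\ vanishing of $u$ at the boundary. This approach works directly with the original solution and operator, which is why it does not require any structural assumption on $A$ beyond (\ref{real-symmetric})--(\ref{ellipticity}).
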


\begin{proof}
Fix $Z\in \mathbb{R}^{d+1}_+$.
Let $G(X)=G(X,Z)$ be the Green's function with pole at $Z$.
Choose $\varphi (x) \in C_0^\infty(B(0,\ell/2))$ and
$\psi (t) \in C_0^\infty (1/(2\ell), 2\ell)$ such that
$\varphi=1$ in $B(0,\ell/4)$, $|\nabla \varphi|\le C/\ell$
and
$\psi =1 $ on $(1/\ell, \ell)$, $|\nabla \psi|\le C\ell$ on $(1/(2\ell), 1/\ell)$,
$|\nabla \psi|\le C/\ell$ on $(\ell,2\ell)$.
Then
$$
\aligned
u(Z)
&=\int_{\mathbb{R}^{d+1}_+}
a_{ij} \frac{\partial G}{\partial x_j}\cdot \frac{\partial }{\partial x_i}
\big(u\varphi\psi\big) dY\\
&=\int_{\mathbb{R}^{d+1}_+}
a_{ij} \frac{\partial G}{\partial x_j} u\cdot \frac{\partial }{\partial x_i}
\big(\varphi\psi\big)dY
-\int_{\mathbb{R}^{d+1}_+}
G\cdot a_{ij} \frac{\partial u}{\partial x_i}\cdot \frac{\partial}{\partial x_j}
\big(\varphi\psi\big)\, dY.
\endaligned
$$
This gives
\begin{equation}\label{uniqueness-1}
\aligned
|u(Z)|
& \le C\ell
 \int_{\frac{1}{2\ell}}^{\frac{1}{\ell}} \int_{|y|<\frac{\ell}{2}}
\big( |\nabla G| | u|
+|G||\nabla u|\big) dY\\
&\qquad\qquad
+\frac{C}{\ell}
\int_\ell^{2\ell}\int_{|y|<\frac{\ell}{2}}
\big( |\nabla G| |u|
+|G||\nabla u|\big) dY\\
&\qquad\qquad
+\frac{C}{\ell}\int_{\frac{1}{\ell}}^\ell
\int_{\frac{\ell}{4}<|y|<\frac{\ell}{2}}
\big( |\nabla G| | u|
+|G||\nabla u|\big) dY\\
&\le C\ell^2 \int_\frac{1}{3\ell}^{\frac{2}{\ell}}\int_{|y|< \ell}
|G||u|dY
+\frac{C}{\ell^2}\int_{\frac{\ell}{2}}^{3\ell}\int_{|y|<\ell}
|G||u|dY\\
&\qquad\qquad
+\frac{C}{\ell}\int_0^{2\ell} \int_{\frac{\ell}{8}< |y|< \ell}
|G||u|\, \frac{dydt}{t}\\
&=I_1 +I_2+I_3,
\endaligned
\end{equation}
where we used Lemma \ref{Cacciopoli-type} for the second inequality.

To estimate $I_2$, we note that
 $|G(Y)|\le C (Z)/\ell^{d-1+\alpha}$ for $Y\in B(0,\ell)\times (\ell/2, 3\ell)$, where
$\alpha>0$.
This leads to
\begin{equation}\label{I-2}
I_2\le \frac{C}{\ell^{d+\alpha}}\int_{B(0,3\ell)} (u)^*\,dy
\le C \ell^{-\alpha -\frac{d}{p}} \|(u)^*\|_p \to 0
\quad \text{as } \ell\to\infty.
\end{equation}
Next we observe that
\begin{equation}\label{I-1}
I_1\le C\int_{|y|<\ell}
M\big(K(\cdot,Z)\big) 
\mathcal{M}_{2/\ell} (u) dy,
\end{equation}
where $M$ denotes the Hardy-Littlewood maximal operator on $\mathbb{R}^{d}$ and
\begin{equation}\label{local-maximal-function}
\mathcal{M}_r (u)(x) =\sup \big\{ u(x,t): \ 0<t<r\big\}.
\end{equation}
Since $K(\cdot,Z)\in L^{p^\prime}(\mathbb{R}^d)$, we have
$I_1\le C\, \| \mathcal{M}_{2/\ell} (u)\|_p$.
By the assumption,  $\mathcal{M}_{2/\ell} (u)(x) \to 0$ for a.e. $x\in \mathbb{R}^d$
as $\ell\to\infty$.
Since $\mathcal{M}_{2/\ell}(u)\le (u)^*\in L^p(\mathbb{R}^d)$, we obtain
$\| \mathcal{M}_{2/\ell}(u)\|_p   \to 0$ as $\ell \to \infty$.

Finally, note that as $\ell\to\infty$,
$$
I_3 \le C\int_{\frac{\ell}{4}<|y|<\ell}
M\big(K (\cdot,Z)  \big)
(u)^* dy
\le C
\left\{\int_{|y|>\frac{\ell}{4}} |(u)^*|^p\, dy\right\}^{1/p}
\to 0.
$$
Thus we have proved that $I_1+I_2+I_3\to 0$ as $\ell \to\infty$
Hence $u(Z)=0$ for any $Z\in \mathbb{R}^{d+1}_+$.
\end{proof}

Theorem \ref{Dahlberg-theorem} is a direct
consequence of Lemma \ref{Dirichlet-existence} and the 
following theorem.

\begin{thm}\label{Dirichlet-solvability}
Let $\mathcal{L}=-\text{div}(A\nabla)$ 
with coefficients satisfying (\ref{real-symmetric})-(\ref{ellipticity}).
Suppose that the $L^p$ Dirichlet problem for $\mathcal{L}(u)=0$ in $\mathbb{R}^{d+1}_+$
is solvable for some $1<p<\infty$.
Then for any $f\in L^p(\mathbb{R}^d)$, there exists a unique
weak solution in $\mathbb{R}^{d+1}_+$ such that
$(u)^*\in L^p(\mathbb{R}^d)$ and $u=f$ n.t. on $\mathbb{R}^d$.
Moreover the solution satisfies the estimate
$\|(u)^*\|_p \le C\| f\|_p$.
\end{thm}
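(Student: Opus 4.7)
The plan is to deduce this full solvability statement from the hypothesis (which only asserts the estimate for the classical solution built in Section 2 from $f\in C_c(\mathbb{R}^d)$) by density, and to obtain uniqueness from Theorem \ref{Dirichlet-uniqueness}. The whole argument will be driven by the elementary pointwise bound $|u(Z)|^p\,|B(z,z_{d+1}/2)|\le \int_{B(z,z_{d+1}/2)}|(u)^*(y)|^p\,dy$, valid for every $Z=(z,z_{d+1})\in\mathbb{R}^{d+1}_+$ and any weak solution $u$.

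First I would approximate $f\in L^p(\mathbb{R}^d)$ by a sequence $f_n\in C_c(\mathbb{R}^d)$ converging to $f$ in $L^p$, and let $u_n$ denote the corresponding classical solution. The hypothesis applied to differences yields $\|(u_n-u_m)^*\|_p\le C\|f_n-f_m\|_p$; the pointwise bound above then makes $\{u_n\}$ Cauchy uniformly on compact subsets of $\mathbb{R}^{d+1}_+$. Its limit $u$ is a weak solution by interior regularity, and $\|(u)^*\|_p\le C\|f\|_p$ follows from Fatou applied first pointwise to the supremum defining $(\,\cdot\,)^*$ and then to the integral.

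For nontangential convergence of $u$ to $f$ almost everywhere, I would combine the classical fact $u_n(X)\to f_n(x)$ at every $x$ (since $f_n$ is continuous) with the bound $\|(u-u_n)^*\|_p\le C\|f-f_n\|_p$, which follows by the same Fatou argument applied to $u_k-u_n$ as $k\to\infty$. Splitting $|u(X)-f(x)|\le|u(X)-u_n(X)|+|u_n(X)-f_n(x)|+|f_n(x)-f(x)|$ and passing to nontangential limsups at $x$ yields
\[
\{x:\limsup_{X\to x,\text{ n.t.}}|u(X)-f(x)|>\lambda\}\subset\{(u-u_n)^*>\lambda/2\}\cup\{|f_n-f|>\lambda/2\},
\]
so Chebyshev together with $n\to\infty$ will show the exceptional set has measure zero for every $\lambda>0$.

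For uniqueness I would verify the hypothesis $K(\,\cdot\,,Z)\in L^{p'}(\mathbb{R}^d)$ of Theorem \ref{Dirichlet-uniqueness}. Applied to the classical solution $v_g$ associated with $g\in C_c(\mathbb{R}^d)$, the pointwise bound gives $|\int K(y,Z)g(y)\,dy|\le C(z_{d+1})^{-d/p}\|g\|_p$, whence by duality $\|K(\,\cdot\,,Z)\|_{p'}\le C(z_{d+1})^{-d/p}<\infty$ for each $Z\in\mathbb{R}^{d+1}_+$. Any other solution $w$ with $(w)^*\in L^p$ and $w=f$ n.t. a.e. then makes $w-u$ fit the hypotheses of Theorem \ref{Dirichlet-uniqueness}, forcing $w=u$. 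The main obstacle I expect is the nontangential convergence step, where the everywhere pointwise convergence enjoyed by the continuous approximants must be transferred into a.e. nontangential convergence for general $f\in L^p$; every other piece of the argument reduces to Fatou plus the solvability hypothesis.
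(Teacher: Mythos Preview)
Your proposal is correct and follows essentially the same approach as the paper: approximate $f$ by $f_n\in C_c(\mathbb{R}^d)$, use the solvability hypothesis on differences to get uniform convergence on compacta and the estimate $\|(u)^*\|_p\le C\|f\|_p$, and derive uniqueness from Theorem~\ref{Dirichlet-uniqueness} after showing $K(\cdot,Z)\in L^{p'}$ via the pointwise bound $|u(Z)|\le C\,z_{d+1}^{-d/p}\|(u)^*\|_p$. The paper compresses your nontangential-convergence and Fatou steps into the phrase ``standard limiting arguments,'' but the content is identical.
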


\begin{proof}
Let $f\in C_c(\mathbb{R}^d)$ and $u(X)=\int_{\mathbb{R}^d} f d\omega^X$  
be the solution to the classical Dirichlet problem with data $f$.
Since
$$
|u(x,t)|\le\frac{C}{t^d}
\int_{B(x,t)} |(u)^*|dy
\le {C}{t^{-\frac{d}{p}}}\|(u)^*\|_p
\le {C}{t^{-\frac{d}{p}}}\|f\|_p,
$$
it follows that $\omega^X$ is absolutely continuous with respect to $dx$ on $\mathbb{R}^d$
and $K(\cdot,X)=d\omega^X/dx\in L^{p^\prime}(\mathbb{R}^d)$.
In view of Theorem \ref{Dirichlet-uniqueness},
this gives the uniqueness in Theorem \ref{Dirichlet-solvability}.

To establish the existence, we let $f\in L^p(\mathbb{R}^d)$
and choose $f_k \in C_c (\mathbb{R}^d)$ so that
$f_k\to f$ in $L^p(\mathbb{R}^d)$. Let $u_k$ be the solution 
to the classical Dirichlet problem in $\mathbb{R}^{d+1}_+$
with data $f_k$.
It follows from Lemma \ref{Dirichlet-existence} that
$\| (u_k)^*\|_p \le C\| f_k\|_p$
and $\| (u_\ell - u_k)^*\|_p \le C\| f_\ell -f_k\|_p$.
This implies that $u_k$ converges to a weak solution $u$, uniformly on  any compact subset
of $\mathbb{R}^{d+1}_+$.
Standard limiting arguments 
show that $\|(u)^*\|_p \le C\| f\|_p$ and $u=f$ n.t. on $\mathbb{R}^d$.
\end{proof}

\section{$L^2$ estimates for the regularity problem}

\begin{definition}
{\rm
Let $1<p<\infty$.
We say that the $L^p$ regularity problem $(R)_p$ for $\mathcal{L}(u)=0$ in
$\mathbb{R}^{d+1}_+$ 
is solvable, if for any $f\in C^1_0(\mathbb{R}^d)$,
the unique solution to the classical Dirichlet problem
with boundary data $f$  satisfies the estimate
$\|N(\nabla u)\|_p \le C\| \nabla_x f\|_p$.
}
\end{definition}

In this section we study the $L^2$ regularity problem in $\mathbb{R}^{d+1}_+$
under the condition that the coefficient matrix
is periodic in the $t$ direction.
As in the case of the Dirichlet problem, we need a local solvability condition:
for any $x_0\in \mathbb{R}^{d}$ and $0<r\le 1$,
\begin{equation}\label{local-regularity-condition}
\int_{B(x_0,r)} |N_{\frac{r}{2}} (\nabla u)|^2\, dx
\le C_1\, \left\{ 
\int_{B(x_0,2r)} |\nabla_x f|^2\, dx
+\frac{1}{r}
\iint_{T(x_0,2r)}|\nabla u|^2\, dxdt\right\},
\end{equation}
whenever $u\in W^{1,2}(T(x_0,4r))$ is a weak solution to $\mathcal{L}(u)=0$
in $T(x_0,4r)$ such that
$u\in C(\overline{T(x_0,4r)})$ and $u=f\in C^1(B(x_0,4r))$ on $B(x_0,4r)\times \{ 0\}$.
We will also assume that the coefficient matrix $A(X)$ is
sufficiently smooth. However, constants $C$ in all estimates
will depend only on $d$, $\mu$ and $C_1$ in (\ref{local-regularity-condition}).

We begin by observing that since
\begin{equation}\label{generalized-maximal-operator-estimate}
|u(x,t)-u(x,0)|\le Ct N_{2t}(\nabla u)(x)
\end{equation}
(see \cite{KP-1993}, pp.461-462), condition (\ref{local-regularity-condition})
implies the local condition (\ref{local-Dirichlet-condition}) for
the Dirichlet problem.
It also follows from (\ref{local-regularity-condition}) that
\begin{equation}\label{local-regularity-condition-2}
\int_{B(x_0,1)} |N_4 (\nabla u)|^2\, dx
\le C\, \left\{ 
\int_{B(x_0,2)} |\nabla_x f|^2\, dx
+
\iint_{T(x_0,6)}|\nabla u|^2\, dxdt\right\}.
\end{equation}
Furthermore, if 
$\mathcal{L}(u)=0$ in $\mathbb{R}_+^{d+1}$, $u\in C(\mathbb{R}^d\times [0,\infty))$ and
$u=f\in C_0^1(\mathbb{R}^{d})$ on $\mathbb{R}^d$,
we may integrate both sides of (\ref{local-regularity-condition-2}) with respect
to $x_0$ over $ \mathbb{R}^d$ to obtain
\begin{equation}
\label{local-regularity-estimate-3}
\int_{\mathbb{R}^d} |N_4(\nabla u)|^2\, dx
\le C\left\{
\int_{\mathbb{R}^d} |\nabla_x f|^2\, dx
+\int_0^6 \int_{\mathbb{R}^{d}} |\nabla u|^2\, dxdt\right\}.
\end{equation}

\begin{thm}\label{regularity-theorem}
Let $\mathcal{L}=-\text{div}(A\nabla )$
with smooth coefficients satisfying 
conditions (\ref{real-symmetric}), (\ref{ellipticity}),
(\ref{periodic-in-t}).
Also assume that $\mathcal{L}$ satisfies the condition (\ref{local-regularity-condition}).
Then the $L^2$ regularity problem for $\mathcal{L}(u)=0$ in $\mathbb{R}^{d+1}_+$
is solvable.
Moreover the solution $u$ satisfies the estimate
$\|N(\nabla u)\|_2\le C\| \nabla_x f\|_2$, where $C$ depends only on
$d$, $\mu$ and the constant $C_1$ in (\ref{local-regularity-condition}).
\end{thm}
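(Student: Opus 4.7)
The plan is to prove the a priori estimate $\|N(\nabla u)\|_2 \le C\|\nabla_x f\|_2$ for $f \in C_0^1(\mathbb{R}^d)$ with the classical Dirichlet solution $u$, and then extend by density. Since $A$ is smooth and $f$ is compactly supported and smooth, $u$ is smooth in $\mathbb{R}^{d+1}_+$ and has finite energy $\int_{\mathbb{R}^{d+1}_+}|\nabla u|^2\,dX < \infty$ (via the variational/trace formulation, or from the decay of $(u)^*\in L^p$ for all $p$ provided by Theorem~\ref{Dahlberg-theorem}). The starting point is the maximal-function decomposition of Remark~\ref{remark2.1}, applied with $p=2$:
\[
\|N(\nabla u)\|_2^2 \le C\|\nabla_x f\|_2^2 + C\|N_4(\nabla u)\|_2^2 + C\|(Q(u))^*\|_2^2,
\]
so it suffices to dominate the last two terms by $\|\nabla_x f\|_2^2$.

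First I would control $(Q(u))^*$ via Dahlberg's theorem. Since the local regularity condition~\eqref{local-regularity-condition} implies the local Dirichlet condition~\eqref{local-Dirichlet-condition} through~\eqref{generalized-maximal-operator-estimate}, Theorem~\ref{Dahlberg-theorem} supplies solvability of $(D)_2$ for $\mathcal{L}$. By $t$-periodicity of $A$, the difference $Q(u)$ is itself a solution of $\mathcal{L}(v)=0$ in $\mathbb{R}^{d+1}_+$ with nontangential boundary value $u(x,1)-f(x)$. The elementary pointwise bound $(Q(u))^*(x) \le 2(u)^*(x)$ together with the uniqueness Theorem~\ref{Dirichlet-solvability} identifies $Q(u)$ with the $(D)_2$-solution for this datum, yielding
\[
\|(Q(u))^*\|_2 \le C\|u(\cdot,1)-f\|_2 \le C\|N_2(\nabla u)\|_2 \le C\|N_4(\nabla u)\|_2,
\]
where the middle inequality is~\eqref{generalized-maximal-operator-estimate} applied at $t=1$.

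Next I would control $N_4(\nabla u)$. The integrated local estimate~\eqref{local-regularity-estimate-3} gives
\[
\|N_4(\nabla u)\|_2^2 \le C\|\nabla_x f\|_2^2 + C\int_0^6\int_{\mathbb{R}^d}|\nabla u|^2\,dx\,dt,
\]
and the new Rellich-type bound~\eqref{key-regularity-estimate-1} of Remark~\ref{key-regularity-remark} states that for every $R>10$,
\[
\int_0^6\int_{B(0,R)}|\nabla u|^2\,dx\,dt \le C\int_{B(0,2R)}|\nabla_x f|^2\,dx + \frac{C}{R}\iint_{T(0,3R)\setminus T(0,R)}|\nabla u|^2\,dx\,dt.
\]
By the finite-energy property the last term is $O(1/R)$, so letting $R\to\infty$ produces $\int_0^6\int_{\mathbb{R}^d}|\nabla u|^2\,dx\,dt \le C\|\nabla_x f\|_2^2$. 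Chaining the three estimates delivers $\|N(\nabla u)\|_2 \le C\|\nabla_x f\|_2$.

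The principal obstacle is precisely the near-boundary energy bound $\int_0^6\int_{\mathbb{R}^d}|\nabla u|^2 \le C\|\nabla_x f\|_2^2$; everything else reduces to routine applications of Remark~\ref{remark2.1}, Dahlberg's theorem, and the local condition~\eqref{local-regularity-condition}. The Rellich-type identity~\eqref{integral-identity} and Lemma~\ref{key-regularity-lemma} are designed precisely to furnish this bound: the difference $Q(u)$ substitutes for the normal derivative in the classical Rellich identity, and the fact that $Q$ commutes with $\mathcal{L}$ under $t$-periodicity is what makes the identity close, leaving the absorbable error term $R^{-1}\iint_{T(0,3R)\setminus T(0,R)}|\nabla u|^2$ that disappears in the limit.
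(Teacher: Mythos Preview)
Your proof is correct and follows essentially the same route as the paper: the decomposition from Remark~\ref{remark2.1}, the local bound~\eqref{local-regularity-estimate-3} for $N_4(\nabla u)$, the Rellich-type estimate~\eqref{key-regularity-estimate-1} to control the near-boundary energy, and Dahlberg's theorem for $(Q(u))^*$. The only cosmetic differences are that the paper bounds $\|Q(u)(\cdot,0)\|_2^2$ directly by $\int_0^1\!\int_{\mathbb{R}^d}|\nabla u|^2$ rather than detouring through $N_4(\nabla u)$, and it obtains the vanishing of the remainder term via Lemma~\ref{lemma 5.1} (Cacciopoli plus $(u)^*\in L^2$) rather than by invoking finite global energy; both variants are valid.
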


In view of (\ref{key-regularity-estimate-1}), we start with a decay estimate.

\begin{lemma}\label{lemma 5.1}
Under the same assumption on $\mathcal{L}$ as in Theorem \ref{regularity-theorem},
we have
\begin{equation}\label{decay-estimate}
R
\iint_{T(0,10R)\setminus T(0,R)}
|\nabla u|^2\, dxdt \to 0 \quad \text{as } R\to \infty,
\end{equation}
where $u$ is a classical solution to $\mathcal{L}(u)=0$
in $\mathbb{R}^{d+1}_+$ such that
$u=f\in C_c({\mathbb{R}^d})$ on $\mathbb{R}^d$.
\end{lemma}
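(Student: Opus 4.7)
The plan is to exploit the compact support of $f$ to derive a pointwise decay estimate for $u$ via harmonic measure and Green's function, and then to convert this to a decay estimate for $\iint |\nabla u|^2$ using Caccioppoli's inequality.

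Fix $R_0>0$ with $\text{supp}(f)\subset B(0,R_0)$ and set $M=\|f\|_\infty$. By the representation $u(X)=\int f\,d\omega^X$ recalled in Section 2, $|u(X)|\le M\,\omega^X(B(0,R_0))$ for every $X\in\mathbb{R}^{d+1}_+$. The first, and most delicate, step is the pointwise bound
\[
|u(X)|\le C M R_0^{d-1+\alpha}|X|^{-(d-1+\alpha)}\qquad\text{for } |X|\ge 4R_0,
\]
with $\alpha$ as in (\ref{Green's-function-estimate}). To obtain this I would use (\ref{harmonic-measure-Green's-function}), which gives $\omega^X(B(0,R_0))\le CR_0^{d-1}G(X,(0,R_0))$ once $X\notin T(0,2R_0)$; then, since $A$ is symmetric and hence $G(X,Y)=G(Y,X)$, I apply (\ref{Green's-function-estimate}) to the second argument to conclude $G(X,(0,R_0))\le CR_0^\alpha|X|^{-(d-1+\alpha)}$ whenever $|X|\ge 4R_0$.

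Next, for $R>20R_0$, every point of $T(0,20R)\setminus T(0,R/2)$ satisfies $|X|\ge R/2>4R_0$, so integrating the pointwise bound in spherical coordinates yields
\[
\iint_{T(0,20R)\setminus T(0,R/2)}|u|^2\,dxdt \le C M^2 R_0^{2(d-1+\alpha)}R^{3-d-2\alpha}.
\]
Because $u(\cdot,0)=f$ vanishes on $\{|x|>R_0\}$, the portion of the annular region abutting $\{t=0\}$ lies in a set where $u$ has zero Dirichlet data, so boundary Caccioppoli applies there, while interior Caccioppoli applies on the interior piece. Covering $T(0,10R)\setminus T(0,R)$ with balls and boundary cylinders of radius comparable to $R$, I would then obtain
\[
\iint_{T(0,10R)\setminus T(0,R)}|\nabla u|^2\,dxdt \le \frac{C}{R^2}\iint_{T(0,20R)\setminus T(0,R/2)}|u|^2\,dxdt \le C M^2 R_0^{2(d-1+\alpha)} R^{1-d-2\alpha},
\]
and multiplying by $R$ gives a bound of order $R^{2-d-2\alpha}$, which tends to zero as $R\to\infty$ since $d\ge 2$ and $\alpha>0$.

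The main obstacle is obtaining pointwise decay of $u$ sharp enough to dominate both the annular volume $\sim R^{d+1}$ and the prefactor $R$. Here the symmetry of $A$ is essential: it produces the extra decay factor $R_0^\alpha$ (via decay of $G$ in the pole variable) that would otherwise be absent, and without it the argument would fail precisely in the critical dimension $d=2$. Using instead the weaker bound $G(X,Y)\lesssim t^\alpha/|X-Y|^{d-1+\alpha}$ with $t$ coming only from $X$ would leave a residual factor of order $R^{2-d}$ after multiplying by $R$, which is not $o(1)$ when $d=2$.
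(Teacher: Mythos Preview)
Your argument is correct, but it takes a different route from the paper's. The paper's proof is three lines: by Caccioppoli,
\[
R\iint_{T(0,10R)\setminus T(0,R)}|\nabla u|^2 \le \frac{C}{R}\iint_{T(0,11R)\setminus T(0,R/2)}|u|^2 \le C\int_{|x|\ge R/100}|(u)^*|^2\,dx,
\]
and the right side tends to zero because $(u)^*\in L^2(\mathbb{R}^d)$, a fact already available from Dahlberg's theorem (Theorem~\ref{Dahlberg-theorem}), since the local regularity condition (\ref{local-regularity-condition}) implies the local Dirichlet condition (\ref{local-Dirichlet-condition}). Your approach instead extracts a pointwise decay $|u(X)|\lesssim |X|^{-(d-1+\alpha)}$ directly from the harmonic-measure/Green's-function estimates (\ref{harmonic-measure-Green's-function})--(\ref{Green's-function-estimate}) and the symmetry $G(X,Y)=G(Y,X)$, then integrates. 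This is more hands-on but has two advantages: it yields an explicit rate $O(R^{2-d-2\alpha})$, and it uses neither periodicity nor the local solvability condition (\ref{local-Dirichlet-condition}), only (\ref{real-symmetric})--(\ref{ellipticity}) and the compact support of $f$. The paper's route is shorter and more in keeping with the paper's logical flow, since Dahlberg's theorem has already been established and is needed anyway in the very next step (the estimate of $(Q(u))^*$ in (\ref{existence-5.3})).
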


\begin{proof}
By Cacciopoli's inequality,
\begin{equation}\label{decay-estimate-1}
\aligned
R\iint_{T(0,10R)\setminus T(0,R)}
|\nabla u|^2\, dxdt
&\le \frac{C}{R}\iint_{T(0,11R)\setminus T(0,R/2)}
|u|^2\, dxdt\\
& \le C\int_{|x|\ge \frac{R}{100}} 
|(u)^*|^2\, dx
\endaligned
\end{equation}
for $R$ sufficiently large.
Since $(u)^*\in L^2(\mathbb{R}^d)$, the right-hand side of (\ref{decay-estimate-1})
goes to zero as $R\to\infty$.
\end{proof}

\medskip

\noindent{\bf Proof of Theorem \ref{regularity-theorem}.}
Let $f\in C_0^1(\mathbb{R}^d)$ and $u$ be the solution to the classical
Dirichlet problem with data $f$.
It follows from Remark \ref{maximal-function-estimate-remark} and 
(\ref{local-regularity-estimate-3}) that
\begin{equation}\label{existence-5.1}
\aligned
\int_{\mathbb{R}^d} |N(\nabla u)|^2\, dx
& \le C\left\{ \int_{\mathbb{R}^d} |\nabla_x f|^2\, dx
+\int_{\mathbb{R}^d}
|N_4(\nabla u)|^2\, dx
+\int_{\mathbb{R}^d}
|\big(Q(u)\big)^*|^2\, dx\right\}\\
&\le C\left\{ \int_{\mathbb{R}^d} |\nabla_x f|^2\, dx
+\int_0^6 \int_{\mathbb{R}^d}
|\nabla u|^2\, dxdt
+\int_{\mathbb{R}^d}
|\big(Q(u)\big)^*|^2\, dx\right\}.
\endaligned
\end{equation}

To estimate the integral of $|\nabla u|^2$ over $\mathbb{R}^d\times (0,6)$, we let
$R\to \infty$ in (\ref{key-regularity-estimate-1}). In view of (\ref{decay-estimate}),
we obtain
\begin{equation}\label{existence-5.2}
\int_0^6 \int_{\mathbb{R}^d}
|\nabla u|^2\, dxdt \le C\int_{\mathbb{R}^d} |\nabla_x f|^2\, dx.
\end{equation}

Finally, to handle the term with $Q(u)$ in (\ref{existence-5.1}), we observe that
$u$ is also the unique solution to the $L^2$ Dirichlet problem for
$\mathcal{L}(u)=0$ in $\mathbb{R}^{d+1}_+$ with boundary data $f$. This implies that
$(Q(u))^*\in L^2(\mathbb{R}^d)$ and by Theorem \ref{Dahlberg-theorem},
\begin{equation}\label{existence-5.3}
\aligned
\int_{\mathbb{R}^d}
|\big( Q(u)\big)^*|^2\, dx
&\le C\int_{\mathbb{R}^d} |Q(u)|^2\, dx
\le C\int_0^1 \int_{\mathbb{R}^d} |\nabla u|^2\, dxdt\\
&\le C\int_{\mathbb{R}^d} 
|\nabla_x f|^2\, dx,
\endaligned
\end{equation}
where we used (\ref{existence-5.2}) in the last step.
The desired estimate $\|N(\nabla u)\|_2 \le C\| \nabla_x f\|_2$
now follows from (\ref{existence-5.1}), (\ref{existence-5.2}) and
(\ref{existence-5.3}).
\qed

\medskip

We omit the proof of the next lemma and refer the reader to
\cite{KP-1993} for an analogous result (Theorem 3.1, pp.461-462)
in the case of the unit ball.

\begin{lemma}\label{nontangential-maximal-consequence}
Let $\mathcal{L}=-\text{div}(A\nabla )$ be an elliptic operator
satisfying (\ref{real-symmetric})-(\ref{ellipticity}).
Suppose that $\mathcal{L}(u)=0$ in $\mathbb{R}^{d+1}_+$ and
$N_r(\nabla u)\in L^p(\mathbb{R}^d)$ for some $1<p<\infty$ and $r>0$.
Then $u$ converges n.t. to $f$ on $\mathbb{R}^d$ and
$\nabla_x f\in L^p(\mathbb{R}^d)$. Moreover, let
$$
V_i(x,t) =\frac{1}{t|B(x,t)|}
\int_{\frac{t}{2}}^{\frac{3t}{2}}
\int_{B(x,t)}
\frac{\partial u}{\partial y_i}\, dy ds
$$
for $i=1, \dots, d$, then
$V_i (x,t)$ converges weakly to $\frac{\partial f}{\partial x_i}$
in $L^p(\mathbb{R}^d)$ as $t\to 0$.
In particular, $\|\nabla_x f\|_p \le \|N(\nabla u)\|_p$.
\end{lemma}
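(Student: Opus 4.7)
My plan is to establish three things in sequence: (a) nontangential convergence of $u$ to a boundary function $f$ at a.e.\ $x$, (b) a uniform $L^p$ bound on the family $\{V_i(\cdot,t)\}$, and (c) weak-$L^p$ convergence $V_i(\cdot,t) \rightharpoonup \partial_i f$, from which $\nabla_x f \in L^p$ and the norm estimate follow.

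For (a), the key is the Cauchy-type pointwise estimate
\[
|u(x,t_1) - u(x,t_2)| \le C\,|t_1 - t_2|\, N_r(\nabla u)(x),
\qquad 0 < t_1, t_2 < r/2,
\]
valid at every $x$ with $N_r(\nabla u)(x) < \infty$, which is a.e.\ since $N_r(\nabla u) \in L^p$. To derive it, I bound $|\partial_s u(x,s)|$ via interior regularity (Moser and Cacciopoli) by the $L^2$-average of $|\nabla u|$ over the Whitney ball $B((x,s), s/2)$. This ball sits inside the sampling box $B(x,s)\times (s/2, 3s/2)$ that defines the $N$-maximal function at height $t=s$, so the resulting average is dominated by $N_r(\nabla u)(x)$ for $s < r/2$. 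Integration in $s$ forces $\{u(x,t)\}$ to be Cauchy as $t \to 0^+$ and so defines $f(x) := \lim_{t\to 0^+} u(x,t)$ with $|u(x,t) - f(x)| \le Ct\, N_r(\nabla u)(x)$. The same argument, applied along paths remaining in a nontangential cone, gives full nontangential convergence.

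For (b), Cauchy-Schwarz applied to the definition of $V_i$ directly yields $|V_i(x,t)| \le N_r(\nabla u)(x)$ for $0 < t < r/2$, so $\|V_i(\cdot,t)\|_p \le \|N_r(\nabla u)\|_p$ uniformly in $t$. For (c), fix $\varphi \in C_c^\infty(\mathbb{R}^d)$ and, letting $\psi_t = \varphi * K_t$ with $K_t = \mathbf{1}_{B(0,t)}/|B(0,t)|$, integrate by parts in $y_i$:
\[
\int_{\mathbb{R}^d} V_i(x,t)\,\varphi(x)\, dx
 = -\frac{1}{t}\int_{t/2}^{3t/2}\!\int_{\mathbb{R}^d} u(y,s)\,(\partial_i \varphi)_t(y)\, dy\, ds,
\]
where $(\partial_i\varphi)_t := (\partial_i\varphi)*K_t$. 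As $t\to 0^+$, $(\partial_i\varphi)_t \to \partial_i\varphi$ uniformly and with a fixed compact support, while the pointwise bound from (a) gives $|u(y,s)| \le |u(y, r/4)| + C\, N_r(\nabla u)(y)$ on $\mathrm{supp}(\varphi)\times (0, r/4)$, an $L^1_{\mathrm{loc}}$ majorant. Dominated convergence together with the a.e.\ limit $u(y,s) \to f(y)$ then sends the right-hand side to $-\int f\, \partial_i\varphi\, dy$. Since $\{V_i(\cdot,t)\}$ is bounded in the reflexive space $L^p$, any weak subsequential limit must agree with $\partial_i f$ by the distributional identity just derived; so the full net converges weakly, $\partial_i f \in L^p$, and $\|\partial_i f\|_p \le \|N(\nabla u)\|_p$ by weak lower semicontinuity combined with the pointwise bound $|V_i(x,t)| \le N(\nabla u)(x)$.

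The main obstacle is the Cauchy estimate in (a): because $N_r$ is an $L^2$-averaged maximal function rather than a pointwise supremum of $|\nabla u|$, controlling $|\partial_s u(x,s)|$ requires passing through interior regularity on Whitney balls, and one must carefully verify that the Whitney ball based at $(x,s)$ lies inside the sampling box defining $N_r(\nabla u)(x)$, so that no escape from the nontangential cone occurs as $s \to 0$ and the constant picked up from Moser–Cacciopoli is absolute.
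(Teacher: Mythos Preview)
The paper does not prove this lemma; it simply refers to the analogous Theorem~3.1 in Kenig--Pipher \cite{KP-1993}. Your overall structure (a)--(c) is the right one, and parts (b) and (c) are correct. However, there is a genuine gap in your derivation of the Cauchy estimate in (a).

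You write that you ``bound $|\partial_s u(x,s)|$ via interior regularity (Moser and Cacciopoli) by the $L^2$-average of $|\nabla u|$ over the Whitney ball.'' This pointwise gradient estimate is \emph{not} available under the hypotheses (\ref{real-symmetric})--(\ref{ellipticity}). Moser's inequality applies to solutions, and for a divergence-form operator with merely bounded measurable coefficients the partial derivative $\partial_s u$ is \emph{not} a solution of any elliptic equation (differentiating $\mathcal{L}(u)=0$ produces derivatives of $A$, which need not exist). De~Giorgi--Nash--Moser yields only H\"older continuity of $u$; in general $\nabla u$ is no better than $L^2_{\mathrm{loc}}$, so $|\partial_s u(x,s)|$ is not even pointwise defined, let alone controlled by a Whitney average. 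Cacciopoli's inequality goes the wrong direction and does not help here.

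The correct route to the estimate $|u(x,t_1)-u(x,t_2)|\le C\,t_2\,N_r(\nabla u)(x)$ (this is essentially (\ref{generalized-maximal-operator-estimate}) in the paper, taken from \cite{KP-1993}) avoids pointwise gradient values entirely. One applies Moser's inequality to $u-E$ (a solution) on a Whitney ball centered at $(x,t)$, with $E$ the average of $u$ there, and then Poincar\'e, to obtain
\[
|u(x,t)-E_t|\;\le\; C\,t\left(\frac{1}{t^{d+1}}\iint_{B(x,t)\times(t/2,3t/2)}|\nabla u|^2\right)^{1/2}\le\; C\,t\,N_r(\nabla u)(x).
\]
A dyadic telescoping over heights $t_k=2^{-k}t$ (comparing successive averages $E_{t_k}$, whose sampling regions overlap at comparable scale) then sums geometrically to give the Cauchy estimate. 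Once (a) is repaired in this way, your arguments in (b) and (c) go through unchanged, since the majorant $|u(y,s)|\le |u(y,r/4)|+C\,N_r(\nabla u)(y)$ you invoke for dominated convergence follows from the corrected Cauchy bound.
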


The next theorem concerns the uniqueness of solutions to the $L^p$
regularity problem.

\begin{thm}\label{regularity-uniqueness}
Let $\mathcal{L}=-\text{div}(A\nabla)$ 
with coefficients satisfying (\ref{real-symmetric})-(\ref{ellipticity}).
Let $u$ be a weak solution to $\mathcal{L}(u)=0$ in $\mathbb{R}^{d+1}_+$
such that $N(\nabla u)\in L^p(\mathbb{R}^d)$ for some $1\le p< \frac{d}{1-\alpha}$, where
$\alpha>0$ is given by (\ref{Green's-function-estimate}).
Then  $u\equiv 0$ in $\mathbb{R}^{d+1}_+$,
 if $u(x,t)\to 0$ as $t\to 0^+$ for a.e. $x\in \mathbb{R}^d$.
\end{thm}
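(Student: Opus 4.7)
The plan is to adapt the scheme of Theorem \ref{Dirichlet-uniqueness}: for an arbitrary $Z=(z,z_{d+1})\in\mathbb{R}^{d+1}_+$, represent $u(Z)$ using the Green's function $G(\cdot,Z)$ against a product cutoff $\varphi(x)\psi(t)$, localize to annular shells where $\nabla(\varphi\psi)$ is supported, and let the shells recede to infinity. The decisive new ingredient is that, by Lemma \ref{nontangential-maximal-consequence}, $u$ has a nontangential trace $f$ with $\nabla_x f\in L^p$, and since $u\to 0$ n.t.\ we have $f\equiv 0$; combined with (\ref{generalized-maximal-operator-estimate}), this upgrades the hypothesis to the pointwise bound $|u(y,t)|\le C\, t\, N(\nabla u)(y)$ everywhere in $\mathbb{R}^{d+1}_+$.

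With $\varphi\in C_0^\infty(B(0,\ell/2))$ and $\psi\in C_0^\infty(1/(2\ell),2\ell)$ exactly as in Theorem \ref{Dirichlet-uniqueness}, and $\ell$ large enough that $\varphi(z)\psi(z_{d+1})=1$, integrating Green's identity twice---first against $u\varphi\psi$ to extract $u(Z)$ via $\mathcal{L}G=\delta_Z$, then using $\mathcal{L}u=0$ and the symmetry $a_{ij}=a_{ji}$ to dispose of the integral with $\varphi\psi$ as test function---produces
\begin{equation*}
u(Z) \;=\; \int a_{ij}\,\partial_j G\cdot u\cdot\partial_i(\varphi\psi)\,dY \;-\; \int a_{ij}\, G\cdot \partial_i u\cdot\partial_j(\varphi\psi)\,dY.
\end{equation*}
Boundary contributions at $\{t=0\}$ are absent because $\psi$ vanishes there. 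I split the right-hand side into three pieces $I_1,I_2,I_3$ corresponding to the supports of $\psi'$ near $t=1/\ell$, of $\psi'$ near $t=\ell$, and of $\varphi'$ near $|y|=\ell/2$, and on each annular region apply Lemma \ref{Cacciopoli-type} (using $G\ge 0$) to replace $|\nabla G|\,|u|$ or $G\,|\nabla u|$ by $G\,|u|$ at the ambient scale.

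The estimates then combine the bound $|u|\le CtN(\nabla u)$ with the symmetrized Green's function decay $G(Y,Z)\le C\min(t_Y,z_{d+1})^\alpha/|Y-Z|^{d-1+\alpha}$, which follows from (\ref{Green's-function-estimate}) together with $G(Y,Z)=G(Z,Y)$. For $I_2$, where $t\sim\ell$ and $|Y-Z|\sim\ell$ so that $G\lesssim z_{d+1}^\alpha/\ell^{d-1+\alpha}$, H\"older's inequality yields a bound of the form $Cz_{d+1}^\alpha \ell^{1-\alpha-d/p}\|N(\nabla u)\|_p$, which tends to zero as $\ell\to\infty$ precisely when $p<d/(1-\alpha)$. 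For $I_3$, a dyadic decomposition in $t\in(0,2\ell)$ reproduces the same exponent, and the vanishing is reinforced by $\|N(\nabla u)\|_{L^p(|y|>\ell/4)}\to 0$. For the small-scale piece $I_1$, after H\"older's inequality the question reduces to integrability on $\mathbb{R}^d$ of the weight $\max(z_{d+1},|y-z|)^{-(d-1+\alpha)q}$ with $q=p'$, which holds iff $(d-1+\alpha)q>d$---again exactly $p<d/(1-\alpha)$---and gives $|I_1|=O(\ell^{-\alpha})$.

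The main obstacle will be that three rather different geometries must all be controlled by the single threshold $p<d/(1-\alpha)$: the tradeoff between Green's function decay and annular volume growth in $I_2$, the dyadic interplay between the pointwise factor $t$ and the decay $\min(t,z_{d+1})^\alpha$ in $I_3$, and the tail integrability of the Green-type weight in $I_1$ all hinge on the same conjugate exponent relation. Once the three pieces are shown to vanish as $\ell\to\infty$, we conclude $u(Z)=0$ for arbitrary $Z\in\mathbb{R}^{d+1}_+$, completing the proof.
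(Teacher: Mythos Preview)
Your proposal is correct and follows essentially the same route as the paper: start from the $I_1+I_2+I_3$ decomposition of Theorem \ref{Dirichlet-uniqueness}, replace the Dirichlet-type control by the pointwise bound $|u(y,t)|\le Ct\,N(\nabla u)(y)$ (which the paper also invokes, citing (\ref{generalized-maximal-operator-estimate})), and combine with the Green's function decay (\ref{Green's-function-estimate}) to show each piece is $O(\ell^{-\alpha})$ or $O(\ell^{1-\alpha-d/p})$. The paper's write-up is slightly leaner---it does not invoke Lemma \ref{nontangential-maximal-consequence} explicitly and handles $I_3$ without a dyadic decomposition, simply bounding $G(Y,Z)\le C_Z/\ell^{d-1+\alpha}$ via symmetry and integrating---but the substance and the role of the threshold $p<d/(1-\alpha)$ are identical to what you describe.
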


\begin{proof}
The proof begins in the same way as that of Theorem \ref{Dirichlet-uniqueness}.
As such, this leads to
$|u(Z)|\le I_1 +I_2 +I_3$, where $I_1$, $I_2$ and $I_3$ are defined in
(\ref{uniqueness-1}). To estimate $I_1$,
we use the estimate
(\ref{Green's-function-estimate})
and the observation that $|u(x,t)|\le Ct N(\nabla u)(x)$ for a.e. $x\in \mathbb{R}^d$.
This yields that
$$
I_1 =C\ell^2 \int^{\frac{2}{\ell}}_{\frac{1}{2\ell}}
\int_{|y|<\ell}
|G||u|\, dY
\le \frac{C}{\ell^\alpha}\int_{\mathbb{R}^d}
\frac{N(\nabla u) (y)\, dy}{|(y,0)-Z|^{d-1+\alpha}}
\le C\ell^{-\alpha} \|N(\nabla u)\|_p \to 0
$$
as $\ell\to \infty$, where we used the assumption $p<\frac{d}{1-\alpha}$.
Similarly,
$$
I_2=\frac{C}{\ell^2}
\int_{\frac{\ell}{2}}^{3\ell}\int_{|y|<\ell}
|G||u|\, dydt
\le \frac{C}{\ell^{d-1+\alpha}}
\int_{|y|<\ell} N(\nabla u)\, dy
\le C\ell^{1-\alpha-\frac{d}{p}} \| N(\nabla u)\|_p
$$
which also tends to zero as $\ell\to \infty$, since $p<\frac{d}{1-\alpha}$.
Finally, note that
$$
I_3=\frac{C}{\ell}
\int_0^{2\ell}\int_{\frac{\ell}{4}<|y|<2\ell}
|G||u|\, \frac{dydt}{t}
\le \frac{C}{\ell^{d-1+\alpha}} \int_{|y|<2\ell} |N(\nabla u)|\, dy
\to 0,
$$
as $\ell\to \infty$.
Thus we have proved that $I_1+I_2+I_3 \to 0$ as $\ell\to \infty$.
Hence $u(Z)=0$ for any $Z\in \mathbb{R}^{d+1}_+$.
\end{proof}

Recall that $f\in \dot{W}^{1,p}(\mathbb{R}^d)$ if $f\in W^{1,p}_{loc}(\mathbb{R}^d)$
and $\nabla_x f\in L^p(\mathbb{R}^d)$.
The following theorem provides the existence of solutions with boundary data in
$\dot{W}^{1,p}(\mathbb{R}^d)$.

\begin{thm}\label{regularity-existence}
Let $\mathcal{L}=-\text{div}(A\nabla )$ with coefficients satisfying
(\ref{real-symmetric})-(\ref{ellipticity}).
Suppose that the $L^p$ regularity problem for
$\mathcal{L}(u)=0$ in $\mathbb{R}^{d+1}_+$ is solvable.
Then for any $f\in \dot{W}^{1,p}(\mathbb{R}^d)$, there exists
a solution $u$ to $\mathcal{L}(u)=0$ in $\mathbb{R}^{d+1}_+$
such that $\| N(\nabla u)\|_p \le C\| f\|_p$
 and $u=f$ n.t. on $\mathbb{R}^d$.
\end{thm}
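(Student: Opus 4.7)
The plan is to construct $u$ as a limit of the classical bounded Dirichlet solutions associated to smooth compactly supported approximations of $f$, and then use the solvability hypothesis to control the limit. First, pick a sequence $f_k\in C_0^1(\mathbb{R}^d)$ with $\nabla_x f_k\to \nabla_x f$ in $L^p(\mathbb{R}^d)$; for the nonhomogeneous behavior this amounts to mollifying $f$ and then multiplying suitable translates $f-c_k$ by smooth cutoffs, a standard density argument in $\dot W^{1,p}$ modulo constants. Let $u_k$ denote the bounded solution to the classical Dirichlet problem in $\mathbb{R}^{d+1}_+$ with data $f_k$ from Section 2. Applying the assumed $(R)_p$ solvability to $f_k-f_j\in C_0^1$ yields
\[
\|N(\nabla(u_k-u_j))\|_p \le C\,\|\nabla_x(f_k-f_j)\|_p \longrightarrow 0.
\]

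Second, I upgrade the Cauchy-in-$L^p$ behavior of $N(\nabla u_k)$ to locally uniform convergence of $\nabla u_k$ on compact subsets of $\mathbb{R}^{d+1}_+$, using the interior pointwise bound $|\nabla w(X)|\le C\,\big(\fint_{B(X,t/2)}|\nabla w|^2\big)^{1/2}$ for $\mathcal{L}$-solutions $w$. After subtracting constants $c_k$ (chosen so that $u_k-c_k$ converges at a fixed base point $X_0\in \mathbb{R}^{d+1}_+$), I obtain $u_k-c_k\to u$ locally uniformly together with $\nabla u_k\to \nabla u$, so $\mathcal{L}(u)=0$ in $\mathbb{R}^{d+1}_+$. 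Fatou's lemma then gives
\[
\|N(\nabla u)\|_p \le \liminf_{k\to\infty}\|N(\nabla u_k)\|_p \le C\,\|\nabla_x f\|_p,
\]
which is the desired maximal estimate.

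The main obstacle is the nontangential attainment $u=f$ a.e., because a priori the limiting procedure loses information about the boundary trace up to an additive constant. To handle this I combine the pointwise inequality $|u_k(x,t)-u_k(x,0)|\le C\,t\,N(\nabla u_k)(x)$ from (\ref{generalized-maximal-operator-estimate}) with the Cauchy estimate above to conclude that for a.e.\ $x$, the family $(u_k-c_k)(x,t)$ converges nontangentially as $k\to\infty$ and $t\to 0$ to a common limit $g(x)$. Lemma \ref{nontangential-maximal-consequence} applied to $u$ then guarantees $g\in \dot W^{1,p}(\mathbb{R}^d)$ with $\nabla_x g$ realized as the weak $L^p$ limit of the averages $V_i(\cdot,t)$; applying the same lemma to each $u_k$ identifies the corresponding weak limits as $\nabla_x f_k$, and passing $k\to\infty$ yields $\nabla_x g=\nabla_x f$ in $L^p$. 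Hence $g-f$ is constant, and by selecting the normalization constants $c_k$ appropriately, for instance so that $\fint_{B(0,1)}(u_k-c_k-f_k)\,dx\to 0$, this constant is zero. This completes the identification $u=f$ n.t.\ and finishes the proof.
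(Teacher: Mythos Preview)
Your overall strategy---approximate $f$ by $f_k\in C_0^1$, use the Cauchy estimate $\|N(\nabla(u_k-u_j))\|_p\le C\|\nabla_x(f_k-f_j)\|_p$, extract a limit $u$, and identify the trace via Lemma~\ref{nontangential-maximal-consequence} and the weak convergence of the averages $V_i(\cdot,t)$---is exactly the paper's approach, and the boundary-trace portion of your argument matches the paper's almost verbatim.

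There is, however, a genuine gap in your construction of the limit. You invoke the interior pointwise bound
\[
|\nabla w(X)|\le C\Big(\fint_{B(X,t/2)}|\nabla w|^2\Big)^{1/2}
\]
for $\mathcal{L}$-solutions $w$, in order to obtain \emph{locally uniform} convergence of $\nabla u_k$. This estimate is \emph{not} available under the hypotheses of the theorem: the coefficients satisfy only (\ref{real-symmetric})--(\ref{ellipticity}), i.e., they are merely bounded measurable, real and symmetric. De Giorgi--Nash--Moser gives H\"older continuity of the solutions themselves, but in general there is no pointwise control of $\nabla w$ (Meyers-type constructions produce solutions whose gradients fail to be locally bounded). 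The paper avoids this entirely by working at the level of $L^2$ convergence: from $\|N(\nabla(v_k-v_\ell))\|_p\to 0$ one deduces $\nabla(v_k-v_\ell)\to 0$ in $L^2(\Omega_m)$ for each truncated region $\Omega_m=\{|x|<3m,\ 1/(3m)<t<3m\}$, obtains $u_m\in W^{1,2}(\Omega_m)$ as the limit of $v_k-\alpha(k,m)$, and then patches the $u_m$ together modulo additive constants. Your argument is easily repaired along these lines; alternatively, you could use De Giorgi--Nash to get locally uniform convergence of $u_k-c_k$ (not of $\nabla u_k$) and then pass gradients to the limit only weakly in $L^2_{\mathrm{loc}}$, which is all that is needed for $\mathcal{L}(u)=0$ and for the Fatou step.
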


\begin{proof}
Let $f\in \dot{W}^{1,p}(\mathbb{R}^d)$ and choose
$f_k\in C_0^1(\mathbb{R}^d)$ such that $\nabla_x f_k \to \nabla_x f$ in $L^p(\mathbb{R}^d)$.
Let $v_k$ be the solution to the classical Dirichlet problem in $\mathbb{R}^{d+1}_+$
with boundary data $f_k$.
By assumption, we have
$\| N(\nabla v_k)\|_p \le C\| \nabla_x f_k\|_p$
and
$\| N(\nabla v_k -\nabla v_\ell )\|_p \le C\| \nabla_x (f_k-f_\ell)\|_p$.
It follows that for any $m>1$, $\nabla (v_k -v_\ell)\to 0$
in $L^2(\Omega_m)$ as $k, \ell\to\infty$, where
$$
\Omega_m =\left\{ (x,t)\in \mathbb{R}^{d+1}_+: |x|<3m \text{ and }
\frac{1}{3m}< t< 3m\right\}.
$$
Thus there exists $u_m\in W^{1,2}(\Omega_m)$ such that
$v_k -\alpha(k,m) \to u_m$ in $W^{1,2}(\Omega_m)$ as $k\to \infty$, where $\alpha(k,m)$
is the average of $v_k$ over $\Omega_m$. Clearly,
$\mathcal{L}(u_m)=0$ in $\Omega_m$.
By a limiting argument, we also have
\begin{equation}\label{regularity-theorem-1}
\| \widetilde{N}_m (\nabla u_m)\|_{L^p(B(0, m))}
\le C\, \| \nabla_x f\|_p,
\end{equation}
where the nontangential maximal function
$\widetilde{N}_m (\nabla u)$ is defined in a manner similar to $N_m$,
but the variable $t$ in (\ref{nontangential-maximal})
is restricted to $(1/m)<t<m$.

Next, since $u_{m+1}-u_m$ is
constant in $\Omega_m$,
one may define a function $u\in W^{1,p}_{loc}(\mathbb{R}^{d+1}_+)$
such that $u-u_m$ is constant in $\Omega_m$ for any $m\ge 1$.
It follows that $u$ is a weak solution in $\mathbb{R}^{d+1}_+$. 
Moreover, in view of (\ref{regularity-theorem-1}), we have
\begin{equation}\label{regularity-theorem-2.5}
\| \widetilde{N}_m (\nabla u)\|_{L^p(B(0, m))}
\le C\, \| \nabla_x f\|_p.
\end{equation}
Letting $m\to \infty$ in (\ref{regularity-theorem-2.5}) gives
$\| N(\nabla u)\|_p \le C\| \nabla_x f\|_p$.
Similar argument also gives
\begin{equation}\label{regularity-theorem-2}
\|N(\nabla u-\nabla v_k)\|_p \le C\|\nabla_x (f-f_k)\|_p.
\end{equation}

Finally we note that by Lemma \ref{nontangential-maximal-consequence},
$u$ converges n.t. to $h\in \dot{W}^{1,p}(\mathbb{R}^d)$ and
the average of $\frac{\partial u}{\partial x_i}$
over $B(x,t)\times (t/2,3t/2)$ converges weakly to $\frac{\partial h}{\partial x_i}$
in $L^p(\mathbb{R}^{d})$ as $t\to 0$.
This, together with  the 
weak convergence of the average of $\frac{\partial v_k}{\partial x_i}$
to $\frac{\partial f_k}{\partial x_i}$ and estimate (\ref{regularity-theorem-2}),
implies that $\frac{\partial h}{\partial x_i}
=\frac{\partial f}{\partial x_i}$ on $\mathbb{R}^d$ for
$i=1,\dots,d$. It follows that $f-h$ is constant.
Thus, by subtracting a constant, we obtain
$u=f$ n.t. on $\mathbb{R}^d$.
\end{proof}

We conclude this section with two more theorems on the consequences of the solvability
of the regularity problem.

\begin{thm}\label{regularity-imply-Dirichlet}
Let $\mathcal{L}=-\text{div}(A\nabla )$
with coefficients satisfying (\ref{real-symmetric})-(\ref{ellipticity}).
Let $1<p,q<\infty$ and $\frac{1}{p}+\frac{1}{q}=1$.
Then 
for $\mathcal{L}(u)=0$ in $\mathbb{R}^{d+1}_+$,
the solvability of the $L^p$ regularity problem implies
the solvability of the $L^q$ Dirichlet problem.
\end{thm}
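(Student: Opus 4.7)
The plan is to establish a weak reverse H\"older inequality for the Poisson kernel $K(\cdot, Z) = d\omega^Z/dx$,
\begin{equation*}
\left(\frac{1}{|B|}\int_B K(y, Z)^p\,dy\right)^{1/p} \le \frac{C}{|2B|}\int_{2B} K(y, Z)\,dy,
\end{equation*}
for every ball $B = B(x_0, r) \subset \mathbb{R}^d$ and every $Z \in \mathbb{R}^{d+1}_+ \setminus T(x_0, 4r)$. Given this, the standard self-improvement property of reverse H\"older upgrades it to some exponent $p_1 > p$, and the argument in the proof of Lemma \ref{Dirichlet-existence} (dyadic decomposition of $\mathbb{R}^d$ around the cone vertex together with the decay \eqref{Green's-function-estimate}) yields $(u)^*(x) \le C\{M(|f|^{p_1'})(x)\}^{1/p_1'}$ for the classical Dirichlet solution $u$. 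Since $p_1' < q = p'$, the Hardy--Littlewood maximal inequality gives $\|(u)^*\|_q \le C\|f\|_q$, and Theorem \ref{Dirichlet-solvability} then closes the $L^q$ Dirichlet solvability.

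For the reverse H\"older inequality, the symmetry of $A$ is essential. Since $\mathcal{L}^* = \mathcal{L}$, the Green's function is symmetric, $G(X, Z) = G(Z, X)$, so $v_Z(X) := G(X, Z)$ is a nonnegative weak solution of $\mathcal{L}$ in $T(x_0, 3r)$ with zero Dirichlet data on $B(x_0, 3r) \times \{0\}$. The Poisson kernel is (up to a bounded factor from $A$) the conormal derivative of $v_Z$ at $t = 0$, and so $\|K(\cdot, Z)\|_{L^p(B)}$ is controlled by $\|N_{r/2}(\nabla v_Z)\|_{L^p(B)}$ (the boundary-trace assertion used implicitly in Lemma \ref{nontangential-maximal-consequence}, applied to the full gradient). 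I would then localize the $(R)_p$ hypothesis by multiplying $v_Z$ with a cutoff $\chi \in C_c^\infty$ supported in $T(x_0, 3r)$ and equal to $1$ on $T(x_0, 2r)$: the function $w := \chi v_Z$ solves $\mathcal{L}w = [\mathcal{L}, \chi]v_Z$ in $\mathbb{R}^{d+1}_+$ with zero boundary data. Applying $(R)_p$ to the harmonic-extension component of $w$ plus a Green's-function representation of the commutator term yields a local a priori bound on $\|N_{r/2}(\nabla v_Z)\|_{L^p(B)}$ in terms of interior norms of $v_Z$ on $T(x_0, 3r) \setminus T(x_0, r)$.

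Finally, the positivity of $v_Z$ together with the interior Harnack inequality give $v_Z(X) \le C v_Z((x_0, r))$ throughout $T(x_0, 3r) \setminus T(x_0, r)$, and \eqref{harmonic-measure-Green's-function} yields $v_Z((x_0, r)) \approx \omega^Z(B(x_0, r))/r^{d-1}$. A short scaling computation then produces the reverse H\"older inequality. The main technical obstacle is the localization step: $(R)_p$ as stated requires boundary data in $\dot{W}^{1, p}(\mathbb{R}^d)$ and is global in nature, whereas $v_Z$ has a singularity at $Z$ and the estimate sought is purely local. The cutoff $\chi$ introduces a source term $[\mathcal{L}, \chi]v_Z$ supported in $T(x_0, 3r) \setminus T(x_0, 2r)$, and the delicate point is absorbing this source, via $L^p$ estimates for its Green's potential, without spoiling the scaling required for the reverse H\"older inequality on $K(\cdot, Z)$.
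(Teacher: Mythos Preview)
Your strategy is correct and is precisely the approach of Theorems 5.4 and 5.19 in \cite{KP-1993}, to which the paper defers: exploit the symmetry of $A$ so that $v_Z=G(\cdot,Z)$ is a local solution of $\mathcal{L}$ with zero Dirichlet data, localize the $(R)_p$ estimate to bound $N_{r/2}(\nabla v_Z)$ and hence $K(\cdot,Z)=-\partial_\nu v_Z$ in $L^p(B)$, and finish with Harnack plus \eqref{harmonic-measure-Green's-function} to obtain the reverse H\"older inequality. The localization you flag as the obstacle is exactly the $L^p$ analogue of Theorem~\ref{regularity-localization} in this paper, and your decomposition of $w=\chi v_Z$ into a fundamental-solution potential of the commutator plus an $\mathcal{L}$-harmonic correction (to which the global $(R)_p$ applies) is how that localization is carried out in \cite{KP-1993}.
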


\begin{thm}\label{regularity-localization}
Let $\mathcal{L}=-\text{div}(A\nabla )$ 
with coefficients satisfying (\ref{real-symmetric})-(\ref{ellipticity}).
Suppose that the $L^2$ regularity problem for $\mathcal{L} (u)=0$ in
$\mathbb{R}^{d+1}_+$ is solvable.
Let $u\in W^{1,2}(T(x_0,8r))$ be a weak solution in $T(x_0, 8r)$ such that
$N_r (\nabla u)\in L^2(B(x_0,4r))$ and $u=f$ on $B(x_0,4r)$.
Then
\begin{equation}\label{regularity-localization-estimate}
\int_{B(x_0,r)}
|N_{\frac{r}{2}}(\nabla u)|^2\, dx
\le C\int_{B(x_0,2r)} |\nabla_x f|^2\, dx
+\frac{C}{r}
\int_{T(x_0,2r)} |\nabla u|^2\, dX.
\end{equation}
\end{thm}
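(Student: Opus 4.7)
The plan is to reduce the localization estimate to the assumed global $L^2$ solvability via a standard cutoff decomposition of $u$. Choose $\varphi\in C_c^\infty(B(x_0,2r))$ with $\varphi\equiv 1$ on $B(x_0,3r/2)$ and $|\nabla\varphi|\le C/r$; let $c$ denote the average of $f$ over $B(x_0,2r)$; and set $g:=(f-c)\varphi$, extended by zero to $\mathbb{R}^d$. Poincar\'e's inequality on $B(x_0,2r)$ gives $\|\nabla_x g\|_{L^2(\mathbb{R}^d)}\le C\|\nabla_x f\|_{L^2(B(x_0,2r))}$. By the hypothesis that the $L^2$ regularity problem is solvable and Theorem \ref{regularity-existence}, there exists a weak solution $v$ of $\mathcal{L}(v)=0$ in $\mathbb{R}^{d+1}_+$ with $v=g$ n.t. on $\mathbb{R}^d$ and
$$
\|N(\nabla v)\|_{L^2(\mathbb{R}^d)}\le C\|\nabla_x f\|_{L^2(B(x_0,2r))}.
$$
Set $w:=u-v-c$; then $w$ is a weak solution of $\mathcal{L}(w)=0$ in $T(x_0,8r)$ that converges nontangentially to $0$ on $B(x_0,3r/2)\times\{0\}$ (since $\varphi\equiv 1$ there, so $g=f-c$ n.t. on that ball).

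Since $\nabla u=\nabla w+\nabla v$ and $|N_{r/2}(\nabla u)|^2\le 2|N_{r/2}(\nabla w)|^2+2|N(\nabla v)|^2$, integration over $B(x_0,r)$ reduces the problem to showing the local estimate
$$
\int_{B(x_0,r)}|N_{r/2}(\nabla w)|^2\,dx\le\frac{C}{r}\iint_{T(x_0,2r)}|\nabla w|^2\,dX
$$
for a weak solution $w$ on $T(x_0,8r)$ with vanishing nontangential trace on $B(x_0,3r/2)\times\{0\}$, since the $v$-contribution is already bounded by $C\|\nabla_x f\|_{L^2(B(x_0,2r))}^2$. Establishing this local estimate is the main obstacle. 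The approach is to combine interior Cacciopoli with the Hardy-type bound $|w(z,\tau)|^2\le \tau\int_0^\tau|\nabla w(z,s)|^2\,ds$ (which holds because $w(z,0)=0$ for $z\in B(x_0,3r/2)$): for $y\in B(x_0,r)$ and $0<t<r/2$, Cacciopoli applied with zero constant on the interior box $B(y,t)\times(t/2,3t/2)$ yields
$$
\frac{1}{t|B(y,t)|}\int_{t/2}^{3t/2}\int_{B(y,t)}|\nabla w|^2\,dz\,d\tau
\le \frac{C}{t^{d+1}}\iint_{B(y,2t)\times(0,3t)}|\nabla w|^2\,dz\,d\tau.
$$
The supremum in $t$ is then dominated by a nontangential-type maximal operator $\widetilde M$ acting on $|\nabla w|^2\chi_{T(x_0,2r)}$; combining the reverse H\"older self-improvement of $|\nabla w|^2$ for weak solutions (Meyers) with the $L^p$-boundedness of $\widetilde M$ for $p>1$, together with a Vitali covering of $B(x_0,r)$, upgrades the pointwise estimate into the desired $L^1$ bound.

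Finally, $\iint_{T(x_0,2r)}|\nabla w|^2\le 2\iint|\nabla u|^2+2\iint|\nabla v|^2$, and a direct Fubini argument using the definition of $N(\nabla v)$ shows $\iint_{T(x_0,2r)}|\nabla v|^2\le Cr\|N(\nabla v)\|_{L^2(\mathbb{R}^d)}^2\le Cr\|\nabla_x f\|_{L^2(B(x_0,2r))}^2$, so the term $r^{-1}\iint_{T(x_0,2r)}|\nabla v|^2$ is absorbed into $C\|\nabla_x f\|_{L^2(B(x_0,2r))}^2$ on the right-hand side. Assembling all pieces yields (\ref{regularity-localization-estimate}).
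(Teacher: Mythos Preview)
Your decomposition $u=v+w+c$ is the standard reduction, and you correctly identify the estimate for the zero-trace piece $w$ as the main obstacle. The paper itself omits the proof and refers to Theorem~5.19 in \cite{KP-1993}; the argument there (and the closely related Theorem~\ref{regularity-localization-theorem} in this paper, proved under the $(D)_2$ hypothesis) handles the $w$-piece via comparison with the Green's function and the reverse H\"older inequality for the $\mathcal{L}$-harmonic measure. Since $(R)_2$ implies $(D)_2$ by Theorem~\ref{regularity-imply-Dirichlet}, that machinery is available here.

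Your proposed shortcut for the $w$-estimate, however, has a genuine gap. After Cacciopoli and the Hardy bound you arrive at
\[
|N_{r/2}(\nabla w)(y)|^2 \le C\,\widetilde M\big(|\nabla w|^2\chi_{T(x_0,2r)}\big)(y),
\qquad
\widetilde M(F)(y):=\sup_{0<t<r/2}\frac{1}{t^{d+1}}\iint_{B(y,Ct)\times(0,Ct)} F.
\]
But $\widetilde M$ is a ``tent'' maximal operator taking functions on $\mathbb{R}^{d+1}_+$ to functions on $\mathbb{R}^d$, and it is \emph{not} bounded from $L^p(\mathbb{R}^{d+1}_+)$ to $L^p(\mathbb{R}^d)$ for any $p$; indeed, for $F$ concentrated near $\{t=0\}$ one can make $\widetilde M(F)$ infinite on a set of positive measure while $\int F$ stays bounded. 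Interior Meyers self-improvement does not rescue this: it gives reverse H\"older on balls away from the boundary, not the boundary decay of $|\nabla w|^2$ needed to tame $\widetilde M$. The information that actually controls $\widetilde M(|\nabla w|^2)$ is precisely the comparison $|w(x,t)|\lesssim G((x,t),X_0)\cdot(\text{normalization})$ from Lemma~\ref{new-comparison-lemma}, followed by $G((x,t),X_0)/t\approx$ the kernel $K$ and $\int K^2\le C$ from the reverse H\"older for harmonic measure --- exactly the route in the proof of Theorem~\ref{regularity-localization-theorem}. So your outline is fine up to the point where you write ``Meyers $+$ $L^p$-boundedness of $\widetilde M$ $+$ Vitali''; that step needs to be replaced by the Green's function comparison argument.
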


We omit the proof of both theorems and refer 
the reader to Theorems 5.4 and 5.19 in \cite{KP-1993}, 
 where the analogous
results were proved in the case of the unit ball.
We point out that Theorem 5.19 in \cite{KP-1993}
was stated for solutions in the whole domain.
However an inspection of its proof, which extends readily to the
case of $\mathbb{R}^{d+1}_+$,  shows that the conclusion 
holds for local solutions.
Theorem \ref{regularity-localization} shows that
the local solvability condition (\ref{local-regularity-condition})
is necessary.

\section{$L^p$ estimates for the regularity problem}

In this section we study the $L^p$ regularity problem in $\mathbb{R}^{d+1}_+$
for $1<p< 2+\delta$.

\begin{thm}\label{regularity-theorem-p}
Let $\mathcal{L}=-\text{div}(A\nabla u )$
with coefficients satisfying (\ref{real-symmetric})-(\ref{ellipticity}).
Suppose that $(R)_2$ for $\mathcal{L}(u)=0$
in $\mathbb{R}^{d+1}_+$ is solvable.
Then $(R)_p$ for $\mathcal{L}(u)=0$
in $\mathbb{R}^{d+1}_+$ is solvable for
$1<p<2+\delta$.
\end{thm}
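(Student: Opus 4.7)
The plan is to treat the ranges $1<p<2$ and $2<p<2+\delta$ by different mechanisms, each using the $L^2$ solvability of $(R)_2$ as the starting point.

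For $2<p<2+\delta$ I would apply the real-variable approach of Shen (\cite{shen-2006-ne, shen-2007-boundary}). The key step is to verify that the sublinear operator $T:\nabla_x f\mapsto N(\nabla u_f)$ satisfies a weak reverse Hölder condition on boundary balls: there exists some $q>2$ such that for every $B=B(x_0,r)\subset \mathbb{R}^d$ and every $f\in \dot{W}^{1,2}(\mathbb{R}^d)$ with $\nabla_x f\equiv 0$ on $4B$,
\begin{equation*}
\left(\frac{1}{|B|}\int_B |N(\nabla u_f)|^q\,dx\right)^{1/q}
\le C\left(\frac{1}{|2B|}\int_{2B} |N(\nabla u_f)|^2\,dx\right)^{1/2}.
\end{equation*}
Granted this, Shen's theorem together with the $(R)_2$ bound yields $\|N(\nabla u)\|_p\le C\|\nabla_x f\|_p$ for all $2<p<q$, and we set $\delta=q-2$. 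To establish the weak reverse Hölder inequality I would use Theorem \ref{regularity-localization}: on $T(x_0,2r)$ the solution has boundary data vanishing on $4B$, so interior Cacciopoli plus Hölder continuity of solutions that vanish on a boundary ball (coming from the local solvability condition, i.e.\ the exponent $\alpha$ in (\ref{Green's-function-estimate})) upgrades the $L^2$ average of $|\nabla u|$ to an $L^q$ average for some $q>2$. Passing this improvement through the definition of the nontangential maximal function $N$, and invoking (\ref{regularity-localization-estimate}), produces the claimed inequality.

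For $1<p<2$ I would follow the atomic / Calderón--Zygmund decomposition scheme of Dahlberg--Kenig \cite{Dahlberg-Kenig-1987} and Kenig--Pipher \cite{KP-1993}. Given $f\in \dot{W}^{1,p}(\mathbb{R}^d)$ and $\lambda>0$, perform a Calderón--Zygmund decomposition of $\nabla_x f$ at height $\lambda$ to write $f=g+\sum_k b_k$, where $\nabla_x g\in L^2$ with $\|\nabla_x g\|_2^2\lesssim \lambda^{2-p}\|\nabla_x f\|_p^p$ and each $b_k$ is supported in a cube $Q_k$ with $\int_{Q_k}\nabla_x b_k=0$, $\|\nabla_x b_k\|_\infty\lesssim \lambda$, and $\sum|Q_k|\lesssim \lambda^{-p}\|\nabla_x f\|_p^p$. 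For the good piece, apply the $L^2$ regularity estimate directly. For each atomic piece $b_k$, split $\mathbb{R}^d$ into $Q_k^*$ (a fixed dilate of $Q_k$) and its complement, apply the $L^2$ estimate on $Q_k^*$ using Chebyshev, and on $(Q_k^*)^c$ use the cancellation of $\nabla_x b_k$ together with Green's function / comparison principle decay estimates, following the template of \cite{KP-1993}. Summing the resulting weak-type bounds yields $|\{N(\nabla u)>\lambda\}|\lesssim \lambda^{-p}\|\nabla_x f\|_p^p$, and then Marcinkiewicz interpolation with the $L^2$ estimate provides $(R)_p$ for $1<p<2$.

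The main obstacle, I expect, lies in the $p>2$ half: checking the precise form of the weak reverse Hölder inequality required by Shen's theorem, particularly the coupling between the nontangential maximal function $N(\nabla u)$ on the boundary and interior energy estimates. In the $p<2$ half, the existing template of \cite{Dahlberg-Kenig-1987, KP-1993} is robust and transfers with only bookkeeping changes, since the decay and comparison inputs have already been arranged in Section 2; the role of the periodicity in $t$ enters only indirectly, through the $L^2$ input $(R)_2$ established in Section 5. Existence of a solution with the required non-tangential convergence to $f$ then follows from Theorem \ref{regularity-existence}, and uniqueness from Theorem \ref{regularity-uniqueness} (applicable for $1<p<d/(1-\alpha)$, which covers the range $p<2+\delta$ once $\delta$ is taken small enough).
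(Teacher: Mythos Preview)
Your two-regime strategy is exactly the paper's: atoms for $1<p<2$, Shen's real-variable method for $2<p<2+\delta$.  For $1<p<2$ the paper phrases it as the $H^1_{at}$ endpoint (show $\|N(\nabla u)\|_1\le C$ for a single regularity atom and interpolate) rather than a Calder\'on--Zygmund decomposition, but these are equivalent; the required off-support decay (your ``bad piece'' estimate) is obtained in the paper from Theorem~\ref{regularity-localization-theorem} together with the pointwise bound $|u(X)|\le \|f\|_\infty\,\omega^X(B(x_0,r))\le C\,G(X,X_0)$.

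The one place where your sketch diverges from the paper is the source of the weak reverse H\"older inequality when $p>2$.  Your appeal to ``H\"older continuity of solutions that vanish on a boundary ball'' and the De~Giorgi--Nash exponent~$\alpha$ does not by itself upgrade an $L^2$ average of $N(\nabla u)$ to $L^q$ with $q>2$; $\alpha$ is available for every operator satisfying (\ref{real-symmetric})--(\ref{ellipticity}) and contains no solvability information.  The paper's route is instead through harmonic measure: $(R)_2\Rightarrow (D)_2$ by Theorem~\ref{regularity-imply-Dirichlet}, hence $K=d\omega/dx$ satisfies the reverse H\"older inequality (\ref{reverse-Holder}), which self-improves to exponent $q>2$.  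The comparison principle (Lemma~\ref{new-comparison-lemma}) then yields the pointwise bound $N_r(\nabla u)(x)\le Cr^{d-1}M(\chi_{2B}K)(x)\cdot(\text{energy})^{1/2}$ (Theorem~\ref{regularity-localization-theorem}, Remark~\ref{weak-reverse-Holder-inequality-remark}), and it is the $B_q$ property of $K$ that produces the exponent $q>2$ in (\ref{weak-reverse-Holder}).  An alternative, closer in spirit to what you wrote, would be to mimic Remark~\ref{Neumann-localization-remark}: combine Theorem~\ref{regularity-localization} with Cacciopoli, the $L^\infty$--$L^1$ estimate for solutions vanishing on a boundary portion, and $|u(x,t)-u(x,0)|\le Ct\,N(\nabla u)(x)$ to get a weak $(2,1)$ reverse H\"older for $N(\nabla u)$, then self-improve.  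Either way, ``H\"older continuity'' alone is not the operative mechanism.
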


In the case of the unit ball, Kenig and Pipher \cite{KP-1993} proved that
the solvability of $(R)_q$ implies the solvability of
$(R)_p$ for $1<p<q+\delta$.
Although it is possible to extend their results to the case of the upper half-space and
thus obtain the $L^p$ solvability for $1<p<2+\delta$ from the $L^2$ solvability,
we will provide a more direct proof.

For the range $1<p<2$,
we follow the approach used
in \cite{Dahlberg-Kenig-1987} (also see \cite{KP-1993}) by proving
$\| N(\nabla u)\|_1\le C$ for solutions of the $L^2$ regularity problem
with $H^1_{at}$ data.
Our approach to the range $2<p<2+\delta$,
which is based on 
a real variable argument
developed by Shen \cite{shen-2006-ne, shen-2007-boundary},
 is different from that used in \cite{Dahlberg-Kenig-1987, KP-1993}.
The rather general approach reduces the problem to certain weak
reverse H\"older estimates. In particular, our argument
 shows that if $(R)_2$
for $\mathcal{L}(u)=0$ in $\mathbb{R}^{d+1}_+$ is solvable,
then the solvability of $(D)_q$
for some $q<2$ implies
the solvability of $(R)_p$, 
where $\frac{1}{p}+\frac{1}{q}=1$.

We start with a comparison principle.

\begin{lemma}\label{new-comparison-lemma}
Let $u, v\in C(\overline{T(x_0,3r)})$ be two weak solutions in $T(x_0,3r)$
such that $u(x,0)=v(x,0)=0$ on $B(x_0,3r)$. Suppose that
$v$ is nonnegative in $T(x_0,3r)$. Then
$$
|u(x,t)|\le C\cdot \frac{v(x,t)}{v(x_0,r)}
\left(\frac{1}{r^{d+1}}
\iint_{T(x_0,3r)} |u(y,s)|^2\, dyds\right)^{1/2}
$$
for any $(x,t)\in T(x_0,r)$, where $C$ depends only on $d$ and $\mu$.
\end{lemma}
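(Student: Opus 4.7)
The plan is to dominate $|u|$ on the smaller cylinder $T(x_0, 2r)$ by a \emph{nonnegative} $\mathcal{L}$-solution $w$ that also vanishes on the bottom, and then apply the comparison principle (\ref{comparison-principle}) to the pair $(w, v)$. Throughout the argument, set
\begin{equation*}
M := \left(\frac{1}{r^{d+1}} \iint_{T(x_0, 3r)} |u(y,s)|^2 \, dy ds\right)^{1/2}.
\end{equation*}
First I would establish the pointwise bound $\sup_{T(x_0, 2r)} |u| \leq C M$ via the standard boundary De Giorgi--Nash--Moser estimate up to the flat piece $B(x_0, 3r) \times \{0\}$ on which $u$ vanishes; this requires only the ellipticity of $A$, and the constant depends only on $d$ and $\mu$.

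Next I would introduce the auxiliary function $w$ as the $\mathcal{L}$-harmonic (Perron) solution in $T(x_0, 2r)$ with boundary data equal to $CM$ on the lateral-plus-top portion of $\partial T(x_0, 2r)$ and to $0$ on the bottom $B(x_0, 2r) \times \{0\}$; equivalently,
\begin{equation*}
w(X) = CM \cdot \omega^X\bigl(\partial T(x_0, 2r) \setminus B(x_0, 2r)\bigr),
\end{equation*}
where $\omega^X$ is the $\mathcal{L}$-harmonic measure for the cylinder, constructed exactly as in Section~2. By construction $w$ is a nonnegative weak solution in $T(x_0, 2r)$, continuous on $T(x_0, 2r) \cup (B(x_0, 2r) \times \{0\})$ with $w \equiv 0$ on the bottom, and $0 \leq w \leq CM$ throughout. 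Applying the weak maximum principle to the solutions $w - u$ and $w + u$, both of which have nonnegative boundary values by the first step, yields $|u| \leq w$ in $T(x_0, 2r)$.

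Finally, both $w$ and $v$ are nonnegative weak solutions in $T(x_0, 2r)$, continuous down to and vanishing on $B(x_0, 2r)$, so the comparison principle (\ref{comparison-principle}) gives
\begin{equation*}
\frac{w(x,t)}{v(x,t)} \leq C\, \frac{w(x_0, r)}{v(x_0, r)} \leq \frac{C M}{v(x_0, r)} \quad \text{for } (x,t) \in T(x_0, r),
\end{equation*}
since $w(x_0, r) \leq CM$. Combining with $|u(x,t)| \leq w(x,t)$ then gives exactly the desired estimate. The main technical wrinkle — not really an obstacle — is the possible discontinuity of $w$ at the corner $\partial B(x_0, 2r) \times \{0\}$, which blocks a direct application of (\ref{comparison-principle}) as stated; this is sidestepped by approximating the discontinuous boundary data with continuous cutoffs supported away from the corner, applying (\ref{comparison-principle}) to the approximants, and passing to the limit via monotone convergence of harmonic measures. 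Alternatively, one can shrink the reference radius slightly (replace $2r$ by $r' \in (r, 2r)$) before invoking the comparison principle and then relate $w(x_0, r')$ to $w(x_0, r)$ by the interior Harnack inequality for the nonnegative solution $v$.
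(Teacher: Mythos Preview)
The paper does not supply a proof of this lemma; it simply refers the reader to Lemma~2.5 in \cite{Shen-2007-relationship}. Your argument is correct and is the standard one for this estimate: a boundary De~Giorgi--Nash--Moser sup bound, domination of $|u|$ by a nonnegative solution $w$ vanishing on the bottom via the maximum principle, and then the comparison principle (\ref{comparison-principle}) applied to the pair $(w,v)$.

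One minor imprecision in your workaround (b): applying (\ref{comparison-principle}) as stated (with its fixed factor-of-two reduction) on $T(x_0,r')$ for $r'\in(r,2r)$ only controls the ratio on $T(x_0,r'/2)\subsetneq T(x_0,r)$, so the radii do not close up. The cleanest patch is instead to construct $w$ on the slightly larger cylinder $T(x_0,5r/2)$ (the sup bound $|u|\le CM$ on $\overline{T(x_0,5r/2)}$ is available from the boundary Moser estimate in $T(x_0,3r)$); then $w$ is automatically continuous on all of $\overline{T(x_0,2r)}$, since the only possible discontinuities of $w$ live on the corner $\partial B(x_0,5r/2)\times\{0\}$, and (\ref{comparison-principle}) applies on $T(x_0,2r)$ verbatim to give the conclusion on $T(x_0,r)$. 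Your monotone-approximation workaround (a) is fine as well.
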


\begin{proof} See Lemma 2.5 in \cite{Shen-2007-relationship}.
\end{proof}

The following is a localization result similar to Theorem \ref{regularity-localization}.
Note that here we assume the solvability of $(D)_2$ instead of the solvability of $(R)_2$.

\begin{thm}\label{regularity-localization-theorem}
Let $\mathcal{L}$ be an elliptic operator with coefficients
satisfying (\ref{real-symmetric})-(\ref{ellipticity}).
Assume that the $L^2$ Dirichlet problem for $\mathcal{L}(u)=0$
in $\mathbb{R}^{d+1}_+$ is solvable.
Then
\begin{equation}\label{regularity-localization-estimate-1}
\int_{B(x_0,r)}
|N_r (\nabla u)|^2\, dx
\le \frac{C}{r^3}
\iint_{T(x_0,4r)} |u(x,t)|^2\, dxdt,
\end{equation}
where $u\in C(\overline{T(x_0,6r)})$ is a weak solution of $\mathcal{L}(u)=0$
in $T(x_0,6r)$ and $u(x,0)=0$ on $B(x_0,6r)$.
\end{thm}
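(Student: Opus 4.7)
The strategy is to bound $N_r(\nabla u)$ by a two-step argument: first combine interior Cacciopoli with a pointwise comparison of $u$ against a Green's-function-type reference solution, and then convert the resulting pointwise estimate into an $L^2$ bound using the reverse H\"older inequality (\ref{reverse-Holder}) for the kernel of $\mathcal{L}$-harmonic measure, which is available under the $(D)_2$ hypothesis (either directly or after self-improvement). Since $u$ itself need not be one-signed, the pointwise comparison tool is Lemma \ref{new-comparison-lemma}, which is designed precisely to bound $|u|$ by a nonnegative reference solution vanishing on the boundary.

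For $x\in B(x_0,r)$ and $0<t<r/2$, the region $B(x,t)\times(t/2,3t/2)$ sits in the interior of $\mathbb{R}^{d+1}_+$ at scale $t$, so interior Cacciopoli gives
$$\frac{1}{t|B(x,t)|}\int_{t/2}^{3t/2}\!\int_{B(x,t)}|\nabla u|^2\,dyds \le \frac{C}{t^2}\cdot\frac{1}{t|B(x,t)|}\int_{t/4}^{7t/4}\!\int_{B(x,2t)}|u|^2\,dyds.$$
I would then pick a reference point $Z_0=(x_0,cr)$ outside the working cylinder and set $v(X)=G(X,Z_0)$, a positive weak solution vanishing on $B(x_0,6r)\times\{0\}$. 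Lemma \ref{new-comparison-lemma}, applied at an appropriate scale, yields
$$|u(y,s)|\le C\,\frac{v(y,s)}{v(x_0,2r)}\left(\frac{1}{r^{d+1}}\iint_{T(x_0,4r)}|u|^2\right)^{1/2}$$
on the required region. Using $v\ge 0$ and an interior Harnack chain to bound $v(y,s)$ on $B(x,2t)\times(t/4,7t/4)$ by $Cv(x,t)$, and then taking the supremum over $t$, I obtain
$$N_r(\nabla u)(x)^2\le \frac{CF(x)^2}{v(x_0,2r)^2}\cdot\frac{1}{r^{d+1}}\iint_{T(x_0,4r)}|u|^2,\qquad F(x):=\sup_{0<t<r/2}\frac{v(x,t)}{t}.$$

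To bound $\int_{B(x_0,r)}F^2$, the identification $v(x,t)/t\approx \omega^{Z_0}(B(x,t))/|B(x,t)|$ from (\ref{harmonic-measure-Green's-function}) gives $F(x)\le CM(K(\cdot,Z_0))(x)$, where $K(\cdot,Z_0)=d\omega^{Z_0}/dx$ is the kernel function and $M$ is the Hardy--Littlewood maximal operator. The $L^2$-boundedness of $M$, combined with the reverse H\"older inequality (\ref{reverse-Holder}) (which holds at scale $r$ under $(D)_2$ solvability via the rescaling already invoked in Lemma \ref{key-Dirichlet-lemma}), then gives
$$\int_{B(x_0,r)}F(x)^2\,dx \le C\,\frac{\omega^{Z_0}(B(x_0,4r))^2}{|B(x_0,4r)|}\le Cr^{d-2}v(x_0,2r)^2,$$
where the last step uses (\ref{harmonic-measure-Green's-function}), the symmetry of $G$ (as $A$ is symmetric), and interior Harnack for $v$ to convert the harmonic measure back to $v(x_0,2r)$. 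Substituting this into the estimate for $N_r(\nabla u)(x)^2$ cancels the $v(x_0,2r)^2$ factors and collects $r^{d-2}/r^{d+1}=r^{-3}$, which produces the claimed bound.

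The main obstacle I expect is careful scale management: Lemma \ref{new-comparison-lemma} relates a pointwise bound at one scale to an $L^2$ average at three times that scale, so $Z_0$ and the scale in Lemma \ref{new-comparison-lemma} must be chosen so that the outer cylinder ends up as $T(x_0,4r)$ (and not larger) while still leaving room to run the interior Harnack chains for $v$ arbitrarily close to the boundary. A related bookkeeping issue is that the reverse H\"older inequality (\ref{reverse-Holder}) and the comparison (\ref{harmonic-measure-Green's-function}) were stated at unit scale; transferring them to scale $r$ is a routine rescaling once $(D)_2$ solvability on $\mathbb{R}^{d+1}_+$ is in hand, but one has to be explicit about it.
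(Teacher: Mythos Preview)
Your proposal is correct and follows essentially the same route as the paper: Cacciopoli at scale $t$, then Lemma \ref{new-comparison-lemma} with $v=G(\cdot,(x_0,10r))$ to dominate $|u|$ pointwise, then the identification $v(x,t)/t\approx \omega^{Z_0}(B(x,t))/|B(x,t)|$ to bound $F(x)$ by a maximal function of the kernel, and finally the $L^2$ reverse H\"older inequality for the kernel to close. Two minor remarks: the paper inserts the cutoff $\chi_{B(x_0,2r)}$ before applying $M$, which localizes the reverse H\"older step cleanly; and the reverse H\"older inequality at scale $r$ follows directly from the assumed solvability of $(D)_2$ (a standard equivalence), not from Lemma \ref{key-Dirichlet-lemma}, which relies on periodicity not assumed here.
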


\begin{proof}
Let $v(x,t)$ be the Green's function for $\mathcal{L}$ on $\mathbb{R}^{d+1}_+$
 with pole at $X_0=(x_0,10r)$.
Fix $x\in B(x_0,r_0)$ and $0<t<\frac{r}{2}$.
By Cacciopoli's inequality,
$$
\aligned
&\frac{1}{t|B(x,t)|}\int_{\frac{t}{2}}^{\frac{3t}{2}}
\int_{B(x,t)}|\nabla u|^2\, dyds
 \le \frac{C}{t^3 |B(x,t)|}
\int_{\frac{t}{4}}^{2t}\int_{B(x,2t)}\left|
u(y,s)\right|^2\, dyds\\
&\le \frac{C}{t^3 |B(x_0,t)|}\int_{\frac{t}{4}}^{2t}
\int_{B(x,2t)}
\left|\frac{v(y,s)}{v(x_0,r)}\right|^2 dyds
\cdot\frac{1}{r^{d+1}}
\iint_{T(x_0,4r)} |u(y,s)|^2\, dyds\\
&\le Cr^{2d-2}
\left(\frac{v(x,t)}{t}\right)^2
\cdot\frac{1}{r^{d+1}}
\iint_{T(x_0,4r)} |u(y,s)|^2\, dyds,\\ 
\endaligned
$$
where we have used Lemma \ref{new-comparison-lemma} and $v(x_0,r)\approx r^{1-d}$.
Let $\omega$ denote the $\mathcal{L}$-harmonic measure for $\mathbb{R}^{d+1}_+$, 
evaluated at $(x_0,10r)$.
Since
$$
\frac{v(x,t)}{t}\approx
\frac{\omega(B(x,t))}{|B(x,t)|}
\le C M\big( \chi_{B(x_0,2r)} \frac{d\omega}{dx}\big) (x)
$$
where $M$ denotes the Hardy-Littlewood maximal operator on $\mathbb{R}^d$,
we obtain
\begin{equation}\label{pointwise-estimate-for-regularity}
N_r(\nabla u)(x)
\le Cr^{d-1}
M \big(\chi_{B(x_0,2r)} \frac{d\omega}{dx}\big)(x)
\cdot
\left(\frac{1}{r^{d+1}}
\iint_{T(x_0,4r)} |u(y,s)|^2\, dyds\right)^{1/2}.
\end{equation}
Since the $L^2$ Dirichlet problem for $\mathcal{L}(u)=0$ in $\mathbb{R}^{d+1}_+$
is solvable, the reverse H\"older 
inequality (\ref{reverse-Holder}) for $K=\frac{d\omega}{dx}$ 
holds on $B(x_0,2r)$.
The desired estimate now follows by the $L^2$ boundedness of $M$ and 
 (\ref{reverse-Holder}).
\end{proof}

\begin{remark}\label{weak-reverse-Holder-inequality-remark}
{\rm
Suppose that $(D)_p$ for $
\mathcal{L}(u)=0$ in $\mathbb{R}^{d+1}_+$
is solvable for some $p<2$.
Then the reverse H\"older inequality
(\ref{reverse-Holder-in-q}) holds for $q=p^\prime$.
It follows from (\ref{pointwise-estimate-for-regularity}) and (\ref{reverse-Holder-in-q}) that 
\begin{equation}\label{weak-reverse-1}
\aligned
\left(\frac{1}{r^d}
\int_{B(x_0,r)}
|N_r (\nabla u)|^q\, dx \right)^{1/q}
&\le \frac{C}{r}
\left(\frac{1}{r^{d+1}}
\iint_{T(x_0,4r)} |u(y,s)|^2\, dyds\right)^{1/2}\\
&\le C
\left(\frac{1}{r^{d+1}}
\iint_{T(x_0,4r)} |\nabla u(y,s)|^2\, dyds\right)^{1/2}\\
&\le C
\left(\frac{1}{r^d}
\int_{B(x_0,6r)}
|N (\nabla u)|^2\, dx \right)^{1/2}.
\endaligned
\end{equation}
A simple geometric observation shows that for $x\in B(x_0,r)$,
\begin{equation}\label{geometric-1}
N(\nabla u)(x)
\le N_r (\nabla u)(x)
+\frac{C}{r^d}
\int_{B(x_0,6r)}
|N(\nabla u)|\, dx.
\end{equation}
In view of (\ref{weak-reverse-1}), this gives
\begin{equation}\label{weak-reverse-Holder}
\left(\frac{1}{r^d}
\int_{B(x_0,r)}
|N(\nabla u)|^q\, dx\right)^{1/q}
\le C
\left(\frac{1}{r^d}
\int_{B(x_0,6r)}
|N(\nabla u)|^2\, dx\right)^{1/2}.
\end{equation}
We will need this weak reverse H\"older estimate to treat the case $2<p<2+\delta$
for the regularity problem. 
}
\end{remark}

\begin{thm}\label{regularity-theorem-less-than 2}
Under the same conditions on $\mathcal{L}$ as in Theorem \ref{regularity-theorem-p},
the $L^p$ regularity problem in $\mathbb{R}^{d+1}_+$
is solvable for $1<p<2$.
\end{thm}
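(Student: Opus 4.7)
The plan is to follow the atomic Hardy-space approach of Dahlberg--Kenig \cite{Dahlberg-Kenig-1987} and Kenig--Pipher \cite{KP-1993}. Since $(R)_2$ is solvable by Theorem \ref{regularity-theorem}, I will establish an endpoint estimate $\|N(\nabla u_a)\|_1\le C$ for solutions whose boundary data $a$ is an atom of a suitable $H^1$-Sobolev space, and then combine this with the $L^2$ bound via an interpolation argument on atomic decompositions to recover $(R)_p$ for $1<p<2$. Call $a$ a regularity atom if, for some ball $B=B(x_0,r)$, one has $\supp{a}\subset B$, $\int_{\mathbb{R}^d} a\,dx=0$, and $\|\nabla_x a\|_{L^2(\mathbb{R}^d)}\le r^{-d/2}$; Poincar\'e's inequality then yields $\|a\|_{L^2}\le C r^{1-d/2}$. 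Let $u_a$ be the classical solution produced by Lemma \ref{Dirichlet-existence} with boundary datum $a$, so that Theorem \ref{regularity-theorem} gives $\|N(\nabla u_a)\|_2\le C\|\nabla_x a\|_2\le C r^{-d/2}$.

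I would prove $\|N(\nabla u_a)\|_1\le C$ uniformly in $a$ by splitting the integral into a local piece on $\{|x-x_0|<4r\}$ and a distant piece on $\{|x-x_0|\ge 4r\}$. The local piece follows at once from Cauchy--Schwarz and the $L^2$ bound:
$$
\int_{|x-x_0|<4r} N(\nabla u_a)\,dx \le C\,r^{d/2}\,\|N(\nabla u_a)\|_{L^2}\le C.
$$
For the distant piece I would combine three ingredients: (i) the Poisson representation $u_a(Z)=\int K(y,Z)\,a(y)\,dy$ together with the cancellation $\int a=0$, which lets one replace $K(y,Z)$ by $K(y,Z)-K(x_0,Z)$ and exploit H\"older continuity of the kernel in $y$ at $x_0$, a consequence of the comparison principle \eqref{comparison-principle} and the decay estimate \eqref{Green's-function-estimate}; (ii) Cacciopoli's inequality, which converts pointwise bounds on $u_a$ over interior cubes into bounds on the $L^2$ average of $\nabla u_a$, hence on $N(\nabla u_a)$; and (iii) the geometric fact that for $|x-x_0|\ge 4r$ every point $(y,t)$ in the nontangential cone over $x$ lies at distance comparable to $|x-x_0|+t$ from $\supp{a}$. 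Together these should yield a pointwise decay
$$
N(\nabla u_a)(x)\le C\,\|\nabla_x a\|_{L^2}\,\frac{r^{d/2+\alpha}}{|x-x_0|^{d+\alpha}},
$$
with $\alpha$ the H\"older exponent from \eqref{Green's-function-estimate}, which is uniformly integrable on $\{|x-x_0|\ge 4r\}$.

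Given $f\in\dot W^{1,p}(\mathbb{R}^d)$ with $1<p<2$, I would decompose $\nabla_x f=\sum_k\lambda_k b_k$ with the $b_k$ suitably normalized $L^2$-atoms of $H^1(\mathbb{R}^d)$ satisfying $\sum|\lambda_k|^p\le C\|\nabla_x f\|_p^p$, lift this to a corresponding decomposition $f=\sum_k\lambda_k a_k$ by regularity atoms in the sense above, and combine the atomic $L^1$ bound with the $L^2$ bound via a Marcinkiewicz-type argument carried out at the atomic level (cf.\ \cite[\S 5]{KP-1993}). The existence of a solution $u$ with $u=f$ n.t.\ and $\|N(\nabla u)\|_p\le C_p\|\nabla_x f\|_p$ would then follow from the construction in Theorem \ref{regularity-existence} applied with the $L^p$ estimate just obtained in place of the $L^2$ estimate.

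The main obstacle is the distant-decay step. Since the coefficients of $\mathcal{L}$ are only assumed measurable in $x$, no pointwise smoothness of the Poisson kernel is available beyond the H\"older continuity implicit in \eqref{comparison-principle} and \eqref{Green's-function-estimate}. Transferring the cancellation of the atom $a$ onto $u_a$ through these qualitative bounds alone, while preserving a decay exponent $d+\alpha$ strong enough for uniform integrability of the tail, is the delicate piece of the argument.
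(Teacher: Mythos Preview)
Your overall strategy --- the atomic Hardy space approach of \cite{Dahlberg-Kenig-1987, KP-1993} --- is exactly what the paper uses. The local piece on $\{|x-x_0|<4r\}$ is handled just as you say. The difficulty, as you anticipated, is the distant piece, and here your implementation has a genuine gap.

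Your step (ii) does not work. Cacciopoli's inequality converts a bound on $u_a$ to a bound on the $L^2$ average of $\nabla u_a$ only at the cost of a factor $1/t$, where $t$ is the distance to the boundary. For a point $x$ with $|x-x_0|\sim R\gg r$ and $t$ small in the cone over $x$, Cacciopoli gives at best
\[
\left(\frac{1}{t^{d+1}}\int_{t/2}^{3t/2}\int_{B(x,t)}|\nabla u_a|^2\right)^{1/2}
\le \frac{C}{t}\sup_{B((x,t),ct)}|u_a|,
\]
and the right side blows up as $t\to 0$. No pointwise bound on $N(\nabla u_a)(x)$ can be obtained this way. What is needed instead is the localization estimate of Theorem \ref{regularity-localization-theorem}: for a solution vanishing on $B(y_0,6R)\times\{0\}$,
\[
\int_{B(y_0,R)}|N_R(\nabla u)|^2\,dx \le \frac{C}{R^3}\iint_{T(y_0,4R)}|u|^2\,dX.
\]
This is the paper's key tool; it requires the solvability of $(D)_2$, which follows from $(R)_2$ via Theorem \ref{regularity-imply-Dirichlet}. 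Combined with the geometric observation (\ref{geometric-1}), it yields a bound on $\int_{B(y_0,R)}N(\nabla u_a)\,dx$ rather than a pointwise bound, which is all that is needed.

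Your step (i) is also problematic: H\"older continuity of $K(\cdot,Z)$ in the first variable is not available for operators with merely bounded measurable coefficients in $x$. The paper circumvents this entirely by using $L^\infty$-normalized atoms ($\|\nabla_x f\|_\infty\le Cr^{-d}$, hence $\|f\|_\infty\le Cr^{1-d}$) and the simple maximum-principle bound
\[
|u(X)|\le \|f\|_\infty\,\omega^X(B(x_0,r))\le Cr^{1-d}\cdot r^{d-1}G(X,X_0)\le \frac{Cr^\alpha}{|X-X_0|^{d-1+\alpha}},
\]
with $X_0=(x_0,r)$, using (\ref{harmonic-measure-Green's-function}) and (\ref{Green's-function-estimate}). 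No cancellation of the atom is invoked. Feeding this into Theorem \ref{regularity-localization-theorem} on balls $B(y_0,R)$ with $B(y_0,10R)\cap B(x_0,r)=\emptyset$ gives $\int_{B(y_0,R)}N(\nabla u)\,dx\le C(r/R)^\alpha$, and summing over dyadic annuli finishes the endpoint estimate.
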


\begin{proof}
We follow the approach in \cite{Dahlberg-Kenig-1987}
and prove that $\| N(\nabla u)\|_1 \le C$ for solutions of
the $L^2$ regularity problem with $H^1_{at}$ data.
To this end, let $f$ be a Lipschitz function such that
supp$(f)\subset B(x_0,r)$ and $\|\nabla_x f\|_\infty \le Cr^{-d}$
for some $r>0$ and $x_0\in \mathbb{R}^d$.
Let $u$ be the solution of the $L^2$ regularity problem
with data $f$.
By the $L^2$ estimate $\|N(\nabla u)\|_2 \le C\|\nabla_x f\|_2$,
 one has $\|N(\nabla u)\|_{L^1(B(x_0,Cr))}\le C$.
Thus it suffices to show that if $R\ge r$ and $B(y_0,10R)\cap B(x_0,r)=\emptyset$, 
\begin{equation}\label{regularity-decay-estimate}
\int_{B(y_0,R)} |N(\nabla u)|\, dx \le C\left(\frac{r}{R}\right)^\alpha
\end{equation}
for some $\alpha>0$. This follows from Theorem \ref{regularity-localization-theorem} and
the pointwise estimate
$$
\aligned
|u(X)| &\le \| f\|_\infty \, \omega^X (B(x_0,r))
\le Cr^{1-d} \omega^X (B(x_0,r))\\
& \le C G (X,X_0)\le \frac{Cr^\alpha}{|X-X_0|^{d-1+\alpha}},
\endaligned
$$
where $X_0=(x_0,r)$.
We omit the details.
\end{proof}

If $\Omega$ is a bounded Lipschitz domain, it was proved in \cite{Shen-2007-relationship} that
the solvability of $(R)_p$ for $\mathcal{L}(u)=0$
in $\Omega$ is equivalent
to the solvability of $(D)_{p^\prime}$ for
$\mathcal{L}(u)=0$ in $\Omega$, provided
that $(R)_{p_0}$
for $\mathcal{L}(u)=0$ in $\Omega$ is solvable
for some $p_0>1$.
We extend this result to the case $\Omega=\mathbb{R}^{d+1}_+$.

\begin{thm}\label{regularity-large-than-2}
Let $\mathcal{L}=-\text{div}(A\nabla )$
with coefficients satisfying (\ref{real-symmetric})-(\ref{ellipticity}).
Suppose that $(R)_2$ for
$\mathcal{L}(u)=0$ in $\mathbb{R}^{d+1}_+$
is solvable.
Let $p>2$ and $q=p^\prime$.
Then $(R)_p$ for $\mathcal{L}(u)=0$
in $\mathbb{R}^{d+1}_+$ is solvable if and only if
$(D)_q$ for
$\mathcal{L}(u)=0$ in $\mathbb{R}^{d+1}_+$ is solvable.
\end{thm}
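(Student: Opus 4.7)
The forward implication $(R)_p\Rightarrow (D)_q$ is exactly Theorem \ref{regularity-imply-Dirichlet}, whose proof the authors defer to \cite{KP-1993}. So all the work is in the converse: assuming $(R)_2$ and $(D)_q$ with $q=p'<2$, derive the a priori estimate $\|N(\nabla u)\|_p\le C\|\nabla_x f\|_p$ for the classical $L^2$ solution associated to any $f\in C_0^1(\mathbb{R}^d)$; once this is done, Theorem \ref{regularity-existence} promotes it to full solvability of $(R)_p$.

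The plan is to apply the real variable machinery of \cite{shen-2006-ne, shen-2007-boundary} alluded to in the introduction, using the weak reverse H\"older estimate from Remark \ref{weak-reverse-Holder-inequality-remark} as the central input. By Gehring's self-improving property, the solvability of $(D)_q$ upgrades to $(D)_{q-\varepsilon_0}$ for some small $\varepsilon_0>0$, so Remark \ref{weak-reverse-Holder-inequality-remark} actually yields (\ref{weak-reverse-Holder}) at an exponent $p_1:=(q-\varepsilon_0)'>p$, i.e.
\[
\left(\frac{1}{|B(x_0,r)|}\int_{B(x_0,r)} |N(\nabla v)|^{p_1}\,dx\right)^{1/p_1}
\le C\left(\frac{1}{|B(x_0,6r)|}\int_{B(x_0,6r)} |N(\nabla v)|^{2}\,dx\right)^{1/2}
\]
for any solution $v$ of $\mathcal{L}v=0$ in $\mathbb{R}^{d+1}_+$ that vanishes on the boundary patch $B(x_0,6r)\times\{0\}$.

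Fixing $f\in C_0^1(\mathbb{R}^d)$, let $u$ be the $L^2$ solution and define the sublinear map $T(\nabla_x f):=N(\nabla u)$, which is bounded on $L^2$ by the hypothesis $(R)_2$. For each ball $B=B(x_0,r)$, choose a smooth cutoff $\eta$ equal to $1$ on $10B$ and supported in $20B$, set $f_1=f\eta$, $f_2=f(1-\eta)$, and let $u_i$ be the $L^2$ solution with data $f_i$, so $u=u_1+u_2$ and $u_2$ vanishes on the boundary patch $10B\times\{0\}$. Put $F_B=N(\nabla u_1)$ and $R_B=N(\nabla u_2)$. Then (i) $N(\nabla u)\le F_B+R_B$; (ii) $(R)_2$ applied to $u_1$ gives
\[
\left(\frac{1}{|B|}\int_{B}|F_B|^2\,dx\right)^{1/2}
\le C\left(\frac{1}{|20B|}\int_{20B}|\nabla_x f|^2\,dx\right)^{1/2};
\]
and (iii) the weak reverse H\"older estimate above applied to $u_2$ on $B$, combined with $N(\nabla u_2)\le N(\nabla u)+N(\nabla u_1)$ on $6B$, yields
\[
\left(\frac{1}{|B|}\int_{B}|R_B|^{p_1}\,dx\right)^{1/p_1}
\le C\left(\frac{1}{|6B|}\int_{6B}|N(\nabla u)|^2\,dx\right)^{1/2}
+C\left(\frac{1}{|20B|}\int_{20B}|\nabla_x f|^2\,dx\right)^{1/2}.
\]
These are precisely the hypotheses of Shen's real variable theorem at exponent $p_1>p$, so $T$ is bounded on $L^p$.

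The main obstacle is the bookkeeping in step (iii): one must handle the cross-term produced when converting a reverse H\"older inequality for $N(\nabla u_2)$ into one involving $N(\nabla u)$, and correctly absorb $N(\nabla u_1)$ using the $L^2$ bound from $(R)_2$. A secondary technicality is that the weak reverse H\"older estimate in Remark \ref{weak-reverse-Holder-inequality-remark} is stated for solutions vanishing on a boundary disk, so the cutoff radii must be chosen large enough (here $10B$ suffices) to guarantee that $u_2$ vanishes on the relevant $6B$ patch required by the derivation of (\ref{weak-reverse-Holder}) via Theorem \ref{regularity-localization-theorem}. Once the a priori $L^p$ bound is in hand, Theorem \ref{regularity-existence} supplies the solution for arbitrary $f\in\dot W^{1,p}(\mathbb{R}^d)$, completing the converse direction.
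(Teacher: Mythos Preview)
Your overall strategy is the same as the paper's: invoke Theorem \ref{regularity-imply-Dirichlet} for the forward direction, and for the converse use the local/far decomposition together with the weak reverse H\"older inequality of Remark \ref{weak-reverse-Holder-inequality-remark} to feed into Theorem \ref{real-variable-theorem}. The placement of the self-improvement step (you do it up front, the paper does it at the end) and the specific multiples of $B$ are cosmetic differences.

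There is, however, a genuine gap in your step (ii). You set $f_1=f\eta$ and claim
\[
\left(\frac{1}{|B|}\int_{B}|N(\nabla u_1)|^2\,dx\right)^{1/2}
\le C\left(\frac{1}{|20B|}\int_{20B}|\nabla_x f|^2\,dx\right)^{1/2}.
\]
But $(R)_2$ only gives $\|N(\nabla u_1)\|_2\le C\|\nabla_x(f\eta)\|_2$, and
\[
\int_{\mathbb{R}^d}|\nabla_x(f\eta)|^2\,dx
\le 2\int_{20B}|\nabla_x f|^2\,dx
+ \frac{C}{r^2}\int_{20B\setminus 10B}|f|^2\,dx.
\]
The second term on the right is \emph{not} controlled by $\int_{20B}|\nabla_x f|^2$ in general (take $f$ constant and nonzero on a large neighborhood of $20B$). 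The paper repairs this by first subtracting the average $\lambda$ of $f$ over the cutoff ball and decomposing $u-\lambda$ instead: with $v$ solving the regularity problem for $(f-\lambda)\varphi$, the product-rule error becomes $r^{-2}\int|f-\lambda|^2$, which Poincar\'e's inequality bounds by $C\int_{8B}|\nabla_x f|^2$. Once you insert this subtraction, your argument goes through exactly as in the paper.
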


The proof of Theorem \ref{regularity-large-than-2} relies on a real variable
argument which may be formulated as follows.

\begin{thm}\label{real-variable-theorem}
Let $F\in L^2(\mathbb{R}^d)$ and $g\in L^p(\mathbb{R}^d)$ for some $2<p<q$.
Also assume that $g$ has compact support.
Suppose that for each ball $B$ in $\mathbb{R}^d$, there exist
two functions $F_B$ and $R_B$ such that $|F|\le |F_B|+|R_B|$
on $2B$, and
$$
\aligned
\frac{1}{|2B|}\int_{2B} |F_B|^2\, dx
 & \le E_1 \sup_{B^\prime\supset B} \frac{1}{|B^\prime|}
\int_{B^\prime} 
|g|^2\, dx,\\
\left\{\frac{1}{|2B|}
\int_{2B} |R_B|^q\, dx \right\}^{1/q}
&\le E_2 \left\{
\left(\frac{1}{|\beta B|}\int_{\beta B} |F|^2\, dx\right)^{1/2}
+\sup_{B^\prime\supset B}
\left(\frac{1}{|B^\prime|}
\int_{B^\prime}
|g|^2\, dx \right)^{1/2}\right\},
\endaligned
$$
where $E_1, E_2>0$ and $\beta>2$. Then $F\in L^p(\mathbb{R}^d)$ and
$\| F\|_p\le C\| g\|_p$,
where $C$ depends only on $d$, $p$, $q$, $E_1$, $E_2$ and $\beta$.
\end{thm}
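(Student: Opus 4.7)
The plan is to deploy the good-$\lambda$/Calder\'on--Zygmund scheme of Shen \cite{shen-2006-ne, shen-2007-boundary}, adapted to the fact that the hypothesized decomposition of $F$ delivers only an $L^q$ (not $L^\infty$) bound on one piece. Writing $\mathcal{M}$ for the Hardy--Littlewood maximal operator on $\mathbb{R}^d$ and setting $\mathcal{F}=\mathcal{M}(|F|^2)$, $\mathcal{G}=\mathcal{M}(|g|^2)$, it suffices to prove $\|\mathcal{F}\|_{L^{p/2}}\le C\|\mathcal{G}\|_{L^{p/2}}$, since $p/2>1$ then yields $\|F\|_p\le C\|g\|_p$ via $|F|^2\le\mathcal{F}$ and boundedness of $\mathcal{M}$ on $L^{p/2}$. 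A preliminary truncation $F\rightsquigarrow F\chi_{\{|F|\le n\}\cap B(0,n)}$ preserves the hypotheses up to an enlargement of $E_1,E_2$ (independent of $n$) and ensures $\|F\|_p<\infty$ a priori, legitimizing the absorption step at the end; the final bound is recovered by letting $n\to\infty$ via Fatou.

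The heart of the proof is a good-$\lambda$ inequality: there exist $A>1$ depending only on $d,p,q,E_2,\beta$, and for each $\eta>1$ a constant $C_\eta$, such that
$$|\{\mathcal{F}>A\lambda^2\}|\le \gamma_A\,|\{\mathcal{F}>\lambda^2\}|+C_\eta\,|\{\mathcal{G}>\lambda^2/\eta^2\}|,\qquad \lambda>0,$$
with $\gamma_A<A^{-p/2}$. To establish it, apply a Whitney decomposition to the open set $\Omega_\lambda:=\{\mathcal{F}>\lambda^2\}$, yielding disjoint cubes $\{Q_k\}$ with associated balls $B_k^*$ such that a fixed dilate of $B_k^*$ meets $\Omega_\lambda^c$. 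This forces the crucial local control $(|\beta B_k^*|^{-1}\int_{\beta B_k^*}|F|^2)^{1/2}\le C_0\lambda$. Split $\{Q_k\}$ into \emph{bad} cubes, where $\sup_{B'\supset B_k^*}(|B'|^{-1}\int_{B'}|g|^2)^{1/2}>\lambda/\eta$, and \emph{good} ones otherwise. The union of bad cubes lies in $\{\mathcal{G}>\lambda^2/\eta^2\}$, producing the second term in the good-$\lambda$ bound.

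On a good cube $Q_k$, invoke the hypothesis with $B=B_k^*$ to get $|F|\le|F_{B_k^*}|+|R_{B_k^*}|$ on $2B_k^*$, and for $x\in Q_k$ decompose
$$\mathcal{F}(x)\le 2\,\mathcal{M}\bigl(|F_{B_k^*}|^2\chi_{2B_k^*}\bigr)(x)+2\,\mathcal{M}\bigl(|R_{B_k^*}|^2\chi_{2B_k^*}\bigr)(x)+C_1\lambda^2,$$
where the last term bounds the tail $\mathcal{M}(|F|^2\chi_{(2B_k^*)^c})(x)$ via the Whitney property. With $A>3C_1$, the event $\{\mathcal{F}>A\lambda^2\}$ forces one of the two maximal pieces to exceed $cA\lambda^2$. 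The weak $L^1$ bound on $\mathcal{M}$ and the $F_{B_k^*}$ hypothesis yield $|\{x\in Q_k:\cdots\}|\le CE_1(A\eta^2)^{-1}|Q_k|$, while the strong $L^{q/2}$ bound on $\mathcal{M}$ (valid since $q/2>1$) and the $R_{B_k^*}$ hypothesis yield $|\{x\in Q_k:\cdots\}|\le CE_2^qA^{-q/2}|Q_k|$. Summing over good cubes gives $\gamma_A=C(E_1\eta^{-2}A^{-1}+E_2^qA^{-q/2})$.

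The main obstacle is the parameter tuning to achieve $\gamma_A<A^{-p/2}$, i.e.
$$CE_2^q\,A^{(p-q)/2}+CE_1\,\eta^{-2}A^{(p-2)/2}<1.$$
The hypothesis $q>p$ makes the first term tend to $0$ as $A\to\infty$, so it can be made smaller than $\tfrac12$ by choosing $A$ large. The exponent $(p-2)/2$ in the second term is \emph{positive} (since we are in the regime $p>2$), so enlarging $A$ alone cannot tame it; the parameter $\eta$ saves the argument, by letting us fix $A$ first and then choose $\eta$ large so that $\eta^{-2}A^{(p-2)/2}<\tfrac12$. Integrating the good-$\lambda$ inequality against $\lambda^{p/2-1}d(\lambda^2)$ then produces $\|\mathcal{F}\|_{L^{p/2}}^{p/2}\le A^{p/2}\gamma_A\|\mathcal{F}\|_{L^{p/2}}^{p/2}+C(A,\eta)\eta^p\|\mathcal{G}\|_{L^{p/2}}^{p/2}$, and the a priori finiteness from Step~1 allows absorption of the first term, completing the proof.
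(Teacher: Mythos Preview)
Your overall strategy—good-$\lambda$ inequality via Whitney decomposition and the two-parameter tuning—is precisely the machinery behind Theorem~3.2 of \cite{shen-2007-boundary}, which the paper simply cites. The paper's proof is two lines: that theorem (stated for a fixed ball) yields the local estimate
\[
\|F\|_{L^p(B(0,r))}\le C\,|B(0,r)|^{\frac1p-\frac12}\|F\|_{L^2(B(0,Cr))}+C\,\|g\|_{L^p(B(0,Cr))},
\]
and since $p>2$ and $F\in L^2(\mathbb{R}^d)$ the first term on the right vanishes as $r\to\infty$. You instead unpack the cited reference and work directly on $\mathbb{R}^d$; this is more self-contained, but it forces you to confront the a~priori finiteness of $\|\mathcal{F}\|_{L^{p/2}}$, which the paper sidesteps by working on bounded balls first.

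Your handling of that finiteness contains a genuine gap. Replacing $F$ by $F_n=F\chi_{\{|F|\le n\}\cap B(0,n)}$ does \emph{not} preserve the second hypothesis: the right side of the $R_B$ estimate involves $\bigl(|\beta B|^{-1}\int_{\beta B}|F|^2\bigr)^{1/2}$, and running your good-$\lambda$ argument with $\mathcal{F}_n=\mathcal{M}(|F_n|^2)$ would require this average with $F$ replaced by $F_n$---a \emph{stronger} bound (since $|F_n|\le|F|$) not implied by what you are given. The clean fix is to drop the truncation of $F$ and instead truncate the distribution integral. The weak-$(1,1)$ bound for $\mathcal{M}$ gives $|\{\mathcal{F}>\mu\}|\le C\|F\|_2^2/\mu$, whence $I_M:=\int_0^M \mu^{p/2-1}|\{\mathcal{F}>\mu\}|\,d\mu<\infty$ for every $M$ (here $p>2$ ensures convergence at $0$). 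Your good-$\lambda$ inequality then yields $I_{AM}\le A^{p/2}\gamma_A\,I_M + C\|\mathcal{G}\|_{L^{p/2}}^{p/2}$ with $A^{p/2}\gamma_A<1$; iterating along $M,\,AM,\,A^2M,\dots$ and using monotonicity of $M\mapsto I_M$ gives $\|\mathcal{F}\|_{L^{p/2}}^{p/2}\le C\|\mathcal{G}\|_{L^{p/2}}^{p/2}$ without absorbing any potentially infinite quantity. This is morally the same device as the paper's limiting argument: both exploit $F\in L^2$ together with $p>2$ to pass from local (or truncated) control to global control.
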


\begin{proof} This is a simple consequence
of Theorem 3.2 in \cite{shen-2007-boundary}.
Indeed, it follows from the theorem that
\begin{equation}\label{real-variable-proof-1}
\left\{ \int_{B(0,r)}| F|^p\, dx\right\}^{1/p}
\le C|B(0,r)|^{\frac{1}{p}-\frac12} \left\{ \int_{B(0,Cr)}
|F|^2\, dx \right\}^{1/2}
+C\left\{ \int_{B(0,Cr)} |g|^p\, dx\right\}^{1/p}.
\end{equation}
The estimate $\| F\|_p \le C\| g\|_p$
follows by letting $r\to \infty$ in (\ref{real-variable-proof-1}).
\end{proof}

\medskip

\noindent{\bf Proof of Theorem \ref{regularity-large-than-2}.}
In view of Theorem \ref{regularity-imply-Dirichlet},
we only need to show that the solvability of $(D)_q$ implies the solvability of
$(R)_p$. More precisely,
 it suffices to show that $\| N(\nabla u)\|_p 
\le C\| \nabla_x f\|_p$, if $u$ is the solution of the classical Dirichlet
problem with data $f\in C_0^1(\mathbb{R}^d)$.
This will be done by applying Theorem \ref{real-variable-theorem} to
the functions $F=N(\nabla u)$ and $g=|\nabla_x f|$.

To this end, we fix a ball $B=B(x_0,r)$ in $\mathbb{R}^d$.
Choose $\varphi\in C_0^1 (B(x_0,8r))$ such that $\varphi=1$ in $B(x_0,7r)$
and $|\nabla\varphi|\le Cr^{-1}$. 
Let $\lambda$ be the average of $f$ over $B(x_0,8r)$.
Write $u-\lambda=v+w$, where $v$ is the solution of
the $L^2$ regularity problem with boundary data $(f-\lambda)\varphi$.

We now let $F_B=N(\nabla v)$ and $R_B=N(\nabla w)$.
Clearly, $F=N(\nabla u)\le F_B  + R_B$.
By the $L^2$ regularity estimate,
$$
\aligned
\frac{1}{|2B|}\int_{2B} |F_B|^2\, dx
&\le \frac{1}{|B|}\int_{\mathbb{R}^d} |N(\nabla u)|^2\, dx
\le \frac{C}{|B|}\int_{\mathbb{R}^d}
|\nabla_x \big((f-\lambda)\varphi\big)|^2\, dx\\
&\le \frac{C}{|B|}
\int_{B(x_0,8r)} |\nabla_x f|^2\, dx
\le C\sup_{B^\prime\supset B}
\frac{1}{|B^\prime}
\int_{B^\prime} |g|^2\, dx,
\endaligned
$$
where we have used the Poincar\'e inequality.
This gives the estimate needed for $F_B$.
To verify the condition on $R_B=N(\nabla w)$,
we note that $w$ is the solution of the $L^2$ regularity
problem with data $(f-\lambda)(1-\varphi)$.
Consequently, $w=0$ on $B(x_0,7r)$ and we may use the weak reverse H\"older
estimate (\ref{weak-reverse-Holder}). This leads to
$$
\aligned
&\left\{ \frac{1}{|2B|}
\int_{2B} |R_B|^q\, dx\right\}^{1/q}
=\left\{ \frac{1}{|2B|}
\int_{2B} |N(\nabla w)|^q\, dx\right\}^{1/q}\\
&\le C\left\{ \frac{1}{|B|}\int_{B(x_0,12r)}
|N(\nabla w)|^2\, dx \right\}^{1/2}\\
&\le
C\left\{ \frac{1}{|B|}\int_{B(x_0,12r)}
|N(\nabla u)|^2\, dx \right\}^{1/2}
+C\left\{ \frac{1}{|B|}\int_{B(x_0,12r)}
|N(\nabla v)|^2\, dx \right\}^{1/2}\\
&\le 
C\left\{ \frac{1}{|B(x_0,12r)|}\int_{B(x_0,12r)}
|F|^2\, dx \right\}^{1/2}
+C\sup_{B^\prime\supset B}
\left\{ \frac{1}{|B^\prime|}
\int_{B^\prime}
|g|^2\, dx\right\}^{1/2},
\endaligned
$$
which gives the estimate needed for $R_B$.
Therefore, by Theorem \ref{real-variable-theorem}, we obtain
$\| N(\nabla u)\|_p\ \le C\| \nabla_x f\|_p$ for
$2<p<q$. Finally, since the weak reverse H\"older estimate (\ref{weak-reverse-Holder})
is self-improving, the argument above in fact gives
$\|N(\nabla u)\|_p \le C\|\nabla_x f\|_p$ for $2<p<q+\delta$.  
This finishes the proof.
\qed

\section {A Neumann function on $\mathbb{R}^{d+1}_+$}

Throughout this section we will assume that
$\mathcal{L}=-\text{div}(A\nabla )$ with coefficients  satisfying
(\ref{real-symmetric})-(\ref{ellipticity}).

\begin{definition}
{\rm Let $g\in L^1_{loc} (\mathbb{R}^d)$. We call $u\in W^{1,2}_{loc}(\mathbb{R}_+^{d+1})$
a weak solution to the Neumann problem,
\begin{equation}\label{Neumann-problem-1}
 \mathcal{L}(u)=0 \quad \text{in }\Omega=\mathbb{R}^{d+1}_+ \quad \text{ and } \quad
  \frac{\partial u}{\partial \nu}= g \quad \text{on } \partial\Omega,
\end{equation}
if $|\nabla u|\in L^1(T(0,R))$ for any $R>1$ and 
\begin{equation}\label{variational-formulation}
\int_{\mathbb{R}^{d+1}_+}
a_{ij}\frac{\partial u}{\partial x_j}\frac{\partial \varphi}{\partial x_i}\, dX
=\int_{\mathbb{R}^d} g\varphi \, dx\quad
\end{equation}
for any $\varphi\in C_0^1(\mathbb{R}^{d+1})$.
}
\end{definition}

To construct weak solutions to (\ref{Neumann-problem-1})
 we introduce a Neumann function.
Let $E=(e_{ij})$ be the $(d+1)\times (d+1)$ diagonal matrix with
$e_{11}=\cdots=e_{dd}=1$ and $e_{(d+)(d+1)}=-1$, and
\begin{equation}\label{new-matrix}
\widetilde{A}=\widetilde{A} (x,t)
=\left\{
\aligned
A(x,t) \quad \text{ if } t>0,\\
EA(x,-t)E \quad \text { if } t<0,
\endaligned
\right.
\end{equation}
where $A(x,t)$ is the coefficient matrix for $\mathcal{L}$.
Let $\widetilde{\Gamma}=\widetilde{\Gamma}(X,Y)$ denote
the fundamental solution for the operator $\widetilde{L}
=-\text{div}(\widetilde{A}\nabla)$ on $\mathbb{R}^{d+1}$
with pole at $X$. Using the fact that $\widetilde{L}(v)=0$
if $v(x,t)=u(x,-t)$ and $\widetilde{L}(u)=0$, one may show that
\begin{equation}\label{Green's function symmetry}
\widetilde{\Gamma}(X^*, Y^*)
=\widetilde{\Gamma}(X,Y) \quad \text{ for any }
X, Y\in \mathbb{R}^{d+1},
\end{equation}
where $X^*=(x,-t)$ for $X=(x,t)$. We now define
\begin{equation}\label{Neumann-function-definition}
N(X,Y)= \widetilde{\Gamma}(X,Y)+\widetilde{\Gamma}(X, Y^*) \quad
\text{ for }X, Y\in \mathbb{R}^{d+1}_+.
\end{equation}
Clearly, $N(X,Y)=N(Y,X)$. Since $|X-Y|\le |X-Y^*|$ for
$X, Y\in \mathbb{R}^{d+1}_+$, we also have
\begin{equation}\label{decay of Neumann function}
\aligned
|N(X,Y)| &\le \frac{C}{|X-Y|^{d-1}},\\
|N(X,Y)-N(Z,Y)| &\le \frac{C|X-Z|^{\alpha}}{|X-Y|^{d-1+\alpha}},
\endaligned
\end{equation}
for any $X,Y,Z\in \mathbb{R}^{d+1}_+$,
where $C>0,\, \alpha >0$ depend only on $d$ and $\mu$.

Let $N(X,y)=N(X,(y,0))$ for $y\in \mathbb{R}^d$.
The next lemma shows that $N(X,Y)$ is a Neumann function for
the operator $\mathcal{L}$ on $\mathbb{R}^{d+1}_+$.

\begin{lemma}\label{Neumann-function-lemma}
(a) For $1\le p\le 2$,
let $g\in L^p(\mathbb{R}^d)$ and
\begin{equation}\label{Neumann-representation}
u(X)=
\left\{
\begin{aligned}
 \int_{\mathbb{R}^d} N(X,y) g(y)\, dy \quad \quad & \text{ for } d\ge 3 \text{ or } d=2,\ 1\le p<2,\\
  \int_{\mathbb{R}^d} \big( N(X,y)-N(X_0,y)\big) g(y)\, dy \quad \quad & \text{ for }d= 2 \text { and } p=2,\\
\end{aligned}
\right.
\end{equation}
where $X_0=(0,1)\in \mathbb{R}^{d+1}_+$.
Then $u$ is a weak solution to the Neumann problem on $\mathbb{R}^{d+1}_+$
with data $g$.

(b) Let $F\in C_c(\mathbb{R}^{d+1})$ and
\begin{equation}\label{inhomogeneous-Neumann}
w(X)=\int_{\mathbb{R}^{d+1}_+} N(X,Y) F(Y)\, dY.
\end{equation}
Then
\begin{equation}\label{inhomogeneous-Neumann-1}
\int_{\mathbb{R}^{d+1}_+}
a_{ij} \frac{\partial w}{\partial x_j}\frac{\partial \varphi}{\partial x_i}\, dX
=\int_{\mathbb{R}^{d+1}_+} F\varphi\, dX
\end{equation}
for any $\varphi \in C_0^1(\mathbb{R}^{d+1})$.

(c) If $g\in L^2 (\mathbb{R}^d)$ and has compact support, 
then the solution $u$ defined by
(\ref{Neumann-representation}) belongs to
$W^{1,2}(T(0,R))$ for any $R\ge 1$.
\end{lemma}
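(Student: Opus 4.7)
The plan is to exploit the fact that $N$ is built as a symmetrized fundamental solution, which lets us pass to a whole-space problem $\widetilde{\mathcal{L}}\tilde{u}=\tilde{\mu}$ and read off the boundary behavior from symmetry. Since $(y,0)^\ast=(y,0)$, the boundary kernel simplifies to $N(X,(y,0))=2\widetilde{\Gamma}(X,(y,0))$, and (\ref{Green's function symmetry}) together with the usual symmetry of the fundamental solution shows that the whole-space extension
$$
\tilde{u}(X):=2\int_{\mathbb{R}^d}\widetilde{\Gamma}(X,(y,0))\,g(y)\,dy,\qquad X\in\mathbb{R}^{d+1},
$$
is even in $t$ and restricts to $u$ on $\mathbb{R}^{d+1}_+$. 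Convergence of the defining integral is immediate from (\ref{decay of Neumann function}) and H\"older's inequality when $d\ge 3$ or $1\le p<2$; for $d=2,\,p=2$ the subtraction $N(X,y)-N(X_0,y)$ replaces the $|y|^{-(d-1)}$ tail by an integrable $|y|^{-(d-1+\alpha)}$ tail via the H\"older bound in (\ref{decay of Neumann function}).

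For part (a), I would first record that $\widetilde{\mathcal{L}}\tilde{u}=2g\otimes\delta_{\{t=0\}}$ as distributions, which follows from the fundamental-solution property of $\widetilde{\Gamma}$ after mollification of $g$. Given a test $\varphi\in C_0^1(\mathbb{R}^{d+1})$, form the Lipschitz even reflection $\tilde{\varphi}(x,t):=\varphi(x,t)$ for $t\ge 0$ and $\tilde{\varphi}(x,t):=\varphi(x,-t)$ for $t<0$, and pair the distributional equation against it. The key calculation, using the definition (\ref{new-matrix}) of $\widetilde{A}$, is that at $(x,-t)$ the integrand $\tilde{a}_{ij}\,\partial_j\tilde{u}\,\partial_i\tilde{\varphi}$ coincides with $a_{ij}\,\partial_j u\,\partial_i\varphi$ at $(x,t)$: the sign change $e_{(d+1)(d+1)}=-1$ picked up once per index equal to $d+1$ cancels exactly with the sign change $\mathrm{sgn}(t)$ coming from each $\partial_t$ on the even functions $\tilde{u}$, $\tilde{\varphi}$. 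Hence the bilinear form doubles across $\{t=0\}$, while the right-hand side pairs $2g\otimes\delta_{\{t=0\}}$ with $\tilde{\varphi}$ to produce $2\int g\varphi\,dx$; the factors of $2$ cancel to yield (\ref{variational-formulation}). The subtracted piece in the $d=2,\,p=2$ case is independent of $X$ and vanishes under the gradient.

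Part (b) is analogous. Setting $\tilde{F}(Y):=F(Y)$ for $Y\in\mathbb{R}^{d+1}_+$ and $\tilde{F}(Y):=F(Y^\ast)$ for $Y\in\mathbb{R}^{d+1}_-$, the change of variables $Z=Y^\ast$ in the second half of $N$ rewrites $w$ as $\tilde{w}(X)=\int_{\mathbb{R}^{d+1}}\widetilde{\Gamma}(X,Y)\tilde{F}(Y)\,dY$ for $X\in\mathbb{R}^{d+1}_+$; this extension is even in $t$ by the same symmetry and satisfies $\widetilde{\mathcal{L}}\tilde{w}=\tilde{F}$ distributionally. Pairing against the even reflection $\tilde{\varphi}$ as in part (a) yields (\ref{inhomogeneous-Neumann-1}).

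For part (c), the strategy is a Caccioppoli-type energy estimate applied to $\tilde{u}$. Pick a smooth cutoff $\eta$ equal to $1$ on $T(0,R)\cup\{(x,-t):(x,t)\in T(0,R)\}$ and supported slightly larger, and pair the distributional identity $\widetilde{\mathcal{L}}\tilde{u}=2g\otimes\delta_{\{t=0\}}$ with $\eta^2\tilde{u}$. Ellipticity together with Cauchy--Schwarz give
$$
\int\eta^2|\nabla\tilde{u}|^2\,dX\le C\int|\tilde{u}|^2|\nabla\eta|^2\,dX+C\int_{\mathbb{R}^d}|g|\,\eta^2|\tilde{u}(x,0)|\,dx.
$$
The first right-hand term is finite because (\ref{decay of Neumann function}) and $g\in L^2$ with compact support give $\tilde{u}\in L^2_{\mathrm{loc}}$ (using that $|X-(y,0)|^{-(d-1)}$ is locally square-integrable in $X$ for $d\ge 2$; in $d=2$ one picks up only a $\log$, still integrable). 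The main obstacle is that $\eta^2\tilde{u}$ is not a legitimate test function for the distributional identity, and $\tilde{u}(\cdot,0)$ must be interpreted as a trace. The cleanest fix is to mollify $g$ to $g_\varepsilon\in C_c^\infty$, apply the energy estimate to the classical solution $u_\varepsilon$ (where all pairings and traces are unambiguous), and pass to the limit using the kernel bounds and the linearity of the representation; this yields $\nabla u\in L^2(T(0,R))$ as claimed.
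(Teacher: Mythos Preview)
Your arguments for parts (a) and (b) are correct and essentially identical to the paper's: both proceed via the even reflection $\tilde\varphi$ and the whole-space identity for $\widetilde{\Gamma}$, yielding exactly the displayed formula $\varphi(Y)=2\int_{\mathbb{R}^{d+1}_+} a_{ij}\,\partial_{x_j}\widetilde{\Gamma}(X,Y)\,\partial_{x_i}\varphi\,dX$ at $Y=(y,0)$ that the paper records.

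For part (c) your approach is correct but genuinely different from the paper's. The paper argues by \emph{duality} from part (b): for $h\in C_0^1(\mathbb{R}^{d+1}_+)$ one writes
\[
\int_{\mathbb{R}^{d+1}_+}\frac{\partial u}{\partial x_j}\,h\,dX
=-2\int_{\mathbb{R}^d} g(y)\,v_j(y)\,dy,
\qquad
v_j(y)=\int_{\mathbb{R}^{d+1}_+}\widetilde{\Gamma}(X,(y,0))\,\frac{\partial h}{\partial x_j}\,dX,
\]
and then invokes the $W^{1,2}$ bound for the Newtonian potential of $\partial_j h$ (part (b)) together with a trace estimate to get $\|v_j\|_{L^2(B(0,R))}\le C_R\|h\|_{L^2}$; duality gives $\partial_j u\in L^2(T(0,R))$ immediately, with no approximation step. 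Your route---a Caccioppoli energy estimate against $\eta^2\tilde u$ plus mollification of $g$---also works, but requires the extra check that the boundary term $\int|g_\varepsilon|\,\eta^2|\tilde u_\varepsilon(\cdot,0)|$ stays bounded uniformly in $\varepsilon$ (which follows from a Schur-type bound on the locally integrable kernel $|x-y|^{-(d-1)}$ restricted to compact sets). The duality argument is cleaner and sidesteps both the trace issue and the limit passage; your argument is more hands-on and makes the energy of $u$ explicit.
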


\begin{proof} Part (a) follows from
\begin{equation}\label{Neumann-function-lemma-1}
\varphi (Y)
=2\int_{\mathbb{R}^{d+1}_+} a_{ij} \frac{\partial }{\partial x_j}
\big\{ \widetilde{\Gamma}(X, Y)\big\}
\frac{\partial \varphi}{\partial x_i}\, dX
\end{equation}
for $Y=(y,0)$ and $\varphi\in C_0^1(\mathbb{R}^{d+1})$.
To see this, one uses
$$
\widetilde{\varphi} (Y)=\int_{\mathbb{R}^{d+1}} \widetilde{a}_{ij}
\frac{\partial }{\partial x_j}
\big\{ \widetilde{\Gamma}(X,Y)\big\} \frac{\partial \widetilde{\varphi}}{\partial x_i}\, dX,
$$
where $\widetilde{\varphi}$ is the even reflection of $\varphi$ from $\mathbb{R}^{d+1}_+$
to $\mathbb{R}^{d+1}$, i.e.,
$\widetilde{\varphi}(X)=\varphi (X) $ if $x_{d+1}\ge 0$ and
$\widetilde{\varphi}(X)=\varphi (X^*)$ if $x_{d+1}<0$.

To see (b), we write
$$
w(X)=\int_{\mathbb{R}^{d+1}} \widetilde{\Gamma}(X,Y)
\widetilde{F}(Y)\, dY
$$
where $\widetilde{F}$ is the even reflection of $F$ and deduce (\ref{inhomogeneous-Neumann-1})
from
$$
\int_{\mathbb{R}^{d+1}}
\widetilde{a}_{ij}
\frac{\partial w}{\partial x_j} \frac{\partial \widetilde{\varphi}}
{\partial x_i}\, dX
=\int_{\mathbb{R}^{d+1}} \widetilde{F} \widetilde{\varphi}\, dX.
$$

Part (c) follows by duality from part (b). 
Let $h\in C_0^1(\mathbb{R}^{d+1}_+)$. Note that
$$
\aligned
\int_{\mathbb{R}^{d+1}_+} \frac{\partial u}{\partial x_j} h(X)\, dX
&=2\int_{\mathbb{R}^d} g(y)\, dy \int_{\mathbb{R}^{d+1}_+}
\frac{\partial \widetilde{\Gamma}}{\partial x_j} (X, (y,0)) h(X)\, dX\\
&=-2
\int_{\mathbb{R}^d} g(y) v_j (y)\, dy,
\endaligned
$$
where
$$
v_j (y)=\int_{\mathbb{R}^{d+1}_+}
\widetilde{\Gamma}(X, (y,0)) \frac{\partial h}{\partial x_j}\, dX.
$$
Using 
$$
\| v_j\|_{L^2(B(0,R))}
\le C_R \| v_j\|_{W^{1,2}(T(0,R))} \le C_R \| h\|_{L^2(\mathbb{R}^{d+1}_+)},
$$
we may conclude
by duality that
$|\nabla u|\in L^2(T(0,R))$ for any $R>1$.
\end{proof}

\begin{definition}
{\rm Let $1<p<\infty$. We say the $L^p$ Neumann problem $(N)_p$ for $\mathbb{R}^{d+1}_+$
is solvable if for any $g\in L^\infty_c(\mathbb{R}^d)$, the weak solution
$u(X)=\int_{\mathbb{R}^d} N(X,y) f(y)dy$
satisfies $\| N(\nabla u)\|_p \le C\| g\|_p$.
}
\end{definition}

The solvability of $(N)_p$
yields the existence of weak solutions of the Neumann problem
with data in $L^p(\mathbb{R}^d)$.

\begin{thm}\label{Neumann-existence}
Let $1<p<\infty$. Suppose that $(N)_p$ for
$\mathcal{L}(u)=0$ in $\mathbb{R}^{d+1}_+$ is solvable.
Then for any $g\in L^p(\mathbb{R}^d)$, there exists
a weak solution $u$ to the Neumann problem with data $g$ such that
$\| N(\nabla u)\|_p \le C\| g\|_p$.
\end{thm}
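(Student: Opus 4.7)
The plan is the standard density-extension argument, making essential use of Lemma \ref{Neumann-function-lemma} (which already furnishes weak solutions for $L^\infty$-compactly supported data) together with the uniform estimate provided by $(N)_p$. First I would choose $g_k\in L^\infty_c(\mathbb{R}^d)$ with $g_k\to g$ in $L^p(\mathbb{R}^d)$, and let $u_k$ be the weak solution to the Neumann problem with data $g_k$ given by the integral formula in Lemma \ref{Neumann-function-lemma}(a). By hypothesis,
$$
\|N(\nabla u_k)\|_p\le C\|g_k\|_p,\qquad \|N(\nabla(u_k-u_\ell))\|_p\le C\|g_k-g_\ell\|_p\to 0,
$$
the second estimate using linearity (so that $u_k-u_\ell$ is the Neumann function representation of $g_k-g_\ell$, at least up to the harmless additive constant appearing in the $d=2$, $p=2$ case).

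The key step is to upgrade the $L^p$ control of $N(\nabla u_k-\nabla u_\ell)$ to $L^1_{loc}$ convergence of $\nabla u_k$ in the closed half-space, so that both sides of (\ref{variational-formulation}) can be passed to the limit against test functions $\varphi\in C_0^1(\mathbb{R}^{d+1})$ that touch $\partial\mathbb{R}^{d+1}_+$. Using the definition (\ref{nontangential-maximal}) of $N$ and Jensen's inequality, one has the pointwise bound
$$
\frac{1}{t|B(x,t)|}\int_{t/2}^{3t/2}\!\!\int_{B(x,t)}|\nabla w|\,dyds\le N(\nabla w)(x).
$$
Integrating in $x$ over $B(0,R)$, applying Fubini, summing geometrically over dyadic $t\in(0,R)$, and using H\"older on $B(0,R)$, one obtains
$$
\int_0^R\!\!\int_{B(0,R)}|\nabla(u_k-u_\ell)|\,dxdt\le C_R\,\|N(\nabla(u_k-u_\ell))\|_p\to 0.
$$
After subtracting suitable constants $c_k$ (so that Poincar\'e applies on fixed cylinders), the functions $u_k-c_k$ therefore converge in $W^{1,1}_{loc}(\overline{\mathbb{R}^{d+1}_+})$ to some $u$; the same argument with $u_k$ alone shows $|\nabla u|\in L^1(T(0,R))$ for every $R>1$, so $u$ qualifies as a weak-solution candidate.

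Once this local convergence is in hand, the rest is routine: the LHS of (\ref{variational-formulation}) with $u_k$ passes to the LHS with $u$ because $\nabla\varphi$ is bounded with compact support and $\nabla u_k\to\nabla u$ in $L^1$ on that support, while the RHS converges because $g_k\to g$ in $L^p$ and $\varphi|_{\partial\Omega}\in L^{p'}$ with compact support. The bound $\|N(\nabla u)\|_p\le C\|g\|_p$ follows by Fatou applied to $N(\nabla u_k)\to N(\nabla u)$ (pointwise along a subsequence, obtained from local uniform convergence of $\nabla u_k$ on compact subsets of $\mathbb{R}^{d+1}_+$ via interior Caccioppoli estimates) together with $\|N(\nabla u_k)\|_p\le C\|g_k\|_p\to C\|g\|_p$. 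The main obstacle is precisely the $L^1$ control of $\nabla u_k$ on cylinders $T(0,R)$ that meet the boundary, which is why interior regularity alone is insufficient and one must use the $N(\nabla u_k)$ estimate to pass to the limit in the variational identity all the way down to $\{t=0\}$.
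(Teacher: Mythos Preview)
Your proposal is correct and follows essentially the same approach as the paper: approximate $g$ by compactly supported data, use the Neumann-function representation and the $(N)_p$ hypothesis to get uniform and Cauchy bounds on $N(\nabla u_k)$, and pass to the limit to obtain $u$ and verify (\ref{variational-formulation}). The paper delegates the limiting construction to the analogous proof of Theorem \ref{regularity-existence} and records the convergence as $\nabla u_k\to\nabla u$ in $L^p(T(0,R))$, whereas you spell out the (equivalent) $L^1(T(0,R))$ version directly; both suffice to pass to the limit against $\varphi\in C_0^1(\mathbb{R}^{d+1})$.
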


\begin{proof}
Let $g\in L^p(\mathbb{R}^d)$.
Choose $g_k \in C_c(\mathbb{R}^d)$ such that $g_k\to g$
in $L^p(\mathbb{R}^d)$.
Let $u_k (X)=\int_{\mathbb{R}^d} N(X,y) g_k(y)dy$.
Then
$\| N(\nabla v_k)\|_p \le C\| g_k\|_p$
and $\| N(\nabla v_k -\nabla v_\ell)\|_p \le C\| g_k-g_\ell\|_p$.
By a limiting argument similar to that in the proof of Theorem \ref{regularity-existence},
there exists $u\in W^{1,2}_{loc} (\mathbb{R}^{d+1}_+)$
such that $\| N(\nabla u)\|_p \le C\| g\|_p$
and $\| N(\nabla u_k -\nabla u)\|_p \le C\| g_k -g\|_p$.
Note that the last inequality implies $\nabla u_k
\to \nabla u$ in $L^p(T(0,R))$ for any $R>1$.
It follows that $u$ is a weak solution to the Neumann problem
with data $g$.
\end{proof}

\begin{remark}\label{Neumann-representation-remark}
{\rm 
Suppose that $(N)_p$ for $\mathcal{L}(u)=0$
in $\mathbb{R}^{d+1}_+$ is solvable.
It follows from the proof of Theorem \ref{Neumann-existence} that
if $1<p\le 2$, the weak solution given by (\ref{Neumann-representation}) satisfies
$\| N(\nabla u)\|_p \le C\| g\|_p$.
}
\end{remark}

\begin{lemma}
Let $u\in W^{1,2}_{loc}(T(0,2R))$. Assume that
\begin{equation}\label{Neumann-data-zero}
\int_{\mathbb{R}^{d+1}_+}
a_{ij}\frac{\partial u}{\partial x_j}\cdot \frac{\partial \psi}{\partial x_i}\, dX=0
\end{equation}
for any $\psi \in C_0^1(B(0,2R)\times (-R,R))$.
Then
\begin{equation}\label{Holder-estimate}
|u(X)-u(Y)|\le C R \left(\frac{|X-Y|}{R}\right)^\beta
\left(\frac{1}{R^{d+1}}
\int_{T(0,2R)} |\nabla u|^2\, dX\right)^{1/2}
\end{equation}
for any $X,Y\in T(0,R)$, where $C>0$ and  $\beta\in (0,1]$ depend only on $d$ and $\mu$.
\end{lemma}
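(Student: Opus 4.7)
The plan is to use the standard reflection trick tailored to the matrix $\widetilde{A}$ defined in (\ref{new-matrix}), reducing the problem to a purely interior H\"older estimate for a divergence form operator with bounded measurable coefficients, to which the De Giorgi--Nash--Moser theorem applies.

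First I would extend $u$ by even reflection across $\{t=0\}$: set $\widetilde{u}(x,t)=u(x,t)$ for $t\ge 0$ and $\widetilde{u}(x,t)=u(x,-t)$ for $t<0$. Then $\widetilde{u}\in W^{1,2}(B(0,2R)\times(-R,R))$, with $\nabla\widetilde{u}(x,t)=E\nabla u(x,-t)$ in the lower half, where $E$ is the diagonal matrix from Section~7. The key claim is that $\widetilde{u}$ is a weak solution of $-\text{div}(\widetilde{A}\nabla\widetilde{u})=0$ in $B(0,2R)\times(-R,R)$. To see this, fix $\psi\in C_0^1(B(0,2R)\times(-R,R))$ and symmetrize it as $\varphi(x,t)=\psi(x,t)+\psi(x,-t)$ for $t\ge 0$; since the hypothesis (\ref{Neumann-data-zero}) encodes the natural (Neumann) boundary condition, $\varphi$ restricted to $\mathbb{R}^{d+1}_+$ is a valid test function and $\int_{\mathbb{R}^{d+1}_+}a_{ij}\partial_j u\,\partial_i\varphi\,dX=0$. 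A direct computation, splitting the integral against $\widetilde{A}\nabla\widetilde{u}$ at $t=0$, changing variables $t\mapsto -t$ in the lower half, and using $E^2=I$ together with the defining relation $\widetilde{A}(x,t)=EA(x,-t)E$ for $t<0$, rewrites $\int_{-R}^R\!\!\int \widetilde{a}_{ij}\partial_j\widetilde{u}\,\partial_i\psi\,dX$ as $\int_{\mathbb{R}^{d+1}_+}a_{ij}\partial_j u\,\partial_i\varphi\,dX=0$, which is the desired weak formulation.

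Once $\widetilde{u}$ is a weak solution of a uniformly elliptic equation with the same ellipticity constant $\mu$ on the interior domain $B(0,2R)\times(-R,R)$, I would invoke the classical De Giorgi--Nash--Moser interior H\"older estimate: there exist $C,\beta$ depending only on $d$ and $\mu$ such that for any $X,Y\in T(0,R)\subset B(0,2R)\times(-R/2,R)$,
\begin{equation*}
|\widetilde{u}(X)-\widetilde{u}(Y)-c|\le C\Bigl(\tfrac{|X-Y|}{R}\Bigr)^{\beta}\Bigl(\tfrac{1}{R^{d+1}}\int_{B(0,2R)\times(-R,R)}|\widetilde{u}-c|^2\,dX\Bigr)^{1/2},
\end{equation*}
where $c$ is any constant; taking $c$ to be the average of $\widetilde{u}$ over the cylinder and applying the Poincar\'e inequality,
\begin{equation*}
\Bigl(\tfrac{1}{R^{d+1}}\int|\widetilde{u}-c|^2\,dX\Bigr)^{1/2}\le CR\Bigl(\tfrac{1}{R^{d+1}}\int|\nabla\widetilde{u}|^2\,dX\Bigr)^{1/2}.
\end{equation*}
Finally, since $|\nabla\widetilde{u}(x,t)|=|\nabla u(x,|t|)|$ almost everywhere, $\int_{B(0,2R)\times(-R,R)}|\nabla\widetilde{u}|^2\,dX\le 2\int_{T(0,2R)}|\nabla u|^2\,dX$, and $\widetilde{u}=u$ on $T(0,R)$, the desired estimate (\ref{Holder-estimate}) follows.

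The main obstacle, and the only nonroutine step, is verifying the weak formulation for $\widetilde{u}$ across the hyperplane $\{t=0\}$: one must track carefully how the reflected gradient $E\nabla u$ pairs with the reflected matrix $EAE$ to restore the original bilinear form on the upper half space, and confirm that the symmetrized test function $\varphi$ (which is only Lipschitz across $t=0$) is admissible in the variational identity (\ref{Neumann-data-zero})---this is where the definition of $\widetilde{A}$ from Section~7 is essential, and where the Neumann-type hypothesis (versus a Dirichlet one, which would require odd reflection) is used.
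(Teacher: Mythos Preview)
Your proposal is correct and is exactly the approach the paper has in mind: the paper's proof consists of the single sentence ``Estimate (\ref{Holder-estimate}) follows from the De Giorgi--Nash estimate by a reflection argument,'' and you have supplied precisely that argument, using the even reflection and the matrix $\widetilde{A}$ from (\ref{new-matrix}). One minor point: $\overline{T(0,R)}$ touches the top face $\{t=R\}$ of $B(0,2R)\times(-R,R)$, so to get the interior De Giorgi--Nash estimate uniformly on all of $T(0,R)$ you should either observe that $\widetilde u$ is in fact a weak $\widetilde{\mathcal L}$--solution on the larger cylinder $B(0,2R)\times(-2R,2R)$ (the hypothesis handles test functions across $\{t=0\}$, and away from $\{t=0\}$ the interior equation for $u$ gives the rest), or simply combine your reflected estimate near $\{t=0\}$ with the ordinary interior estimate for $u$ in $T(0,2R)$ away from $\{t=0\}$. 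Also, in your displayed De Giorgi--Nash inequality the ``$-c$'' on the left side is a slip; the constant belongs only inside the $L^2$ norm on the right.
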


\begin{proof}
Estimate (\ref{Holder-estimate}) follows from the De Giorgi - Nash estimate
by a reflection argument.
\end{proof}

The next theorem addresses the question of uniqueness.

\begin{thm}\label{Neumann-uniqueness}
Let $u\in W^{1,2}_{loc}(\mathbb{R}^{d+1}_+)$ and $N(\nabla u)\in L^p(\mathbb{R}^d)$
for some $2\le p<\frac{d}{1-\beta}$, where $\beta$ is given by (\ref{Holder-estimate}).
Suppose that (\ref{Neumann-data-zero}) holds for any
$\psi\in C_0^1 (\mathbb{R}^{d+1})$.
Then $u$ is constant in $\mathbb{R}^{d+1}_+$.
\end{thm}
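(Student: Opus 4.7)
Fix $Z,Z_0\in\mathbb{R}^{d+1}_+$. Since $u$ is H\"older continuous on $\mathbb{R}^{d+1}_+$ by (\ref{Holder-estimate}) and $\mathbb{R}^{d+1}_+$ is connected, it suffices to show $u(Z)=u(Z_0)$ for every such pair. The plan is to mimic the uniqueness proofs of Theorems \ref{Dirichlet-uniqueness} and \ref{regularity-uniqueness}, using the Neumann function $N(X,Y)$ of Section 7 in place of the Green's function. Set $v(X):=N(X,Z)-N(X,Z_0)$. By (\ref{decay of Neumann function}) combined with the symmetry $N(X,Y)=N(Y,X)$,
\[
|v(X)|\le C|Z-Z_0|^{\beta}|X-Z|^{-(d-1+\beta)}\quad\text{for }|X-Z|\ge 2|Z-Z_0|,
\]
and, since $v$ satisfies $\mathcal{L}v=0$ with vanishing conormal derivative on $\partial\mathbb{R}^{d+1}_+$ away from $\{Z,Z_0\}$, Cacciopoli's inequality (applied after even reflection across the boundary, as in the derivation of (\ref{Holder-estimate})) gives matching $L^2$ control on $\nabla v$ on any annulus of scale $R$ separated from $\{Z,Z_0\}$. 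Choose cutoffs $\varphi\in C_0^\infty(B(0,\ell))$ with $\varphi\equiv 1$ on $B(0,\ell/2)$ and $|\nabla\varphi|\le C/\ell$, and $\psi\in C_0^\infty([0,2\ell))$ with $\psi\equiv 1$ on $[0,\ell]$ and $|\psi'|\le C/\ell$, and set $\eta_\ell(x,t):=\varphi(x)\psi(t)$. For $\ell$ large, $\eta_\ell\equiv 1$ in a neighborhood of $Z$ and $Z_0$, while $\nabla\eta_\ell$ is supported in an annular region $A_\ell\subset\mathbb{R}^{d+1}_+$ with $|X|\sim\ell$ and $|A_\ell|\sim\ell^{d+1}$.

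The identity I aim to establish is
\begin{equation}\label{plan-id-nu}
u(Z)-u(Z_0)=\int_{\mathbb{R}^{d+1}_+} a_{ij}(X)\bigl[u(X)\,\partial_{x_j}v(X)-v(X)\,\partial_{x_j}u(X)\bigr]\,\partial_{x_i}\eta_\ell(X)\,dX.
\end{equation}
This will follow from combining two ingredients: (i) the weak form (\ref{Neumann-data-zero}) applied to the test function $v\eta_\ell\rho_\varepsilon$, where $\rho_\varepsilon\in C_c^\infty$ vanishes on $B(Z,\varepsilon)\cup B(Z_0,\varepsilon)$ and equals $1$ outside $B(Z,2\varepsilon)\cup B(Z_0,2\varepsilon)$ (which makes $v\eta_\ell\rho_\varepsilon$ a legitimate compactly supported $W^{1,2}(\mathbb{R}^{d+1})$ test function); and (ii) the Neumann-function identity $\int a_{ij}\,\partial_{x_j}v\,\partial_{x_i}\phi\,dX=\phi(Z)-\phi(Z_0)$, valid for $\phi\in C_c^1(\mathbb{R}^{d+1})$ (this follows from Lemma \ref{Neumann-function-lemma} and the reflection construction of $N$, by the same argument as in part~(a) of that lemma but with the poles moved into $\mathbb{R}^{d+1}_+$), applied to $\phi=u_\sigma\eta_\ell$ with $u_\sigma:=u*\rho_\sigma$ an interior mollification. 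Using the symmetry $a_{ij}=a_{ji}$ to cancel the bulk cross term $\int a_{ij}\,\partial_j u\,\partial_i v\,\eta_\ell\rho_\varepsilon$, and taking the iterated limit $\sigma\to 0$ followed by $\varepsilon\to 0$ (with the contribution of $\nabla\rho_\varepsilon$ localized near the poles producing precisely the jump $u(Z)-u(Z_0)$ via the classical Dirac-mass behavior of $\mathcal{L}N(\,\cdot\,,Z)$), yields (\ref{plan-id-nu}).

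Once (\ref{plan-id-nu}) is in hand, observe that $\int a_{ij}\,\partial_{x_j}v\,\partial_{x_i}\eta_\ell\,dX=\eta_\ell(Z)-\eta_\ell(Z_0)=0$, so $u$ may be replaced by $u-c_\ell$ in the first summand, where $c_\ell$ is the mean of $u$ over a Poincar\'e neighborhood $\widetilde A_\ell\supset A_\ell$. Applying Cauchy--Schwarz, combined with the decay bounds $\|v\|_{L^2(A_\ell)}\le C\ell^{(3-d)/2-\beta}$ and $\|\nabla v\|_{L^2(A_\ell)}\le C\ell^{(1-d)/2-\beta}$ (the latter from the Cacciopoli bound noted above), the Poincar\'e inequality $\|u-c_\ell\|_{L^2(A_\ell)}\le C\ell\,\|\nabla u\|_{L^2(\widetilde A_\ell)}$, and the non-tangential control
\[
\int_{\widetilde A_\ell}|\nabla u|^2\,dX\le C\ell\int_{\{|x|\sim\ell\}}|N(\nabla u)|^2\,dx\le C\ell^{\,1+d(1-2/p)}\,\|N(\nabla u)\|_p^{\,2}
\]
(the last step by H\"older, using $p\ge 2$), yields that each summand on the right of (\ref{plan-id-nu}) is $O\bigl(\ell^{\,1-\beta-d/p}\,\|N(\nabla u)\|_p\bigr)$. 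The exponent $1-\beta-d/p$ is negative precisely under the hypothesis $p<d/(1-\beta)$, so letting $\ell\to\infty$ gives $u(Z)=u(Z_0)$, and hence $u$ is constant on $\mathbb{R}^{d+1}_+$.

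The main obstacle is the rigorous derivation of (\ref{plan-id-nu}): because the coefficients are only $L^\infty$, $\nabla u$ lies merely in $L^2_{\mathrm{loc}}$ and the product $\partial v\cdot\partial u$ fails to be locally integrable near $Z,Z_0$, so $u\eta_\ell$ cannot be plugged directly into the Neumann-function identity. The dual regularization (interior mollification of $u$ to gain smoothness, combined with excision of small balls around $Z,Z_0$ in the test function for $u$) and the careful identification of the $\nabla\rho_\varepsilon$-correction as producing precisely the jump $u(Z)-u(Z_0)$ in the limit $\varepsilon\to 0$ constitute the technical heart of the argument; the remaining estimates are close analogues of those used in Theorems \ref{Dirichlet-uniqueness} and \ref{regularity-uniqueness}.
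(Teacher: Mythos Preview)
Your approach via the Neumann function is much more elaborate than the paper's, and you yourself flag its technical heart---the rigorous derivation of your representation identity---as an unresolved obstacle. The paper's proof is immediate from (\ref{Holder-estimate}): fix $X,Y\in\mathbb{R}^{d+1}_+$, take $R$ large with $X,Y\in T(0,R)$, and apply that estimate to get
\[
|u(X)-u(Y)|\le C\,|X-Y|^\beta\,R^{1-\beta}\left(\frac{1}{R^{d+1}}\int_{T(0,2R)}|\nabla u|^2\right)^{1/2}.
\]
The same dyadic-in-$t$ plus H\"older argument you use on your annulus $A_\ell$, applied instead to the full cylinder, gives $\int_{T(0,2R)}|\nabla u|^2\le CR^{\,1+d(1-2/p)}\|N(\nabla u)\|_p^2$, so the right side is $O(R^{\,1-\beta-d/p})\to 0$ under the hypothesis $p<d/(1-\beta)$. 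No Neumann function, no pole excision, no mollification.

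Your route imitates Theorems \ref{Dirichlet-uniqueness} and \ref{regularity-uniqueness}, where no analogue of (\ref{Holder-estimate}) is available and one is forced to use a fundamental-solution representation. Here the even reflection in the lemma preceding (\ref{Holder-estimate}) has already converted the zero-Neumann-data hypothesis into interior De~Giorgi--Nash regularity up to the boundary on all of $T(0,2R)$; that is precisely what makes the one-line argument possible and your representation-formula machinery unnecessary. Your final numerology is correct and the strategy is sound in principle, but the regularization near the poles (where, as you note, $\nabla u\cdot\nabla v\notin L^1_{\mathrm{loc}}$) is only sketched, not carried out, so as written it is not a complete proof.
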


\begin{proof}
This follows readily from the estimate (\ref{Holder-estimate}).
\end{proof}

We end this section with a result on the implication
of $N(\nabla u)\in L^p(\mathbb{R}^d)$
on the Neumann data.

\begin{thm}\label{Neumann-problem-consequence-1}
Let $u\in W^{1,2}_{loc}(\mathbb{R}^{d+1}_+)$ be a weak solution of
$\mathcal{L}(u)=0$ in $\mathbb{R}^{d+1}_+$.
Suppose that $N(\nabla u)\in L^p(\mathbb{R}^d)$ for some $1<p<\infty$.
Then $u$ is a weak solution to the Neumann problem on $\mathbb{R}^{d+1}_+$
with data $g$ for some $g\in L^p(\mathbb{R}^d)$ .
Furthermore, $\| g\|_p\le C\| N(\nabla u)\|_p$ and
$$
-\frac{1}{\rho}\int_\rho^{2\rho}
a_{(d+1)j} (x,t)\frac{\partial u}{\partial x_j} dt \to g
\text{ weakly in } L^p(\mathbb{R}^d),
$$
as $\rho \to 0^+$.
\end{thm}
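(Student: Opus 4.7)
The plan is to define the candidate Neumann data as the vertical average
\[
g_\rho(x) := -\frac{1}{\rho}\int_\rho^{2\rho} a_{(d+1)j}(x,t)\,\frac{\partial u}{\partial x_j}(x,t)\,dt,
\]
establish a uniform $L^p$-bound on the family $\{g_\rho\}$, extract a weak subsequential limit $g$, and then identify $g$ as the Neumann data through the variational formulation of $\mathcal{L}u=0$.

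For the uniform bound $\|g_\rho\|_p \le C\|N(\nabla u)\|_p$, start from $|g_\rho(x)| \le C\rho^{-1}\int_\rho^{2\rho}|\nabla u(x,t)|\,dt$ and invoke the interior pointwise estimate $|\nabla u(x,t)| \le CN(\nabla u)(x)$, which follows from the $L^2$-Whitney average built into the definition of $N(\nabla u)$ combined with Moser/Caccioppoli type bounds (available a.e.\ under the standing smoothness; in the $L^2$ case this is immediate by Fubini from $\|N(\nabla u)\|_2^2 \gtrsim \rho^{-1}\iint_{\mathbb{R}^d\times(\rho/2,3\rho/2)}|\nabla u|^2$). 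The same estimate yields $|\nabla u|\in L^1_{\text{loc}}(\overline{\mathbb{R}^{d+1}_+})$. Next I derive the key identity. Let $\eta_\rho(t)$ be the piecewise-linear cutoff equal to $0$ on $(-\infty,\rho]$, affine on $[\rho,2\rho]$, and equal to $1$ on $[2\rho,\infty)$, so that $\eta_\rho'(t)=\rho^{-1}\chi_{[\rho,2\rho]}(t)$. For any $\varphi\in C_0^1(\mathbb{R}^{d+1})$, the product $\eta_\rho\varphi$ has compact support in $\mathbb{R}^{d+1}_+$, hence is a valid interior test function. Expanding $\int a_{ij}\partial_j u\,\partial_i(\eta_\rho\varphi)\,dX=0$ and isolating the $\eta_\rho'$-contribution gives
\[
\int_{\mathbb{R}^{d+1}_+}\eta_\rho\, a_{ij}\partial_j u\,\partial_i\varphi\,dX \;=\; \int_{\mathbb{R}^d}g_\rho(x)\,\varphi(x,0)\,dx \;+\; R_\rho(\varphi),
\]
where $R_\rho(\varphi) := -\rho^{-1}\int_\rho^{2\rho}\!\!\int_{\mathbb{R}^d} a_{(d+1)j}\partial_j u\,[\varphi(x,t)-\varphi(x,0)]\,dx\,dt$. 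Since $|\varphi(x,t)-\varphi(x,0)|\le 2\rho\|\partial_t\varphi\|_\infty$, one checks $|R_\rho(\varphi)|\le C\rho\|\partial_t\varphi\|_\infty \|N(\nabla u)\|_p\,|K|^{1/p'}\to 0$ as $\rho\to 0^+$, where $K$ is the projection of $\operatorname{supp}\varphi$ onto $\mathbb{R}^d$.

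Finally, by reflexivity of $L^p(\mathbb{R}^d)$ and the uniform bound, extract a subsequence $\rho_k\to 0^+$ with $g_{\rho_k}\rightharpoonup g$ weakly in $L^p$ and $\|g\|_p\le C\|N(\nabla u)\|_p$. Passing to the limit along $\rho_k$ in the displayed identity, the left side converges to $\int_{\mathbb{R}^{d+1}_+}a_{ij}\partial_j u\,\partial_i\varphi\,dX$ by dominated convergence (using $|\nabla u|\cdot|\nabla\varphi|\in L^1$), while the right side tends to $\int_{\mathbb{R}^d}g(x)\varphi(x,0)\,dx$. This is precisely the weak Neumann formulation with data $g$, so $u$ solves the Neumann problem with datum $g$. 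Moreover, since $\{\varphi(\cdot,0):\varphi\in C_0^1(\mathbb{R}^{d+1})\}$ is dense in $L^{p'}(\mathbb{R}^d)$, the limiting pairing uniquely determines $g$; hence every subsequential weak limit is the same $g$ and the full family satisfies $g_\rho\rightharpoonup g$ in $L^p$. The main obstacle is the uniform $L^p$-bound for $p\ne 2$, which rests on the pointwise control of $|\nabla u(\cdot,t)|$ by $N(\nabla u)$; under merely bounded measurable coefficients one replaces this with an $L^2$-Whitney-average plus covering argument that bounds spatial averages of $|g_\rho|^2$, and then proceeds in the same way.
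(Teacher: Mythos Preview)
Your approach is essentially the same as the paper's. The paper also tests the equation against $\psi(x)\eta(t)$ for a piecewise-linear $\eta$, arrives at the same expression $-\rho^{-1}\int_\rho^{2\rho}\int a_{(d+1)j}\,\partial_j u\,\psi\,dx\,dt$, and bounds this pairing by $C\|N(\nabla u)\|_p\|\psi\|_{p'}$. The differences are cosmetic: the paper's cutoff is $1$ near the boundary and $0$ away (the complement of yours), and the paper first defines the functional $\Lambda(\psi):=\int a_{ij}\,\partial_j u\,\partial_i\varphi$ (well-defined because $\mathcal{L}u=0$ makes it extension-independent), then invokes the Riesz representation theorem to produce $g$, rather than extracting a weak limit of $\{g_\rho\}$ via reflexivity.

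One caution: your primary justification of the uniform bound $\|g_\rho\|_p\le C\|N(\nabla u)\|_p$ via a pointwise estimate $|\nabla u(x,t)|\le CN(\nabla u)(x)$ is not available under the standing assumptions of that section (merely real, symmetric, bounded measurable coefficients). There is no interior Moser-type sup bound for $\nabla u$, only for $u$, so $\nabla u(x,t)$ need not even be defined pointwise. The paper sidesteps this by bounding the \emph{pairing} $\int g_\rho\psi$ directly rather than $\|g_\rho\|_p$; your fallback ``$L^2$-Whitney-average plus covering'' argument is the correct way to make this step rigorous in general, and should be promoted to the main line of the proof rather than left as an afterthought.
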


\begin{proof}
Let $\psi$ be a Lipschitz function on $\mathbb{R}^d$ with compact support. Define
\begin{equation}\label{definition of lambda}
\Lambda(\psi)
=\int_{\mathbb{R}^{d+1}_+}
a_{ij} \frac{\partial u}{\partial x_j}\frac{\partial\varphi}{\partial x_i}\, dX,
\end{equation}
where $\varphi$ is a Lipschitz function on $\mathbb{R}^{d+1}$ with compact support
 such that $\varphi(x,0)=\psi(x)$
for $x\in \mathbb{R}^d$. Since $N(\nabla u)\in L^p(\mathbb{R}^d)$, 
$|\nabla u|\in L^p(T(0,R))$ for any $R>1$. Thus the integral in 
(\ref{definition of lambda}) converges.
Also note that since $\mathcal{L}(u)=0$ in $\mathbb{R}^{d+1}_+$, the functional $\Lambda$
is well defined.

Next let $\varphi (x,t)=\psi(x)\eta(t)$ in (\ref{definition of lambda}), 
where $\eta$ is a continuous function such that $\eta (t)=1$ on $[0,\rho]$, $\eta(t)=0$ on
$[2\rho, \infty)$ and $\eta$ is linear on $[\rho, 2\rho]$.
This gives
\begin{equation}\label{consequence-1}
\Lambda (\psi)
=\lim_{\rho\to 0}
\left\{-\frac{1}{\rho}
\int_\rho^{2\rho}
\int_{\mathbb{R}^d} \sum_{j=1}^{d+1} a_{(d+1)j} \frac{\partial u}{\partial x_j}
\psi\, dxdt\right\},
\end{equation}
which, by H\"older inequality, implies that $|\Lambda (\psi)|\le C\| N(\nabla u)\|_p 
\| \psi\|_{p^\prime}$.
Hence, there exists $g\in L^p(\mathbb{R}^d)$ such that
$\| g\|_p \le C\, \|N(\nabla u)\|_p$ and 
$$
\int_{\mathbb{R}^{d+1}_+}
a_{ij}\frac{\partial u}{\partial x_j}
\frac{\partial \varphi}{\partial x_i}
\, dX
=\Lambda(\psi)
=\int_{\mathbb{R}^d} g\psi\, dx
=\int_{\mathbb{R}^d} g\varphi\, dx.
$$
\end{proof}

\section{$L^2$ estimates for the Neumann problem}

To solve the $L^2$ Neumann problem,
as in the cases of the Dirichlet and regularity problems,
we need to impose a local solvability condition: for $x_0\in \mathbb{R}^d$,
\begin{equation}
\label{local-Neumann-condition}
\int_{B(x_0,1)} |N_1(\nabla u)|^2\, dx
\le C_2 
\left\{
\int_{B(x_0,2)} |g|^2\, dx
+\int_0^2\int_{B(x_0,4)} |\nabla u|^2\, dxdt\right\},
\end{equation}
whenever  $u \in W^{1,2}(T(x_0,4))$
 satisfies (\ref{variational-formulation})
with $g\in C(B(x_0,4))$ 
for all $\varphi\in C_0^1 (B(x_0,4)\times (-4,4))$.

The goal of this section is to prove the following.

\begin{thm}\label{Neumann-theorem-2}
Let $\mathcal{L}=-\text{div}(A\nabla )$ with smooth coefficients
satisfying (\ref{real-symmetric}), (\ref{ellipticity}) and
(\ref{periodic-in-t}).
Also assume that $\mathcal{L}$
satisfies conditions
(\ref{local-Dirichlet-condition}) and (\ref{local-Neumann-condition}).
Then, given any $g\in L^2(\mathbb{R}^{d})$, there exists a
$u\in W^{1,2}_{loc}(\mathbb{R}^{d+1}_+)$, unique up to constants, 
satisfying 
$N(\nabla u)\in L^2(\mathbb{R}^d)$ and
(\ref{variational-formulation}).
Moreover the solution $u$ satisfies the estimate
$\| N(\nabla u)\|_2 \le C\, \| g\|_2$,
where $C$ depends only on $d$, $\mu$ and the constants $C_0$ 
in (\ref{local-Dirichlet-condition}) and $C_2$ in (\ref{local-Neumann-condition}).
\end{thm}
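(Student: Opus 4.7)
\medskip

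\noindent\textbf{Proof proposal.}

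The plan is to first establish the \emph{a priori} estimate $\|N(\nabla u)\|_2 \le C\|g\|_2$ for $g \in L^\infty_c(\mathbb{R}^d)$, with $u$ taken to be the candidate solution produced by the Neumann function of Section~7, and then pass to general $g \in L^2$ by density. Uniqueness up to constants is almost immediate: if two solutions $u_1, u_2$ share the data $g$ and both have $N(\nabla u_i) \in L^2$, then $w=u_1-u_2$ satisfies (\ref{Neumann-data-zero}) globally with $N(\nabla w) \in L^2$, and Theorem~\ref{Neumann-uniqueness} forces $w$ to be constant since $2 < d/(1-\beta)$ for $d\ge 2$.

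For the a priori estimate, take $g \in L^\infty_c(\mathbb{R}^d)$ and let $u$ be given by (\ref{Neumann-representation}). By Lemma~\ref{Neumann-function-lemma} this $u$ is a weak solution of the Neumann problem, and the decay bounds on $\widetilde{\Gamma}$ together with standard interior gradient estimates give $|u(X)| \lesssim |X|^{-(d-1)}$ and $|\nabla u(X)| \lesssim |X|^{-d}$ for $|X|$ large. The central step is the bound
\begin{equation*}
\int_0^6 \int_{\mathbb{R}^d} |\nabla u|^2 \, dxdt \le C\|g\|_2^2,
\end{equation*}
obtained by applying Remark~\ref{key-Neumann-remark} with $x_0=0$ and sending $R \to \infty$: the tail term $R^{-1}\iint_{T(0,3R)\setminus T(0,R)} |\nabla u|^2$ is of order $R^{-d}$ and hence vanishes. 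With this estimate in hand, covering $\mathbb{R}^d$ by a bounded-overlap family of unit balls and summing the local Neumann hypothesis (\ref{local-Neumann-condition}) gives
\begin{equation*}
\int_{\mathbb{R}^d} |N_1(\nabla u)|^2\,dx \le C\|g\|_2^2 + C \int_0^2\int_{\mathbb{R}^d}|\nabla u|^2 \le C\|g\|_2^2.
\end{equation*}
The jump from $N_1$ to $N_4$ is elementary: for $t \in [1/2,2]$ the defining average is dominated by $C\int_{1/4}^3\int_{B(x,2)}|\nabla u|^2\,dyds$, so $|N_4(\nabla u)(x)|^2 \le |N_1(\nabla u)(x)|^2 + C\int_{1/4}^3 \int_{B(x,2)}|\nabla u|^2$, and integrating yields $\|N_4(\nabla u)\|_2 \le C\|g\|_2$.

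Next, $Q(u)$ is itself a solution in $\mathbb{R}^{d+1}_+$ by $t$-periodicity, and by Cauchy--Schwarz its boundary trace satisfies $\|Q(u)(\cdot,0)\|_2^2 \le \int_0^1\int_{\mathbb{R}^d}|\nabla u|^2 \le C\|g\|_2^2$. Theorem~\ref{Dahlberg-theorem} (whose hypotheses are in force) then yields $\|(Q(u))^*\|_2 \le C\|Q(u)(\cdot,0)\|_2 \le C\|g\|_2$. Lemma~\ref{nontangential-maximal-consequence}, applied with $N_1(\nabla u) \in L^2$, provides a boundary trace $f$ for $u$ with $\|\nabla_x f\|_2 \le C\|N_1(\nabla u)\|_2 \le C\|g\|_2$. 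Plugging these three bounds into (\ref{maximal-function-estimate-2}) delivers
\begin{equation*}
\|N(\nabla u)\|_2 \le C\bigl(\|\nabla_x u(\cdot,0)\|_2 + \|N_4(\nabla u)\|_2 + \|(Q(u))^*\|_2\bigr) \le C\|g\|_2.
\end{equation*}
A standard approximation then furnishes a solution for every $g \in L^2$ (subtracting constants over compacta as needed, in the spirit of the proof of Theorem~\ref{regularity-existence}).

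The main technical obstacle is the $R\to\infty$ step that extracts the global bound $\int_0^6\int|\nabla u|^2 \le C\|g\|_2^2$ from Remark~\ref{key-Neumann-remark}; this requires the decay of $\nabla u$ at infinity, which in turn forces us to work with the explicit representation (\ref{Neumann-representation}) rather than with an abstract variational solution. Once this is secured, the remaining chain (local Neumann condition $\Rightarrow N_1$; elementary maximal comparison $\Rightarrow N_4$; Dahlberg's theorem applied to $Q(u) \Rightarrow (Q(u))^*$; trace lemma $\Rightarrow \nabla_x u(\cdot,0)$; Remark~\ref{remark2.1}) assembles the estimate without further surprises.
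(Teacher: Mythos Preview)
Your proposal is correct and follows essentially the same route as the paper: represent $u$ via the Neumann function, use the decay of $u$ at infinity together with Remark~\ref{key-Neumann-remark} to control $\int_0^6\int_{\mathbb{R}^d}|\nabla u|^2$, upgrade to $\|N_4(\nabla u)\|_2$ via the local Neumann condition, control $\|(Q(u))^*\|_2$ through the $L^2$ Dirichlet problem, bound $\|\nabla_x u(\cdot,0)\|_2$ via Lemma~\ref{nontangential-maximal-consequence}, and combine these in (\ref{maximal-function-estimate-2}); existence for general $g$ and uniqueness then follow from Theorems~\ref{Neumann-existence} and~\ref{Neumann-uniqueness}. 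One small point: to invoke the $L^2$ Dirichlet estimate on $Q(u)$ you need to know in advance that $(Q(u))^*\in L^2(\mathbb{R}^d)$ (so that $Q(u)$ coincides with the unique $L^2$ solution), and the paper secures this via the H\"older bound (\ref{decay of Neumann function}), which gives $|Q(u)(X)|\le C|X|^{1-d-\alpha}$ for large $|X|$; you have the ingredients for this but should make it explicit.
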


In view of Theorems \ref{Neumann-existence} and \ref{Neumann-uniqueness},
it suffices to prove the following.

\begin{thm}\label{Neumann-L-2-solvability}
Under the same conditions on $\mathcal{L}$ as in Theorem \ref{Neumann-theorem-2},
the $L^2$ Neumann problem for $\mathcal{L}(u)=0$ in $\mathbb{R}^{d+1}_+$
is solvable.
\end{thm}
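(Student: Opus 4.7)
The plan is to follow the blueprint from Section 5 for the regularity problem, substituting Lemma \ref{key-Neumann-lemma} and Remark \ref{key-Neumann-remark} for their regularity counterparts, and replacing the data $\|\nabla_x f\|_2$ with $\|g\|_2$ throughout. By density of $C_c^\infty(\mathbb{R}^d)$ in $L^2(\mathbb{R}^d)$ and the approximation scheme of Theorem \ref{Neumann-existence}, it suffices to establish the a priori bound $\|N(\nabla u)\|_2 \le C\|g\|_2$ for $g \in C_c^\infty(\mathbb{R}^d)$, taking $u$ to be the Neumann-function representation
\begin{equation*}
u(X) = \int_{\mathbb{R}^d} N(X,y) g(y)\, dy
\end{equation*}
(with a subtracted constant if $d=2$, as in Lemma \ref{Neumann-function-lemma}). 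By the smoothness of $A$ and standard regularity, $u \in C^1(\mathbb{R}^d \times [0,\infty))$.

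The key vehicle is the maximal-function estimate (\ref{maximal-function-estimate-2}) from Remark \ref{maximal-function-estimate-remark}, which reduces the task to bounding $\int |\nabla_x u(\cdot,0)|^2$, $\int |N_4(\nabla u)|^2$, and $\int |(Q(u))^*|^2$ by $\|g\|_2^2$. First I would prove a decay estimate
\begin{equation*}
\frac{1}{R}\iint_{T(0,3R)\setminus T(0,R)}|\nabla u|^2\,dxdt \longrightarrow 0 \quad \text{as } R\to\infty,
\end{equation*}
in analogy with Lemma \ref{lemma 5.1}, using the pointwise bound $|\nabla_X N(X,y)| \le C|X-y|^{-d}$ (which follows from (\ref{decay of Neumann function}) via Caccioppoli) together with the compact support of $g$. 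Applying (\ref{key-Neumann-estimate-1}) from Remark \ref{key-Neumann-remark} with $x_0=0$ and letting $R \to \infty$ then yields
\begin{equation*}
\int_0^6 \int_{\mathbb{R}^d} |\nabla u|^2\,dxdt \le C\|g\|_2^2,
\end{equation*}
and a standard covering argument combined with the local Neumann condition (\ref{local-Neumann-condition}) upgrades this to $\int|N_4(\nabla u)|^2\,dx \le C\|g\|_2^2$.

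To handle $(Q(u))^*$, I would use that $Q(u)$ is itself an $\mathcal{L}$-solution by the $t$-periodicity of $A$, whose boundary trace satisfies $\|Q(u)(\cdot,0)\|_2^2 \le \int_0^1\int_{\mathbb{R}^d}|\nabla u|^2\,dxdt \le C\|g\|_2^2$; by Dahlberg's theorem (Theorem \ref{Dahlberg-theorem}) applied to $Q(u)$, $\|(Q(u))^*\|_2 \le C\|g\|_2$. Finally, since $|\nabla_x u(x,0)| \le N_4(\nabla u)(x)$ a.e.\ by Lebesgue differentiation applied to the $L^2$ averages defining $N_4$, the first term on the right of (\ref{maximal-function-estimate-2}) is absorbed into the $N_4$ term, producing the desired bound $\|N(\nabla u)\|_2 \le C\|g\|_2$.

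I expect the main obstacle to be the decay estimate, particularly ensuring the gradient decay uniformly down to the $d=2$ case, where the Neumann-function representation is anchored at an auxiliary point $X_0$ rather than vanishing at infinity directly. A secondary technical point is verifying the hypotheses needed to apply Theorem \ref{Dahlberg-theorem} to $Q(u)$ (non-tangential convergence to its boundary trace and membership $(Q(u))^* \in L^2$), both of which should follow from the $C^1$ smoothness of $u$ up to the boundary and the compact support of $g$.
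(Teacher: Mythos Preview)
Your proposal is correct and follows essentially the same route as the paper's proof: represent $u$ via the Neumann function, use the decay at infinity together with Remark \ref{key-Neumann-remark} to control $\int_0^6\int_{\mathbb{R}^d}|\nabla u|^2$, upgrade to $\|N_4(\nabla u)\|_2$ via the local condition (\ref{local-Neumann-condition}), handle $(Q(u))^*$ by Dahlberg's theorem, and assemble everything through (\ref{maximal-function-estimate-2}). The only cosmetic differences are that the paper obtains the decay from $u(X)=O(|X|^{1-d})$ plus Cacciopoli rather than from gradient bounds on $N(X,y)$, and bounds $\|\nabla_x u(\cdot,0)\|_2$ via Lemma \ref{nontangential-maximal-consequence} rather than your pointwise argument; for the a priori membership $(Q(u))^*\in L^2$ needed before invoking Theorem \ref{Dahlberg-theorem}, the paper uses the H\"older estimate in (\ref{decay of Neumann function}) to get $|Q(u)(X)|\le C|X|^{1-d-\alpha}$, which cleanly covers the $d=2$ concern you raised.
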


\begin{proof}
Let $g\in C_c(\mathbb{R}^d)$ and $u$ be given by
(\ref{Neumann-representation}).
It follows from  Lemma \ref{Neumann-function-lemma} that $u\in W^{1,2}(T(0,R))$
for any $R>1$.
This allows us to deduce from the condition (\ref{local-Neumann-condition})
by an integration in $x_0$ over $\mathbb{R}^d$ that
\begin{equation}\label{Neumann-existence-1}
\int_{\mathbb{R}^d}|N_4 (\nabla u)|^2\, dx
\le C\left\{ \int_{\mathbb{R}^d}|g|^2\, dx
+\int_0^6 \int_{\mathbb{R}^d} |\nabla u|^2\, dxdt\right\}.
\end{equation}
Note that by the pointwise estimates on $N(X,Y)$, we have $u(X)=O(|X|^{1-d})$ as $|X|\to \infty$. 
By Cacciopoli's inequality,
this implies that
$$
\iint_{T(0,3R)\setminus T(0,R)}
|\nabla u|^2\, dxdt
\to 0 \text{ as } R\to \infty.
$$
In view of (\ref{key-Neumann-estimate-1}), we obtain
\begin{equation}\label{Neumann-existence-2}
\int_0^6 \int_{\mathbb{R}^d}
|\nabla u|^2\, dxdt
\le C\, \int_{\mathbb{R}^d} |g|^2\, dx.
\end{equation}
By (\ref{Neumann-existence-1}), this leads to
$\| N_4(\nabla u)\|_2 \le C\| g\|_2$.

Next, using the $L^2$ bound of $N_4(\nabla u)$ above 
and Lemma \ref{nontangential-maximal-consequence}, we see that
$u\to f$ n.t. on $\mathbb{R}^d$
and $\| \nabla_x f\|_2 \le C\| N_4(\nabla u)\|_2\le C\| g\|_2$.
Thus, by Remark \ref{maximal-function-estimate-remark},
\begin{equation}\label{Neumann-existence-3}
\aligned
\int_{\mathbb{R}^d}
|N(\nabla u)|^2\, dx
&\le C\left\{
\int_{\mathbb{R}^d} |\nabla_x u(x,0)|^2\, dx
+\int_{\mathbb{R}^d} |N_4 (\nabla u)|^2\, dx
+\int_{\mathbb{R}^d}
|\big( Q(u)\big)^*|^2\, dx\right\}\\
&\le C\left\{
\int_{\mathbb{R}^d} |g|^2\, dx
+\int_{\mathbb{R}^d}
|\big( Q(u)\big)^*|^2\, dx\right\}.
\endaligned
\end{equation}

Finally we note that by (\ref{decay of Neumann function}), $|Q(u) (X)|\le C|X|^{1-d-\alpha}$
for $|X|$ large. This implies that $\big(Q(u)\big)^*\in L^2(\mathbb{R}^d)$.
Since the $L^2$ Dirichlet problem for $\mathcal{L}(u)=0$ in $\mathbb{R}^{d+1}_+$
is solvable, we obtain
$$
\int_{\mathbb{R}^d} |\big(Q(u)\big)^*|^2\, dx
\le C\int_{\mathbb{R}^d} |Q(u)|^2\, dx
\le C\int_0^1 \int_{\mathbb{R}^d} |\nabla u(x,t)|^2\, dxdt
\le C\int_{\mathbb{R}^d} |g|^2\, dx
$$
where we used (\ref{Neumann-existence-2}) in the last step.
In view of (\ref{Neumann-existence-3}), we have proved that
$\|N(\nabla u)\|_2 \le C\| g\|_2$.
\end{proof}

We end this section with a localization theorem.

\begin{thm}\label{Neumann-localization}
Let $\mathcal{L}=-\text{div}(A\nabla)$ with coefficient matrix $A(X)$ satisfying 
(\ref{real-symmetric})-(\ref{ellipticity}).
Suppose that $(N)_2$ and $(R)_2$ for
$\mathcal{L}(u)=0$ in $\mathbb{R}^{d+1}_+$ is solvable.
Let $u\in W^{1,2}(T(x_0, 6r))$ be a weak solution in $T(x_0,6r)$
such that $N_r(\nabla u)\in L^2(B(x_0,4r))$ and $\frac{\partial u}{\partial \nu}
=g$ on $B(x_0,4r)$.
Then
\begin{equation}
\int_{B(x_0,r)} |N_r(\nabla u)|^2\, dx
\le C\int_{B(x_0,3r)} |g|^2\, dx
+\frac{C}{r}
\int_{T(x_0,3r)} |\nabla u|^2\, dX.
\end{equation}
\end{thm}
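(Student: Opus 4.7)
The plan is to split $u = v + w$ via a cutoff on the Neumann data, handling $v$ through the global solvability of $(N)_2$ and handling $w$ (which has vanishing Neumann data on a neighborhood of $B(x_0,2r)\times\{0\}$) by a reflection argument combined with $(R)_2$ and Theorem \ref{regularity-localization-theorem}. Concretely, I would first choose a cutoff $\eta\in C_c^\infty(B(x_0,3r))$ with $\eta\equiv 1$ on $B(x_0,2r)$ and $|\nabla_x\eta|\le C/r$. By Theorem \ref{Neumann-existence} and $(N)_2$, there exists a global weak Neumann solution $v$ on $\mathbb{R}^{d+1}_+$ with data $g\eta$ satisfying $\|N(\nabla v)\|_{L^2(\mathbb{R}^d)}\le C\|g\|_{L^2(B(x_0,3r))}$. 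A dyadic/Fubini argument from the pointwise lower bound in the definition of $N$ then yields the slab estimate
\[
\int_{T(x_0,3r)}|\nabla v|^2\,dX \le Cr\,\|N(\nabla v)\|_{L^2(\mathbb{R}^d)}^2 \le Cr\,\|g\|_{L^2(B(x_0,3r))}^2.
\]
The remainder $w:=u-v$ solves $\mathcal{L}(w)=0$ in $T(x_0,6r)$ with $\partial w/\partial\nu = g(1-\eta) = 0$ on $B(x_0,2r)\times\{0\}$, so via $N_r(\nabla u)\le N(\nabla v)+N_r(\nabla w)$ it will suffice to establish
\[
\int_{B(x_0,r)}|N_r(\nabla w)|^2\,dx \le \frac{C}{r}\int_{T(x_0,3r)}|\nabla w|^2\,dX. \qquad (\star)
\]

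For $(\star)$, I would exploit the vanishing Neumann condition through the even reflection $\widetilde w(x,t):=w(x,|t|)$. With $\widetilde A$ as in (\ref{new-matrix}), the zero Neumann condition ensures $\widetilde w$ is a weak solution of $-\mathrm{div}(\widetilde A\nabla\widetilde w)=0$ in the full ball $B((x_0,0),2r)\subset\mathbb{R}^{d+1}$, i.e.\ a genuine interior solution. I would then pass to a Dirichlet-side localization: pick $\phi\in C_c^\infty(B(x_0,7r/4))$ with $\phi\equiv 1$ on $B(x_0,3r/2)$ and $|\nabla_x\phi|\le C/r$; set $f(x):=\widetilde w(x,0)$, let $c$ be the average of $f$ over $B(x_0,7r/4)$, and put $F:=(f-c)\phi$. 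The interior regularity of $\widetilde w$ (specifically the Meyers higher integrability $\nabla\widetilde w\in L^{2+\epsilon}_{\mathrm{loc}}$) combined with a slicing argument on a family of hyperplanes parallel to $\{t=0\}$ inside $B((x_0,0),2r)$, together with Poincar\'e to absorb the cutoff, should yield the trace bound
\[
\|\nabla_x F\|_{L^2(\mathbb{R}^d)}^2 \le \frac{C}{r}\int_{T(x_0,2r)}|\nabla w|^2\,dX. \qquad (\dagger)
\]
With $v'$ the global Dirichlet solution on $\mathbb{R}^{d+1}_+$ with data $F$, the solvability of $(R)_2$ gives $\|N(\nabla v')\|_{L^2}^2\le C\|\nabla_x F\|_{L^2}^2$. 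Finally, $h:=w-c-v'$ solves $\mathcal{L}(h)=0$ in $T(x_0,6r)$ with $h\equiv 0$ on $B(x_0,3r/2)\times\{0\}$; since $(R)_2\Rightarrow(D)_2$ by Theorem \ref{regularity-imply-Dirichlet}, Theorem \ref{regularity-localization-theorem} applies to $h$ (with rescaled radii) and gives $\int_{B(x_0,r)}|N_r(\nabla h)|^2\,dx \le Cr^{-3}\int_{T(x_0,5r/4)}|h|^2\,dX$. Poincar\'e in the $t$-direction (valid because $h$ vanishes on $B(x_0,3r/2)\times\{0\}$) upgrades this to $\le Cr^{-1}\int_{T(x_0,3r/2)}|\nabla h|^2\,dX$. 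Combining with the $(R)_2$ bound on $N(\nabla v')$, its slab bound (same Fubini computation as for $v$), and the identity $w-c=h+v'$, I obtain $(\star)$.

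To finish, the triangle inequality $N_r(\nabla u)\le N(\nabla v)+N_r(\nabla w)$ together with the inequality $|\nabla w|^2\le 2|\nabla u|^2+2|\nabla v|^2$ and the Step~1 bounds will absorb the $\nabla v$ contribution into $C\|g\|_{L^2(B(x_0,3r))}^2$, giving the stated estimate.

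\textbf{The main obstacle} will be the trace bound $(\dagger)$. For operators with merely bounded measurable coefficients, the classical $W^{1,2}$-trace theorem only produces $f\in H^{1/2}(B(x_0,2r))$, so that $\nabla_x f$ is \emph{a priori} a distribution rather than an $L^2$ function. The reflection of $\widetilde w$ to an interior solution is essential: Meyers higher integrability of $\nabla\widetilde w$ on the interior ball, combined with a careful slicing over hyperplanes transverse to the $t$-direction, should recover the tangential boundary gradient in $L^2$ with the correct $r^{-1}$ scaling, but the details are delicate. A possible alternative, mirroring the approach to Theorem \ref{regularity-localization-theorem}, would be to bypass the trace altogether by representing $w$ on $T(x_0,r)$ through a mixed Neumann--Dirichlet Green's function on a sub-cylinder and invoking Lemma \ref{new-comparison-lemma} directly.
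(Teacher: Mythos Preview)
Your decomposition $u=v+w$ via a cutoff on the Neumann data is reasonable, and the treatment of $v$ by the global $(N)_2$ estimate is fine. The difficulty you flag at $(\dagger)$ is, however, a genuine gap rather than a mere technicality. After even reflection, $\widetilde w$ is an interior weak solution of $-\text{div}(\widetilde A\nabla\,\cdot\,)=0$ with merely bounded measurable coefficients; De Giorgi--Nash gives H\"older continuity, and Meyers gives $\nabla\widetilde w\in L^{2+\epsilon}_{loc}$, but neither yields $\nabla_x\widetilde w(\cdot,0)\in L^2$ on the \emph{specific} slice $\{t=0\}$. Your slicing argument only produces such a bound on almost every nearby hyperplane $\{t=t_0\}$; to exploit that you would have to solve $(R)_2$ in $\{t>t_0\}$, i.e.\ for the shifted operator $-\text{div}(A(\cdot,\cdot+t_0)\nabla)$, and the hypothesis gives you no control over that problem (no periodicity is assumed here). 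The a priori assumption $N_r(\nabla u)\in L^2(B(x_0,4r))$ tells you $\nabla_x f$ exists in $L^2$ qualitatively (via Lemma \ref{nontangential-maximal-consequence}), but the only quantitative bound it delivers is $\|\nabla_x f\|_2\le \|N_r(\nabla w)\|_2$, which is circular.

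The paper bypasses the trace issue entirely and proceeds as in your suggested alternative: it uses the Neumann function $N(X,Y)$ from Section~7 to write, for $w=u-\beta$ (with $\beta$ the average of $u$ on $T(x_0,3r)$) and a cutoff $\varphi\in C_0^\infty(B(x_0,3r)\times(-3r,3r))$ equal to $1$ on $T(x_0,2r)$,
\[
w(X)=\int_{\mathbb{R}^d}N(X,y)\,g\varphi\,dy
-\int_{\mathbb{R}^{d+1}_+} N(X,Y)\,a_{ij}\frac{\partial w}{\partial y_j}\frac{\partial\varphi}{\partial y_i}\,dY
+\int_{\mathbb{R}^{d+1}_+} a_{ij}\frac{\partial}{\partial y_j}\big\{N(X,Y)\big\}\,w\,\frac{\partial\varphi}{\partial y_i}\,dY
\]
for $X\in T(x_0,2r)$. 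The first term is a global Neumann solution with data $g\varphi$ and is handled by $(N)_2$; the remaining two terms are integrals over $T(x_0,3r)\setminus T(x_0,2r)$ (the support of $\nabla\varphi$), and their contribution to $N_r(\nabla\,\cdot\,)$ on $B(x_0,r)$ is controlled using the size/regularity of the Neumann function together with $(R)_2$, exactly as in Theorem 6.10 of \cite{KP-1993}. This representation-formula route is what you should use; your first plan would require an additional regularity ingredient (essentially a boundary Caccioppoli/trace estimate for zero-Neumann solutions) that is not available under the stated hypotheses.
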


\begin{proof}
Let $\varphi\in C_0^\infty (B(x_0,3r)\times (-3r,3r))$ such that
$\varphi=1$ in $T(x_0,2r)$ and $|\nabla \varphi|\le C/r$.
Let $w=u-\beta$ where $\beta$ is the average of $u$ over $T(x_0,3r)$.
Then for $X\in T(x_0,2r)$,
\begin{equation}
\aligned
w(X)
&=\int_{\mathbb{R}^{d+1}_+}
a_{ij} \frac{\partial }{\partial y_j} \big\{ N(X,Y)\big\}
\cdot \frac{\partial (w\varphi)}{\partial y_i}\, dY\\
&
=\int_{\mathbb{R}^d}
N(X,Y) g\varphi\, dY
-\int_{\mathbb{R}^{d+1}_+}
N(X,Y) a_{ij} \frac{\partial w}{\partial y_j}
\cdot \frac{\partial \varphi}{\partial y_i} \, dY\\
&\qquad\qquad
+\int_{\mathbb{R}^{d+1}_+}
a_{ij} \frac{\partial }{\partial y_j} \big\{ N(X,Y)\big\}
w
\cdot
\frac{\partial \varphi}{\partial y_i}\, dY
\endaligned
\end{equation}
With this representation formula, the rest of the proof 
similar to that of Theorem 6.10 in \cite{KP-1993}.
We omit the details.
\end{proof}

\begin{remark}\label{Neumann-localization-remark}
{\rm
Let $u\in W^{1,2}_{loc}(T(x_0,6r))$ be a weak solution in $T(x_0,6r)$ such that
$N(\nabla u)\in L^2(B(x_0,4r))$ and $\frac{\partial u}{\partial\nu} =0$
on $B(x_0,6r)$.
Suppose that $(N)_2$ and $(R)_2$ for $\mathcal{L}(u)
=0$ in $\mathbb{R}^{d+1}_+$ are solvable.
Let $E$ be the average of $u$ over $T(x_0,4r)$.
It follows from Theorem \ref{Neumann-localization} that
$$
\aligned
\bigg(
\frac{1}{r^{d}}
& \int_{B(x_0,r)} |N_r(\nabla u)|^2\, dx\bigg)^{1/2}
\le C\left(\frac{1}{r^{d+1}}
\int_{T(x_0, 3r)} |\nabla u|^2 \, dX\right)^{1/2}\\
& \le \frac{C}{r} 
\left(\frac{1}{r^{d+1}}
\int_{T(x_0, 3.1r)} |u-E|^2 \, dX\right)^{1/2}
\le \frac{C}{r^{d+2}}
\int_{T(x_0,3.2r)} |u-E |\, dX\\
&
\le \frac{C}{r^{d+1}}
\int_{T(x_0,3.2r)} |\nabla u|\, dX \le
\frac{C}{r^d}
\int_{B(x_0,4r)} |N(\nabla u)|\, dx,
\endaligned
$$
where we have used Cacciopoli's inequality, boundary regularity of weak solutions, 
and Poincar\'e inequality.
This yields the weak reverse H\"older inequality,
\begin{equation}\label{reverse-Holder-Neumann-0}
\left\{
\frac{1}{r^{d}}
\int_{B(x_0,r)} |N(\nabla u)|^2\, dx\right\}^{1/2}
\le \frac{C}{r^d}
\int_{B(x_0,4r)} |N(\nabla u)|\, dx.
\end{equation}
By the self-improving property of the weak reverse H\"older inequality,
there exists $p>2$, depending only on $d$ and the constant $C$ in 
(\ref{reverse-Holder-Neumann-0}),  such that
\begin{equation}\label{Neumann-reverse-Holder}
\left\{
\frac{1}{r^{d}}
\int_{B(x_0,r)} |N(\nabla u)|^p\, dx\right\}^{1/p}
\le C \left\{\frac{1}{r^d}
\int_{B(x_0,2r)} |N(\nabla u)|^2\, dx\right\}^{1/2}.
\end{equation}
}
\end{remark}

\section{$L^p$ estimates for the Neumann problem}

In this section we study the $L^p$ Neumann problem in $\mathbb{R}^{d+1}_+$.

\begin{thm}\label{Neumann-solvability-p}
Let $\mathcal{L}=-\text{div}(A\nabla)$ with coefficients satisfying 
(\ref{real-symmetric})-(\ref{ellipticity}).
Suppose that $(N)_2$ and $(R)_2$ for $\mathcal{L}(u)=0$
in $\mathbb{R}^{d+1}_+$ are both solvable.
Then $(N)_p$ for $\mathcal{L}(u)=0$ in $\mathbb{R}^{d+1}_+$
is solvable for $1<p<2+\delta$.
\end{thm}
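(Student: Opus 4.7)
The proof runs in parallel with the argument for Theorem \ref{regularity-theorem-p}, splitting into the cases $1<p<2$ and $2<p<2+\delta$. By Theorems \ref{Neumann-existence} and \ref{Neumann-uniqueness}, it suffices to establish the a priori bound $\|N(\nabla u)\|_p\le C\|g\|_p$ when $g\in C_c^\infty(\mathbb{R}^d)$ and $u(X)=\int N(X,y)g(y)\,dy$ is the associated solution from Section 7.

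For the range $2<p<2+\delta$, I plan to invoke the real-variable Theorem \ref{real-variable-theorem} with $F=N(\nabla u)$. Fix a ball $B=B(x_0,r)$ and a cutoff $\varphi\in C_0^\infty(B(x_0,8r))$ with $\varphi\equiv 1$ on $B(x_0,7r)$. Write $u=v+w$, where $v$ is the $L^2$ Neumann solution with data $g\varphi$ and $w$ the $L^2$ Neumann solution with data $g(1-\varphi)$, and set $F_B=N(\nabla v)$, $R_B=N(\nabla w)$. The hypothesis $(N)_2$ applied to $v$, together with Poincar\'e's inequality, yields the first bound required by Theorem \ref{real-variable-theorem}. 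For the second bound, note that $w$ has zero Neumann data on $B(x_0,7r)$, so Remark \ref{Neumann-localization-remark} provides a weak reverse H\"older estimate (\ref{Neumann-reverse-Holder}) for $N(\nabla w)$ on balls contained in that neighborhood; combining this with $N(\nabla w)\le N(\nabla u)+N(\nabla v)$ and the $L^2$ bound on $N(\nabla v)$ supplies the required $L^q$ inequality with some $q>2$. The self-improving property of (\ref{Neumann-reverse-Holder}) then delivers the full range up to $2+\delta$.

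For the range $1<p<2$, the plan is to prove the uniform atomic estimate $\|N(\nabla u)\|_1\le C$ whenever $g$ is an $H^1_{at}$-atom, and then obtain $(N)_p$ by interpolation with $(N)_2$. Let $g$ be an atom supported in $B(x_0,r)$ with $\|g\|_\infty\le r^{-d}$ and $\int g=0$. Near the support, Cauchy--Schwarz and the $L^2$ bound give $\int_{B(x_0,Cr)}N(\nabla u)\,dx\le C$ at once. For a distant ball $B(y_0,R)$ with $R\ge r$ and $B(y_0,10R)\cap B(x_0,r)=\emptyset$, I plan to combine the mean-zero cancellation of $g$ with the H\"older regularity of the Neumann function (\ref{decay of Neumann function}) to obtain the pointwise estimate $|u(X)|\le C\,r^\alpha|X-X_0|^{-(d-1+\alpha)}$ on $T(y_0,5R)$, where $X_0=(x_0,0)$. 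Since $g$ vanishes on $B(y_0,3R)$, Theorem \ref{Neumann-localization} together with an interior Cacciopoli inequality then gives
\[
\int_{B(y_0,R)}|N_R(\nabla u)|^2\,dx\le\frac{C}{R^3}\int_{T(y_0,5R)}|u|^2\,dX\le\frac{C\,r^{2\alpha}}{R^{d+2\alpha}},
\]
so Cauchy--Schwarz produces $\int_{B(y_0,R)}N_R(\nabla u)\,dx\le C(r/R)^\alpha$. The tail $N(\nabla u)-N_R(\nabla u)$ is handled by the analogous estimate on dyadically larger scales, and summing over $R\in\{2^k r:k\ge 1\}$ yields the atomic bound.

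The main technical obstacle will be the far-field step in the $p<2$ case: one must carefully exploit the mean-zero cancellation of the atom to extract the polynomial decay of $u$, and then convert that pointwise decay into an integral decay for the nontangential maximal function of $\nabla u$ via the localization theorem and Cacciopoli's inequality. The $p>2$ case is essentially automatic once the weak reverse H\"older inequality from Remark \ref{Neumann-localization-remark} is available.
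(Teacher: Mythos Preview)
Your proposal is correct and follows essentially the same route as the paper: atomic $H^1$ estimates plus interpolation for $1<p<2$, and the real-variable Theorem \ref{real-variable-theorem} combined with the weak reverse H\"older inequality of Remark \ref{Neumann-localization-remark} for $2<p<2+\delta$. One small slip: in the $p>2$ step you invoke Poincar\'e's inequality, but unlike the regularity case there is no mean to subtract here---the bound $\|N(\nabla v)\|_2\le C\|g\varphi\|_2\le C\|g\|_{L^2(8B)}$ follows directly from $(N)_2$, so Poincar\'e is unnecessary (and indeed the paper simply uses the sharp cutoff $\chi_{6B}$ rather than a smooth one).
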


\begin{proof}
In the case of the unit ball (or a star-shaped Lipschitz domain), it was proved in
\cite{KP-1993} that the solvability of $(N)_{p_0}$ and $(R)_{p_0}$
implies the solvability of $(N)_p$ for
any $1<p<p_0+\delta$.
We follow the approach in \cite{Dahlberg-Kenig-1987} and 
\cite{KP-1993} to treat the case $1<p<2$.
For $f\in L^2(\mathbb{R}^d)$ with compact support, let
$T(f)= N(\nabla u)$, where $u$ is given by (\ref{Neumann-representation}).
Since $(N)_2$ is solvable, $T$ is bounded on $L^2(\mathbb{R}^d)$.
With Theorem \ref{Neumann-localization} and pointwise estimates of the Neumann function
at our disposal, one may establish 
$\|T(f)\|_1 \le C$, where $f$ is an $H^1_{at}$ atom.
The $L^p$ boundedness of $T$ then follows by interpolation.

We use Theorem \ref{real-variable-theorem} to treat the range $p>2$.
Let $f\in L^2(\mathbb{R}^d)$ with compact support and $F=T(f)=N(\nabla u)$,
where $u$ is given by (\ref{Neumann-representation}).
For each ball $B$ in $\mathbb{R}^d$, we choose $F_B= T(f\chi_{6B})$ and
$R_B=T(f\chi_{\mathbb{R}^d\setminus 6B})$.
Then $|F|\le F_B +R_B$.
By the $L^2$ boundedness of $T$,
$$
\int_{B} |F_B|^2\, dx 
\le
C\int_{6B} |f|^2\, dx.
$$
To estimate $R_B$, we use the weak reverse H\"older inequality (\ref{Neumann-reverse-Holder}).
This gives
$$
\aligned
\left(\frac{1}{r^d}
\int_{B} |R_B|^p\, dx\right)^{1/p}
&\le
C\left(\frac{1}{r^d}
\int_{2B}
|R_B|^2\, dx\right)^{1/2}\\
&\le C \left(\frac{1}{r^d}
\int_{2B}
|T(f)|^2\, dx\right)^{1/2}
+C\left(\frac{1}{r^d}
\int_{6B} |f|^2\, dx \right)^{1/2}.
\endaligned
$$
Thus, by Theorem \ref{real-variable-theorem}, we obtain $\|T(f)\|_q
\le C\| f\|_q$ for any $2<q<p$.

\end{proof}

The next theorem deals with the question of uniqueness for $1<p<2$.

\begin{thm}
Let $\mathcal{L}(u)=-\text{div}(A\nabla )$ with coefficients satisfying
(\ref{real-symmetric})-(\ref{ellipticity}).
Let $1<p<2$. 
Assume that $(R)_p$ and $(N)_2$
for $\mathcal{L}(u)=0$ in $\mathbb{R}^{d+1}_+$ are both solvable.
Suppose that $N(\nabla u)\in L^p(\mathbb{R}^d)$ and $u\in W^{1,2}_{loc}
(\mathbb{R}^{d+1}_+)$ satisfies
(\ref{Neumann-data-zero}).
Then $u$ is constant in $\mathbb{R}^{d+1}_+$.
\end{thm}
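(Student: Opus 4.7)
My plan is first to show that $u$'s nontangential boundary trace $f$ is a constant, and then to apply Theorem \ref{regularity-uniqueness} to conclude $u$ itself is constant. By Lemma \ref{nontangential-maximal-consequence}, the hypothesis $N(\nabla u)\in L^p(\mathbb{R}^d)$ ensures $u$ converges n.t.\ to some $f$ with $\nabla_x f\in L^p(\mathbb{R}^d)$. Since $d\ge 2$ and $\alpha>0$, we have $p<2<\tfrac{d}{1-\alpha}$; hence once $f$ is a constant $c$, Theorem \ref{regularity-uniqueness} applied to $u-c$ (which has $N(\nabla(u-c))\in L^p$ and vanishes n.t.\ on $\mathbb{R}^d$) forces $u\equiv c$.

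To show $f$ is constant, it suffices to verify $\int_{\mathbb{R}^d} fg\,dx = 0$ for every $g\in C_c^\infty(\mathbb{R}^d)$ with $\int g = 0$, because taking $g=\partial_{x_i}\phi$ for $\phi\in C_c^\infty$ then gives $\partial_{x_i} f = 0$ in the sense of distributions. Fix such a $g$. Use the solvability of $(N)_2$ together with the Neumann-function construction of Section 7 to produce a weak solution $v$ of $\mathcal{L}v=0$ with $\partial_\nu v = g$ via (\ref{Neumann-representation}). The mean-zero condition permits the rewriting $v(X) = \int\big[N(X,y) - N(X,y_0)\big]g(y)\,dy$, and combined with the H\"older estimate (\ref{decay of Neumann function}) this yields the improved decay $|v(X)|\le C|X|^{-(d-1)-\alpha}$ for $|X|$ large, and by Caccioppoli $|\nabla v(X)|\le C|X|^{-d-\alpha}$. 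Choose $\eta_R\in C_c^\infty(\mathbb{R}^{d+1})$ with $\eta_R\equiv 1$ on $\{|X|\le R\}$, supported in $\{|X|\le 2R\}$, and $|\nabla\eta_R|\le C/R$. Test the zero-Neumann variational identity for $u$ against $v\eta_R$ and (\ref{variational-formulation}) for $v$ against $u\eta_R$; subtracting and using the symmetry $a_{ij}=a_{ji}$ to cancel the interior quadratic form produces
\[
\int_{\mathbb{R}^d} f\,g\,\eta_R(\cdot,0)\,dx \;=\; \int_{\mathbb{R}^{d+1}_+} a_{ij}\big(u\,\partial_j v - v\,\partial_j u\big)\,\partial_i\eta_R\,dX \;+\; o(1),
\]
where the $o(1)$ handles the passage from $u|_{t=0}$ to the n.t.\ limit $f$, justified by dominated convergence since $g\in L^\infty_c$.

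The main obstacle is to show that the right-hand integral $I_R$ vanishes as $R\to\infty$. Its integrand lives on the annulus $\{R\le |X|\le 2R\}$ with weight $1/R$. For the $|\nabla u||v|$ piece, a slicing argument exploiting the definition of $N(\nabla u)$ yields $\int_{R\le |X|\le 2R,\,t>0}|\nabla u|^2\,dX\le CR\int_{|x|\le 2R}N(\nabla u)^2\,dx \le CR^{1+d(1-2/p)}\|N(\nabla u)\|_p^2$ by H\"older, while the decay of $v$ gives $\int_{R\le |X|\le 2R}|v|^2\,dX\le CR^{3-d-2\alpha}$; Cauchy-Schwarz together with the $1/R$ weight produces a bound of order $R^{1-d/p-\alpha}\|N(\nabla u)\|_p$, which tends to zero precisely because $p<\tfrac{d}{1-\alpha}$. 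The symmetric $|u||\nabla v|$ piece is handled analogously: after normalizing $u$ by subtracting a constant (justified because the constant's contribution to $I_R$ is $c\int g\,\eta_R(\cdot,0)\,dx\to 0$, again using $\int g = 0$), the Sobolev embedding $\dot W^{1,p}(\mathbb{R}^d)\hookrightarrow L^{p^*}(\mathbb{R}^d)$ (valid since $p<d$) places $f$ in $L^{p^*}$, and the pointwise bound $|u(x,t)|\le |f(x)|+t\,N(\nabla u)(x)$ paired with $|\nabla v|\le C|X|^{-d-\alpha}$ produces the same exponent $R^{1-d/p-\alpha}$. The crucial ingredient throughout is that the mean-zero condition on $g$ upgrades the generic Neumann-function decay $|X|^{-(d-1)}$ to $|X|^{-(d-1)-\alpha}$; without this extra factor of $|X|^{-\alpha}$ the final exponent would fail to be negative in the range $p<2$. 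Once $I_R\to 0$ we conclude $\int fg\,dx=0$ for all admissible $g$, so $f$ is a constant, and Theorem \ref{regularity-uniqueness} closes the argument.
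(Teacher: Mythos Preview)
Your approach is genuinely different from the paper's. The paper argues by approximation: since $(R)_p$ is solvable, $u$ coincides with the $L^p$ regularity solution for its trace $f$; approximating $f$ by $f_k\in C_0^1$ gives classical solutions $u_k$ whose Neumann data $g_k$ (extracted via Theorem \ref{Neumann-problem-consequence-1}) converge in $L^p$ to zero, and then the $L^2$ Neumann representation $u_k=\beta_k+\int N(\cdot,y)g_k\,dy$ (available because $N(\nabla u_k)\in L^2$) forces $\nabla u_k\to 0$ locally. Your duality/integration-by-parts argument against a Neumann solution $v$ with mean-zero data is more direct and makes the role of the extra $|X|^{-\alpha}$ decay transparent.

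There is, however, a concrete error in your bound for the $|\nabla u||v|$ piece. You assert by H\"older that
\[
\int_{|x|\le 2R}N(\nabla u)^2\,dx \;\le\; CR^{d(1-2/p)}\,\|N(\nabla u)\|_p^2,
\]
but for $p<2$ this inequality runs the wrong direction: one cannot dominate an $L^2$ integral by an $L^p$ norm on a bounded set when $p<2$, and indeed nothing in the hypotheses guarantees $N(\nabla u)\in L^2_{\mathrm{loc}}$. The repair is to replace Cauchy--Schwarz by an $L^1$--$L^\infty$ pairing: use $|v|\le C R^{1-d-\alpha}$ pointwise on the annulus together with the slicing bound $\int_{R\le|X|\le 2R}|\nabla u|\,dX\le CR\int_{|x|\le 2R}N(\nabla u)\,dx\le CR^{1+d-d/p}\|N(\nabla u)\|_p$ (this H\"older is legitimate since $p>1$), which recovers exactly your claimed exponent $R^{1-d/p-\alpha}$. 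A secondary gap is the justification of testing (\ref{Neumann-data-zero}) against $v\eta_R$ and (\ref{variational-formulation}) against $u\eta_R$: neither is in $C_0^1(\mathbb{R}^{d+1})$, and the cancelled interior term $\int a_{ij}\partial_j u\,\partial_i v\,\eta_R$ is not a priori finite because $\nabla u$ is only $L^p$ and $\nabla v$ only $L^2$ near $\{t=0\}$. This can be handled by inserting a vertical cutoff $\theta(t/\delta)$, performing the subtraction in $\{t>\delta\}$ where both gradients are locally $L^2$, and letting $\delta\to 0$; the boundary term $\int fg\,\eta_R$ then emerges from the $\partial_t\theta$ contribution via the weak convergence in Theorem \ref{Neumann-problem-consequence-1}.
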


\begin{proof}
Since $N(\nabla u)\in L^p(\mathbb{R}^d)$, $u= f$ n.t. on $\mathbb{R}^d$
for some $f\in \dot{W}^{1,p}(\mathbb{R}^d)$ by Lemma \ref{nontangential-maximal-consequence}.
Thus $u$ is the solution of the $L^p$ regularity problem with boundary data
$f$. Choose $f_k \in C_0^1(\mathbb{R}^d)$ such that $\nabla_x f_k \to \nabla_x f$
in $L^p(\mathbb{R}^d)$.
Let $u_k$ be the solution of the classical Dirichlet problem with data $f_k$.
By Theorem \ref{Neumann-problem-consequence-1}, $u_k$ is also a weak solution
to the Neumann problem on $\mathbb{R}^{d+1}_+$ with data $g_k\in L^p(\mathbb{R}^d)$.
Since $\|g_k-g_\ell\|_p \le C\|N(\nabla u_k-\nabla u_\ell)\|_p\to 0$ as $k, \ell\to\infty$, we 
may deduce that $g_k \to g$ in $L^p(\mathbb{R}^d)$.

Next we note that $\|N(\nabla u_k -\nabla u)\|_p \le C\| \nabla_x (f_k -f)\|_p$.
Thus $\nabla u_k \to \nabla u$ in $L^1(T(0,R))$ for any $R>1$.
It follows that $\int_{\mathbb{R}^d} g\varphi dx=0$ for any $\varphi\in C_0^1(\mathbb{R}^{d+1})$.
Consequently, we obtain $g=0$.

Finally since $\| N(\nabla u_k)\|_2 \le C\|\nabla_x f_k\|_2<\infty$,
$u_k$ is a solution to the $L^2$ Neumann problem with data $g_k$. 
By Remark \ref{Neumann-representation-remark}, $u_k$ is given by
$
u_k =\beta_k +\int_{\mathbb{R}^d} N(X,y) g_k (y)\, dy
$
for some constant $\beta_k$, if $d\ge 3$. If $d=2$, one needs to replace
$N(X,y)$ by $N(X,y)-N(X_0,y)$.
In any case, because $g_k\to 0$ in $L^p(\mathbb{R}^d)$,
we have $u_k(x)-\beta_k\to 0$ uniformly on any compact subset of $\mathbb{R}^{d+1}_+$.
This, together with the fact that $\nabla u_k\to \nabla u$ in $L^1(T(0,R)$,
implies that $\nabla u=0$ a.e. in $\mathbb{R}^{d+1}_+$.
Hence $u$ is constant in $\mathbb{R}^{d+1}_+$.
\end{proof}

\section{Proof of Theorems \ref{Dirichlet-theorem-Lipschitz},
 \ref{Neumann-theorem-Lipschitz} and \ref{regularity-theorem-Lipschitz}}

In this section
 we give the proof of Theorems \ref{Dirichlet-theorem-Lipschitz},
 \ref{Neumann-theorem-Lipschitz} and \ref{regularity-theorem-Lipschitz}
stated in the Introduction.
Let
$$
D=\big\{ (x,t): x\in \mathbb{R}^d \text{ and }\ t>\psi (x)\big\},
$$
where $\psi:\mathbb{R}^d \to\mathbb{R}$ is a Lipschitz function with Lipschitz
constant $\|\nabla_x \psi\|_\infty$.
Consider the bi-Lipschitzian mapping $\Phi : D \to \mathbb{R}^{d+1}_+$ given by
$
Y=\Phi(X)=\Phi(x,t)=(x,t-\psi(x)).
$
Let $\Phi^\prime$ denote the Jacobian matrix of $\Phi$ and
$\widetilde{A}(Y)=(\widetilde{a}_{ij}(Y))=\Phi^\prime (X) A (X) (\Phi^\prime (X))^t$,
where $X=\Phi^{-1} (Y)=\Psi(Y)$.
Note that
$$
\int_D a_{ij} (X) \frac{\partial u}{\partial x_j} \frac{\partial \varphi}{\partial x_i}\, dX
=\int_{\mathbb{R}^{d+1}_+}
\widetilde{a}_{ij} (Y)
\frac{\partial \widetilde{u}}{\partial y_j} \frac{\partial \widetilde{\varphi}}{\partial y_i}\, dY,
$$
where $\widetilde{u}(Y)=u(\Psi(Y))$
and $\widetilde{\varphi} (Y)=\varphi(\Psi (Y))$.
This shows that $\mathcal{L}(u)=-\text{div} (A\nabla u)=0$ in $D$ if and only if
$\widetilde{\mathcal{L}}(\widetilde{u})
=-\text{div} (\widetilde{A}\nabla \widetilde{u})=0 $ in $ \mathbb{R}^{d+1}_+$.
Consequently, the $L^p$ Dirichlet problem for $\mathcal{L}(u)=0$
in $D$ is solvable if and only if the $L^p$ Dirichlet problem for
$\widetilde{\mathcal{L}}(u)=0$ in $\mathbb{R}^{d+1}_+$
is solvable.
The same can be said for the $L^p$ regularity and Neumann problems.

Note that $\Phi^\prime (\Psi(Y))$ is independent of $s=y_{d+1}$ and
$A(\Psi(Y))=A(y,s+\psi(y))$. It follows that
$\widetilde{A}(y,s+1)=\widetilde{A}(y,s)$ if $A(x,t+1)=A(x,t)$.
Also, since
$$
\widetilde{A}(y,s_1)-\widetilde{A}(y,s_2)
=\Phi^\prime (X)
\big\{ A(y,s_1+\psi(y))-A(y,s_2+\psi(y))\big\} (\Phi^\prime (X))^t,
$$
there exist $c_1, c_2>0$ depending only on $d$ and $\|\nabla_x\psi\|_\infty$
such that $c_1 \eta (\rho) \le \widetilde{\eta}(\rho)
\le c_2 \eta (\rho)$, where $\eta (\rho)$ is given by (\ref{eta-function}) and 
$\widetilde{\eta} (\rho)$ is defined in the same manner, using $\widetilde{A}$.
Therefore the elliptic operator $\widetilde{\mathcal{L}}$ satisfies the 
same assumptions
as those imposed on $\mathcal{L}$ in Theorem \ref{Dirichlet-theorem-Lipschitz}.
As a result, it suffices to prove Theorems \ref{Dirichlet-theorem-Lipschitz},
\ref{Neumann-theorem-Lipschitz} and \ref{regularity-theorem-Lipschitz}
in the case $D=\mathbb{R}^{d+1}_+$.

Next we reduce the general case to the case of smooth coefficients.

\begin{lemma}\label{regularity-approximation-lemma}
Let $\mathcal{L}_k =-\text{div}( A_k\nabla )$ and
$\mathcal{L}=-\text{div}(A \nabla )$ with coefficient matrices satisfying
(\ref{real-symmetric})-(\ref{ellipticity}).
Suppose that $A_k(X)\to A(X)$ for a.e. $X\in \mathbb{R}^{d+1}$ as $k\to\infty$.
Let $u_k$ and $u$ be solutions to the classical Dirichlet problem for
$\mathcal{L}_k$ and $\mathcal{L}$ respectively in $\mathbb{R}^{d+1}_+$ 
with boundary data $f\in C_0^1(\mathbb{R}^d)$.
Then, as $k\to\infty$,  $u_k\to u$ uniformly in $T(0,R)$ for any $R>1$
 and $\nabla u_k \to \nabla u$ in $L^2(\mathbb{R}^{d+1}_+)$. Consequently,
$$
\aligned
\|(u)^*\|_p & \le \liminf_{k\to \infty} \| (u_k)^*\|_p,\\
\|N(\nabla u)\|_p & \le \liminf_{k\to \infty} \| N(\nabla u_k)\|_p.
\endaligned
$$
for any $p\ge 1$.
\end{lemma}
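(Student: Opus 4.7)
The plan is to fix a compactly supported extension $F\in C_c^1(\mathbb{R}^{d+1})$ of $f$ with $F(x,0)=f(x)$ and use it to reduce the problem to finite-energy considerations. The argument then proceeds in three steps: a uniform $L^2$ gradient bound, uniform convergence on compacts, and a variational-identity argument for strong $L^2$ convergence of gradients.

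For the energy bound, I would use the cylinder construction from Section 2: if $m$ exceeds the diameter of $\operatorname{supp}(F)$, the function $u_{k,m}-F$ lies in $W^{1,2}_0(T(0,m))$, so testing $\mathcal{L}_k(u_{k,m})=0$ against it and applying ellipticity gives $\|\nabla u_{k,m}\|_{L^2(T(0,m))}\le C\|\nabla F\|_{L^2}$; passing $m\to\infty$ by weak lower semicontinuity yields the uniform bound $\|\nabla u_k\|_{L^2(\mathbb{R}^{d+1}_+)}\le C\|\nabla F\|_{L^2}$, and the same applies to $u$. For uniform convergence on compacts, note that $\|u_k\|_\infty\le\|f\|_\infty$; De Giorgi--Nash--Moser boundary regularity makes $\{u_k\}$ equicontinuous on compact subsets of $\overline{\mathbb{R}^{d+1}_+}$, so by Arzel\`a--Ascoli and the gradient bound, any subsequence admits a further subsequence $u_{k_j}\to u^*$ uniformly on compacts with $\nabla u_{k_j}\rightharpoonup\nabla u^*$ in $L^2$. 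Passing to the limit in $\int A_{k_j}\nabla u_{k_j}\cdot\nabla\varphi=0$ (using $A_{k_j}\nabla\varphi\to A\nabla\varphi$ in $L^2$ by dominated convergence on the compact support of $\varphi$) gives $\mathcal{L}(u^*)=0$; since $u^*$ is bounded, continuous up to the boundary, and equals $f$ at $t=0$, the uniqueness statement for the bounded classical solution (Section 2) forces $u^*=u$, and the standard subsequence argument upgrades this to convergence of the full sequence.

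For strong $L^2$ convergence of gradients, I would test $\mathcal{L}_k(u_k)=0$ against $(u_k-F)\eta_R^2$ for a cutoff $\eta_R$ and let $R\to\infty$. The cross term $\int A_k\nabla u_k\cdot\nabla\eta_R\,(u_k-F)\eta_R$ is controlled by the pointwise decay $|u_k(X)|\le C|X|^{1-d}$ on $|X|$ large (uniform in $k$, from the harmonic-measure/Green-function bound of Section 2 applied to the positive and negative parts of $f$) combined with the $L^2$ gradient bound, and vanishes as $R\to\infty$. This yields the global identity
\[
\int_{\mathbb{R}^{d+1}_+} A_k\nabla u_k\cdot\nabla u_k\,dX=\int_{\mathbb{R}^{d+1}_+} A_k\nabla u_k\cdot\nabla F\,dX.
\]
The right-hand side converges to $\int A\nabla u\cdot\nabla F=\int A|\nabla u|^2$ by weak convergence $\nabla u_k\rightharpoonup\nabla u$, strong $L^2$ convergence $A_k\nabla F\to A\nabla F$ (dominated convergence), and the analogous identity for $u$. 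Expanding $\int A_k|\nabla(u_k-u)|^2$ and handling the cross terms by weak$\times$strong convergence shows this integral tends to zero, and ellipticity yields $\nabla u_k\to\nabla u$ in $L^2(\mathbb{R}^{d+1}_+)$. Finally, the $L^p$ maximal function inequalities follow from Fatou: uniform convergence on compacts gives $(u)^*(x)\le\liminf_k(u_k)^*(x)$ pointwise, while strong $L^2$ convergence of gradients implies convergence of the $L^2$-averages over $B(x,t)\times(t/2,3t/2)$ defining $N$ at each scale $t$, so $N(\nabla u)(x)\le\liminf_k N(\nabla u_k)(x)$ pointwise. The main technical hurdle I anticipate is the justification of the global energy identity: since $u_k-F$ need not lie in $L^2(\mathbb{R}^{d+1}_+)$, it cannot be used directly as a test function, and the cutoff scheme requires the quantitative pointwise decay of the bounded solutions to make the remainder terms vanish.
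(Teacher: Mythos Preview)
Your argument is correct, but it takes a longer route than the paper. The paper bypasses the compactness/uniqueness detour entirely: after establishing the global energy bound $\|\nabla u\|_{L^2(\mathbb{R}^{d+1}_+)}\le C\|\nabla F\|_2$ exactly as you do, it observes that $u_k-u$ has zero boundary data and satisfies $\mathcal{L}_k(u_k-u)=\operatorname{div}\big((A_k-A)\nabla u\big)$. Testing this against $(u_k-u)\varphi(X/R)$ and sending $R\to\infty$ (justified by the same pointwise decay you invoke) yields in one stroke
\[
\int_{\mathbb{R}^{d+1}_+}|\nabla(u_k-u)|^2\,dX\le C\int_{\mathbb{R}^{d+1}_+}|A_k-A|^2|\nabla u|^2\,dX\to 0
\]
by dominated convergence. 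Uniform convergence on $T(0,R)$ then follows \emph{a posteriori} from Poincar\'e ($u_k-u=0$ on $\mathbb{R}^d$) plus De Giorgi--Nash equicontinuity. Your approach instead goes weak convergence $\to$ identification via uniqueness $\to$ energy convergence $\to$ strong convergence, which is the standard ``weak plus norm implies strong'' scheme; it works, but requires the extra input of uniqueness of the bounded classical solution to pin down the limit, whereas the paper's direct difference estimate never needs this. The payoff of your route is that it would adapt to situations where the right-hand side of the difference equation is less explicit; here, though, the paper's one-line estimate is cleaner.
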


\begin{proof}
We may assume that $f\in C_0^1(\mathbb{R}^{d+1})$. Then
$$
\int_{\mathbb{R}^{d+1}_+}
A\nabla (u-f)\cdot \nabla g\, dX =-\int_{\mathbb{R}^{d+1}_+} A
\nabla f\cdot \nabla g\, dX
$$
for any $g\in C_0^1(\mathbb{R}^{d+1}_+)$. 
Recall that $u(X)=\int_{\mathbb{R}^d} f d\omega^X= O(|X|^{1-d-\delta})$,
as $|X|\to \infty$.
By taking $g(X)=(u(X)-f(X))\varphi (X/R)$ where $\varphi\in C_0^\infty (\mathbb{R}^{d+1})$,
$\varphi(X)=1$ for $|X|<1$ and $\varphi(X)=0$ for $|X|\ge 2$,
we may deduce that $\|\nabla u\|_{L^2(\mathbb{R}^{d+1}_+)}
\le C\| \nabla f\|_{L^2(\mathbb{R}^{d+1}_+)}$.
Similarly, using
$$
\int_{\mathbb{R}^{d+1}_+}
A_k \nabla (u_k-u)\cdot  \nabla g\, dX
=-\int_{\mathbb{R}^{d+1}_+}
(A_k-A) \nabla u\cdot \nabla g\, dX,
$$
for any $g\in C_0^1(\mathbb{R}^{d+1}_+)$,
we obtain
$$
\int_{\mathbb{R}^{d+1}_+}
|\nabla (u_k -u)|^2\, dX
\le C\int_{\mathbb{R}^{d+1}_+}
|A_k-A|^2 |\nabla u|^2\, dX.
$$
It follows from the dominated convergence theorem that $\nabla u_k \to \nabla u$
in $L^2(\mathbb{R}^{d+1}_+)$.

To see $u_k\to u$ uniformly in $T(0,R)$, we first note that
$u_k\to u$ in $L^2(T(0,R))$.
This follows from the Poincar\'e inequality
$$
\int_{T(0,R)}
|u_k-u|^2\, dX \le CR^2 \int_{T(0,R)}
|\nabla (u_k -u)|^2\, dX,
$$
as $u_k-u=0$ on $\mathbb{R}^d$.
However, by De Giorgi -Nash estimates, the sequence $\{ u_j\}$ is  equiv-continuous
on $T(0,R)$.
This implies that any subsequence of $\{ u_k\}$
contains a subsequence which converges uniformly on $T(0,R)$ to $u$.
It follows that $\{ u_k\}$ converges uniformly on $T(0,R)$ to $u$.
\end{proof}

\begin{thm}\label{regularity-approximation-theorem}
Let $\mathcal{L}_k=-\text{div}(A_k\nabla )$ and $\mathcal{L}
=-\text{div}(A\nabla)$ with coefficient matrices satisfying the same conditions
as in Lemma \ref{regularity-approximation-lemma}.
Suppose that $(R)_2$
for $\mathcal{L}_k (u_k)=0$ in $\mathbb{R}^{d+1}_+$ is solvable
with uniform estimate $\|N(\nabla u_k)\|_2 \le C_3\|\nabla_x u_k\|_2$.
Then $(R)_p$
for $\mathcal{L}(u)=0$ in $\mathbb{R}^{d+1}_+$
is solvable for $1<p<p_0$, where $p_0>2$ depends only on $d$, $\mu$ and $C_3$.
Consequently, $(D)_q$ is solvable for $p_0^\prime<q<\infty$.
\end{thm}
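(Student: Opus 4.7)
The plan is a direct chain of implications: use Lemma~\ref{regularity-approximation-lemma} to transfer the $L^2$ regularity estimate from the $\mathcal{L}_k$ to $\mathcal{L}$, and then invoke the machinery of Sections 5--6 to enlarge the range of $p$ and to cross over to the Dirichlet problem.

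Concretely, I would fix $f\in C_0^1(\mathbb{R}^d)$ and let $u,u_k$ denote the solutions of the classical Dirichlet problem for $\mathcal{L},\mathcal{L}_k$ respectively with boundary data $f$. By the hypothesis $\|N(\nabla u_k)\|_2\le C_3\|\nabla_x f\|_2$ and the last conclusion of Lemma~\ref{regularity-approximation-lemma},
$$
\|N(\nabla u)\|_2\le \liminf_{k\to\infty}\|N(\nabla u_k)\|_2\le C_3\|\nabla_x f\|_2,
$$
which is the a priori $(R)_2$ estimate for $\mathcal{L}$ with constant $C_3$. Theorem~\ref{regularity-existence} then supplies a solution for arbitrary $f\in\dot{W}^{1,2}(\mathbb{R}^d)$, so $(R)_2$ for $\mathcal{L}$ is solvable with constant controlled by $C_3$. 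Applying Theorem~\ref{regularity-theorem-p} yields $(R)_p$ for $1<p<p_0$ with $p_0=2+\delta>2$, and Theorem~\ref{regularity-imply-Dirichlet} converts this into $(D)_q$ for every $q\in(p_0',\infty)$.

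The only point requiring any care is the claim that $p_0$ depends solely on $d,\mu,C_3$. Tracking $\delta$ through the proof of Theorem~\ref{regularity-theorem-p}: for $p<2$ one uses the atomic-decomposition argument of Theorem~\ref{regularity-theorem-less-than 2}, whose constants are controlled by $d,\mu$ and the $(R)_2$ constant; for $p>2$, Theorem~\ref{regularity-large-than-2} combines the real-variable scheme of Theorem~\ref{real-variable-theorem} with the weak reverse H\"older inequality of Remark~\ref{weak-reverse-Holder-inequality-remark}, and the self-improving exponent there depends only on $d$ and on the constant of the reverse H\"older estimate, which in turn is controlled by the $(R)_2$ constant of $\mathcal{L}$, hence by $C_3$. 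There is no substantive obstacle in this argument; the theorem is essentially a packaging of Theorems~\ref{regularity-theorem-p} and \ref{regularity-imply-Dirichlet} together with the approximation lemma, once the constant-tracking is recorded.
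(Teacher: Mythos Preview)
Your proof is correct and follows essentially the same route as the paper's: pass the $(R)_2$ estimate from $\mathcal{L}_k$ to $\mathcal{L}$ via Lemma~\ref{regularity-approximation-lemma}, then invoke Theorem~\ref{regularity-theorem-p} and Theorem~\ref{regularity-imply-Dirichlet}. Your appeal to Theorem~\ref{regularity-existence} is harmless but unnecessary, since $(R)_2$ solvability is \emph{defined} as the a priori estimate for $C_0^1$ data; the extension to $\dot W^{1,2}$ data is not needed for the statement. Your paragraph tracking the dependence of $p_0$ on $d,\mu,C_3$ is more explicit than what the paper records, and it is correct.
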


\begin{proof}
By Theorem \ref{regularity-theorem-p} and Theorem \ref{regularity-imply-Dirichlet},
we only need to show that $(R)_2$ for 
$\mathcal{L}(u)=0$ in $\mathbb{R}^{d+1}_+$ is solvable.
To this end 
let $u_k$ and $u$ be the solutions to the classical Dirichlet problem
for $\mathcal{L}(u)=0$ and $\mathcal{L}_k (u_k)=0$
in $\mathbb{R}^{d+1}_+$ with data $f\in C_0^1 (\mathbb{R}^{d})$, respectively.
It follows from Lemma \ref{regularity-approximation-lemma} 
and the uniform estimate $\| N(\nabla u_k)\|_2 \le C_3\|\nabla_x u_k\|_2$
that
$$
\| N(\nabla u)\|_2 
\le \liminf_{k\to\infty} 
\|N(\nabla u_k)\|_2
\le C_3\|\nabla_x f\|_2.
$$
\end{proof}

A similar result holds for the $L^2$ Neumann problem.

\begin{lemma}\label{Neumann-approximation-lemma}
Let $\mathcal{L}_k=-\text{div}(A_k\nabla )$ and $\mathcal{L}
=-\text{div}(A\nabla)$ with coefficient matrices satisfying the same conditions
as in Lemma \ref{regularity-approximation-lemma}.
Let 
$$
\aligned
u_k(X) &=\int_{\mathbb{R}^d} N_k (X, (y,0)) f(y)\, dy,\\
u(X) &=\int_{\mathbb{R}^d} N (X, (y,0)) f(y)\, dy,
\endaligned
$$
where $f\in L_c^\infty(\mathbb{R}^d)$ and
$N_k(X,Y)$, $N(X,Y)$ are the Neumann functions constructed in Section 7
for $\mathcal{L}_k$, $\mathcal{L}$ respectively.
Then $\nabla u_k \to \nabla u$ in $L^2(\mathbb{R}^{d+1}_+)$ and consequently,
$$
\|N(\nabla u)\|_p \le \liminf_{j\to\infty}
\|N(\nabla u_k)\|_p
$$
for any $p\ge 1$.
\end{lemma}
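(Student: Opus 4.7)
The plan is to mirror the proof of Lemma \ref{regularity-approximation-lemma}, adapted to the Neumann setting: establish uniform global $L^2$ bounds on $\nabla u_k$, run an energy estimate on $w_k := u_k - u$, and conclude via dominated convergence. Since $f\in L^\infty_c(\mathbb{R}^d)$ is supported in some $B(0,R_0)$, the pointwise bound $|N_k(X,Y)|\le C|X-Y|^{1-d}$ from (\ref{decay of Neumann function}), valid uniformly in $k$, gives $|u_k(X)|\le C|X|^{1-d}$ for $|X|\ge 2R_0$. Cacciopoli's inequality on dyadic annuli then yields $\int_{2^j\le |X|<2^{j+1}} |\nabla u_k|^2\, dX \le C\, 2^{-j(d-1)}$, which is summable for $d\ge 2$; together with Lemma \ref{Neumann-function-lemma}(c) this gives $\|\nabla u_k\|_{L^2(\mathbb{R}^{d+1}_+)}\le C$ independent of $k$, and likewise for $\nabla u$.

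Both $u_k$ and $u$ satisfy the variational identity (\ref{variational-formulation}) with the same data $f$, so $w_k$ satisfies
\begin{equation*}
\int_{\mathbb{R}^{d+1}_+} A_k \nabla w_k \cdot \nabla \varphi \, dX = \int_{\mathbb{R}^{d+1}_+} (A-A_k)\nabla u\cdot \nabla \varphi\, dX
\end{equation*}
for every $\varphi\in C_0^1(\mathbb{R}^{d+1})$, and by density for every compactly supported $\varphi\in W^{1,2}(\mathbb{R}^{d+1}_+)$. Substituting $\varphi = w_k \zeta_R^2$ for a standard cutoff $\zeta_R$ with $\zeta_R\equiv 1$ on $B(0,R)$ and $|\nabla\zeta_R|\le C/R$, ellipticity and Cauchy's inequality produce
\begin{equation*}
\int_{\mathbb{R}^{d+1}_+} |\nabla w_k|^2 \zeta_R^2\, dX \le C\int_{\mathbb{R}^{d+1}_+} |A_k-A|^2|\nabla u|^2\, dX + \frac{C}{R^2}\int_{R<|X|<2R} |w_k|^2\, dX,
\end{equation*}
where the last term vanishes as $R\to\infty$ thanks to the uniform decay of $w_k$. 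Letting $R\to\infty$ and then invoking dominated convergence on the right (with $|A_k-A|$ uniformly bounded and $|\nabla u|^2\in L^1$) yields $\nabla u_k\to \nabla u$ in $L^2(\mathbb{R}^{d+1}_+)$.

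For the concluding $L^p$ inequality, the $L^2$ convergence of $\nabla u_k$ on $\mathbb{R}^{d+1}_+$ implies that for each fixed $x\in \mathbb{R}^d$ and $t>0$, the cylinder averages $\frac{1}{t|B(x,t)|}\int_{t/2}^{3t/2}\int_{B(x,t)} |\nabla u_k|^2$ converge to the corresponding averages of $|\nabla u|^2$. Taking the supremum over $t$ and using $\sup_t \lim_k a_k(t)\le \liminf_k \sup_t a_k(t)$ gives $N(\nabla u)(x)\le \liminf_k N(\nabla u_k)(x)$ a.e., and Fatou's lemma finishes the proof.

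The main obstacle is securing decay of $w_k$ sufficient to control the cutoff boundary term, particularly in the borderline case $d=2$ where $|u_k(X)|\lesssim |X|^{-1}$ provides only marginal integrability on the annulus $\{R<|X|<2R\}$; the key observation is that the decay estimate (\ref{decay of Neumann function}) is uniform in $k$ (the constants depend only on $d$ and $\mu$), so $w_k$ inherits this decay uniformly and the error term $R^{-2}\int_{R<|X|<2R}|w_k|^2\,dX$ does vanish in the limit.
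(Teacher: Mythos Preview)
Your proof is correct and follows essentially the same approach as the paper's: derive the identity
\[
\int_{\mathbb{R}^{d+1}_+} A_k\nabla(u_k-u)\cdot\nabla g\,dX = -\int_{\mathbb{R}^{d+1}_+}(A_k-A)\nabla u\cdot\nabla g\,dX,
\]
test against $u_k-u$ (justified via a cutoff and the uniform decay $u_k(X),u(X)=O(|X|^{1-d})$), and conclude by dominated convergence; the paper is simply more terse about the cutoff step and the concluding Fatou argument, which you have spelled out carefully.
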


\begin{proof}
The proof is similar to that of Lemma \ref{regularity-approximation-lemma}.
Suppose that supp$(f)\subset B(0,r_0)$. Starting with
$$
\int_{\mathbb{R}^{d+1}_+} A\nabla u\cdot \nabla g \, dX=\int_{\mathbb{R}^d} fg\, dx
\quad \text{ for } \ g\in C_0^\infty (\mathbb{R}^{d+1})
$$
and using $u(X)=O(|X|^{1-d})$ as $|X|\to\infty$, we may deduce that
$$
\int_{\mathbb{R}^{d+1}_+} |\nabla u|^2\, dX \le Cr_0^{d+1} \| f\|_\infty^2.
$$
Similarly, using
$$
\int_{\mathbb{R}^{d+1}_+}
A_k \nabla (u_k-u)\cdot \nabla g\, dX
=-\int_{\mathbb{R}^{d+1}_+}
(A_k -A)\nabla u\cdot \nabla g\, dX
\quad \text{ for } \ g\in C_0^\infty (\mathbb{R}^{d+1}),
$$
we obtain
$$
\int_{\mathbb{R}^{d+1}_+}
|\nabla (u_k-u)|^2\, dX
\le C\int_{\mathbb{R}^{d+1}_+}
|A_k-A|^2|\nabla u|^2\, dX.
$$
Thus $\nabla u_k \to \nabla u $ in $L^2(\mathbb{R}^{d+1}_+)$
by the dominated convergence theorem.
\end{proof}

\begin{thm}\label{Neumann-approximation-theorem}
Let $\mathcal{L}_k=-\text{div}(A_k\nabla )$ and $\mathcal{L}
=-\text{div}(A\nabla)$ with coefficient matrices satisfying the same conditions
as in Lemma \ref{regularity-approximation-lemma}.
Suppose that $(N)_2$
for $\mathcal{L}_k (u_k)=0$ in $\mathbb{R}^{d+1}_+$
is solvable with uniform estimate
$\|N(\nabla u_k)\|_2 \le C_4\| \partial u_k/\partial \nu_k\|_2$.
Then $(N)_2$ for $\mathcal{L}(u)=0$
in $\mathbb{R}^{d+1}_+$ is solvable.
If in addition, $(R)_2$ is solvable
for $\mathcal{L}_k (u_k)=0$ with uniform estimate
$\|N(\nabla u_k)\|_2 \le C_4 \|\nabla_x u_k\|_2$, then both
$(N)_p$ and $(R)_p$ for $\mathcal{L}(u)=0$ in $\mathbb{R}^{d+1}_+$
are solvable for $1<p<2+\delta$, where $\delta>0$
depends only on $n$, $\mu$ and $C_4$. Furthermore, the unique solutions
of $(N)_p$ and $(R)_p$ satisfy
$\|N(\nabla u)\|_p \le C\| \partial u/\partial \nu \|_p$
and $\| N(\nabla u)\|_p \le C\| \nabla_x u\|_p$ respectively
with constant $C$ depending only on $d$, $p$, $\mu$ and $C_4$.
\end{thm}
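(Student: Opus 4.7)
\textbf{Proof proposal for Theorem \ref{Neumann-approximation-theorem}.}
The plan is to pass the $L^2$ solvability of $(N)_2$ (and when available, $(R)_2$) from the approximating operators $\mathcal{L}_k$ to the limit operator $\mathcal{L}$ via Lemma \ref{Neumann-approximation-lemma}, and then invoke the general $L^p$ theory developed in Sections 5, 6, 8, 9 to upgrade to the full range $1<p<2+\delta$.

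First, I would prove solvability of $(N)_2$ for $\mathcal{L}$. Fix $f\in L^\infty_c(\mathbb{R}^d)$ and let $u,u_k$ be the weak solutions to the Neumann problem on $\mathbb{R}^{d+1}_+$ with data $f$ constructed via the Neumann functions $N(X,Y)$, $N_k(X,Y)$ as in Lemma \ref{Neumann-approximation-lemma}. The uniform hypothesis yields $\|N(\nabla u_k)\|_2 \le C_4 \|f\|_2$. By Lemma \ref{Neumann-approximation-lemma},
\begin{equation*}
\|N(\nabla u)\|_2 \le \liminf_{k\to\infty} \|N(\nabla u_k)\|_2 \le C_4\,\|f\|_2.
\end{equation*}
This establishes the $L^2$ estimate on Neumann-function solutions with $L^\infty_c$ data, i.e.\ $(N)_2$ for $\mathcal{L}$ is solvable, and by Theorem \ref{Neumann-existence} the estimate extends to all $g\in L^2(\mathbb{R}^d)$.

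Now assume in addition the uniform $(R)_2$ bound $\|N(\nabla u_k)\|_2 \le C_4\|\nabla_x u_k\|_2$. Then Theorem \ref{regularity-approximation-theorem} gives $(R)_p$ for $\mathcal{L}$ on $\mathbb{R}^{d+1}_+$ for $1<p<p_0$ with $p_0>2$, and in particular $(R)_2$ for $\mathcal{L}$ holds with a constant depending only on $d,\mu,C_4$. Since $(N)_2$ and $(R)_2$ for $\mathcal{L}$ are both solvable, Theorem \ref{Neumann-solvability-p} yields the solvability of $(N)_p$ for $1<p<2+\delta$ with $\|N(\nabla u)\|_p \le C\|\partial u/\partial\nu\|_p$. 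Independently, Theorem \ref{regularity-theorem-p} upgrades $(R)_2$ to $(R)_p$ for $1<p<2+\delta$ with $\|N(\nabla u)\|_p \le C\|\nabla_x u\|_p$. Throughout, the exponent $\delta$ and constants $C$ arise from the weak reverse H\"older self-improvement and the real variable argument of Theorem \ref{real-variable-theorem}, and they depend only on $d,\mu$ and $C_4$, as required.

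The only mildly delicate step is the one where we pass to the limit: we must verify that the $u_k$ are controlled uniformly in a way compatible with taking $\liminf$ of $\|N(\nabla u_k)\|_p$, and that the resulting $u$ is indeed the solution obtained from $N(X,Y)$. For $p=2$ this is exactly the content of Lemma \ref{Neumann-approximation-lemma}, whose proof hinges on the $L^2(\mathbb{R}^{d+1}_+)$ convergence of $\nabla u_k \to \nabla u$ together with Fatou-type reasoning for nontangential maximal functions; no further work is needed beyond citing it. The remainder of the argument is essentially bookkeeping, combining the black-box theorems from Sections 6, 8, and 9.
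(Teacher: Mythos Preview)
Your proposal is correct and follows essentially the same route as the paper's proof, which simply cites Lemma \ref{Neumann-approximation-lemma}, Theorem \ref{regularity-approximation-theorem}, and Theorem \ref{Neumann-solvability-p}. You have merely unpacked what those citations mean; the extra invocation of Theorem \ref{regularity-theorem-p} is harmless since it is already contained in Theorem \ref{regularity-approximation-theorem}.
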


\begin{proof}
This follows readily from 
Lemma \ref{Neumann-approximation-lemma}, Theorems \ref{regularity-approximation-theorem}
and \ref{Neumann-solvability-p}.
\end{proof}

Now given a $(d+1)\times (d+1)$ matrix $A(X)$ on $\mathbb{R}^{d+1}$ that satisfies
conditions (\ref{real-symmetric}), (\ref{ellipticity}),
(\ref{periodic-in-t}) and (\ref{smoothness-condition-in-t}),
we define
$$
A_k(X)= \int_{\mathbb{R}^{d+1}}
A(X-Y)\varphi_k (Y)\, dY
$$
where $\{ \varphi_k (Y)\}$ is a standard approximation of identity.
Then $A_k\in C^\infty (\mathbb{R}^{d+1})$ and
$A_k(X)\to A(X)$ for a.e. $X\in \mathbb{R}^{d+1}$.
Furthermore, $A_k(X)$
satisfies the same conditions (\ref{real-symmetric}), (\ref{ellipticity}),
(\ref{periodic-in-t}) and (\ref{smoothness-condition-in-t})
with the same $\mu$ and $\eta_k(\rho)\le \eta(\rho)$, where $\eta_k(\rho)$
denotes the modulus of continuity for $A_k$ in the $t$ variable.
In view of Theorem \ref{Neumann-approximation-theorem},
we have reduced the proofs of Theorems \ref{Dirichlet-theorem-Lipschitz},
\ref{Neumann-theorem-Lipschitz}
and \ref{regularity-theorem-Lipschitz}
to the proof of the following theorem.

\begin{thm}\label{special-case-theorem}
Theorems \ref{Dirichlet-theorem-Lipschitz},
\ref{Neumann-theorem-Lipschitz}
and \ref{regularity-theorem-Lipschitz} hold
in the case where $D=\mathbb{R}^{d+1}_+$, $p=2$, and the coefficient
matrix $A(X)$ is $C^\infty$ and
satisfies conditions (\ref{real-symmetric}), (\ref{ellipticity}),
(\ref{periodic-in-t}) and (\ref{smoothness-condition-in-t}).
\end{thm}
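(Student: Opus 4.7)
The strategy is to reduce the three $L^2$ claims in Theorem \ref{special-case-theorem} to the corresponding solvability theorems already established in the paper---Theorem \ref{Dahlberg-theorem}, Theorem \ref{regularity-theorem}, and Theorem \ref{Neumann-theorem-2}---by verifying the three local solvability conditions (\ref{local-Dirichlet-condition}), (\ref{local-regularity-condition}), and (\ref{local-Neumann-condition}) for $\mathcal{L}$, with all constants depending only on $d$, $\mu$ and the modulus $\eta(\rho)$. Once these three local conditions are in hand, the three $L^2$ estimates are immediate. The $C^\infty$ hypothesis on $A$ will be used only to justify the integration by parts underlying the Rellich identities of Section~3; the quantitative constants must depend solely on $d$, $\mu$, and $\int_0^1 (\eta(\rho))^2/\rho\, d\rho$, so that the approximation arguments in Lemma \ref{regularity-approximation-lemma} and Lemma \ref{Neumann-approximation-lemma}, combined with Theorems \ref{regularity-approximation-theorem} and \ref{Neumann-approximation-theorem}, yield the full conclusions of Theorems \ref{Dirichlet-theorem-Lipschitz}--\ref{regularity-theorem-Lipschitz}.

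To verify the three local conditions, I would freeze the transverse variable by setting $A_0(x) := A(x,0)$ and $\mathcal{L}_0 := -\text{div}(A_0\nabla)$. Since $A_0$ is real, symmetric, elliptic and independent of $t$, the $L^2$ Dirichlet, regularity and Neumann problems on $\mathbb{R}^{d+1}_+$ are solvable for $\mathcal{L}_0$ with constants depending only on $d$ and $\mu$; this is the classical Rellich-identity framework for $t$-independent operators, obtained by using $\partial/\partial t$ as a multiplier. Standard localization of these global $L^2$ bounds then produces the local conditions (\ref{local-Dirichlet-condition}), (\ref{local-regularity-condition}), (\ref{local-Neumann-condition}) for $\mathcal{L}_0$ on every ball, with constants depending only on $d$ and $\mu$. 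To transfer each of these to $\mathcal{L}$, I would exploit the pointwise bound $|A(X) - A_0(x)| \le \eta(t)$ for $X = (x,t)$ with $0 < t \le 1$, which together with (\ref{smoothness-condition-in-t}) gives the Carleson-type estimate
$$
\sup_{x_0,\, r \le 1}\, \frac{1}{|B(x_0,r)|} \iint_{T(x_0,r)} |A(X) - A_0(x)|^2\,\frac{dxdt}{t}\, \le\, C \int_0^1 \frac{(\eta(\rho))^2}{\rho}\, d\rho.
$$
Writing $w = u - v$, where $u$ solves the relevant boundary value problem for $\mathcal{L}$ in $T(x_0,4r)$ and $v$ solves the corresponding problem for $\mathcal{L}_0$ with matching boundary data on $B(x_0,4r)\times\{0\}$, one has $\mathcal{L}_0(w) = \text{div}((A-A_0)\nabla u)$, and a Fefferman--Kenig--Pipher-style perturbation argument uses the Carleson bound above to absorb the error term and close the estimate for $w$, thereby transferring each local condition from $\mathcal{L}_0$ to $\mathcal{L}$.

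The main obstacle will be the perturbation step for the regularity and Neumann local conditions, since these require nontangential control of $\nabla w$ rather than merely pointwise control of $w$. The Dirichlet case (\ref{local-Dirichlet-condition}) falls squarely in the classical FKP setup and presents no novelty. The gradient analogues, however, require combining the Rellich identity for $\mathcal{L}_0$ with a square-function/Carleson decomposition of the inhomogeneous term $(A - A_0)\nabla u$, together with the localization results of Theorem \ref{regularity-localization} and Theorem \ref{Neumann-localization} to obtain a good-lambda/hide-and-seek inequality that can be iterated; care is needed because the absorption step succeeds only when the finiteness (not the smallness) of the Carleson norm of $A-A_0$ is used together with a slight enlargement of the cylinder $T(x_0,2r)$ in the right-hand side. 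Once all three local conditions are established with the required dependence on $d$, $\mu$, and $\eta$, Theorems \ref{Dahlberg-theorem}, \ref{regularity-theorem}, and \ref{Neumann-theorem-2} assemble the three $L^2$ solvability statements, finishing the proof of Theorem \ref{special-case-theorem}.
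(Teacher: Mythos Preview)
Your high-level strategy coincides with the paper's: reduce to Theorems \ref{Dahlberg-theorem}, \ref{regularity-theorem}, \ref{Neumann-theorem-2} by verifying the local solvability conditions, and obtain those by perturbing from the $t$-frozen operator $\mathcal{L}_0=-\text{div}(A(x,0)\nabla)$, the square Dini condition (\ref{smoothness-condition-in-t}) furnishing the needed Carleson control of $A-A_0$. The implementation, however, differs. The paper does not run a bespoke perturbation on cylinders as you propose; instead it passes to a fixed bounded star-like domain $\Omega_0=T(x_0,20)$ and introduces auxiliary operators $L_1,L_2,L_3$ via cutoffs: $L_1$ agrees with $\mathcal{L}$ near $x_0$ and with $-\Delta$ elsewhere, $L_2$ is the same with $A(x,t)$ replaced by $A(x,0)$, and $L_3$ is globally $t$-independent. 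The square Dini condition yields the \emph{vanishing} Carleson condition $h(P,r)\to 0$ in \cite{KP-1995}, so Theorem~2.2 there gives $(N)_2,(R)_2$ for $L_1$ on $\Omega_0$ from those for $L_2$; the latter follow, after another localization, from the $t$-independent theory for $L_3$ (\cite{KP-1993}, Theorem~8.1) on the flat part of $\partial\Omega_0$ and from the Laplacian on the rest. Finally the bounded-domain localization theorems of \cite{KP-1993} (Theorems~5.19 and 6.10) convert solvability for $L_1$ on $\Omega_0$ into the local conditions (\ref{local-regularity-condition}) and (\ref{local-Neumann-condition}) for $\mathcal{L}$.

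What the paper's route buys is that every perturbation and localization step is an off-the-shelf citation to \cite{KP-1993,KP-1995}, stated precisely for bounded star-like Lipschitz domains, so nothing has to be reproved. Your direct $w=u-v$ argument on $T(x_0,4r)$ is morally the same, but you would have to redo the Kenig--Pipher perturbation machinery in the half-space/cylinder setting; note in particular that the relevant input from (\ref{smoothness-condition-in-t}) is the \emph{vanishing} (not merely finiteness) of the Carleson norm as $r\to 0$, which is exactly what drives the absorption in \cite{KP-1995} and makes your ``hide-and-seek'' step close without smallness assumptions on the full integral.
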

 
\begin{proof}
In view of Theorems \ref{Dahlberg-theorem}, \ref{regularity-theorem} and \ref{Neumann-theorem-2},
it suffices to show that the square Dini condition
(\ref{smoothness-condition-in-t}) implies the local solvability conditions
(\ref{local-regularity-condition}) and (\ref{local-Neumann-condition}).
To this end, we fix $x_0\in \mathbb{R}^d$ and choose $\varphi_1\in C_0^\infty(\mathbb{R}^d)$
and $\varphi_2\in C_0^\infty(\mathbb{R})$ so that $0\le \varphi_1\le 1$,
$\varphi_1 (x) =1$ in $B(x_0,8)$, $\varphi_1 (x)=0$ outside of $B(x_0,9)$
and $0\le \varphi_2 \le 1$, $\varphi_2 (t) =1$ in $[-8, 8]$, $\varphi_2 (t)=0$ outside of $[-9,9]$.
We consider the elliptic operator
$L_1 =-\text{div}(A_1\nabla)$ in the star-like Lipschitz domain $\Omega_0=T(x_0,20)$, where
\begin{equation}\label{definition-of-A-1} 
A_1(x,t)=\varphi_2 (t) \big[ \varphi_1 (x) A(x,t) + (1-\varphi_1 (x))I\big] +(1-\varphi_2(t)) I
\end{equation}
and 
$I$ denotes the $(d+1)\times (d+1)$ identity matrix.
Note that $A_1(x,t)=A(x,t)$ for $(x,t)\in T(x_0,8)$ and
thus $\mathcal{L}(u)=0$ in $T(x_0,r)$ implies $L_1(u)=0$ in $T(x_0,r)$, if $0<r<8$.
We claim that under the condition (\ref{smoothness-condition-in-t}),
both $(N)_2$ and $(R)_2$ for $L_1(u)=0$ in $\Omega_0$ are solvable and
the bounding constants in the nontangential maximal function estimates depend
only on $d$, $\mu$ and $\eta (t)$ in (\ref{eta-function}).
By the localization results for the $L^p$ 
Neumann and regularity problems in star-like Lipschitz domains
(see Theorems 5.19 and 6.10 in \cite{KP-1993}), this gives estimates
(\ref{local-regularity-condition}) and (\ref{local-Neumann-condition}).
We should point out that Theorems 5.19 and 6.10 in \cite{KP-1993}
were stated for solutions of $L_1 (u)=0$ in $\Omega_0$.
However the same arguments in their proofs apply to local solutions of
$L_1(u)=\mathcal{L}(u)=0$ in $T(x_0,r)$
for $0<r<8$.

To prove the claim we introduce another elliptic operator
$L_2=-\text{div}(A_2\nabla )$ in $\Omega_0$, where
\begin{equation}\label{definition-of-A-2}
 A_2(x,t)=\varphi_2 (t) \big[ \varphi_1 (x) A(x,0) + (1-\varphi_1 (x))I\big]
+(1-\varphi_2 (t))I.
\end{equation}
Let
\begin{equation}\label{definition-of-e}
\varep^* (X)=\sup \big\{ |A_1(Y)-A_2(Y)|:\
Y\in B(X, \delta(X)/2)\big\},
\end{equation}
where $\delta(X)=\text{dist}(X, \partial\Omega_0)$.
Since
\begin{equation}\label{special-case-3}
A_1(x,t)-A_2 (x,t) =\varphi_1(x) \varphi_2 (t) \big( A(x,t)-A(x,0)\big),
\end{equation}
it follows that $\varep^*(X)=0$ if $|X-P|\le 1$ for some 
$P\in \partial\Omega_0 \setminus \{ (y,0): |y|<10\}$.
It is also easy to see from (\ref{special-case-3}) that if $X=(x,t)$ with $|x|\le 11$ and $t\in (0,1)$, then
$\varep^*(X)\le \eta (2t)$, where $\eta(\cdot)$ is defined by (\ref{eta-function}).
Therefore, for $P\in \partial\Omega_0$ and $r\in (0,1)$,
\begin{equation}\label{definition-of-h}
h(P,r):=\frac{1}{r^d}
\int_{B(P,r)\cap \Omega_0}
\frac{ \big(\varep^* (X)\big)^2}{\delta(X)} \, dX
\le C\int_0^{cr} \frac{\big(\eta (t)\big)^2}{t}\, dt.
\end{equation}
It follows from the condition (\ref{smoothness-condition-in-t}) that
$$
\sup \big\{ h(P,r): P\in \partial\Omega_0\big\} \to 0 
\text{ as } r\to 0.
$$
Hence, by Theorem 2.2 in \cite{KP-1995}, the solvability of $(N)_2$ and $(R)_2$ for
$L_1 (u)=0$ in $\Omega_0$ is equivalent to
that of $(N)_2$ and $(R)_2$ for $L_2 (u)=0$ in $\Omega_0$.
Thus it suffices to prove that the $L^2$ Neumann and regularity problems
for $L_2(u)=0$ in $\Omega_0$ are solvable and 
the bounding constants in the nontangential maximal function estimates
depend only on $d$ and $\mu$.
We shall give the proof for the solvability of $(N)_2$.
The proof  for $(R)_2$ is similar.

Thus, let $u\in W^{1,2}(\Omega_0)$ be a weak solution to
the Neumann problem for $L_2 (u)=0$ in $\Omega_0$ with data $g
\in C(\partial\Omega_0)$.
We will show that 
\begin{equation}\label{special-case-5}
\int_{B(P, 1)\cap\partial\Omega_0} |N(\nabla u)|^2\, d\sigma
\le C\int_{\partial\Omega_0} |g|^2\, d\sigma +C \int_{\Omega_0} |\nabla u|^2\, dX
\end{equation}
for any $P\in \partial\Omega_0$.
The desired estimate
$\|N(\nabla u)\|_2 \le C\| g\|_2$ follows from (\ref{special-case-5}) by a simple
argument (see the proof of Theorem \ref{e-Neumann-theorem} in Section 11).

To see (\ref{special-case-5}), we first note that $A_2(x,t)=I$ for $(x,t)\in \Omega_0
\setminus T(x_0,9)$. Hence, 
if $P\in \partial\Omega_0\setminus
\{ (y,0): |y|<11\}$, $u$ is harmonic in $B(P,2)\cap\Omega_0$. As a result,
estimate (\ref{special-case-5}) follows readily from
the solvability of the $L^2$ Neumann problem for $\Delta v=0$ in Lipschitz domains
\cite{Jerison-Kenig-1981-Bull}
and Theorem 6.10
in \cite{KP-1993}.
To show (\ref{special-case-5}) for $P=(y,0)$ with $|y|<11$, 
we use the fact that $A_2(x,t)=A_3(x,t)$ if $t\in [-8,8]$, where
$$
A_3(x,t)=\varphi_1 (x) A(x,0)+ (1-\varphi_1 (x)) I.
$$
Let $L_3=-\text{div}(A_3\nabla)$. Then $L_3(u)=0$ in $B(x_0,20) \times (0,8)$.
Since $A_3(X)$ is independent of the variable $t$, 
it follows from Theorem 8.1 in \cite{KP-1993} (also see Theorem 1.12 in \cite{Hofmann-preprint})
that both $(N)_2$ and $(R)_2$ for $L_3(v)=0$ in $\mathbb{R}^{d+1}_+$
are solvable.
Consequently, estimate (\ref{special-case-5}) 
for $P=(y,0)$ with $|y|\le 11$
follows from Theorem \ref{Neumann-localization}.
This concludes the proof of Theorem \ref{special-case-theorem}.
\end{proof}

\section{Uniform estimates in bounded Lipschitz domains}

In this section we give the proof of Theorems 
\ref{e-Dirichlet-theorem}, \ref{e-Neumann-theorem} and
\ref{e-regularity-theorem} stated in the Introduction.
Let $\Omega$ be a bounded Lipschitz domain in $\mathbb{R}^{d+1}$, $d\ge 2$.
Note that under the conditions
(\ref{real-symmetric}), (\ref{ellipticity}) and (\ref{smoothness-condition}),
the $L^p$ Dirichlet problem for $\mathcal{L}_\varepsilon (u_\varepsilon)=0$
is solvable in $\Omega$ for $2-\delta<p<\infty$, while the $L^p$ Neumann and regularity
problems for $\mathcal{L}_\varepsilon (u_\varepsilon)=0$ 
in $\Omega$ are solvable for $1<p<2+\delta$.
This follows directly from \cite{Dahlberg-1986} and \cite{KP-1993}.
However, without the periodicity condition, the constants $C$ (and $\delta$) 
in the nontangential maximal function estimates in general depend on the 
parameter $\varepsilon$.

The proof of Theorems \ref{e-Dirichlet-theorem}, \ref{e-Neumann-theorem}
and \ref{e-regularity-theorem} relies on the following two observations:

1. The class of elliptic operators $\{ \mathcal{L}_1=
-\text{div}(A\nabla)\}$ with $A(X)$ satisfying
the conditions in Theorem \ref{e-Dirichlet-theorem} is invariant
under rotation of the coordinate system.

2. Suppose $\mathcal{L}_\varepsilon (u_\varepsilon)=0$ in $D=\{(x,t): x\in \mathbb{R}^d
\text{ and } t>\psi(x)\}$. 
Let $w(X)=u_\varepsilon (\varepsilon X)$. Then $\mathcal{L}_1 (w)=0$ in
$$
D_\varepsilon=\big\{ (x,t)\in \mathbb{R}^{d+1}:\ x\in \mathbb{R}^d
\text{ and } t>\psi_\varepsilon (x)\big\},
$$
where $\psi_\varepsilon (x)=\varepsilon^{-1} \psi (\varepsilon x)$.

Since $\|\nabla \psi_\varepsilon\|_\infty =\|\nabla \psi\|_\infty$, it follows from
the observation (2) and Theorem \ref{Dirichlet-theorem-Lipschitz} that
solutions of the $L^2$ Dirichlet problem for $\mathcal{L}_\varepsilon (u_\varepsilon)
=0$ in $D$ satisfies the estimate $\|(u_\varepsilon)^*\|_{L^2(\partial D)}
\le C\|u_\varepsilon\|_{L^2(\partial D)}$ uniformly in $\varepsilon>0$.
Similarly, by Theorems \ref{Neumann-theorem-Lipschitz}
and \ref{regularity-theorem-Lipschitz},
 solutions of the $L^2$ Neumann and regularity problems for
$\mathcal{L}_\varepsilon (u_\varepsilon)=0$ in $D$ satisfy
the uniform estimates
$$
\| N(\nabla u_\varepsilon)\|_{L^2(\partial D)}
\le C \|{\partial u_\varepsilon}/\partial \nu_\varepsilon\|_{L^2(\partial D)}
\quad
\text{ and } \quad
 \| N(\nabla u_\varepsilon)\|_{L^2(\partial D)}
\le C\|\nabla_{tan} u_\varepsilon\|_{L^2(\partial D)},
$$
respectively.

We begin with the Dirichlet problem for $\mathcal{L}_\varepsilon (u_\varepsilon)=0$ in $\Omega$.

\medskip

\noindent{\bf Proof of Theorem \ref{e-Dirichlet-theorem}.}
Although Theorem \ref{e-regularity-theorem} implies Theorem \ref{e-Dirichlet-theorem},
we give a direct proof.

Let $\omega_\varepsilon^X$ denote the $\mathcal{L}_\varepsilon$-harmonic measure
on $\partial\Omega$ and $K_\varepsilon (X,P)=d\omega_\varepsilon^X/d\sigma$,
where $X\in \Omega$ and $P\in \partial\Omega$.
Let $I(P,r)=B(P,r)\cap \partial\Omega$.
We will show that for any $P_0\in \partial\Omega$ and $0<r<r_0$,
\begin{equation}\label{reverse-Holder-e}
\left\{\frac{1}{r^d}
\int_{I(P_0,r)}
|K_\varepsilon (X,P)|^2\, d\sigma(P)\right\}^{1/2}
\le \frac{C}{r^d}
\int_{I(P_0,r)} K_\varepsilon (X, P)\, d\sigma(P),
\end{equation}
where $X\in \Omega\setminus B(P_0,4r)$ and
$C$ is independent of $\varepsilon$.

Fix $P_0\in \partial\Omega$ and $0<r<r_0$. 
Since the class of elliptic operators $\{-\text{div}(A\nabla)\}$
with $A(X)$ satisfying the conditions of Theorem \ref{e-Dirichlet-theorem}
is invariant under rotation,
 we may assume that
\begin{equation}\label{local-graph-representation}
\aligned
\Omega\cap B(P_0,Cr_0)
&=\big\{ (x,t): x\in \mathbb{R}^d \text{ and } t>\psi (x)\big\}
\cap B(P_0, Cr_0)\\
& =D\cap B(P_0, Cr_0)
\endaligned
\end{equation}
and $A(x,t+t_0)=A(x,t)$ for some $t_0\ge 1$ and any $(x,t)\in \mathbb{R}^{d+1}$.
Without loss of generality,
we may also assume that $t_0=1$.
Let $G_\Omega (X,Y)$ and $G_D (X,Y)$ denote the Green's functions for
$\mathcal{L}_\varepsilon$ on $\Omega$ and $D$ respectively.
Fix $Y\in D\setminus B(P_0,4r)$. By the comparison principle (\ref{comparison-principle}), 
we have
$$
\frac{G_\Omega (X, A_s(P))}{G_D(Y, A_s(P))}
\approx 
\frac{G_\Omega (X, A_r(P_0))}{G_D(Y, A_r(P_0))} \quad \text{ for }\
0<s<r \text{ and } P\in I(P_0,r),
$$
where $A_s(P)=P+(0, \dots, 0, c_0 s)$. It follows that
$$
\aligned
\frac{\omega_\Omega^X (I(P,s))}{s^d}
&\approx
\frac{G_\Omega (X, A_s(P))}{s}
\approx 
\frac{G_D (Y, A_s(P))}{s}
\cdot \frac{G_\Omega (X, A_r(P_0))}{G_D (Y, A_r(P_0))}\\
&\approx
\frac{\omega_D^Y (I(P,s))}{s^d}
\cdot
\frac{G_\Omega (X, A_r(P_0))}{G_D (Y, A_r(P_0))},
\endaligned
$$
where $\omega_D$ denotes the $\mathcal{L}_\varepsilon$-harmonic measure on $\partial D$.
This implies that for any $P\in I(P_0,r)$,
$$
K_\varepsilon (X, P)
\le C K_{\varepsilon, D} (Y, P)
\cdot
\frac{G_\Omega (X, A_r(P_0))}{G_D (Y, A_r(P_0))},
$$
where $K_{\varepsilon, D}(Y,P)=d\omega_\varepsilon^Y/d\sigma$.
Note that by Theorem \ref{Dirichlet-theorem-Lipschitz}
and a simple rescaling argument, 
$K_{\varepsilon, D}(Y,P)$ satisfies the reverse
H\"older inequality with exponent $2$ on $I(P_0,r)$.
Thus,
$$
\aligned
\int_{I(P_0,r)} |K_\varepsilon (X,P)|^2\, d\sigma (P)
&\le C\int_{I(P_0,r)}
|K_{\varepsilon, D}(Y, P)|^2\, d\sigma(P)
\left( 
\frac{G_\Omega (X, A_r(P_0))}{G_D (Y, A_r(P_0))}\right)^2\\
&
\le Cr^{-d}
\big[ \omega_D^Y (I(P_0,r))\big]^2 
\left( 
\frac{G_\Omega (X, A_r(P_0))}{G_D (Y, A_r(P_0))}\right)^2\\
&\le
Cr^{-d} 
\big[ \omega_\Omega^X (I(P_0,r))\big]^2.
\endaligned
$$
This gives the desired reverse H\"older inequality (\ref{reverse-Holder-e}).
\qed
\medskip

Next we consider the $L^p$ Neumann problem for $\mathcal{L}_\varepsilon (u_\varepsilon)
=0$ in $\Omega$.

\medskip

\noindent{\bf Proof of Theorem \ref{e-Neumann-theorem}.}
By \cite{KP-1993} it suffices to establish the uniform estimate
$
\|N(\nabla u_\varepsilon)\|_2 \le C\| g\|_2,
$
 where $u_\varepsilon$
is the solution to the $L^2$ Neumann problem in $\Omega$
with boundary data $g\in L^2(\partial\Omega)$
and $\int_{\partial\Omega} g d\sigma =0$.
By dilation we may assume that $|\partial\Omega|=1$.

Fix $P_0\in \partial\Omega$.
As in the proof of Theorem \ref{e-Dirichlet-theorem}, we may assume that
$\Omega\cap B(P_0,Cr_0)$ is given by (\ref{local-graph-representation})
and $A(x,t+1)=A(x,t)$.
By Theorem \ref{Neumann-theorem-Lipschitz}
 and rescaling, the $L^2$ Neumann problem for $\mathcal{L}_\varepsilon (u)=0$
in $D$ is solvable with uniform estimates.
It follows from Theorem \ref{Neumann-localization} that
$$
\int_{I(P_0,r_0)}
|N(\nabla u_\varepsilon)|^2\, d\sigma
\le C\int_{\partial\Omega} |g|^2\, d\sigma
+C\int_{\Omega}
|\nabla u_\varepsilon |^2\, dX.
$$
By covering $\partial\Omega$ with balls of radius $r_0$, we obtain
\begin{equation}\label{final-estimate}
\int_{\partial\Omega} 
|N(\nabla u_\varepsilon)|^2\, d\sigma
\le C\int_{\partial\Omega} |g|^2\, d\sigma
+C\int_\Omega |\nabla u_\varepsilon|^2\, dX.
\end{equation}
To estimate $\int_\Omega |\nabla u_\varepsilon|^2 dX$, we
let $\beta$ be the average of $u_\varepsilon$ on
$\Omega$. Note that
$$
\int_{\partial\Omega} |u_\varepsilon-\beta|^2\, d\sigma
\le C\left\{ \int_\Omega |u_\varepsilon -\beta|^2\, dX
+\int_\Omega |\nabla u_\varepsilon|^2\, dX\right\}
\le C\int_\Omega
|\nabla u_\varepsilon|^2\, dX,
$$
where we have used the Poincar\'e inequality. Thus
$$
\mu \int_{\Omega} |\nabla u_\varepsilon|^2\, d\sigma
\le \int_{\partial\Omega}
g (u_\varepsilon-\beta) \, d\sigma
\le C(\rho) \int_{\partial\Omega} |g|^2\, d\sigma +\rho \int_\Omega |\nabla u_\varepsilon|^2\, dX,
$$
where $0<\rho<\mu$. This implies that $\|\nabla u_\varepsilon\|_{L^2(\Omega)}
\le C\| g\|_2$. In view of (\ref{final-estimate}) we obtain
 $\|N(\nabla u_\varepsilon)\|_2 \le C\| g\|_2$.
\qed
\medskip

Finally we give the proof of Theorem \ref{e-regularity-theorem}.

\medskip

\noindent{\bf Proof of Theorem \ref{e-regularity-theorem}.}
By \cite{KP-1993} it suffices to show that
$$
\|N(\nabla u_\varepsilon)\|_2 \le C\|\nabla_{tan} f\|_2
+|\partial\Omega|^{\frac{1}{1-d}} \| f\|_2,
$$ 
where
$u_\varepsilon$ is the solution of the $L^2$ regularity
problem for $\mathcal{L}_\varepsilon (u_\varepsilon)
=0$ in $\Omega$ with data $f\in W^{1,2}(\partial\Omega)$.
By dilation we may assume that $|\partial\Omega|=1$.

By a localization argument similar to that in the proof of Theorem
\ref{e-Neumann-theorem}, we obtain
$$
\int_{\partial\Omega}
|N(\nabla u_\varepsilon)|^2\, d\sigma
\le C\int_{\partial\Omega} |\nabla_{tan} f|^2\, d\sigma
+C\int_\Omega |\nabla u_\varepsilon|^2\, dX.
$$
Let
$g=\frac{\partial u_\varepsilon}{\partial \nu_\varepsilon}$.
The desired estimate follows from the observation
$$
\mu \int_{\Omega}
|\nabla u_\varepsilon|^2\, dX
\le \rho \| g\|^2_2+
C(\rho) \| u_\varepsilon\|^2_2 
\le C\rho \| N(\nabla u_\varepsilon)\|^2_2
+ C (\rho) \|f\|^2_2,
$$
where we have used $\| g\|_2 \le C\| N(\nabla u_\varepsilon)\|_2$.
\qed
\medskip

\begin{remark}
{\rm
Suppose $A(X)$ is periodic and $C^\alpha (\mathbb{R}^{d+1})$ for some $\alpha>0$,
Then it satisfies the condition (\ref{smoothness-condition}).
Furthermore, by the $L^\infty$ gradient estimate in \cite{AL-1987},  
$$
|\nabla u_\varepsilon (X)|\le C\left(\frac{1}{|B(X,r)|}
\int_{B(X,r)} |\nabla u_\varepsilon(Y)|^2\, dY\right)^{1/2}
$$
if $\mathcal{L}_\varepsilon (u_\varepsilon) =0$ in $B(X,2r)$.
Consequently, the solutions of the $L^p$ Neumann and regularity problems
in Theorems \ref{e-Neumann-theorem} and \ref{e-regularity-theorem}
satisfy the uniform estimates
$$
\| (\nabla u_\varepsilon)^*\|_p \le C\| \partial u_\varepsilon/\partial\nu_\varepsilon\|_p
\quad
\text{ and }
\quad
\| (\nabla u_\varepsilon)^*\|_p
\le C\|\nabla_{tan} u_\varepsilon\|_p
$$
for $1<p<2+\delta$.
}
\end{remark}

\section{Appendix: Dahlberg's Theorem}

In this appendix we give Dahlberg's original proof of Lemma \ref{key-Dirichlet-lemma}
(and thus Theorem \ref{Dahlberg-theorem}).

\begin{lemma}\label{difference-lemma}
Let $\mathcal{L}=-\text{div}(A\nabla)$
with coefficients satisfying (\ref{real-symmetric}),
(\ref{ellipticity}) and (\ref{periodic-in-t}).
Let $u$ be a weak solution of $\mathcal{L}(u) =0$ in 
$\Omega=B(x_0,r)\times (t_0-r, t_0+r)$ for some $(x_0,t_0)\in \mathbb{R}^{d+1}$ and
$r>3$.
Then
\begin{equation} \label{difference-estimate}
|u(x_0, t_0+1)-u(x_0, t_0)|
\le \frac{C}{r}
\left( \frac{1}{|\Omega|}
\iint_\Omega |u(y,s)|^2\, dyds\right)^{1/2},
\end{equation}
where $C$ depends only on $d$ and $\mu$.
\end{lemma}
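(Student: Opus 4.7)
The plan is to exploit the $t$-periodicity of the coefficients through the commutation identity $\mathcal{L}(Q(u)) = Q(\mathcal{L}(u))$: since $A(x, t+1) = A(x, t)$, the difference $Q(u)(x, t) = u(x, t+1) - u(x, t)$ is itself a weak solution of $\mathcal{L}(Q(u)) = 0$ wherever both $u$ and its vertical unit translate lie in the domain of $u$. In particular, $Q(u)$ solves $\mathcal{L} w = 0$ in the slightly truncated cylinder $\Omega' = B(x_0, r) \times (t_0 - r, t_0 + r - 1)$, and for $r > 3$ the point $(x_0, t_0)$ sits comfortably inside $\Omega'$: a ball $B((x_0, t_0), \rho)$ with $\rho = (r-1)/2 > 1$ is contained there.

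The proof then reduces to three short steps. First, apply the Moser/De~Giorgi--Nash local boundedness estimate to the solution $Q(u)$ at the point $(x_0, t_0)$:
\[
|Q(u)(x_0, t_0)|^2 \le \frac{C}{\rho^{d+1}} \iint_{B((x_0, t_0), \rho/2)} |Q(u)(y, s)|^2\, dy\, ds.
\]
Second, by Cauchy--Schwarz and the fundamental theorem of calculus,
\[
|Q(u)(y, s)|^2 = \Bigl|\int_0^1 \partial_t u(y, s + \tau)\, d\tau\Bigr|^2 \le \int_0^1 |\nabla u(y, s + \tau)|^2\, d\tau,
\]
so Fubini bounds the integral from step one by $\iint_{B((x_0, t_0), \rho/2 + 1)} |\nabla u|^2\, dy\, ds$. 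Third, apply Caccioppoli's inequality to $u$, using that for $\rho = (r-1)/2$ the enlarged ball is at distance $\approx r$ from $\partial \Omega$:
\[
\iint_{B((x_0, t_0), \rho/2 + 1)} |\nabla u|^2\, dy\, ds \le \frac{C}{r^2} \iint_\Omega |u(y, s)|^2\, dy\, ds.
\]
Chaining the three estimates and using $|\Omega| \approx r^{d+1} \approx \rho^{d+1}$ yields $|Q(u)(x_0, t_0)|^2 \le C r^{-2} |\Omega|^{-1} \iint_\Omega |u|^2\, dy\, ds$, which is the claim after taking square roots.

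The only delicate point I expect is the geometric bookkeeping of nested balls and cylinders under the tight hypothesis $r > 3$: one needs simultaneously that the ball on which Moser is applied lies in the domain $\Omega'$ of solvability of $Q(u)$ (requiring $\rho \le r - 1$), and that a subsequent thickening by $1$ from the Fubini step still leaves a buffer comparable to $r$ before reaching $\partial \Omega$, so that Caccioppoli produces the correct factor $r^{-2}$. The choice $\rho = (r-1)/2$ accommodates both requirements when $r > 3$. Beyond this bookkeeping the argument is routine; the essential conceptual content is the single observation $\mathcal{L} \circ Q = Q \circ \mathcal{L}$ forced by the $t$-periodicity, which promotes a pointwise bound on a difference of values of $u$ into a standard interior estimate for the new solution $Q(u)$.
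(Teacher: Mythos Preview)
Your proof is correct and follows essentially the same approach as the paper's own proof: observe that $Q(u)$ is a solution by $t$-periodicity, apply the Moser/De~Giorgi--Nash local boundedness estimate to $Q(u)$, bound $|Q(u)|^2$ pointwise by $\int_0^1 |\partial_t u(\cdot,s+\tau)|^2\,d\tau$ via Cauchy--Schwarz, and finish with Caccioppoli. The only difference is cosmetic bookkeeping in the choice of intermediate radii (you take $\rho=(r-1)/2$, the paper simply works with $r/4$), which has no bearing on the argument.
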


\begin{proof}
Let $F(x,t)=Q(u)(x,t)$. By the periodicity 
assumption (\ref{periodic-in-t}), $\mathcal{L}(F)=0$
in $B(x_0,r)\times (t_0-r, t_0+r-1)$.
It follows that
\begin{equation}\label{Moser-estimate}
|F(x_0,t_0)|\le 
C\left( \frac{1}{r^{d+1}}
\int_{t_0-\frac{r}{4}}^{t_0+\frac{r}{4}}
\int_{B(x_0,\frac{r}{4})}
|F(y,s)|^2\, dyds\right)^{1/2}.
\end{equation}
The desired estimate then follows from 
$$
|F(y,s)|^2 \le \int_s^{s+1} |\frac{\partial u}{\partial t} (y, t)|^2\, dt
$$
and Cacciopoli's inequality.
\end{proof}

\begin{lemma}\label{boundary-difference-lemma}
If in addition to the assumption in Lemma \ref{difference-lemma}, we also assume that
$u=0$ in $\partial B(x_0,r)\times (t_0-r, t_0+r)$, then
\begin{equation}\label{boundary-difference-estimate}
|u(x,t_0+1)-u(x,t_0)|
\le \frac{C}{r}
\left( \frac{1}{|\Omega|}
\iint_\Omega |u(y,s)|^2\, dyds\right)^{1/2}
\end{equation}
for any $x\in B(x_0,r)$,
where $C$ depends only on $d$ and $\mu$.
\end{lemma}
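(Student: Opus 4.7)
The plan is to adapt the argument of Lemma \ref{difference-lemma}, replacing the interior Moser estimate with a boundary $L^\infty$ estimate that exploits the new hypothesis that $u$ vanishes on the lateral side of the cylinder. As before, set $F(x,t)=Q(u)(x,t)=u(x,t+1)-u(x,t)$. By the $t$-periodicity (\ref{periodic-in-t}) of $A$, $\mathcal{L}(F)=0$ in the slightly shortened cylinder $\Omega^\prime=B(x_0,r)\times(t_0-r,t_0+r-1)$. The new input is that since $u$ vanishes on $\partial B(x_0,r)\times(t_0-r,t_0+r)$, the difference $F$ vanishes on the entire lateral portion $\partial B(x_0,r)\times(t_0-r,t_0+r-1)$ of $\partial\Omega^\prime$.

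The key step is to obtain an $L^\infty$ estimate for $F$ that is uniform up to the lateral boundary. For any point $(x,t_0)$ with $x\in B(x_0,r)$, let $\rho=\mathrm{dist}(x,\partial B(x_0,r))$. If $\rho\ge r/8$, apply the interior Moser inequality on $B((x,t_0),\rho/2)$ as in the proof of Lemma \ref{difference-lemma}. If $\rho<r/8$, choose a nearest boundary point $y\in\partial B(x_0,r)$ and apply the classical de Giorgi--Nash--Moser boundary $L^\infty$ estimate on $B((y,t_0),r/4)\cap\Omega^\prime$, using that $F$ vanishes on the Lipschitz (in fact, smooth away from corners) portion of the boundary contained in that ball. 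In either case one obtains
\begin{equation*}
|F(x,t_0)|\le C\bigg(\frac{1}{r^{d+1}}\iint_{\Omega^{\prime\prime}}|F(y,s)|^2\,dyds\bigg)^{1/2},
\end{equation*}
where $\Omega^{\prime\prime}=B(x_0,r)\times(t_0-r/2,t_0+r/2)$ and $|\Omega^{\prime\prime}|\approx|\Omega|$ since $r>3$.

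To finish, as in Lemma \ref{difference-lemma}, bound $|F(y,s)|^2\le\int_s^{s+1}|\partial_\tau u(y,\tau)|^2\,d\tau$ and integrate, reducing matters to estimating $\iint_{\widetilde\Omega}|\partial_\tau u|^2$ over a cylinder $\widetilde\Omega\subset\Omega$ with $\mathrm{dist}(\widetilde\Omega,\{t_0\pm r\})\gtrsim r$. Apply Caccioppoli's inequality with a cutoff that is $1$ on $\widetilde\Omega$, supported in a slightly larger cylinder $\subset\Omega$, and does not need to vanish on the lateral boundary because $u$ does; this yields $\iint_{\widetilde\Omega}|\nabla u|^2\le Cr^{-2}\iint_\Omega|u|^2$, which combines with the previous display to give (\ref{boundary-difference-estimate}).

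The main thing to be careful about is the boundary $L^\infty$ estimate for $F$ at points $x$ arbitrarily close to $\partial B(x_0,r)$: one must verify that the constant in the de Giorgi--Nash--Moser boundary estimate depends only on $d$ and $\mu$ (not on $\mathrm{dist}(x,\partial B(x_0,r))$), which is standard because of the scale-invariant Lipschitz character of the lateral boundary. Once that uniform estimate is in hand, the rest of the argument is identical in structure to the proof of Lemma \ref{difference-lemma}.
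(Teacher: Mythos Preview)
Your proposal is correct and follows essentially the same approach as the paper: the paper's proof simply says to repeat the argument of Lemma~\ref{difference-lemma}, replacing the interior Moser estimate (\ref{Moser-estimate}) by the boundary $L^\infty$ estimate, which is exactly what you do. Your case-splitting on $\rho=\mathrm{dist}(x,\partial B(x_0,r))$ and the remark about the cutoff in Cacciopoli's inequality not needing to vanish on the lateral boundary are just careful elaborations of the paper's one-line sketch.
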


\begin{proof}
The proof is similar to that of Lemma \ref{difference-lemma}. In the place of (\ref{Moser-estimate}),
one uses the boundary $L^\infty$ estimate.
\end{proof}

\noindent{\bf Proof of Lemma \ref{key-Dirichlet-lemma}.}

The case $1< r\le 10$ follows easily
by covering $B(x_0,r)$ with balls of radius $1$.
If $r>10$, we also
cover $B(x_0,r)$ with a sequence of balls 
$\{ B(x_k,1)\}$ of radius $1$ such that
$$
B(x_0,r) \subset \bigcup_k B(x_k, 1)\subset B(x_0,r+1)
\quad \text{ and }\quad
\sum_k \chi_{B(x_k,1)}\le C.
$$
Let $I$ denote the left-hand side of (\ref{local-Dirichlet-condition}).
It follows from the assumption (\ref{local-Dirichlet-condition}) (with $r=1$)
and boundary Harnack inequality (\ref{local-bound}) that
\begin{equation}\label{a-1}
I\le C\, \sum_k \int_{T(x_k,2)} |u(X)|^2\, dX \le C\, \sum_k |u(x_k, 1)|^2.
\end{equation}

To estimate the right-hand side of (\ref{a-1}), we let
$v_1$ be the Green's function for the operator $\mathcal{L}$ on the domain
$\Omega_1= B(x_0, 2r)\times (0, r)$ with pole at
$(x_0, r/2)$. Similarly, we let $v_2$ be the Green's function for $\mathcal{L}$ on $\Omega_2=
B(x_0,2r)\times (0, 2r)$ with pole at $(x_0, 3r/2)$.
Note that
$$
v_i (x_0, r/4)\ge c r^{2-d} \quad \text{ for } i=1,2.
$$
Thus, by the comparison principle (\ref{comparison-principle}), we have
$$
\frac{u(x_k,1)}{v_i(x_k,1)}
\le C\, \frac{u(x_0,r/4)}{v_i (x_0,r/4)}
\le Cr^{d-2} u(x_0, r/4)
$$
for $i=1, 2$. 
In view of (\ref{a-1}), this leads to
\begin{equation}\label{Dirichlet-estimate-4}
I\le C r^{2(d-2)} |u(x_0, r/4)|^2 \sum_k v_1 (x_k, 1) v_2 (x_k, 1).
\end{equation}

Next we let $\omega$ denote the $\mathcal{L}$-harmonic measure on the domain $\Omega_1$,
evaluated at the point $(x_0, r/2)$.
Since 
\begin{equation}\label{Dirichlet-estimate-5}
\aligned
v_1 (x_k, 1) &\le C\, \int_{B(x_k,1)\times\{ 0\}} d\omega,\\
v_2 (x_k,1) &\le C\, v_2 (y,1) \quad \text{ for any } y\in B(x_k, 1),
\endaligned
\end{equation}
the right-hand side of (\ref{Dirichlet-estimate-4}) is bounded by
$$
Cr^{2(d-2)} |u(x_0, r/4)|^2 \sum_k \int_{B(x_k,1)\times \{ 0\} } v_2(y,1) d\omega .
$$
It follows that
\begin{equation}\label{Dirichlet-estimate-6}
I\le C\, r^{2(d-2)} |u(x_0, r/4)|^2 \int_{B(x_0, 2r)\times\{ 0\} } v_2(y,1) d\omega .
\end{equation}

We now consider the function
\begin{equation}\label{Dirichlet-estimate-7}
F(x,t)=v_2 (x,t+1)-v_2 (x,t)
\quad \text{ in } \Omega_1.
\end{equation}
By the periodicity assumption (\ref{periodic-in-t}), $\mathcal{L}(F)=0$ in $\Omega_1$. It follows that
\begin{equation}\label{Dirichlet-estimate-8}
\aligned
 F(x_0, r/2) & =\int_{\partial\Omega_1} F d\omega\\
&=\int_{B(x_0,2r)\times \{ 0\}} v_2 (y,1)\, d\omega 
+\int_{B(x_0, 2r)\times\{ r\}} F(y, r)\, d\omega ,
\endaligned
\end{equation}
where we have used the fact that $F=0$ on $\partial B(x_0,2r)\times (0,r)$
and $F(y,0)=v_2(y,1)$ on $B(x_0, 2r)\times \{ 0\}$.
We claim that
\begin{eqnarray}
|F(x_0, r/2)| & \le & C\, r^{1-d} \label{claim-1},\\
|F(y,r)| & \le & C r^{1-d} \quad \text{ if } |y-x_0|< 2r.
\label{claim-2}
\end{eqnarray}
Assume the claim (\ref{claim-1})-(\ref{claim-2}) for
a moment. It then follows from (\ref{Dirichlet-estimate-8}) that
$$
\int_{B(x_0,2r)\times \{ 0\} }
v_2 (y,1)\, d\omega \le C\, r^{1-d}.
$$
In view of (\ref{Dirichlet-estimate-6}), we obtain
\begin{equation}\label{Dirichelt-estimate-11}
I\le C\, r^{d-3} |u(x_0, r/4)|^2 \le \frac{C}{r^3}
\int_{T(x_0,r)} |u(X)|^2\, dX.
\end{equation}

Finally we note that since $v_2(x,t)\le Cr^{2-d}$ for $(x,t)\in B(x_0, 2r)\times (0,r+3)$,
estimates (\ref{claim-1}) and (\ref{claim-2}) follow readily from
Lemmas \ref{difference-lemma} and \ref{boundary-difference-lemma}, respectively.
This completes the proof of Lemma \ref{key-Dirichlet-lemma}.
\qed

\bibliography{ks1}

\small
\noindent\textsc{Department of Mathematics, 
University of Chicago, Chicago, IL 60637}\\
\emph{E-mail address}: \texttt{cek@math.uchicago.edu} \\

\noindent\textsc{Department of Mathematics, 
University of Kentucky, Lexington, KY 40506}\\
\emph{E-mail address}: \texttt{zshen2@email.uky.edu} \\

\noindent \today

\end{document}